\documentclass[english,11pt]{article}
\usepackage[german,french,english]{babel}
\usepackage[cp850]{inputenc}
\usepackage{latexsym,graphicx, fancybox}
\usepackage{graphicx}
\usepackage{caption}

\usepackage{amssymb, amsmath, amsfonts}
\usepackage{amsmath, amssymb}
\usepackage{color}
\usepackage{tikz}
\usepackage{latexsym}
\usepackage{tocloft}        

\usepackage{mathrsfs}

\usepackage{natbib}

\usepackage{etoolbox}
\apptocmd{\thebibliography}{\setlength{\itemsep}{0.01pt}}{}{}

\usepackage{microtype}
	\usepackage{amsthm}
	
	\newcommand{\dd}{\,\mathrm{d}}
	\newcommand{\R}{\mathbb{R}}
	\newcommand{\N}{\mathbb{N}}
	\newcommand{\E}{\mathbb{E}}
	\renewcommand{\P}{\mathbb{P}}
	
	\newcommand{\Hcal}{{\mathcal H}}
	\newcommand{\fdot}{{\,\cdot\,}}
	\newcommand{\Tcal}{{\mathcal T}}
	\usepackage{authblk}

	\newtheorem{Theorem}{Theorem}[section]
	\newtheorem{Definition}[Theorem]{Definition}
	\newtheorem{Proposition}[Theorem]{Proposition}
	\newtheorem{Assumption}[Theorem]{Assumption}
	
	\newtheorem{Lemma}[Theorem]{Lemma}

	\newtheorem{Example}[Theorem]{Example}
	
	\newtheorem{assumption}{Assumption}[section]

	\usepackage{thm-restate}
	\usepackage{float}
	\usepackage{hyperref}
	
\begin{document}
	
\selectlanguage{english}
\title{\bf On (fake) Stationarity in Stochastic Volterra 
	Equations  with Affine Drift and Regular Kernels }
\author{
	Emmanuel Gnabeyeu\\
	Laboratoire de Probabilit\'es, Statistique et Mod\'elisation (LPSM), UMR~8001,\\
	\textit{Sorbonne Universit\'e and Universit\'e Paris Cit\'e}, Paris, France. \\
	E-mail: \texttt{emmanuel.gnabeyeu\_mbiada@sorbonne-universite.fr}
	\and
	Gilles Pag\`es\\
	Laboratoire de Probabilit\'es, Statistique et Mod\'elisation (LPSM), UMR~8001,\\
	Sorbonne Universit\'e, case 158, 4, pl. Jussieu, F-75252 Paris Cedex 5, France. \\
	E-mail: \texttt{gilles.pages@sorbonne-universite.fr}
}
\date{August 30, 2026}
  \maketitle
  \renewcommand{\abstractname}{Abstract}
 
\vspace{-.5cm}
\begin{abstract}	

We investigate the fake stationarity properties of solutions to forward Stochastic Volterra Integral Equations (SVIEs) with affine drift and long-memory (regular) kernels, both on finite horizons and in the long-run regime. 
By either deriving explicit closed-form specifications for the deterministic initial condition \(\phi\) and the mean-reversion function \(\mu\) appearing in the drift, or by introducing a deterministic stabilizing factor \(\varsigma\) in the diffusion coefficient associated with the kernel while keeping \(\mu\) fully flexible, we show that it is possible to induce a \emph{fake stationary} regime, in the sense that all marginal distributions share the same mean and variance. This extends the results of~\cite{EGnabeyeu2025} to the long-memory setting.
Afterwards, using a refined asymptotic analysis, we further establish that, in both frameworks, the time-shifted solutions of these long-memory SVIEs converge weakly, in the functional sense, toward a family of \(L^2\)-stationary processes sharing the same covariance structure, for suitable classes of diffusion coefficients. 
These results are applied to a class of exponential-fractional Stochastic Volterra Integral Equations driven by an \(\alpha\)-gamma fractional integration kernel, in the particular regime \(\alpha \geq 1\), which regularizes diffusion paths and invoke {\em long-term memory}, persistence or long range dependence.
\end{abstract}

\textbf{\noindent {Keywords:}} Stochastic Volterra Processes, Stochastic Differential Equations, Fourier-Laplace Transforms, Jordan-Cauchy Residue Theorem, Regular Variation, Tauberian Theorems, Limit theorems.
\vspace{-.9cm}
\section{Introduction} 
The theory of 
stochastic Volterra integral equations (SVIEs) which has its origins in the 1980s,
have recently attracted considerable attention due to their ability to capture rough sample path behaviour (see, e.g.,~\cite{DiNunno2023} for a survey of the literature) as well as their flexibility in modelling both short- and long-range dependence structures~\cite{Samorodnitsky2016}.
By considering a deterministic continuous function \( \phi \), typically normalized such that \( \phi(0) = 1 \), in the absence of jumps, a general convolution-type stochastic Volterra equation with affine drift on \(\mathbb{R}\) can be written for any \(T>0\) as
\begin{equation}\label{eq:Volterrameanrevert}
	\begin{cases}
		X_t = X_0\phi(t) +\int_0^t K(t-s)(\mu(s)-\lambda X_s)ds + \int_0^t K(t-s)\sigma(s,X_{ s})dW_s, \quad X_0\perp\!\!\!\perp W.\\
		X_0 : (\Omega, \mathcal{F}, \mathbb{P}) \to (\mathbb{R}, {\cal B}or(\mathbb{R})) \text{ is a given initial random variable}
	\end{cases}
\end{equation}
where $\lambda>0$, $\mu :[0, T] \to \R$ in the drift is a Borel function and the diffusion coefficient $\sigma : [0, T] \times \mathbb{R} \to \mathbb{R}$ is a Lipstchiz continuous function
and $K$ a deterministic kernel modeling the memory or hereditary structure of the system.  The process $(W_t)_{t \geq 0}$ is an $\mathbb{R}$-valued standard Brownian motion independent of $X_0$, both defined on a probability space $(\Omega, \mathcal{F}, \P)$ and $\mathcal{F}_t \supset \mathcal{F}_{t}^{X_0, W}$ is a filtration satisfying the usual conditions.
Among the most prominent examples of  Volterra kernel is the fractional Riemann--Liouville kernel \(K_\alpha(t)
=
\frac{t^{\alpha-1}}{\Gamma(\alpha)},
\quad \alpha\in\left(\frac12,\frac32\right),\)
where \(\alpha := H-\frac12\), with \(H\) denoting the Hurst parameter. 
This kernel allows for a flexible incorporation of rough sample path properties when $H\in\big(0,\frac12\big)$ and some memory features when $H\in\big(\frac12, 1\big]$.
Another important example covered in this work is the exponentially damped fractional kernel \(	K_{\alpha,\rho}(t)
=
\frac{t^{\alpha-1}}{\Gamma(\alpha)}e^{-\rho t},
\; \rho>0.\)
These stochastic Volterra processes which have recently attracted much attention in the mathematical finance community following the empirical observation in \cite{GatheralJR2018} that volatility paths exhibit low H\"older regularity (\(H \approx 0.1\)), have
been originally introduced mostly with non-singular kernel for modelling population dynamics and more generally in biology
and physics \cite{mohammed1998}, in order to generalize modelling to non-Markovian random systems with memory effects and irregular behaviour. They were also motivated particularly by the physics of heat transfer \cite{gripenberg1990} and have undergone extensive mathematical study. 

In the late 1990s, attempts were made within the financial community to incorporate long-memory effects into continuous-time stochastic volatility models. This shift was largely motivated by the need to capture persistent dependencies observed in financial markets, particularly through fractional Brownian motion (see \cite{CoutinD2001, ComteR1998}). Earlier studies, such as those by Comte and Renault \cite{ComteR1998}, found that \(H > 1/2\) was a key parameter in capturing long memory in volatility dynamics.
Within this framework, Volterra equations~\eqref{eq:Volterrameanrevert} with fractional kernels offer a tractable alternative to SDEs driven by genuine \(H\)-fractional Brownian motion, while retaining the long-memory features associated with \(H > 1/2\).



The existence of stationary solutions and the study of long-time behaviour are central problems in stochastic analysis. Their extension to stochastic Volterra integral equations (SVIEs) with time-independent coefficients has attracted considerable attention in recent years. The non-Markovian and non-semimartingale nature of SVIEs makes the analysis of ergodicity, invariant measures, and stability substantially more challenging than in the classical SDE setting. Existing ergodicity results rely mainly on Markovian lifting methods based on the Laplace representation of completely monotone kernels (see, e.g.,~\cite{hamaguchi2024,Jacquieretal2022,huber2024}) or on the analysis of Volterra-type Riccati equations associated with completely monotone kernels (see~\cite{friesen2022volterra,BenAlayaFriesenKremer2025}) in the specific settings of affine Volterra processes. These techniques, however, do not extend naturally to regular kernels, where complete monotonicity may no longer hold (typically $K_{\alpha,\rho}$ when $\alpha>1$ and $\rho\geq0$).

In this work, we derive conditions ensuring that stochastic Volterra equations of the form~\eqref{eq:Volterrameanrevert}, driven by long-memory kernels (typically corresponding to \(\alpha\in\left(1,\frac32\right)\)), admit a \emph{fake stationary regime} (in the terminology introduced in~\cite{Pages2024}), in the sense that the solution to~\eqref{eq:Volterrameanrevert} either has time-invariant first and second moments (\emph{fake stationary regime of type I }) or the same marginal distribution at all times in the Gaussian case (\emph{ fake stationary regime of type II}, leading in particular to pseudo-Ornstein--Uhlenbeck type dynamics.)
This allows us to cover the full range of Hurst coefficient, namely \( H :=\alpha - \tfrac{1}{2} \in (0, 1) \), thereby complementing the results obtained in the rough setting in~\cite{Pages2024,EGnabeyeu2025}.
Specifically, our main result follow that of \cite{EGnabeyeu2025} and the approach proceeds along two complementary directions. In the autonomous diffusion setting of section~\ref{subsec:fakestatioIntrin}, we derive explicit closed-form characterizations of the deterministic initial condition \(\phi\) together with the mean-reversion function \(\mu\). Alternatively, by allowing the volatility coefficient to be separable in time and state in section~\ref{subsec:fakestatioStabil}, we preserve full flexibility in the specification of \(\mu\) while introducing a time-dependent (deterministic) multiplicative stabilizing factor in the Brownian convolution term, required to satisfy an appropriate functional convolution equation involving the derivative of the resolvent associated with the  Volterra kernel.
Moreover, we establish the existence of limiting distributions for these stabilized {\em long-memory} Volterra processes. Formally, we prove that as \( t \to \infty \), the shifted process \( (X^t_s)_{s \geq 0} \), defined by \( X^t_s := X_{s+t} \), functionally weakly converges to a limiting continuous process.  Unlike in~\cite{friesen2022volterra,EGnabeyeuPR2025,EGnabeyeuR2025}, the limiting process need not be stationary, in the sense of invariance of its finite-dimensional distributions under time shifts. However, we prove that, under \emph{fake stationary} regime, the limiting process is at least weak \( L^2 \)-stationary.

 \vspace{-.25cm}
\paragraph{Outline.}
The remainder of the paper is organized as follows: Section~\ref{sec:main_result} introduces the necessary preliminaries and provides a summary of the main results.  In Section~\ref{subsec:background}, we review key properties of stochastic Volterra equations with convolution kernels, including results on existence, moment control, and a special focus on processes with affine drift.
Sections~\ref{subsec:fakestatioIntrin} and~\ref{subsec:fakestatioStabil} are devoted to the development of the framework associated with the weak stationary regime of SVIEs~\eqref{eq:Volterrameanrevert}, in the spirit of~\cite{Pages2024, EGnabeyeu2025} with an example of a \emph{fake stationary regime} when the state-dependent diffusion coefficient is a trinomial function and the analysis of the long-run behavior of these time-inhomogeneous processes as time tends to infinity. Specifically, we establish that, for such stabilized processes, the functional weak asymptotics of the time-shifted process \( (X_{t+s})_{s \geq 0} \) as \( t \to +\infty \), turns out to be a weakly \( L^2 \)-stationary process.
Finally, in Section~\ref{sec:appl2}, we apply these results to the case of SVIEs with an \(\alpha\)-fractional integration kernel for \(\alpha \in \left(1, \frac{3}{2}\right)\) ({\em long-term memory, persistence or long range dependence}), where the case \(\alpha \in \left(\frac{1}{2}, 1\right)\) has been extensively studied in \cite[Section 5, Theorem 5.2]{Pages2024, EGnabeyeu2025}. In Section~\ref{sec:appl3}, we further extend the application to SVIEs with an \(\rho\)-exponential \(\alpha\)-fractional integration kernel for \(\alpha \in \left(1, \frac{3}{2}\right)\) involving both the \emph{long-term memory} effects inherent to those Volterra equations.

\vspace{-.3cm}
\section{Summary of standing assumptions and main results}\label{sec:main_result}
We summarize in this part the framework and the main results of our study.
We first introduce the notations, then the tools and basic framework in Section~\ref{subsec:main_tools}, and finally present a streamlined version of
our principal theorems in Section~\ref{subsec:main_result}.
\vspace{-.3cm}
\paragraph{Notations.}
\smallskip
\noindent $\bullet$ From now on, let \(T>0\) be fixed.  We denote $\mathbb{T} = [0, T] \subset \mathbb{R}_+$, ${\rm Leb}_d$ the Lebesgue measure on $(\R^d, {\cal B}or(\R^d))$, $d\geq1$.

\noindent $\bullet$ $\mathbb{X} := {\cal C}([0,T], \R^d)  (\text{resp.} \quad {\mathcal C_0}([0,T], \R^d))$ denotes the set of continuous functions (resp. null at 0)  from $[0,T]$ to $\R^d $ and ${\cal B}or(\mathbb{X})$ denotes the  Borel $\sigma$-field of $\mathbb{X}$ induced by the $\sup$-norm topology. 

\smallskip 
\noindent $\bullet$ For \(p\in[1,+\infty)\), \({\cal L}_{\mathrm{loc}}^p(\mathbb{R}_+,{\rm Leb}_1)\) denotes the space of measurable functions \(f:\mathbb{R}_+\to\mathbb{R}\) satisfying \(f\in L^p([0,T],{\rm Leb}_1),
\; \forall\,T>0,\) that is \(\int_0^T |f(t)|^p\,dt <+\infty,
\; \forall\, T>0.\)

\smallskip 
\noindent $\bullet$ For $p\in(0,+\infty)$, $L_{\mathbb H}^p(\P)$ or simply $L^p(\P)$ denote the set of  $\mathbb H$-valued random vectors $X$  defined on a probability space $(\Omega, {\cal A}, \P)$ such that $\|X\|_p:=(\E[\|X\|_{\mathbb H}^p])^{1/p}<+\infty$. For $f: E\to \R$, $\displaystyle \|f\|_{\sup}= \sup_{x\in E}|f(x)|$

\smallskip
\noindent$\bullet$ $\mathrm{Leb}_1$ (resp. $\mathrm{Leb}_{\mathbb{R}_+}$) denotes the Lebesgue measure on $\R$ (resp. $\mathbb{R}_+ = [0,+\infty)$).

\smallskip 
\noindent $\bullet$ For $f, g \!\in {\cal L}_{\R_+,loc}^1 (\R_+, {\rm Leb}_1)$, we define their convolution by $f*g(t) = \int_0^tf(t-s)g(s)ds$, $t\ge 0$.

\smallskip 
\noindent $\bullet$ For $f, g \!\in {\cal L}_{\R_+,loc}^2(\R_+, {\rm \text{Leb}_1})$ and $W$ a Brownian motion, we define  their stochastic convolution by 
\centerline{$
f\stackrel{W}{*}g = \int_0^t f(t-s)g(s) dW_s, \quad t\ge 0.
$}

\smallskip 
\noindent $\bullet$ For a random variable/vector/process $X$, we denote by $L(X)$ or $[X]$ its law or distribution. 

\smallskip 
\noindent $\bullet$ For short, for any \(p\in[1,+\infty]\), we will denote by ${\cal L}^p(\R_+)$ (resp. ${\cal L}_{\mathrm{loc}}^p(\R_+)$) the space ${\cal L}^p(\mathbb{R}_+,{\rm Leb}_1)$ (resp. ${\cal L}_{\mathrm{loc}}^p(\mathbb{R}_+,{\rm Leb}_1)$).

\smallskip 
\noindent $\bullet$ $X\perp \! \! \!\perp Y$  stands for independence of random variables, vectors or processes $X$ and $Y$.  

\smallskip 
\noindent $\bullet$ $\Gamma(a) = \int_0^{+\infty} u^{a-1} e^{-u} \, du, \; a > 0, \; 
\text{and} \quad 
B(a, b) = \int_0^1 u^{a-1} (1 - u)^{b-1} \, du, \quad a, b > 0.$
We will extensively use the classical identities:
$\Gamma(a + 1) = a \, \Gamma(a) 
\; \text{and} \; 
B(a, b) = \frac{\Gamma(a)\Gamma(b)}{\Gamma(a + b)}.$

\smallskip 
\noindent $\bullet$  Let $\mathrm{Pol}(\mathbb{R})$ denote the ring of all polynomials on $\mathbb{R}$ and $\mathrm{Pol}_n(\mathbb{R})$ the subspace consisting of polynomials of degree at most $n$.

\noindent Throughout this paper, we work on a filtered probability space $(\Omega,\mathcal{F},\mathbb{F}:=(\mathcal{F}_t^{X_0,W})_{t\in[0,T]},\mathbb{P})$
suppor-\\ting a \(1\)-dimensional Brownian motion \(W\), where \((\mathcal{F}_t^{X_0,W})_{t\in[0,T]}\) denotes the filtration generated by \(X_0\) and \(W\).
\subsection{Main tools and standing assumptions}\label{subsec:main_tools}
 In this subsection, we provide specific analytical tools, such as the \emph{resolvent} and the solution of the \emph{Wiener--Hopf equation}.\\
\noindent Throughout this work, we adopt, following \cite{Pages2024, EGnabeyeu2025}, the following definition of the resolvent, which is also discussed in works such as \cite{Pruss1993} and offers a distinct perspective from the functional resolvent introduced in \cite{gripenberg1990}.

\vspace{-.3cm}
\paragraph{Solvent core of a convolution kernel.}
Let $K$ be a convolution kernel   satisfying 
 $\int_{0}^tK(u)du>0$ for every $t>0$. 
For every $\lambda \!\in \R$,  the {\em resolvent or Solvent core} $R_{\lambda}$ associated to $K$ and $\lambda$ is defined as the unique solution -- if it exists --  to the deterministic Volterra equation
 \vspace{-.2cm}
\begin{equation}\label{eq:Resolvent}
	\forall\,  t\ge 0,\quad R_{\lambda}(t) + \lambda \int_0^t K(t-s)R_{\lambda}(s)ds = 1,
	 \vspace{-.3cm}
\end{equation}
or, equivalently, written in terms of convolution, 
$R_{\lambda}+\lambda K*R_{\lambda} = 1.$
This equation is also known as \textit{resolvent equation} or \textit{renewal equation}. Its solution  always satisfies $R_{\lambda}(0)=1$ and admits the formal \textit{Neumann series expansion} \footnote{Recall that $ K^{1*} = K $ and $ K^{k*}(t) = \int_0^t K(t - s) \cdot K^{(k-1)*}(s) \, ds.$}:
 \vspace{-.2cm}  
\begin{equation}\label{eq:Resolvent3}
	R_{\lambda} = \sum_{k\ge 0} (-1)^k \lambda^k (\mbox{\bf 1}*K^{k*}). 
	 \vspace{-.2cm}
\end{equation}
where \(K^{k*}\) denotes the $k$-fold $*$
product of \(K\) with itself, with the convention $K^{0*}= \delta_0$ (Dirac mass at $0$).
\medskip From now on we will assume that the kernel $K$ has a finite Laplace transform  \(\forall\, t>0, \;L_K(t)<+\infty.\) Note that, as mentioned e.g. in \cite{Pages2024}, if the (non-negative) kernel $K$  satisfies 
\vspace{-.1cm}
\begin{equation}\label{eq:Kcontrol}
	0\le K(t)\le Ce^{bt }t^{a-1}\mbox{  for some }\; a,\, C>0, \; b\geq0. 
	\vspace{-.2cm}
\end{equation} then, by induction  \(\mbox{\bf 1}*K^{*n}(t) \le C^n e^{bt}\frac{\Gamma(a)^n}{\Gamma(an+1)} t^{an},\) so that for such kernels, the above series~\eqref{eq:Resolvent3} is absolutely converging for every $t>0$ implying that the function   $R_{\lambda}$ is well-defined on $(0, +\infty)$. 

\smallskip
\noindent{\bf Remark} 1. If $K$ is continuous on $(0,+\infty)$, then the resolvent $R_{\lambda}$ is differentiable and one checks that $f_{\lambda}=-R'_{\lambda}$ satisfies, for every  $t>0$, \(-f_{\lambda}(t) +\lambda \big( R_{\lambda}(0)K(t) - K *f_{\lambda}(t)\big)=0\)
that is $f_{\lambda}$ is solution to the equation
\begin{equation}\label{eq:flambda-eq}
	f_{\lambda} +\lambda K *f_{\lambda}=\lambda   K.
\end{equation}
2. Taking the Laplace transform from both side of the above equality~\eqref{eq:flambda-eq}, we have that :
$L_{f_{\lambda}}(t)(1+\lambda L_K(t))=\lambda L_K(t) $, $t>0$. Consequently, 
 \vspace{-.2cm}
\begin{equation}\label{eq:LaplaceTf_lambda}
	L_{f_{\lambda}}(t) = \frac{\lambda L_K(t)}{1+\lambda L_{K}(t)}.
	 \vspace{-.2cm}
\end{equation}
so that, for $\lambda \geq 0, $ $L_{f_{\lambda}}(t) \equiv 0$ if and only if 
$L_K(t) \equiv 0$ i.e. if and only if $K=0$ by the injectivity of Laplace transform.

3. If $\displaystyle \lim_{t\to +\infty} R_{\lambda}(t) =0$ then, one also has that $\int_0^{+\infty} f_{\lambda}(t)dt = 1 -R_{\lambda}(+\infty) = 1$.Moreover, if $R_{\lambda}$ turns out to be non-increasing, then $f_{\lambda}$ is non-negative and satisfies $0\le f_{\lambda} \le \lambda K$, so that $f_{\lambda} $ {\em is a probability density}.

\smallskip
In what follows, unless otherwise specified, \(K\) denotes a kernel (exponential, fractional, or Gamma), and \(f_\lambda\) denotes the negative of the derivative of its \(\lambda\)-resolvent \(R_\lambda\). In the case of the fractional (resp. Gamma) kernel, we use the subscript \({}_{\alpha,\lambda}\) (resp. \({}_{\alpha,\rho, \lambda}\)) whenever the context is sufficiently clear.
\begin{Example}[Examples of kernels, Laplace transform and $\lambda-$ Resolvent.]\label{Ex:SolventGammaKernel}\textcolor{white}{.}
	
	\smallskip
	\noindent {\em 1. Trivial kernel (Markov)}. Let $K(t) = \mbox{\bf 1}_{\R_+}(t)$. It obviously satisfies
	~\eqref{eq:Kcontrol}. Then $R_{\lambda} (t)= e^{-\lambda t}$. 
	
	\smallskip
	\noindent {\em 2. Fractional integration kernel}. Let $K(t) = K_{\alpha}(t) = \frac{u^{\alpha-1}}{\Gamma(\alpha)} \mbox{\bf 1}_{\R_+}(t),  \; \alpha>0.$
	These kernels trivially satisfy~\eqref{eq:Kcontrol}.
	This  family of  kernels corresponds to the fractional integrations of order $\alpha >0$. 
	
	\noindent The Laplace transform associated to the kernel $K_\alpha$ reads, for $t>0$
	$L_{K_\alpha}(t):=\int_0^{+\infty}e^{-t u}K_\alpha(u)du= t^{-\alpha}.$
	It follows from the  easy identity  $K_{\alpha}*K_{\alpha'}= K_{\alpha+\alpha'}$ and the Neumann series expansion provided in equation~\eqref{eq:Resolvent3} that the resolvent
	reads: 
	\begin{equation}\label{eq:SolventFracKernel}
		R_{\alpha,\lambda}(t) = \sum_{k\ge 0} (-1)^k \frac{\lambda ^k t^{\alpha k}}{\Gamma(\alpha k+1)}= E_{\alpha}(-\lambda t^{\alpha} ) \; t\ge 0,\; \text{where}\; E_{\alpha}(t) = \sum_{k\ge 0} \frac{t^k}{\Gamma(\alpha k+1)},\ t\!\in \R.
	\end{equation}
	i.e. $E_{\alpha}$ denotes the {\em standard   Mittag-Leffler function}. For $\lambda >0$, we also define minus its derivative, namely, the function $f_{\alpha, \lambda}:=- R'_{\alpha, \lambda}$ on $(0,+\infty)$ by
	\begin{align}
		\label{eq:DerivSolventFracKernel} f_{\alpha, \lambda}(t) &= - R'_{\alpha, \lambda}(t) = \alpha\lambda t^{\alpha-1} E'_{\alpha}(-\lambda t^{\alpha})  = \lambda t^{\alpha-1}\sum_{k\ge 0}(-1)^k\lambda^k \frac{t^{\alpha k}}{\Gamma(\alpha (k+1))}
	\end{align}
	Note that when $\alpha =1$ (i.e. $K=K_1=\mbox{\bf 1}$), $E_{1}(t) = e^t$, $R_{1,\lambda}(t)= e^{-\lambda t}$ and $f_{1,\lambda}(t)=\lambda e^{-\lambda t}$.
	%
	
	\smallskip
	\noindent {\em 3. Exponential-fractional integration kernel}.  Let $K(t):=K_{\alpha , \rho }(t):= e^{-\rho t}\frac{ t^{\alpha - 1}}{\Gamma(\alpha)}  \cdot \mathbf{1}_{(0,\infty)}(t)
	$, for \(\alpha >0 \) and \(\rho > 0\). One checks that these kernels also satisfy trivially~\eqref{eq:Kcontrol}.
	It follows from the easy identity \(K_{\alpha, \rho} * K_{\alpha^\prime, \rho} = K_{\alpha + \alpha^\prime, \rho}\) and the Neumann series expansion provided in ~\eqref{eq:Resolvent3} that the resolvent
	reads:
	{\small
		\begin{equation}\label{eq:SolventGammaKernel}
			R_{\alpha, \rho, \lambda}(t) =(1*\delta_0)(t) + \sum_{k\ge 1} (-1)^k \lambda^k (\mbox{\bf 1}*K_{\alpha, \rho}^{(k*)}) = \mathbf{1}_{\mathbb{R}_+}(t) + \sum_{k \geq 1} (-1)^k \lambda^k \int_0^t \frac{e^{-\rho s} s^{k\alpha -1}}{\Gamma(k\alpha)} \,ds.
		\end{equation}
	}
	Hence, if $\lambda > 0$, we define the function $f_{\alpha, \rho, \lambda}:= - R_{\alpha, \rho, \lambda} $ on $(0, +\infty)$ by:
	{\small
		\begin{equation}\label{eq:DerivSolventGammaKernel}
			f_{\alpha, \rho, \lambda}(t) = -\frac{d}{dt} R_{\alpha, \rho, \lambda}(t)
			= -\sum_{k \geq 1} (-1)^k \lambda^k \frac{e^{-\rho t} t^{k\alpha -1}}{\Gamma(k\alpha)} = \lambda e^{-\rho t} t^{\alpha - 1} \sum_{k \geq 0} (-1)^k \lambda^k \frac{ t^{\alpha k }}{\Gamma(\alpha (k+1))}. 
		\end{equation}
	}
		We note that \(	f_{\alpha, \rho, \lambda}(t) = e^{-\rho t} f_{\alpha, \lambda}(t) =\alpha \lambda e^{-\rho t} t^{\alpha-1} E^\prime_{\alpha}( - \lambda t^{\alpha})\). A straightforward computation using Tauberian Final Value Theorem shows that \(\lim_{t \to \infty} R_{\alpha, \rho, \lambda}(t) = \frac{1}{1 + \lambda \rho^{-\alpha}} \in [0, 1)\) and \(\lim_{t \to \infty} f_{\alpha, \rho, \lambda}(t) = 0\).
\end{Example}

\vspace{-.4cm}
\paragraph{Standing assumptions.}
 Throughout the paper,
we will always work under the following assumptions:
\begin{Assumption}[On the convolution kernels]\label{assum:convolKernel} \textcolor{white}{.}\\
		Assume that the kernel $K\in {\cal L}^{2\beta}_{loc}(\R_+)$ for some \(\beta\geq1\) and satisfies for every \(0<T<+\infty\):
	{\small   
		\begin{equation}\label{eq:contKtilde}
			(\widehat {\cal K}^{cont}_{T, \widehat \theta})\;\;\exists\,\widehat\kappa_{T, \beta}< +\infty,\; \exists\, \widehat\theta\in (0,1],\;\forall\bar{\delta}\!\in (0,T],\; \widehat \eta(\delta) := \sup_{t\in [0,T]} \Big[\int_{(t-\bar{\delta})^+}^t \hskip-0,25cm K\big(t-u\big)^{2\beta} du\Big]^{\frac{1}{2\beta}}\le \widehat \kappa_{T,\beta} \,\bar{\delta}^{\,\widehat \theta} \; 
		\end{equation}
		\begin{equation}\label{eq:Kcont}
			({\cal K}^{cont}_{T,\theta}) \;\exists\, \kappa_{_T,\beta}< +\infty,\; \exists\, \theta >0,\; \forall\,\delta \!\in (0,T),\;
			\sup_{t\in [0,T]} \Big[\int_0^t |K(s+\delta)-K(s)|^{2\beta}ds \Big]^{\frac{1}{2\beta}} \le  \kappa_{_T,\beta}\,\delta^{\theta}.
		\end{equation}
	}
\end{Assumption}
\noindent One readily verifies that the kernels \(K\) in Examples~\ref{Ex:SolventGammaKernel}~$2.$ and~$3.$ satisfy~\eqref{eq:contKtilde} and~\eqref{eq:Kcont} of Assumption~\ref{assum:convolKernel} for every \(\alpha>\frac12\) with any \(\beta\in[1,\frac{1}{2(\alpha-1)^-})\) and \(\widehat \theta=\theta=(\alpha-1 + \frac{1}{2\beta})\wedge 1\) ( see e.g.~\cite{RiTaYa2020} or~\cite{JouPag22} or~\cite{GnabeyeuPages2026} along many others) 
Here we adopt the convention $1/0 = +\infty$ and \(x^-:=\max(-x, 0)\).

\bigskip
Consider the Volterra equation~\eqref{eq:Volterrameanrevert} with diffusion coefficient with separable variables $\forall (t,x) \in \mathbb{T} \times \mathbb{R}, \; \sigma(t,x) = \varsigma(t)\,\sigma(x),$ where \(\varsigma(t)>0\) and \(\sigma(x)>0\). 
Let \(X_0\) be a random initial condition, and define
\[0\neq m_0:=\mathbb{E}[X_0], \quad v_0:=\text{Var}(X_0) \geq 0, \quad \text{and} \quad \bar \sigma^2_0:=\mathbb{E}[\sigma^2(X_{0})] \geq 0.\]
\begin{assumption}[$\lambda$-resolvent $R_{\lambda}$ of the kernel]\label{ass:resolvent} We assume that the $\lambda$-resolvent $R_{\lambda}$ of the kernel $K$ is well-defined
	on $(0, +\infty)$ and that, together with the mean-reverting function \(\mu\), it satisfies the following conditions for every $\lambda > 0$:
	\begin{equation}\label{eq:hypoRlambda}
		({\cal R}_\lambda)\quad
		\left\{
		\begin{array}{ll}
			(i) & R_{\lambda}(t) \text{ is } \text{differentiable on } \mathbb{R}^+,\; R_{\lambda}(0)=1 \text{ and } \lim_{t \to +\infty}R_{\lambda}(t) =a \in [0,1[, \\
			(ii) &  f_{\lambda} \in {\cal L}^2(\mathbb{R}_+), f_\lambda(t) \neq 0\; dt-a.e., \text{ where we set } \quad f_{\lambda} := -R'_{\lambda} \text{ for } t > 0,\\
			(iii) & \mu \text{
				$ C^1$-function such that }  
			\lim_{t\to +\infty} \mu (t) = \mu_{\infty} \in \mathbb{R} \text{ and } \frac{\mu_\infty}{m_0}\geq 0.
		\end{array} 
		\right.
	\end{equation}
\end{assumption}
\noindent Note that $L_{f_\lambda}(t) > 0\; dt-a.e$ and under assumptions \(({\cal R}_\lambda)\) $(i)$ and $(ii)$, \( f_{\lambda} \) is a \((1-a)\)-sum measure, i.e., \( \int_0^{+\infty} f_{\lambda}(s) \, ds = 1-a \).
In fact, since $L_K(t) >0$ for any $t\in \R_+$, we deduce from~\eqref{eq:LaplaceTf_lambda} that $L_{f_{\lambda}}>0$. Additionally,
\[\int_0^{+\infty} f_{\lambda}(s)\,ds = [1 - R_{\lambda}(s)]_{s=0}^{s=+\infty} = -\lim_{s\to +\infty}R_{\lambda}(s) + R_{\lambda}(0) = 1-a\]
	Moreover, as a direct application of~\cite[Lemma 3.5.]{EGnabeyeu2025}, \(\lim_{t\to +\infty} \int_0^t f_{\lambda}(t-s) \mu (s)ds = \mu_{\infty} (1-a) \).
\begin{Definition}\label{def:stabilizer}
	We will call the stabilizer (or corrector) of the scaled stochastic Volterra equation ~\eqref{eq:Volterrameanrevert} a (locally) bounded Borel function \( \varsigma \), which is a solution (if any) to the functional equation:
	\begin{equation}\label{eq:VolterraStabilizer}
		\textit{($E_{\lambda, c}$)}: \quad\forall\, t\ge 0, \quad c \lambda^2\big(1-(\phi - f_{\lambda} * \phi)^2(t) \big) =  (f_{\lambda}^2 * \varsigma^2)(t).
	\end{equation}
	where 
	\begin{equation}\label{eq:notcst_phi}
		c = \frac { v_0 }{\bar \sigma^2_0}>0 \quad  \textit{and} \quad  \phi(t) = 1 - \lambda \int_0^t K(t-s) \left( \frac{\mu(s)}{\lambda m_0} - 1 \right) \, ds\end{equation}
	Consequently, the stabilizer \(\varsigma\) depends on the parameters \(\lambda\) and \(c\); whenever no ambiguity arises, we will simply denote \(\varsigma = \varsigma_{\lambda,c}\).
\end{Definition}
\noindent{\bf Remarks:}
\noindent 1.	Inserting Equation~\eqref{eq:notcst_phi}, ($E_{\lambda, c}$) in ~\eqref{eq:VolterraStabilizer} can also be re-written as follows:
\begin{equation}\label{eq:VolterraStabilizer2}
	\textit{($E_{\lambda, c}$)}: \quad\forall\, t\ge 0, \quad c \lambda^2\Big(1-\big(1 - \frac{(f_{\lambda} * \mu)_t}{\lambda m_0}\big)^2 \Big) =  (f_{\lambda}^2 * \varsigma^2)(t) \quad  \textit{with} \quad c = \frac { v_0 }{\bar \sigma_0^2} \quad  \textit{and} \quad \varsigma := \varsigma_{\lambda,c} .
\end{equation}
\noindent2. Note that ~\eqref{eq:VolterraStabilizer}-\eqref{eq:VolterraStabilizer2} has a solution \( \varsigma_{\lambda, c} \) for some \( c > 0 \) if and only if it has a solution \( \varsigma_{\lambda, 1} \) when \( c = 1 \), and \( \varsigma_{\lambda, c} = \sqrt{c} \varsigma_{\lambda, 1} \). Hence, ($E_{\lambda, c}$) can be replaced by ($E_{\lambda, 1}$) denoted ($E_{\lambda}$) for simplicity.

\smallskip
\noindent 3. By~\cite[Lemma 3.10]{EGnabeyeu2025}, the solution to the above equation~\eqref{eq:VolterraStabilizer} (if any) is unique and belongs to $ {\cal L}^1_{\text{loc}}((\R_+))$. The crucial requirement is the existence and positivity of the solution.

\smallskip
\noindent 4. 
If \(\mu\in {\cal L}^1(\mathbb{R}_+)\), then \(\varsigma^2\in {\cal L}^1(\mathbb{R}_+)\). Indeed, integrating both sides of Equation~\eqref{eq:VolterraStabilizer2} and applying Fubini--Tonelli's theorem together with a suitable change of variables yields
\[\|f_\lambda\|_{{\cal L}^2(\R_+)}^2\int_{0}^\infty \varsigma^2(s)\,ds = \frac{2c\lambda}{m_0}\|f_\lambda\|_{{\cal L}^1(\R_+)}\|\mu\|_{{\cal L}^1(\R_+)}- \frac{c}{m_0^2}\|f_{\lambda} * \mu\|_{{\cal L}^2(\R_+)}^2.\]
The conclusion then follows from Young's convolution inequality together with the fact that \(f_\lambda\) is a \((1-a)\)-sum measure with finite \( {\cal L}^2(\R_+)-\)norm under Assumption~\(({\cal R}_\lambda)\).

\medskip
We introduce the following standing existence assumption for Equation~\eqref{eq:VolterraStabilizer}-\eqref{eq:VolterraStabilizer2}, with particular emphasis on the existence and positivity property. 
\begin{Assumption}[On the stabilizer]\label{ass:on_stabilizer}
	Let \(\lambda>0\). There exists a  
	positive Borel solution \( \varsigma_{\lambda} \) on \( ]0, +\infty) \) of the equation $ \textit{($E_{\lambda}$)}: \quad
	\forall\, t> 0, \quad \lambda^2\big(1-\big(1 - \frac{(f_{\lambda} * \mu)_t}{\lambda m_0}\big)^2(t) \big) =  (f_{\lambda}^2 * \varsigma^2)(t)$.
\end{Assumption}
To unify the two settings under consideration in this work, namely, the autonomous and the time-dependent diffusion ones, we assume that the equation~\eqref{eq:VolterraStabilizer} is satisfied by $\varsigma$ and we adopt the convention
\begin{equation}\label{eq:VolterraStabilizerCont}
	\varsigma = \begin{cases}
		1, & \text{in the autonomous diffusion setting , i.e., } \quad \sigma(t,x) =\sigma(x), \\
		\varsigma_{\lambda,c} \neq 1, & \text{in the time-dependent diffusion setting , i.e., }  \quad \sigma(t,x) = \varsigma_{\lambda,c}(t)\,\sigma(x).
	\end{cases} .
\end{equation}
	\begin{Assumption}[Integrability and H\"older continuity]\label{ass:int_holregul}
	Let \( \lambda, c > 0 \), and assume the kernel \( K \) and its \( \lambda \)-resolvent \( R_{\lambda} \) satisfy:
	\begin{equation}\label{eq:Integf}
	\int_0^{+\infty} f_{\lambda}^{2\beta}(u) \, du < +\infty \quad \text{for some } \beta > 1, \quad f_\lambda \in {\cal L}^1(\R_+) \cap {\cal L}^\infty(\R_+)
	\end{equation}
	and there exists \( \vartheta \in (0, 1)\), and a real-valued positive function \(h_{\lambda}\) such that
	 for every \(t\geq0,\;\)
	\begin{equation}\label{eq:Regulf}
		|f_{\lambda}(t + \bar{\delta}) - f_{\lambda}(t)| \leq h_\lambda(t)\bar{\delta}^{\vartheta} \quad\text{with}\; h_\lambda \in {\cal L}^1(\R_+) \cap {\cal L}^\infty(\R_+) \quad\text{and}\quad h_\lambda*\varsigma^2_{\lambda,c} \in {\cal L}^\infty(\R_+).
	\end{equation}
\end{Assumption}
This assumption provide a uniform H\"older continuity or H\"older regularity with exponent \( \vartheta \) for the function \( f_{\lambda} \), ensuring controlled behavior as \( t \) and \( t + \delta \) become arbitrarily close.

\smallskip
\noindent{\bf Remark: } if \(\mu \in {\cal L}^1(\R_+)\), then by the claim~$4.$ of the Remark following Definition~\ref{def:stabilizer} above, \(\varsigma^2_{\lambda,c} \in {\cal L}^1(\R_+)\) and since we have moreover \(h_\lambda\in {\cal L}^\infty(\R_+)\), it follows by Young's inequality that \(h_\lambda*\varsigma^2_{\lambda,c} \in {\cal L}^\infty(\R_+)\).

\paragraph{Wiener--Hopf transform.}
Let $\lambda > 0$ and let $\mu : \mathbb{R} \to \mathbb{R}$ be a Borel function. Assume the kernel $K$ satisfies the above assumptions $({\cal R}_\lambda)$, \eqref{eq:contKtilde} and \eqref{eq:Kcont} from Assumption \ref{assum:convolKernel}  and its $\lambda$-resolvent $R_\lambda$ is well-defined
on $(0, +\infty)$.
Then, the solution $(X_t)_{t \geq 0}$ of the Volterra equation ~\eqref{eq:Volterrameanrevert} also satisfies:
\begin{equation}\label{eq:Volterrameanrevert_}
	X_t = X_0\Big(\phi(t) - \int_0^t f_{\lambda}(t-s) \phi(s) \, ds \Big) + \frac{1}{\lambda} \int_0^t f_{\lambda}(t-s) \mu(s) \, ds + \frac{1}{\lambda} \int_0^t f_{\lambda}(t-s) \sigma(s, X_{s}) \, dW_s.
\end{equation}	
The converse also holds, namely any process satisfying~\eqref{eq:Volterrameanrevert_} is also a solution to the original Volterra equation~\eqref{eq:Volterrameanrevert}. This follows directly from~\cite[Proposition~3.1]{EGnabeyeu2025}.

\smallskip
\noindent {\bf Remark.}   Notably, in the Markovian case, the Wiener--Hopf equation~\eqref{eq:Volterrameanrevert_} amounts to applying It\^o's lemma to the transformed process \( e^{\lambda t} X_t \). In fact, if $K(t) = \mathbf{1}$ in the Volterra equation, then \(
R_\lambda(t) = e^{-\lambda t} \) and  \( f_\lambda(t) = \lambda e^{-\lambda t},\) so that  the above computation corresponds to  It\^o' s Lemma applied to $e^{\lambda t} X_t$.
\vspace{-.1cm}
\subsection{Main result}\label{subsec:main_result}
We now present our main result. First, we recall the main concept underlying the notion of stationary regime for the ``scaled" stochastic Volterra equation~\eqref{eq:Volterrameanrevert}.
\vspace{-.1cm}
\begin{Definition}[Fake stationary regime of type I and II (see. \cite{Pages2024})]\label{def:stationarity_order}.
	\begin{enumerate}
		\item The process $(X_t)_{t\ge 0}$ starting from $X_0\!\in L^2(\P)$,  exhibit a fake stationary regime of type I if:
		\begin{equation}\label{eq:FakeStats}
			\forall\, t\ge 0, \quad	\mathbb{E}[X_t] = m_0, \quad \text{Var}(X_t) = v_0 \geq 0, \quad \text{and} \quad \bar{\sigma}^2(t) = \mathbb{E}[\sigma^2(X_{t})]=\bar \sigma^2_0 \geq 0.
			\vspace{-.1cm}
		\end{equation}
		\item The process $(X_t)_{t\ge 0}$ starting from $X_0$ 
		has a fake stationary regime of type II if \( (X_t)_{t \geq 0} \) has the same marginal distribution, i.e., \( X_t \overset{d}{=}X_0 \) for every \( t \geq 0 \).
	\end{enumerate}
	\vspace{-.1cm}
\end{Definition}
Let \(\sigma(t,x) = \varsigma(t)\,\sigma(x)\) where \(\varsigma\) satisfies Equation~\eqref{eq:VolterraStabilizerCont} and
consider like \cite{Pages2024, EGnabeyeu2025} a square-root trinomial diffusion coefficient \(\sigma\) i.e. \(\sigma^2(x) \in \mathrm{Pol}_2(\mathbb{R})\) (Polynomial of degree 2).
	\begin{equation}\label{eq:sigmafakeI&II}
		\sigma(x) = \sqrt{ \kappa_0 +\kappa_1\,x +\kappa_2\,x^2}\quad \mbox{ with }\quad  \kappa_i\ge 0,\;i=0,2, \;\kappa^2_1 \le 4\kappa_2\kappa_0. 
	\end{equation}
	Define \(\ell_\infty
	:=1-(1-a)\frac{\mu_\infty}{\lambda m_0} \). Note that \(\ell_\infty=\lim_{t\to +\infty} 1-\frac{(f_\lambda * \mu)(t)}{\lambda m_0} \). 
\begin{Theorem}\label{thm:mainGamma}
	Let \( \alpha \in \left( 1, \frac32\right) \) , let \(\rho\geq0\), let \( K(t) = K_{\alpha,\rho}(t) =  e^{-\rho t}\frac{t^{\alpha-1}}{\Gamma(\alpha)} \), \( t > 0 \) the exponential-fractional kernel, set \(\phi\) as in~\eqref{eq:notcst_phi} and let $\sigma(t,x):= \varsigma(t)\sigma(x)$ in ~\eqref{eq:Volterrameanrevert}-\eqref{eq:Volterrameanrevert_} where \( \sigma : \mathbb{R} \to \mathbb{R} \) is the Lipschitz continuous function~\eqref{eq:sigmafakeI&II}, with \([\sigma]_{\text{Lip}}=\kappa_2\) and $\varsigma $ is a non-negative, continuous function given by Equation~\eqref{eq:VolterraStabilizerCont} for some fixed $\lambda >0$ and 
	 \( c \in \left( 0, \frac{\alpha-1}{8\kappa_2} \right) \). Let \( X_0 \in L^p(\P) \) for some $p>\frac{2}{\alpha-1}>2$ such that \( \mathbb{E}[X_0] =m_0\) and \(Var(X_0) = v_0=\frac{c\sigma^2( m_0)}{1-c\kappa_2}.\)
	Then,
	\begin{enumerate}
		\item Assume that the diffusion coefficient  $\sigma$ is non-degenerated (in the sense $\sigma(m_0) \neq0$) 
		
		\noindent $(a)$ Case $\kappa_2 > 0$. The solution \( (X_t)_{t \geq 0} \) to the Volterra equation~\eqref{eq:Volterrameanrevert} or ~\eqref{eq:Volterrameanrevert_} starting from $X_0$ has a fake stationary regime of type I, in the sense that
		\vspace{-.3cm}
		\begin{equation}\forall\, t\ge 0, \quad 
		\mathbb{E}[X_t] = m_0, \; \text{Var}(X_t) = v_0= \frac{c\sigma^2(m_0)}{1 - c\kappa_2},\; \text{and}\;\; \mathbb{E}[\sigma^2(X_t)] = \bar{\sigma}_0^2 =\frac{\sigma^2(m_0)}{1 - c\kappa_2}.
		\vspace{-.1cm}
		\end{equation}
		\noindent $(a)$ Case $\kappa_2 = 0$. Then $\kappa_1=0$ and \((X_t)_{t\geq0}\) is a Gaussian process with a fake stationary regime of type~II, with \( \nu \) as one-dimensional marginal distribution, provided that if $X_0 \sim \nu := \mathcal{N}\!\left(m_0,c\kappa_0^2\right)$.
		
		\item The family of shifted processes \( X_{t+\cdot}, t \geq 0 \), is \( C \)-tight as \( t \to +\infty \) and its (functional) limiting distributions are all \( L^2 \)-stationary processes with mean \(m_0\) and autocovariance function \( C_\infty \) given, for \( t_1, t_2 \geq 0 \), \( t_1 \leq t_2 \), by \(C_\infty:=C_{f_{\alpha,\rho,\lambda}}(t_1,t_2)\)
		\vspace{-.3cm}
			\begin{equation}
				\text{Cov}(X_{t+t_1}, X_{t+t_2}) \overset{t \to +\infty}{\to} C_{f_{\alpha,\rho,\lambda}}(t_1,t_2) := \ell_\infty^2 v_0 + \frac{\varsigma_{\infty}^2 v_0 }{c\lambda^2}\int_0^{+\infty}  f_{\alpha,\rho,\lambda}(t_2-t_1+u)f_{\alpha,\rho,\lambda}(u)du.
			\end{equation}
		\vspace{-.3cm}
		$$\varsigma_{\infty}^2 :=\lim_{t \to +\infty}\varsigma_{\alpha,\rho,\lambda,c}^2(t)  = \begin{cases}
			1, & \text{in the autonomous diffusion setting of~\eqref{eq:VolterraStabilizerCont}},\\
			\frac{c\lambda^2 (1-\ell_\infty^2)}{\int_0^{+\infty}f^2_{\alpha,\rho, \lambda}(s)ds}, & \text{in the time-dependent diffusion setting of~\eqref{eq:VolterraStabilizerCont}}.
		\end{cases} $$
		
		\item {\em $L^p$-confluence:} If $m_0= (1-a) \frac{\mu_\infty}{\lambda}$ i.e. \(\ell_\infty=0\), then for any starting random variable \( X_0^\prime  \in L^p(\P) \), a solution to the Volterra SDE ~\eqref{eq:Volterrameanrevert}-\eqref{eq:Volterrameanrevert_} starting from \( X_0^\prime \) satisfies \( \| X_t^\prime - X_t \|_p\to 0\) as $t \to \infty$ and
	
		\centerline{$\mathbb{E}[X_t] \to m_0, \quad \text{and} \quad \text{Var}(X_t) \to \frac{c\sigma^2(m_0)}{1 - c\kappa_2} \quad \text{as} \quad t \to +\infty.$}
	\end{enumerate}
\end{Theorem}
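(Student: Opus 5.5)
The proof works entirely on the Wiener--Hopf form~\eqref{eq:Volterrameanrevert_} of the equation. With $\phi$ as in~\eqref{eq:notcst_phi}, one has $\phi - f_\lambda*\phi = 1-\frac{f_\lambda*\mu}{\lambda m_0}$; write $\ell(t)$ for this quantity. Taking expectations gives $\E[X_t]=m_0\ell(t)+\frac1\lambda (f_\lambda*\mu)(t)=m_0$ for all $t$, so the mean is constant.

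\textbf{Variance (claim 1(a)).} By the It\^o isometry,
\[
\mathrm{Var}(X_t)=v_0\ell(t)^2+\frac1{\lambda^2}\int_0^t f_\lambda^2(t-s)\varsigma^2(s)\,\E[\sigma^2(X_s)]\,ds ,
\]
and because $\sigma^2\in\mathrm{Pol}_2$,
\[
\E[\sigma^2(X_s)]=\sigma^2(m_0)+\kappa_2\,\mathrm{Var}(X_s).
\]
Substituting, $v(t):=\mathrm{Var}(X_t)$ solves a linear Volterra equation with kernel $\frac{\kappa_2}{\lambda^2}f_\lambda^2\varsigma^2$. This equation has a unique locally bounded solution, by a Gr\"onwall-type argument using $f_\lambda\in{\cal L}^2$ and the local boundedness of $\varsigma$. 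The constant $v_0$ is a solution, because inserting $v\equiv v_0$ and $\E[\sigma^2(X_s)]\equiv\bar\sigma_0^2$ reduces the identity exactly to the stabilizer equation $(E_{\lambda,c})$ in~\eqref{eq:VolterraStabilizer}, with $c=v_0/\bar\sigma_0^2$. The relation $v_0=c\sigma^2(m_0)/(1-c\kappa_2)$ is precisely what makes $\bar\sigma_0^2=\sigma^2(m_0)+\kappa_2v_0=\sigma^2(m_0)/(1-c\kappa_2)$ consistent with $c=v_0/\bar\sigma_0^2$. Hence $v\equiv v_0$.

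In the autonomous convention $\varsigma=1$ the same computation goes through, because $\phi$ and $\mu$ are then chosen so that $(E_{\lambda,c})$ holds with $\varsigma\equiv1$.

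\textbf{Gaussian case (claim 1(b)).} If $\kappa_2=0$, the constraint $\kappa_1^2\le 4\kappa_0\kappa_2$ forces $\kappa_1=0$. The stochastic integral then has a deterministic integrand, so $X$ is Gaussian whenever $X_0$ is Gaussian. A Gaussian marginal is determined by its mean and variance, which are constant, so the fake stationarity is of type II. Existence, uniqueness and $L^p$ bounds come from Section~\ref{subsec:background}, using Assumption~\ref{assum:convolKernel} for $K_{\alpha,\rho}$ with $\alpha\in(1,\frac32)$.

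\textbf{Long-run behaviour (claim 2).} For $t_1\le t_2$,
\[
\mathrm{Cov}(X_{t+t_1},X_{t+t_2})=v_0\ell(t+t_1)\ell(t+t_2)+\frac{\bar\sigma_0^2}{\lambda^2}\int_0^{t+t_1}f_\lambda(t+t_2-s)f_\lambda(t+t_1-s)\varsigma^2(s)\,ds .
\]
The first term tends to $\ell_\infty^2v_0$, since $f_\lambda*\mu\to(1-a)\mu_\infty$ by~\cite[Lemma 3.5]{EGnabeyeu2025}. In the second term, change variables to $u=t+t_1-s$ and apply dominated convergence, using $\varsigma^2(t+t_1-u)\to\varsigma^2_\infty$ and the domination $|f_\lambda(\cdot+\Delta)f_\lambda|\le\|f_\lambda\|_\infty|f_\lambda|\in{\cal L}^1$. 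This yields $C_{f_{\alpha,\rho,\lambda}}$, since $\bar\sigma_0^2=v_0/c$. The value of $\varsigma^2_\infty$ follows by letting $t\to\infty$ in $(E_{\lambda,c})$: the left side tends to $c\lambda^2(1-\ell_\infty^2)$ and the right side to $\varsigma_\infty^2\|f_\lambda\|_2^2$.

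\emph{Main obstacle.} The hard step is showing that $\varsigma^2$ actually has a limit. My plan is a Tauberian argument via Laplace transforms: the explicit form of $f_{\alpha,\rho,\lambda}$ and the monotone behaviour of the left-hand side of $(E_{\lambda,c})$ should give regular variation at infinity, from which the limit can be extracted.

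For $C$-tightness I apply Kolmogorov's criterion to $X_{t+\cdot}$, uniformly in $t$. This needs the uniform $L^p$ bound (from 1(a) when $p=2$, and a BDG/Gr\"onwall argument in general) together with increment estimates. The increment of the stochastic term is bounded via BDG by
\[
\Big(\int|f_\lambda(u+\delta)-f_\lambda(u)|^2\varsigma^2\,du\Big)^{1/2}+\Big(\int_0^\delta f_\lambda^2\varsigma^2\Big)^{1/2}\lesssim\delta^{\vartheta/2},
\]
using Assumption~\ref{ass:int_holregul}, in particular $h_\lambda*\varsigma^2\in{\cal L}^\infty$. The condition $p>2/(\alpha-1)$ guarantees $p\vartheta/2>1$. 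Every limit point has mean $m_0$ and covariance depending only on $t_2-t_1$, hence is $L^2$-stationary; uniform integrability passes the moments to the limit.

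\textbf{$L^p$-confluence (claim 3).} Set $D_t=X'_t-X_t$. Then
\[
D_t=(X'_0-X_0)\ell(t)+\frac1\lambda\int_0^t f_\lambda(t-s)\varsigma(s)\bigl(\sigma(X'_s)-\sigma(X_s)\bigr)dW_s .
\]
By BDG, $\|D_t\|_p^2\le C\ell(t)^2+\frac{C_p\kappa_2}{\lambda^2}\,(f_\lambda^2\varsigma^2*\|D\|_p^2)(t)$. Since $\ell_\infty=0$, the forcing term vanishes at infinity. The constraint $c<(\alpha-1)/(8\kappa_2)$ is what I expect to make the kernel mass $\frac{C_p\kappa_2}{\lambda^2}\int f_\lambda^2\varsigma^2$ strictly less than $1$, using $\int f_\lambda^2\varsigma^2=\lim(f_\lambda^2*\varsigma^2)=c\lambda^2(1-\ell_\infty^2)$. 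A renewal-type comparison lemma then gives $\|D_t\|_p\to0$, and the convergence of the mean and variance follows.
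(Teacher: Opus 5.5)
There is a genuine gap, and it sits exactly where the hypothesis $c<\frac{\alpha-1}{8\kappa_2}$ is used. Your overall scheme matches what the paper does. It writes the variance as a linear Volterra equation closed by $(E_{\lambda,c})$, computes the covariance limit explicitly, uses Kolmogorov for tightness and a renewal comparison for confluence. The paper gets all of this by checking the hypotheses of Theorems~\ref{Thm:Fake-funcWeak_Intrinsic} and~\ref{Thm:funcWeak} via Proposition~\ref{prop:main2}, as in the proof of Theorem~\ref{thm:mainFractional}.

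\textbf{The uniform-in-time moment bound cannot come from Gr\"onwall.} You treat it as ``a BDG/Gr\"onwall argument in general''. Kolmogorov needs $\sup_{t\ge0}\|X_t-m_0\|_p<\infty$ for some $p>\frac{2}{\alpha-1}>4$, so the $L^2$ bound from 1(a) does not suffice. The same $p>2$ bound is what gives the square uniform integrability needed to pass the covariance limit to the weak limit points. Gr\"onwall on $u(t)=\|X_t-m_0\|_p^2$ only yields $u(t)\le A\exp\big(B\|f_\lambda\|_\infty^2\int_0^t\varsigma^2\big)$, which blows up because $\int_0^t\varsigma^2\to\infty$. The uniform bound (\cite[Proposition~4.3]{EGnabeyeu2025}, used in Step~2 of Theorem~\ref{Thm:funcWeak}) instead takes suprema in the BDG estimate and uses $f_\lambda^2*\varsigma^2=c\lambda^2(1-\ell^2)\le c\lambda^2$. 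That estimate closes only when roughly $4pc\kappa_2<1$. Combined with $p>\frac{2}{\alpha-1}$, this forces $p\in\big(\frac{2}{\alpha-1},\frac{1}{4c\kappa_2}\big)$. This window is non-empty precisely because $c<\frac{\alpha-1}{8\kappa_2}$, and the paper lowers $p$ into it without loss of generality.

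\textbf{The constraint does not give kernel mass $<1$ in your confluence step.} At the given exponent $p>\frac{2}{\alpha-1}$, the hypothesis only yields $4pc\kappa_2<\frac{p(\alpha-1)}{2}$, and the right-hand side exceeds $1$. So the mass is not shown to be below $1$. The renewal argument has to be run at an exponent $p'$ inside the window above, or at $p'=2$ with the It\^o isometry, where the mass is at most $c\kappa_2<1$.

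\textbf{Your plan for $\lim_{t\to\infty}\varsigma^2$ does not work for $\alpha>1$.} It relies on monotonicity of $c\lambda^2(1-\ell^2)$. But $R_{\alpha,\rho,\lambda}$ and $f_{\alpha,\rho,\lambda}$ carry the damped oscillating term inherited from $G_\alpha$, so that quantity is not monotone. The final-value computation $sL_{\varsigma^2}(s)\to c\lambda^2(1-\ell_\infty^2)/\|f_\lambda\|_2^2$ only identifies an Abel mean, not a pointwise limit. The paper does not derive this limit; it imports it from \cite[Lemma~3.9]{EGnabeyeu2025}. The same fact is what gives $h_\lambda*\varsigma^2\in{\cal L}^\infty(\R_+)$ in Proposition~\ref{prop:main2}(c), on which your increment bound relies.

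\textbf{Minor point on domination.} Since $\varsigma^2(t)\sim Ct^{1-\alpha}$ at $0^+$, $\varsigma$ is not locally bounded, and $\|f_\lambda\|_\infty|f_\lambda|$ does not dominate the covariance integrand near $u=t+t_1$. Split off that region and use $\varsigma^2\in{\cal L}^1_{\rm loc}$ together with $f_\lambda\to0$.
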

\noindent The main objective of Sections~\ref{sec:appl2} and~\ref{sec:appl3} is to establish that, for both the fractional kernel \(K_\alpha\) and its exponentially damped counterpart (the gamma kernel) \(K_{\alpha,\rho}\) the associated \(\lambda\)-resolvent \(R_\lambda\) and minus its derivative \(f_\lambda\) satisfy Assumptions~\ref{ass:resolvent} and~\ref{ass:int_holregul}.
Moreover, in the particular case considered in our numerical experiments (\(\mu\) constant; see Section~\ref{Sec:Num}), we show that there exists a positive solution to Equation~\eqref{eq:VolterraStabilizer2}, so that Assumption~\ref{ass:on_stabilizer} holds.

\vspace{-.4cm}
\section{Fake stationarity of a scaled stochastic Volterra Integral equation}\label{sec:genfakestatio}
\vspace{-.2cm}
 We first begin with a brief discussion of the well-posedness of such Volterra SDEs. Then, in sections~\ref{subsec:fakestatioIntrin} and~\ref{subsec:fakestatioStabil}, we will study the consequences of the three constraints in Equation~\ref{eq:FakeStats} under different settings of the Volterra equation~\eqref{eq:Volterrameanrevert}.
\vspace{-.3cm}
\subsection{A primer on Volterra SDE with convolution and Long-memory kernels}\label{subsec:background}
We are interested in the convolution stochastic Volterra equation: 
\vspace{-.2cm}
\begin{equation}\label{eq:Volterra}
	X_t=x_0(t) +\int_0^t K(t-s) b(s,X_s)ds+\int_0^t K(t-s)\sigma(s,X_s)dW_s,\quad t\ge 0.
	\vspace{-.1cm}
\end{equation}
where $b:\mathbb{T}\times\R\to \R$, $\sigma:\mathbb{T}\times\R\to\R$ are Borel  measurable, $K\!\in {\cal L}^2_{loc}(\R_+)$  is a convolution kernel and $(W_t)_{t\ge 0}$ is a standard  Brownian motion independent from the $\R$-valued initial random function $x_0$ both defined on a probability space $(\Omega,{\cal A}, \P)$. Let $(\mathcal F_t)_{t\ge 0}$ be a filtration (satisfying the usual conditions) such that $x_0(0)$ is ${\cal F}_0$-measurable  and $W$ is an $({\cal F}_t)$-Brownian motion independent of $x_0$. 

\smallskip
As will be shown in Sections~\ref{sec:appl2} and~\ref{sec:appl3}, the stabilizing function \(\varsigma\) appearing in the inhomogeneous diffusion setting \eqref{eq:VolterraStabilizer}~\eqref{eq:VolterraStabilizerCont} is may be unbounded e.g. for \(K_{\alpha}\) when $\alpha>1$. Consequently, the associated diffusion coefficient does not satisfy uniform-in-time linear growth and Lipschitz conditions. This motivates to state a refined well-posedness conditions for the solution, as highlighted below.

\begin{Assumption}[On Volterra Equations with convolution kernels]\label{assum:CoefsVolterra}
	For every $T>0$, assume that the drift $b$ and the diffusion coefficient
	$\sigma$ satisfy time-dependent Lipschitz and linear growth conditions, with coefficients $h_{b,\sigma}$ and $\ell_{b,\sigma}$,respectively.
	{\small
		\begin{align*}
			(i) &\; \forall t \in [0,T], \; \forall x, y \in \mathbb{R}, \; |b(t, x) - b(t, y)| + |\sigma(t, x) - \sigma(t, y)| \le h_{b,\sigma}(t)\, |x - y|\;\\
			(ii)&\; \forall t \in [0,T],\; \forall x \in \mathbb{R},\; |b(t, x)| + |\sigma(t, x)| \leq \ell_{b,\sigma}(t)\, (1 + |x|) \;
			 , \\
			(iii) &\; \text{Moreover, the initial random function  $x_0$ is $(\mathcal F_t)_{t\in[0,T]}$-adapted and satisfies for some } \delta > 0, \text{and } p > 0, \\ 
			&\quad \mathbb{E}\Big(\sup_{t \in [0,T]} |x_0(t)|^p \Big) < +\infty, \quad \mathbb{E} |x_0(t') - x_0(t)|^p \leqslant C_{T,p} \Big( 1 + \mathbb{E}\big[ \sup_{t \in [0,T]} |x_0(t)|^p \big] \Big) |t' - t|^{\delta p},
		\end{align*}
	where, in accordance with Assumption~\ref{assum:convolKernel}, the functions $h_{b,\sigma}$, $\ell_{b,\sigma}$ and the Kernel $K$ 
	are integrable on $[0,T]$ with an exponent strictly larger than 2, namely,
		\vspace{-.2cm}
		\begin{equation}\label{eq:Hol_Intergral_Cond}
			\int_0^T\big(\ell_{b,\sigma}(u)^{{\frac{2\beta}{\beta-1}}} + h_{b,\sigma}(u)^{{\frac{2\beta}{\beta-1}}}
			+K(u)^{2\beta}\big)du<+\infty, \; \text{for some} \; \beta>1.
			\vspace{-.3cm}
		\end{equation}
	}
\end{Assumption}
\noindent{\bf Remark :} In particular $\ell_{b,\sigma}, h_{b,\sigma} \in {\cal L}^2([0,T])$. Moreover, if the diffusion coefficient $\sigma(\cdot,\cdot)$ instead satisfies a Lipschitz-linear growth assumption {\em uniformly in time}, then 
 we only require the weaker assumption \(K, \ell_{b}, h_{b} \in {\cal L}^2([0,T]) \), which is sufficient to establish the below existence result.
\begin{Theorem}[Existence, Pathwise Regularity and Maximal Inequality]\label{thm:ExistenceVolterraL2}
	Let $T > 0$ and let $p \geq p_{eu}:=2\vee\frac{1}{\delta}\vee \frac{1}{\widehat\theta}\vee\frac{1}{\theta}$ such that
	 Assumptions~\eqref{assum:convolKernel} and~\eqref{assum:CoefsVolterra} are in force. Then, the Volterra equation~\eqref{eq:Volterra} admits, up to a $\P$-indistinguishability, a unique $({\cal F}_t)$-adapted strong solution $X=(X_t)_{t\in [0,T]}$, pathwise continuous.
	This solution also satisfies: 
	\vspace{-.2cm}
	\begin{equation}\label{eq:Lpincrements}
		\forall\, s,t\in [0,T],\quad
		\mathbb{E}\left[|X_t-X_s|^p\right]
		\le
		C_{p,T}
		\Big(
		1+ \mathbb{E}\big[\sup_{u\in[0,T]}|x_0(u)|^p\big]
		\Big)
		|t-s|^{p(\delta\wedge\theta)}.
		\vspace{-.2cm}
	\end{equation}
	And thus, \( t \mapsto X_t \) admits a H\"older continuous modification (still denoted \( X \) in lieu of \( \tilde{X} \) up to a \( \mathbb{P} \)-indistinguishability).
	More specifically, $X$ admits a version which is H\"older continuous on $[0,T]$ of any order \(a\in\Big(0,(\delta\wedge\widehat\theta\wedge\theta)-\frac1p\Big)\). Consequently,
	$\forall\, a\in(0,\delta\wedge\widehat\theta\wedge\theta)$,
	\vspace{-.2cm}
	\begin{equation}\label{eq:Holderpaths}
		\Big\|
		\sup_{0\le s<t\le T}
		\frac{|X_t-X_s|}{|t-s|^a}
		\Big\|_p^p
		=
		\mathbb{E}\Bigg[
		\sup_{s\neq t\in[0,T]}
		\frac{|X_t-X_s|^p}{|t-s|^{ap}}
		\Bigg]
		<
		C_{a,p,\beta, K, b,\sigma,T}
		\Big(
		1+ \mathbb{E}\big[\sup_{u\in[0,T]}|x_0(u)|^p\big]
		\Big),
		\vspace{-.1cm}
	\end{equation}
	for some positive constant
	$C_{a,p, \beta, K, b,\sigma,T}$ that only depends on $a$, $p$, $\beta$, $K$, $b$,$\sigma$ and $T$.
	In particular, it holds that
	\vspace{-.1cm}
	\begin{equation}\label{eq:supLpbound}
		\Big\|
		\sup_{t\in[0,T]}|X_t|
		\Big\|_p^p
		=
		\mathbb{E}\Big[
		\sup_{t\in[0,T]}|X_t|^p
		\Big]
		\le
		C'_{p,T}
		\Big(
		1+ \mathbb{E}\big[\sup_{u\in[0,T]}|x_0(u)|^p\big]
		\Big).
		\vspace{-.1cm}
	\end{equation}
	for some positive constant
	$C'_{p,T}=C_{a, p, \beta, K, b,\sigma,T}$.
\end{Theorem}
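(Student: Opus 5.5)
The plan is a Picard iteration on a weighted sup-norm space, followed by a Kolmogorov-type continuity argument; the time-dependent coefficients $h_{b,\sigma},\ell_{b,\sigma}$ are handled by H\"older's inequality with the exponents $\beta$ and $\frac{\beta}{\beta-1}$ from~\eqref{eq:Hol_Intergral_Cond}.

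\textbf{Step 1: a priori bound.} Fix $p\ge p_{eu}$. For an adapted process $Y$, set $\Phi(Y)_t:=x_0(t)+\int_0^tK(t-s)b(s,Y_s)ds+\int_0^tK(t-s)\sigma(s,Y_s)dW_s$. By Burkholder--Davis--Gundy and then H\"older's inequality applied to the product $K(t-s)^2\ell_{b,\sigma}(s)^2$ with exponents $\beta$ and $\frac{\beta}{\beta-1}$, the stochastic term is bounded as follows:
\[
\E\Big|\int_0^tK(t-s)\sigma(s,Y_s)dW_s\Big|^p\le C_p\Big(\int_0^tK^{2\beta}\Big)^{\frac p{2\beta}}\Big(\int_0^t\ell_{b,\sigma}^{\frac{2\beta}{\beta-1}}\Big)^{\frac{p(\beta-1)}{2\beta}}\sup_{s\le t}\big(1+\E|Y_s|^p\big).
\]
The drift term is treated the same way. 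To get a contraction rather than a mere bound, I would keep the integral form: the $p$-th moment is controlled by $\int_0^t k(t-s)\,\E|Y_s|^p\,ds$ with $k\in{\cal L}^1_{loc}$. The natural choice is $k=(K^2\ell^2)^{p/2}$, raised to a power made integrable by~\eqref{eq:Hol_Intergral_Cond}, after applying H\"older with an extra time factor. A generalized Gronwall lemma for convolution inequalities with an ${\cal L}^1_{loc}$ kernel then gives $\sup_{t\le T}\E|X_t|^p\le C(1+\E\sup|x_0|^p)$.

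\textbf{Step 2: existence and uniqueness.} Using the Lipschitz bound with $h_{b,\sigma}$ in the same estimates, the map $\Phi$ is a contraction on the space of adapted processes with $\sup_t e^{-\gamma t}\|Y_t\|_p<\infty$, for $\gamma$ large. Alternatively one can iterate the Picard scheme and sum the resolvent series of the convolution inequality. Either route yields a unique adapted solution in $L^p$.

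\textbf{Step 3: increments.} For $s<t$, split $X_t-X_s$ into four pieces: the increment of $x_0$, handled by (iii) with rate $\delta$; the integrals over $[s,t]$, controlled by~\eqref{eq:contKtilde} with rate $\widehat\theta$; the integrals over $[0,s]$ against $K(t-u)-K(s-u)$, controlled by~\eqref{eq:Kcont} with rate $\theta$; and the corresponding drift terms. This produces~\eqref{eq:Lpincrements}, where the rate stated as $\delta\wedge\theta$ should carry $\widehat\theta$ as well. Since $p(\delta\wedge\widehat\theta\wedge\theta)>1$, which is exactly the role of $p_{eu}$, the Kolmogorov--Chentsov criterion, in its Garsia--Rodemich--Rumsey form with an explicit moment bound, gives a H\"older continuous modification and~\eqref{eq:Holderpaths}. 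The sup bound~\eqref{eq:supLpbound} follows from $|X_t|\le|X_0|+T^a\sup|X_t-X_s|/|t-s|^a$ together with Step 1.

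\textbf{Main obstacle.} I expect the hardest point to be the bookkeeping in the Gronwall step. The coefficients are only $L^{2\beta/(\beta-1)}$ in time, so they have to be paired carefully with $K^{2\beta}$. A contraction on a short interval alone is not enough, because $h_{b,\sigma}$ is not uniformly bounded. I would therefore set up the convolution Gronwall inequality with kernel $g(u)=\big(K^{2\beta}(u)\big)^{1/\beta}$ and the weight $h^2_{b,\sigma}$ absorbed via H\"older on each subinterval, and then apply a Picard-type iteration with a summable series.
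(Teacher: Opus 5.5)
Your proposal is correct and follows essentially the paper's proof. Like the paper, you run a Banach fixed point in the norm $\big(\sup_{t\le T}\E|e^{-ct}X_t|^p\big)^{1/p}$, using BDG, Minkowski and H\"older to pair $K^{2\beta}$ with $h_{b,\sigma}^{2\beta/(\beta-1)}$ (resp.\ $\ell_{b,\sigma}^{2\beta/(\beta-1)}$), then the same increment decomposition and Kolmogorov's criterion; you are also right that the exponent in \eqref{eq:Lpincrements} should be $\delta\wedge\widehat\theta\wedge\theta$, as in the paper's Lemma. The obstacle you worry about does not arise: splitting $e^{-ct}=e^{-c(t-s)}e^{-cs}$ makes the Lipschitz constant a fixed multiple of $\big(\int_0^T e^{-2c\beta s}K(s)^{2\beta}\,ds\big)^{1/(2\beta)}$, which tends to $0$ as $c\to\infty$ by dominated convergence, so no convolution Gronwall is needed, and the paper gets the moment bound directly from the contraction inequality with $Y=0$.
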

For clarity and conciseness, the proof of this Theorem is deferred to Appendix \ref{app:lemmata}, where the main technical results are presented.

\noindent{\bf Remarks :} 1. If $\ell_{b,\sigma}$ and $h_{b,\sigma}$ are defined on $\R_+$ and satisfy~\eqref{eq:Hol_Intergral_Cond} for every \(T > 0\), then the SVIE~\eqref{eq:Volterra} has a unique strong solution on \([0,\infty)\).\\
2. If $x_0:=X_0\phi$ as in~\eqref{eq:Volterrameanrevert}, where the function $\phi$ satisfies the H\"older continuity condition in~\ref{assum:CoefsVolterra}-$(iii)$, then the constraint $p \geq p_{eu}$ in Theorem~\ref{thm:ExistenceVolterraL2} can be relaxed to every \(p > 0\) such that \(\lVert X_0\rVert_p < +\infty\), thanks to~\cite[Lemma D.1]{JouPag22}; see also~\cite{GnabeyeuPages2026}. \\
3. This result can be viewed as a generalization, to convolution-type kernels, of the classical strong existence and uniqueness result for pathwise continuous solutions established in \cite[Theorem 1.1]{JouPag22} as (partial) extension of \cite[Theorem 3.1 and Theorem 3.3]{ZhangXi2010}, which holds only when the starting value \(X_0\) has finite polynomial moments of any order (the framework is more general and allows for any initial random function \(x_0\)). The proof is postponed to the appendix.

\bigskip
From now we focus on  the special case of a {\em scaled} stochastic Volterra equation~\eqref{eq:Volterrameanrevert} where $x_0\equiv X_0\phi$ and associated to a convolution kernel $K:\R_+\to \R_+$ satisfying~\eqref{eq:contKtilde} and ~\eqref{eq:Kcont} of Assumption \ref{assum:convolKernel}.
We consider a mean-reverting function $\mu :[0, T] \to \R$ to be a Borel funtion, in which case
 the drift $b(t,x) = \mu(t)-\lambda  x$ in~\eqref{eq:Volterra} is clearly Lipschitz continuous in $x$. We also assume that the diffusion $\sigma: \mathbb{T}\times \mathbb{R} \to  \R$ is multiplicatively
time-inhomogeneous, namely \(\sigma(t,x) = \varsigma(t)\,\sigma(x)\) where \(\sigma\) is {\em Lipschitz continuous} in $x$ and \(\varsigma\) as in~\eqref{eq:VolterraStabilizerCont}.


\medskip
\noindent
One checks that, when the deterministic initial profile \(\phi\) in~\eqref{eq:Volterrameanrevert}-\eqref{eq:Volterrameanrevert_} is given by~\eqref{eq:notcst_phi} and the initial random variable \(X_0\) is such that \(\E[X_0]=m_0\), then the Volterra SDE~\eqref{eq:Volterrameanrevert}-\eqref{eq:Volterrameanrevert_}  has constant first moment over time and is rewritten with \(\sigma(t,x) = \varsigma(t)\,\sigma(x)\) where \(\varsigma\) as in~\eqref{eq:VolterraStabilizerCont} as:
\begin{equation}\label{eq:CondMean}
	X_t = X_0 - \frac{1}{\lambda m_0}\Big(X_0 - m_0\Big) \int_0^t f_{\lambda}(t-s) \mu(s)\dd s +  \frac{1}{\lambda}\int_0^t f_{ \lambda}(t-s)\varsigma(s)\sigma( X_{s})dW_s, \quad X_0\perp\!\!\!\perp W.
\end{equation} 
 We shall therefore work with Equation~\eqref{eq:CondMean} as our baseline Volterra process, now starting from any initial random variable \(X_0 \in L^2(\P)\).

 \medskip
 In the following parts, 
 we derive conditions under which Equation~\eqref{eq:CondMean} admits a fake stationary regime in the sense of Definition~\ref{def:stationarity_order}. We further establish the existence of limiting distributions for the family of time-shifted processes in both the autonomous and the time-dependent diffusion settings, and characterize the mean and covariance functions of the canonical process under any such weak limit point.
 Be aware that, we do not have uniqueness of the limit distributions since they are
 not characterized by their mean and covariance functions, except in Gaussian setting.  
 
 \smallskip\noindent We denote by $\Omega_0:=C(\R_+, \R)$, the canonical space and $\stackrel{(C)_w}{\rightarrow }$ will stand for functional weak convergence on $C(\R_+,\R)$ equipped with the topology of uniform convergence  on compact sets.
\subsection{Finite-time intrinsic fake stationarity and long run functional weak behaviour}\label{subsec:fakestatioIntrin}
In this section, we assume a time homogeneous or autonomous volatility coefficient, i.e. \(\forall\, (t,x)\!\in \mathbb{T}\times \mathbb{R},\;\sigma(t,x) = \sigma(x)\)
and a Borel bounded mean-reverting function $\mu :[0, T] \to \R$.
Therefore, both the drift
\(b(t,x)=\mu(t)-\lambda x\) and \(\sigma(\cdot,\cdot)\) are  Lipschitz continuous in $x$ uniformly in time and in particular satisfy assumptions~\eqref{assum:CoefsVolterra}~$(i)$ and~$(ii)$. 
Hence, by Theorem~\eqref{thm:ExistenceVolterraL2} (see also \cite[Theorem~1.1]{JouPag22}, refining \cite[Theorems~3.1 and~3.3]{ZhangXi2010}), for every $T>0$, Equation~\eqref{eq:Volterra} admits a unique $(\mathcal{F}^{X_0,W}t){t\ge0}$-adapted continuous solution on $[0,T]$, starting from $X_0\in L^p(\mathbb P)$, $p\ge2$, independent of $W$, and satisfying~\eqref{eq:supLpbound}. Since $T>0$ is arbitrary, this yields a unique strong solution on $[0,\infty)$.
\begin{assumption}[On the mean-reversion function \(\mu\)]\label{ass:MeanRevert}  Let \(m_0\neq0\), $c\geq \frac{1}{\lambda^2} \int_0^\infty f_\lambda^2(s)\,ds$  and define \(h_{\lambda,c}(t):=\sqrt{1-\frac{1}{c\lambda^2}\int_0^t f_\lambda^2(s)\,ds}\). We assume that the mean-reverting function \(\mu\) is a solution to the Volterra convolution equation of the first kind
	\begin{equation}\label{eq:conv_mu_general} 
		(f_{\lambda} * \mu)(t) = \lambda m_0 \big( 1- h_{\lambda,c}(t) \big)=:g_{\lambda, c}(t), \qquad t \ge 0. 
	\end{equation}
\end{assumption}
Under assumption~\ref{ass:MeanRevert}, the Volterra SDE~\eqref{eq:CondMean}, starting from any initial random variable \(X_0 \in L^2(\P)\) can be rewritten as:
\begin{equation}\label{eq:ConstMeanVar}
	X_t = X_0 -\big(X_0 - m_0\big) 	\big(
	1-h_{\lambda,c}(t)\big) +  \frac{1}{\lambda}\int_0^t f_{\lambda}(t-s)\sigma(X_{s})dW_s.
\end{equation}
\begin{Example}[Exponential and Fractional kernels]\label{Ex:FakeStatAutonome_Lm}
	Let $\alpha \in (1, 2)$, $\rho\geq0$ and consider the {\em exponential-fractional or Gamma integration Kernel} $K_{\alpha,\rho}(t) = e^{-\rho t}\frac{u^{\alpha-1}}{\Gamma(\alpha)} \mbox{\bf 1}_{\R_+}(t)$, with
	$f_{\alpha,\rho, \lambda}$ defined by Equation~\eqref{eq:DerivSolventGammaKernel} of Example~\ref{Ex:SolventGammaKernel}. 
	Let $c\geq \frac{1}{\lambda^2} \int_0^\infty f_{\alpha,\rho, \lambda}^2(s)\,ds$ be given. Define \(g_{\alpha,\rho,\lambda,c}(t):=\lambda m_0
	\Big(1-\sqrt{1-\frac{1}{c \, \lambda^2 }\int_0^t f^2_{\alpha,\rho,\lambda}(s)\,ds}\Big),\) for any \(t \ge 0\) and consider the integral representation:	
	\begin{equation}\label{eq:mu_generalGamma}
		\forall\,t \ge 0\quad	\mu (t)
		:=
		g_{\alpha,\rho,\lambda,c}(t)
		-
		\frac{\Gamma(1+\alpha) \sin(\pi \alpha)}{\lambda \pi } \int_{0}^{t}  e^{-\rho (t-s)} (t-s)^{-\alpha-1} g_{\alpha,\rho,\lambda,c}(s) ds.
	\end{equation}
	Assumption~\ref{ass:MeanRevert} holds true in the sense that
	the mean-reverting function $\mu$ in Equation~\eqref{eq:conv_mu_general} is uniquely determined by Equation~\eqref{eq:mu_generalGamma}.
	In fact, taking Laplace transforms on both side of Equation~\eqref{eq:mu_generalGamma} yields
	\[
	L_\mu(s)=\bigl(1+L_{k_{\alpha,\rho,\lambda}}(s)\bigr)L_{g_{\alpha,\rho,\lambda,c}}(s),\quad\text{where}\quad k_{\alpha,\rho,\lambda}(t)=\frac{1}{\lambda\Gamma(-\alpha)}e^{-\rho t}t^{-\alpha-1},
	\quad t>0 .
	\]
	Moreover, one checks that \(L_{k_{\alpha,\rho,\lambda}}(s)=\frac{(s+\rho)^\alpha}{\lambda}.\)
	Hence \(L_\mu(s)=\frac{\lambda+(s+\rho)^\alpha}{\lambda}L_{g_{\alpha,\rho,\lambda,c}}(s).\) Now, we note that:
	\begin{equation}\label{eq:LpT_fGamma}
		L_{f_{\alpha, \rho, \lambda}}(s) = L_{-R'_{\alpha, \rho, \lambda}}(s) = -s L_{R_{\alpha, \rho, \lambda}}(s) + R_{\alpha, \rho, \lambda}(0) = \frac{-s}{s (1 + \lambda (s + \rho)^{-\alpha})} + 1
		= \frac{\lambda}{\lambda + (s + \rho)^{\alpha}}
	\end{equation}
	Consequently, \(L_{f_{\alpha, \rho, \lambda}}(s)\times L_\mu(s)=L_{g_{\alpha, \rho, \lambda,c}}(s)\).
	By uniqueness of Laplace transforms, we deduce that the representation ~\eqref{eq:mu_generalGamma} indeed solves the convolution equation~\eqref{eq:conv_mu_general}.
	Moreover, defining \(C_{\alpha,\lambda,c}:= \frac{m_0 \lambda}{2c\,(2\alpha-1)\,\Gamma(\alpha)^2}\) and using $1-\sqrt{1-x}\underset{0^+}{\sim}\frac{x}{2}$,  one checks that 
	$	g_{\alpha, \rho,\lambda,c}(t)
	\underset{0^+}{\sim} C_{\alpha,\lambda,c}	\, t^{2\alpha-1}$ so that by a dual version of the  Hardy-Littlewood Tauberian Theorem for Laplace transform (see for example \cite{FellerII} or \cite{BiGoTe1989}), at least heuristically, $\mu(t)
	\underset{0^+}{\sim} 
	\frac{C_{\alpha,\lambda,c}\,\Gamma(2\alpha)}{\lambda\,\Gamma(\alpha)}
	\, t^{\alpha-1}=
	\frac{m_0\,\Gamma(2\alpha)}{2c(2\alpha-1)\Gamma(\alpha)^3}
	\, t^{\alpha-1}.$ 
	One checks that $\mu(0)=0$  and that $\mu_\infty:=\lim_{t\to+\infty}\mu(t)
	=
	\lambda m_0 \big(1+\frac{\rho^\alpha}{\lambda}\big)
	\big(
	1-\sqrt{1-\frac{1}{c\,\lambda^2}
		\int_0^\infty f_{\alpha,\rho,\lambda}^2(s)\,ds}
	\big)$ in accordance with Assumption~\ref{ass:resolvent}~$(iii)$. It is straightforward that \(\mu\) is continuous on \(\mathbb{R}_+\).
\end{Example}
\begin{Theorem}[Time homogeneous settings]\label{Thm:Fake-funcWeak_Intrinsic}
	Let \( \lambda, c > 0 \), let  \( \sigma : \mathbb{R} \to \mathbb{R} \) be a Lipschitz continuous function. Assume Assumption \ref{ass:int_holregul} and Assumption~\ref{ass:on_stabilizer} on Equation \( (E_{\lambda}) \) and Assumption~\ref{ass:MeanRevert} are in force. Let \( (X_t)_{t \geq 0} \) be the solution to the scaled Stochastic Volterra Integral Equation ~\eqref{eq:ConstMeanVar} starting from  a random variable \( X_0 \).
	
	\medskip
	\noindent {\em (a) Finite-time intrinsic fake stationarity:}
	\noindent If \( \sigma \) is given by equation~\eqref{eq:sigmafakeI&II}, then
	the conditions of Equation~\eqref{eq:FakeStats} are satisfied for all \(t \geq 0\) only in the following two cases:
	
	\begin{itemize}
		\item [$(i)$]  If $\bar \sigma^2_0 =0$, then $\sigma\big( m_0\big)=0$ and $X_t =  m_0$ $\P$-$a.s.$, which represents a ({\em degenerate}) fake stationary regime (of type I).
	
		\item [$(ii)$]  If $\bar \sigma^2_0 >0$, set $ \displaystyle c:=\frac{v_0}{\bar \sigma_0^2}>0$ and assume that $c\geq \frac{1}{\lambda^2}
	\int_0^\infty f_\lambda^2(s)\,ds$,  
	   \(\sigma\) in \eqref{eq:sigmafakeI&II} with $\kappa_2 > 0$ is
	not degenerated (i.e. $\sigma^2\big( m_0\big)\neq0$) and let \( X_0 \in L^2(\mathbb{P}) \) such that \(\mathbb{E}[X_0] = m_0, \; \text{ and} \;
	\mathrm{Var}(X_0) = v_0= \frac{c\sigma^2(m_0)}{1 - c\kappa_2}\) : The solution \( (X_t)_{t \geq 0} \) to the Volterra equation~\eqref{eq:ConstMeanVar} has a fake stationary regime of type I, in the sense that
	\[
	\forall\, t\ge 0, \quad 
	\mathbb{E}[X_t] = m_0 
	, \; \text{Var}(X_t) = v_0= \frac{c\sigma^2(m_0)}{1 - c\kappa_2}\; \text{and}\;\; \mathbb{E}[\sigma^2(X_t)] = \bar{\sigma}_0^2 =\frac{\sigma^2(m_0)}{1 - c\kappa_2}.\]
   \end{itemize}
   Moreover if $m_0= (1-a) \frac{\mu_\infty}{\lambda}$ i.e. $\ell_\infty=0$, for any starting value \( X_0 \in L^2(\P) \),
   
   \centerline{$
   	\mathbb{E}[X_t] \to m_0, \quad \text{and} \quad \text{Var}(X_t) \to \frac{c\sigma^2(m_0)}{1 - c\kappa_2} \quad \text{as} \quad t \to +\infty.
   	$}
	\medskip
	\noindent {\em (b) C-tightness of time-shifted processes.} Assume $c> \frac{1}{\lambda^2}
	\int_0^\infty f_\lambda^2(s)\,ds$ and let
	\begin{equation}\label{eq:func_weak_intrin}
		X_0 \in L^p(\mathbb{P}) \quad \text{and} \quad
		\begin{cases}
			p = 2 \quad \text{and} \quad c < \frac{1}{[\sigma]_{\rm Lip}^2} \quad \text{if} \quad \vartheta \wedge \frac{\beta - 1}{2\beta} > \frac{1}{2}, \\
			p > \frac{1}{\vartheta \wedge \frac{\beta - 1}{2\beta}} \quad \text{and} \quad c < \frac{1}{4p [\sigma]_{\rm Lip}^2} \quad \text{if} \quad \vartheta \wedge \frac{\beta - 1}{2\beta} \leq \frac{1}{2}.
		\end{cases}
	\end{equation}
	Then,  as \( t \to +\infty \) the family of shifted processes \( (X_{t+u})_{u \geq 0} \) is C-tight and uniformly integrable, square uniformly integrable whenever \( p > 2 \). Consequently, for every weak limit \(P\) on
	 on the canonical space \( \Omega_0 \), the canonical process \( Y_t(\omega) = \omega(t) \) admits a \( \left( \vartheta \wedge \frac{\beta - 1}{2\beta} - \frac{1}{p} - \eta \right) \)-H\"older pathwise continuous \( P \)-modification  for every sufficiently small \( \eta > 0 \). Equivalently, there exists a subsequence \(t_n\uparrow\infty\) and a continuous process
	 \((X_u^\infty)_{u\ge0}\) such that
	 \[
	 (X_{t_n+u})_{u \geq 0} \Rightarrow (X^{\infty}_u)_{u \geq 0}
	 \; \text{weakly in } C(\mathbb{R}_+; \mathbb{R}) \text{ as } n \to \infty\; \text{and}\; \forall\, t\geq0,\; \mathbb{E}[X^{\infty}_t] =
	 \ell_\infty \mathbb{E}[X_0]  + m_0(1-\ell_\infty).\]
	\noindent {\em (c)  Functional weak long-run behavior.} Assume furthermore that the solution \((X_t)_{t \geq 0}\) of the volterra equation~\eqref{eq:ConstMeanVar} has a fake stationary regime of type I, starting from a random variable $X_0 \in L^2(P)$ with mean \( m_0\) and variance \(v_0\). Then, for \( t_1, t_2 \geq 0 \), \( t_1 \leq t_2 \),
	{\small
		\begin{equation}\label{eq:funclongRun_intrinsic}
			\text{Cov}(X_{t+t_1}, X_{t+t_2}) \overset{t \to +\infty}{\to} C_{f_{\lambda}}(t_1,t_2) := v_0 \Big( \ell_\infty^2  + \frac{ 1 }{c\lambda^2}\int_0^{+\infty}  f_{\lambda}(t_2-t_1+u)f_{\lambda}(u)du\Big).
		\end{equation}
	}
	Thus, under any functional limiting distribution \( P \), the canonical process \( Y \) is a (weak) \( L^2 \)-stationary process
	with mean \(  m_0 \) and autocovariance function \( C_{f_\lambda}(s,t) \), for \( s, t \geq 0 \).
	
	\medskip
	\noindent {\em (d) Stationary Gaussian Case.}  When \( \sigma(x)\equiv \sigma > 0 \) is constant (corresponding to the case \(\kappa_2=0\) in~\eqref{eq:sigmafakeI&II}, for which \(\kappa_1=0\) and $\sigma(x)=\sqrt{\kappa_0}$) and \(X_0\) has a Gaussian distribution, (say \( X_0 \sim \mathcal{N}\left( m_0, v_0\right)\) with \(v_0=c\sigma^2 \)), then the solution \( (X_t)_{t \geq 0} \)  to~\eqref{eq:ConstMeanVar} is a Gaussian process and \(X_t\sim X_0,\quad t\geq0,\)
	that is, \((X_t)_{t \geq 0}\) admits a fake stationary regime of type II, and satisfies \((X_{t+\cdot})\stackrel{(C)_w}{\longrightarrow} X^{(\infty)} \; \mbox{ as }t\to +\infty\)
	where $X^{(\infty)}$ 
	 is the 
	stationary Gaussian process with mean \(  m_0\)
	and covariance function \( C_{f_\lambda}(\cdot,\cdot) \).
\end{Theorem}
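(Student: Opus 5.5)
We work directly from the representation~\eqref{eq:ConstMeanVar}. Write $h:=h_{\lambda,c}$ and
\[
Z_t:=\frac1\lambda\int_0^t f_\lambda(t-s)\sigma(X_s)\,dW_s .
\]
Since $X_0\perp\!\!\!\perp W$ and $Z$ is a centered $L^2$ martingale-type stochastic integral, taking expectations gives $\E[X_t]=m_0+h(t)(\E[X_0]-m_0)$. In particular $\E[X_t]=m_0$ whenever $\E[X_0]=m_0$. By It\^o's isometry and independence of $Z_t$ from $X_0$,
\[
\mathrm{Var}(X_t)=h(t)^2 v_0+\frac1{\lambda^2}\int_0^t f_\lambda^2(t-s)\,\bar\sigma^2(s)\,ds,\qquad \bar\sigma^2(s)=\E[\sigma^2(X_s)].
\]

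\textbf{Part (a).} With the trinomial $\sigma$ in~\eqref{eq:sigmafakeI&II} we have
\[
\bar\sigma^2(s)=\sigma^2(\E X_s)+\kappa_2\,\mathrm{Var}(X_s)=\sigma^2(m_0)+\kappa_2 v(s).
\]
This yields a linear Volterra equation for $v(t):=\mathrm{Var}(X_t)$:
\[
v=h^2v_0+\frac1{\lambda^2}f_\lambda^2*(\sigma^2(m_0)+\kappa_2 v).
\]
We check that the constant $v\equiv v_0=c\sigma^2(m_0)/(1-c\kappa_2)$ is a solution. By definition of $h$, $h^2(t)=1-\frac1{c\lambda^2}\int_0^t f_\lambda^2$. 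Since $\sigma^2(m_0)+\kappa_2v_0=v_0/c$, the right-hand side equals $v_0$. Uniqueness of solutions of this linear Volterra equation in $L^\infty_{loc}$ (Gr\"onwall/resolvent argument, since $f_\lambda^2\in\mathcal L^1_{loc}$) then gives $v\equiv v_0$, and hence $\bar\sigma^2\equiv v_0/c=\bar\sigma_0^2$.

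For case (i), $\bar\sigma_0^2=0$ forces $\sigma(X_0)=0$ a.s. Then $X\equiv X_0$ solves the equation, and by uniqueness the variance equation gives $v\equiv0$, so $X_t=m_0$. The "only in these cases" direction comes from reading the same identity backwards.

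For the asymptotics when $\ell_\infty=0$: under $(\mathcal R_\lambda)$ we have $h(t)\to\ell_\infty$, and combining with the remark on $f_\lambda*\mu$ gives $\ell_\infty=0$. For arbitrary $X_0$, the mean therefore converges to $m_0$. The variance equation is a renewal equation whose kernel $\kappa_2 f_\lambda^2/\lambda^2$ has mass $<1$, because $c\kappa_2<1$ and $\int f_\lambda^2\le c\lambda^2$. A key renewal / dominated convergence argument then gives $v(t)\to \sigma^2(m_0)\|f_\lambda\|_2^2/(\lambda^2-\kappa_2\|f_\lambda\|_2^2)$, which equals $v_0$ in the boundary case $c=\|f_\lambda\|_2^2/\lambda^2$. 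We will need to check this identification carefully.

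\textbf{Part (b).} We prove a uniform-in-time $L^p$ bound $\sup_t\E|X_t|^p<\infty$ using BDG on $Z_t$, the Lipschitz property of $\sigma$, and a Volterra–Gr\"onwall bound. The smallness $c<1/(4p[\sigma]^2_{\rm Lip})$ together with $\int f_\lambda^2\le c\lambda^2$ makes the kernel mass below $1$; this uniform moment control is the main obstacle. We then bound increments uniformly in $t$:
\[
\E|X_{t+u}-X_{t+u'}|^p\le C|u-u'|^{p(\vartheta\wedge\frac{\beta-1}{2\beta})}.
\]
For this we split $Z_{t+u}-Z_{t+u'}$ into a term $\int (f_\lambda(\cdot+\delta)-f_\lambda)\cdots$, controlled by~\eqref{eq:Regulf} via $h_\lambda\in\mathcal L^1$, and a short-range term controlled by H\"older's inequality with $f_\lambda\in\mathcal L^{2\beta}$. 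The deterministic term $h$ is Lipschitz-like because $f_\lambda\in\mathcal L^\infty$. Kolmogorov's criterion then gives C-tightness and the stated H\"older modification, and uniform integrability follows from $p>2$. The limit mean follows from $h\to\ell_\infty$.

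\textbf{Parts (c) and (d).} For $t_1\le t_2$, It\^o's isometry gives
\[
\mathrm{Cov}(X_{t+t_1},X_{t+t_2})=h(t+t_1)h(t+t_2)v_0+\frac{\bar\sigma_0^2}{\lambda^2}\int_0^{t+t_1}f_\lambda(t+t_2-s)f_\lambda(t+t_1-s)\,ds .
\]
Letting $t\to\infty$ and using the change of variables $u=t+t_1-s$ together with $\bar\sigma_0^2=v_0/c$ yields $C_{f_\lambda}$. Square uniform integrability transfers the means and covariances to any weak limit, which gives $L^2$-stationarity.

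In the Gaussian case (d), $X$ is Gaussian as an affine functional of $(X_0,W)$. Its mean and variance are constant, so $X_t\sim X_0$. Convergence of the covariances then identifies the unique limit, so the whole family converges.
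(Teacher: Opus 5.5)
\textbf{The step that fails is in case (i).} You argue that since $\sigma(X_0)=0$ a.s., the process $X\equiv X_0$ solves \eqref{eq:ConstMeanVar}, and then that ``by uniqueness'' the variance vanishes. The first claim is false. Setting $X_s=X_0$ kills the stochastic integral, but the right-hand side of \eqref{eq:ConstMeanVar} then becomes $m_0+(X_0-m_0)h(t)$. This equals $X_0$ only if $(X_0-m_0)(1-h(t))=0$, i.e.\ only if $X_0=m_0$ a.s., which is exactly what you are trying to prove. Uniqueness for the variance equation does not give $v\equiv0$ either.

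What works (and is the paper's argument) is the following. Under \eqref{eq:FakeStats}, $\E\,\sigma^2(X_s)=\bar\sigma_0^2=0$ for every $s$, so by It\^o's isometry $Z_t=0$ a.s. Hence $X_t=m_0+(X_0-m_0)h(t)$ and $\mathrm{Var}(X_t)=h^2(t)v_0$. Since $f_\lambda\not\equiv0$, $h(t)<1$ for $t$ large, so constancy of the variance forces $v_0=0$. Therefore $X_t=m_0$ a.s.\ and $\sigma^2(m_0)=\E\,\sigma^2(X_0)=0$. When $\kappa_2>0$ it is even quicker: $0=\bar\sigma_0^2=\sigma^2(m_0)+\kappa_2 v_0$ with both terms non-negative.

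Apart from this, your route is the paper's: the variance Volterra equation, the short-range/long-range split of increments, Kolmogorov's criterion and the covariance limit. Where the paper cites Propositions 3.12, 4.3 and 4.4 of its companion work, you give self-contained arguments: uniqueness for $v=g+k*v$, a BDG absorption bound using $\|f_\lambda\|_2^2\le c\lambda^2$, and a renewal limit for the variance. These are fine. The renewal route works purely at the level of the first two moments (thanks to the trinomial closure), whereas the paper's confluence route gives the stronger statement $\|X'_t-X_t\|_2\to0$.

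A few smaller corrections:
\begin{itemize}
\item $Z_t$ is not independent of $X_0$, since its integrand depends on $X_0$. What you need, and what holds, is $\E[(X_0-m_0)Z_t]=0$. This follows because $X_0$ is $\mathcal F_0$-measurable and $r\mapsto\int_0^r f_\lambda(t-s)\sigma(X_s)\,dW_s$ is a martingale.
\item The identification you flag is immediate. By \eqref{eq:conv_mu_general}, $\ell_\infty=\lim_t h(t)=(1-\|f_\lambda\|_2^2/(c\lambda^2))^{1/2}$, so $\ell_\infty=0$ is exactly the boundary case $c=\|f_\lambda\|_2^2/\lambda^2$.
\item For $\E X_0\neq m_0$, the forcing term of the renewal equation contains $\sigma^2(\E X_s)$ rather than $\sigma^2(m_0)$; it still converges to $\sigma^2(m_0)$, so your limit is unchanged.
\item $f_\lambda\in\mathcal L^\infty$ makes $h^2$ Lipschitz, not $h$. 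The function $h$ is Lipschitz only because $h\ge h_*>0$ under the strict inequality $c>\|f_\lambda\|_2^2/\lambda^2$ in (b), which is how the paper uses that hypothesis. Alternatively, $|h(t)-h(s)|\le|h^2(t)-h^2(s)|^{1/2}$ suffices, since $\frac{\beta-1}{2\beta}<\frac12$.
\item The long-range increment term needs $h_\lambda\in\mathcal L^2$, not $\mathcal L^1$. This follows from $h_\lambda\in\mathcal L^1\cap\mathcal L^\infty$.
\end{itemize}
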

\noindent {\bf Proof:}
\smallskip
\noindent  {\sc Step~1.} {\em (a) (Finite-time intrinsic fake stationarity).} 
First, set \(\bar \sigma^2(t) = \E[\sigma^2(X_t)]\) for all \( t \geq 0 \) and 
suppose the conditions of Equation~\ref{eq:FakeStats} hold for all \( t \geq 0 \), using~\eqref{eq:ConstMeanVar}, one readily checks that  $\forall\, t>0,\; \E\, X_t = \E\, X_0=  m_0$ and
a straightforward computation shows that
\begin{equation}\label{eq:VolterraVar}
	\forall\, t\ge 0, \quad v_0\big(1-h_{\lambda,c}^2(t)\big) = \frac{1}{\lambda^2 }\int_0^t f_{\lambda}^2(t-s)\bar \sigma^2(s)ds.
\end{equation}
 Consequently, if \(\bar \sigma^2_0=0\) as $\bar \sigma^2(t) = \bar \sigma^2_0=0$ for all \( t \geq 0 \), we get $v_0=0$ since $ h_{\lambda,c}(t) \neq 1 $ (at least for infinitely many $t$). As a consequence, ${\rm Var}(X_t)=0$ for every $t\ge 0$. It follows that  $X_t= m_0$ $\P$-$a.s.$. As a result, $\sigma^2 \big( m_0 \big)= \bar \sigma^2_0 = 0$.

\noindent If $\bar \sigma^2_0>0$,  then, using the second-order Taylor Expansion of $\sigma^2$ in~\eqref{eq:sigmafakeI&II} around $m_0$, we have under the conditions~\eqref{eq:FakeStats}, $\bar \sigma^2_0 = \bar \sigma^2(t)= \sigma^2(m_0) + (\kappa_1+2\kappa_2)\times 0 + \kappa_2 v_0 $.  The above equation~\eqref{eq:VolterraVar} reads with \(c:=\frac{v_0}{\bar \sigma^2_0}\), \(v_0\big(1-h_{\lambda,c}^2(t)\big) = \big(\sigma^2(m_0) + \kappa_2 v_0 \big)  \frac{1}{c \lambda^2 }\int_0^t f_{\lambda}^2(s)ds\) i.e. \(v_0= \frac{c\sigma^2(m_0)}{1 - c\kappa_2}\) since  \(h_{\lambda,c}(t)< 1 \) for $t$ large enough, which is clearly solution to Equation~\eqref{eq:VolterraVar}.
Conversely one checks that this constant value for the variance solves Equation~\eqref{eq:VolterraVar}. To prove that it is the only one, we refer to~\cite[Proposition 3.12]{EGnabeyeu2025}.  If $\ell_\infty=0$ the confluence properties follows from \cite[Proposition 4.4]{EGnabeyeu2025}.

\smallskip
\noindent  {\sc Step~2.} {\em (b) (Kolmogorov criterion).} It follows from ~\eqref{eq:func_weak_intrin} that either \( p = 2 \) and \( c < \frac{1}{[\sigma]_{\rm Lip}^2} \), or \( p > 2 \) and \( c < \frac{1}{4\,p\,[\sigma]_{\rm Lip}^2} \). Hence, \cite[Proposition 4.3]{EGnabeyeu2025} implies that $ \sup_{t\ge 0}\Big\|  |X_t-m_0|\Big\|_p < +\infty$.
The function \( \sigma \) having at most affine growth, we derive that \( \sup_{t \geq 0} \Big\| |\sigma(X_t)| \Big\|_p < +\infty \).
Now, we can establish C-tightness by the Kolmogorov criterion. Let \( p\geq2 \) and $c$ satisfying constraint ~\eqref{eq:func_weak_intrin}. One writes  for $s,\,t\ge 0$ with $s\le t$ owing to equation ~\eqref{eq:ConstMeanVar}:
\begin{equation}\label{eq:Tight_intrin} X_t-X_s = \big(X_0 - m_0\big) \Big(h_{\lambda,c}(t)-h_{\lambda,c}(s) \Big) + \frac{1}{\lambda}\Big(J(t)-J(s)\Big),\; J(t):=\int_0^tf_{\lambda}(t-u)\sigma(X_{u})dW_u.
\end{equation}
On the one hand, as $c> \frac{1}{\lambda^2}
\int_0^\infty f_\lambda^2(s)\,ds$, \(h_{\lambda,c}(t)\geq h_{*}:=\sqrt{1-\frac{1}{c \, \lambda^2 }\int_0^\infty f_\lambda^2(u)\,du}
>0\) for every \(t\geq0\).
 One checks together with Assumption~\ref{ass:int_holregul} that,
 the first term on the right-hand side of~\eqref{eq:Tight_intrin} satisfies
\[
\Big\| (X_0 - m_0)\big(h_{\lambda,c}(t)-h_{\lambda,c}(s)\big) \Big\|_p
\leq\frac{\|X_0-m_0\|_p}{ 2 c \, \lambda^2 h_{*}} \Big(\int_0^{+\infty}f^{2\beta}_{\lambda}(u) du\Big)^{\frac{1}{\beta}} |t-s|^{\big(1-\frac{1}{\beta}\big)}
\leq
C_{X_0,m_0, p,\lambda,\beta,c,f_\lambda}
|t-s|^{\frac{\beta-1}{\beta} },
\]
where \(C_{X_0,m_0, p,\lambda,\beta,c,f_\lambda}<\infty\) by Equations~\eqref{eq:Integf}--\eqref{eq:Regulf} in Assumption~\ref{ass:int_holregul}.
On the other hand, combining the $L^p$-BDG inequality and the generalized Minkowski inequality, one derives from~\eqref{eq:func_weak_intrin} that,
\begin{align*}
	 \Big\| |J(t)&-J(s)| \Big\|_p \le \left\| |\int_s^{t}f_{\lambda}(t-u)\sigma(X_{u})\mathrm{d}W_u |\right\|_p + \left\| |\int_0^s \left( f_{\lambda}(t-u) - f_{\lambda}(s-u) \right)\sigma(X_{u})dW_u | \right\|_p\nonumber\\
	&\le
	C^{BDG}_p
	\Bigg[
	\Bigg(\int_s^t f_\lambda^2(t-u)\, \big\|\sigma(X_u)\big\|_p^2 \,du\Bigg)^{\frac12}
	+
	\Bigg(\int_0^s \big(f_\lambda(t-u)-f_\lambda(s-u)\big)^2
	\big\|\sigma(X_u)\big\|_p^2 \,du\Bigg)^{\frac12}
	\Bigg] \nonumber \\
	&\le C_p \left[\Big(\int_0^{+\infty}f^{2\beta}_{\lambda}(u) du\Big)^{\frac{1}{2\beta}} |t-s|^{\frac{\beta-1}{2\beta}} + \left(\int_0^{+\infty} \big(f_{\lambda}(t-s+u)-f_{\lambda}(u)\big)^2du\right)^{\frac12}\right] \\
	&\leq  C_{p, \sigma,  \varsigma, \lambda, f_{\lambda}, X_0} \cdot |t-s|^{ \vartheta\wedge\frac{\beta -1}{2\beta}} \quad \text{where we set} \quad C_p  \equiv C^{BDG}_p \sup_{u\ge 0}\Big\||\sigma(X_{u})|\Big\|_{p}.
\end{align*}
Finally, after noticing that \(\frac12-\frac{1}{2\beta} \leq \frac{\beta-1}{\beta}\) the previous estimates imply the existence of a  real constant \(C_{p,\lambda}>0\) such that
\[
\mathbb{E}\big[|X_t-X_s|^p\big]
\leq
C_{p,\lambda}\,
|t-s|^{p\left(\vartheta\wedge \frac{\beta-1}{2\beta}\right)}.
\]	
As $p(\vartheta\wedge \frac{\beta-1}{2\beta})>1$, it follows from  Kolmogorov's $C$-tightness criterion (see~\cite[Theorem~2.1]{RevuzYor}), that the family of  shifted processes $X_{t+\cdot}$, $t\ge 0$, is $C$-tight.

\smallskip
\noindent  {\sc Step~3.} {\em (c) Asymptotic $L^2$-stationarity}:
Let us consider the asymptotic  covariance between $X_{t+t_1}$ and $X_{t+t_2}$, $0<t_1<t_2$ when $X_t$ starts for $X_0$ with mean $m_0$,  variance $v_0$ and $\bar \sigma^2 = \E\, \sigma(X_t)^2$, $t\ge 0$.
\begin{align*}
	{\rm Cov}(X_{t+t_1}, X_{t+t_2})& = {\rm Var}(X_0)h_{\lambda,c}(t+t_1)h_{\lambda,c}(t+t_2)+ \frac{1}{\lambda^2}\int_0^{t+t_1\wedge t_2} f_{\lambda}(t+t_2-s)f_{\lambda}(t+t_1-s)\E \,\sigma^2(X_s)ds\\
	&=  {\rm Var}(X_0)h_{\lambda,c}(t+t_1)h_{\lambda,c}(t+t_2)+ \frac{\bar \sigma^2}{\lambda^2}\int_0^{t+t_1} f_{\lambda}(t_2-t_1+u)f_{\lambda}(u)du\\
	&\stackrel{t\to+\infty}{\longrightarrow} v_0\ell_\infty^2 + \frac{v_0}{c\lambda^2}\int_0^{+\infty}  f_{\lambda}(t_2-t_1+u)f_{\lambda}(u)du
\end{align*}
where we also used that $h_{\lambda,c}(t)\to \ell_\infty$ as $t\to +\infty$ owing to Equation~\eqref{eq:mu_generalGamma} and the definition of \(\ell_\infty\).
 
\medskip
\noindent{\sc Step 4}. {\em (d). The case $\sigma(x)=\sigma$ } is obvious once noted that the solution $(X_t)_{t\geq0}$ to~\eqref{eq:ConstMeanVar} is a Gaussian process (see also the proof of Theorem~\ref{Thm:funcWeak} further on). One easily verifies that \(\overline{\sigma}_0^2=\sigma^2,\)
and that $(X_t)_{t\geq0}$ in~\eqref{eq:ConstMeanVar}
with \(c=\frac{v_0}{\sigma^2}\) has constant mean \(m_0\) and variance
\(v_0=c\sigma^2\) if \(X_0\sim\mathcal{N}\left(m_0,v_0\right)\).
		  The proof is completed. \hfill $\square$

\medskip
\noindent 	
The preceding discussion highlights that imposing both constant mean and constant variance in Equation~\eqref{eq:Volterrameanrevert}-\eqref{eq:Volterrameanrevert_} is highly restrictive, as it effectively dictates explicit closed-form structures for both the initial function $\phi$ and the mean-reversion term $\mu$ in the drift. To alleviate this rigidity, and following the approach of~\cite{Pages2024, EGnabeyeu2025}, we introduce a deterministic stabilizing factor in the diffusion coefficient while keeping $\mu$ fully flexible. Under this formulation, we adopt Equation~\eqref{eq:CondMean} as our baseline Volterra SDE with constant mean.
\subsection{Fake stationarity with a stabilizer: Finite-time and asymptotics}\label{subsec:fakestatioStabil}
In this section, we consider the volatility coefficient $\sigma(t,x)$ to be time-dependent or inhomogeneous, admitting the separable factorization: 
\begin{equation}
	\forall (t,x) \in \mathbb{T} \times \mathbb{R}, \qquad \sigma(t,x) = \varsigma(t)\,\sigma(x), \quad \varsigma(t):=\varsigma_{\lambda,c}(t), \sigma(x) > 0.
\end{equation}
where \(\varsigma\) is a (locally) integrable Borel function to be specified later.  
Assume $\exists \beta>1$ such that \(\varsigma \in {\cal L}^{\frac{2\beta}{\beta-1}}_{loc}(\R_+)\) (see e.g.~\eqref{eq:varsigma} for any $\beta>\frac{1}{3-2\alpha}$, $\alpha\in(1,\frac32)$) and the mean-reverting function \( \mu : \mathbb{R}_+ \to \mathbb{R} \)  belongs to either \(\mathcal L^\infty(\mathbb R_+)\) or \( {\cal L}^{\frac{2\beta}{\beta-1}}_{loc}(\R_+)\). Therefore, both the drift
 \(b(t,x)=\mu(t)-\lambda x\) and \(\sigma(\cdot,\cdot)\) satisfy assumptions~\eqref{assum:CoefsVolterra}~$(i)$ and~$(ii)$. 
Then, Equation~\eqref{eq:Volterrameanrevert} has a unique \((\mathcal{F}^{X_0, W}_t)_{t>0}\)-adapted pathwise continuous solution \( (X_t)_{t\geq0} \) starting from \( X_0 \in L^p(\mathbb{P}), p\geq2 \) independent of \(W\) and satisfying~\eqref{eq:supLpbound} for every $T>0$.
This follows by applying the existence Theorem~\eqref{thm:ExistenceVolterraL2} above
to each time interval \( [0,T] \), \( T \in \mathbb{N} \).  Since $T\geq0$ is arbitrary, we conclude that \eqref{eq:Volterra} admits a unique strong solution on $[0,\infty)$.

\smallskip
\noindent{\bf Remark: } In this particular case of a multiplicatively separable diffusion coefficient, an alternative well-posedness can be obtained from \cite[Theorem 1.1]{JouPag22} , since the transformed kernels \(K_1(t,s):=K(t-s)\) and \(K_2(t,s):=K(t-s)\varsigma(s)\) for every \(0\leq s\leq t \leq T\) then satisfy the assumptions of that result.

	\begin{Theorem}[Time-dependent or inhomogeneous diffusion coefficient $\sigma$]\label{Thm:funcWeak}
		Let \( \lambda, c > 0 \), let  \( \mu : \mathbb{R}_+ \to \mathbb{R} \)  be a Borel bounded function 
		 and let \( \sigma : \mathbb{R} \to \mathbb{R} \) be a Lipschitz continuous function. Assume Assumptions~\ref{ass:resolvent}, ~\ref{ass:on_stabilizer} on Equation \( (E_{\lambda}) \) and~\ref{ass:int_holregul} are in force. Let \( (X_t)_{t \geq 0} \) be the solution to the scaled Stochastic Volterra Integral Equation ~\eqref{eq:CondMean} starting from an initial random variable \( X_0 \).
		
		\medskip
		\noindent {\em (a) Finite-time fake stationarity with a stabilizer.} 
		If \(\sigma\) is given by \eqref{eq:sigmafakeI&II} with $\kappa_2 > 0$ and is
		not degenerated (i.e. $\sigma\big( m_0\big)\neq0$), then let $\varsigma=\varsigma_{\lambda,c}$,
		assumed 
		continuous solution to~\eqref{eq:VolterraStabilizer2} for some $c\in\left(0,\frac{1}{\kappa_2}\right)$ and  
		let  \( X_0 \in L^2(\mathbb{P}) \) with \(\mathbb{E}[X_0] = m_0, \; \text{ and} \;
		\mathrm{Var}(X_0) = v_0= \frac{c\sigma^2(m_0)}{1 - c\kappa_2}\).  The solution \( (X_t)_{t \geq 0} \) to~\eqref{eq:CondMean} has a fake stationary regime of type I, in the sense that
		\[
		\forall\, t\ge 0, \quad 
		\mathbb{E}[X_t] = m_0
		, \; \text{Var}(X_t) = v_0= \frac{c\sigma^2(m_0)}{1 - c\kappa_2}\; \text{and}\;\; \mathbb{E}[\sigma^2(X_t)] = \bar{\sigma}_0^2 =\frac{\sigma^2(m_0)}{1 - c\kappa_2}.\]
		Moreover if $m_0= (1-a) \frac{\mu_\infty}{\lambda}$ i.e. $\ell_\infty=0$, for any starting value \( X_0 \in L^2(\P) \),
		
		\centerline{$
		\mathbb{E}[X_t] \to m_0, \quad \text{and} \quad \text{Var}(X_t) \to \frac{c\sigma^2(m_0)}{1 - c\kappa_2} \quad \text{as} \quad t \to +\infty.
		$}
		\medskip
		\noindent {\em (b) C-tightness of time-shifted processes.} Assume
		\begin{equation}\label{eq:func_weak}
			X_0 \in L^p(\mathbb{P}) \; \text{and} \;
			\begin{cases}
				p = 2 \; \text{and} \; c < \frac{1}{[\sigma]_{\rm Lip}^2} \quad \text{if} \quad \vartheta^* > \frac{1}{2}, \\
				p > \frac{1}{\vartheta^*} \; \text{and} \; c < \frac{1}{4p [\sigma]_{\rm Lip}^2} \; \text{if} \; \vartheta^* \leq \frac{1}{2},
			\end{cases}\hspace{-.3cm}\quad \text{where}\quad \vartheta^*:= \frac12  (\vartheta \wedge (1-\frac{1}{2\beta})).
		\end{equation}
		Then, the family of shifted processes \( (X_{t+u})_{u \geq 0} \) is C-tight and uniformly integrable, square uniformly integrable for \( p > 2 \) as \( t \to +\infty \). For any limiting distribution \( P \) on \( \Omega_0 := C(\mathbb{R}_+, \mathbb{R}) \), the canonical process \( Y_t(\omega) = \omega(t) \) has a \( \left( \vartheta^* - \frac{1}{p} - \eta \right) \)-H\"older pathwise continuous \( P \)-modification for sufficiently small \( \eta > 0 \). 
		That is, along a sequence \( u_k \uparrow \infty \), there exists a process \( X^{\infty} \) with continuous sample paths such that 
		\[
		(X_{t+u_k})_{t \geq 0} \Rightarrow (X^{\infty}_t)_{t \geq 0}
		\quad \text{weakly in } C(\mathbb{R}_+; \mathbb{R}) \text{ as } u_k \to \infty.\]
		\noindent Any limiting process \( X^{\infty} \) satisfies \( \forall t \ge 0, \quad X^{\infty}_t \in L^p(\P) \)  for each $p \geq 2$ and its first moment is given by
		\[\mathbb{E}[X^{\infty}_t] =
		\ell_\infty \mathbb{E}[X_0]  + (1-a) \frac{\mu_\infty}{\lambda}.\]
		\noindent Moreover, if \(\ell_\infty=0\), the shifted processes of two solutions \( (X_t)_{t \geq 0} \) and \( (X_t')_{t \geq 0} \) are \( L^p \)-confluent, i.e. there exists a non-increasing function \( \bar{\varphi}_{\infty,p} : \mathbb{R}_+ \to [0, \frac{1}{(1-\sqrt{4pck})^2}] \) with \( \lim_{t \to +\infty} \bar{\varphi}_{\infty}(t) = 0 \) and
		\[W_p\left( \left[ (X_{t+t_1}, \ldots, X_{t+t_N}) \right], \left[ (X'_{t+t_1}, \ldots, X'_{t+t_N}) \right] \right) \leq \sqrt{N} \bar \varphi_{\infty,p} (t) {\cal W}_p\big([X_0],[X'_0]\big)\to 0 \quad \text{as} \quad t \to +\infty.\]
		Hence, the functional weak limiting distributions of \( [X_{t+\cdot}] \) and \( [X'_{t+\cdot}] \) coincide, meaning that if \( [X_{t_n+\cdot}] \stackrel{(C)_w}{\rightarrow} P \) for some subsequence \( t_n \to +\infty \), then \( [X'_{t_n+\cdot}] \stackrel{(C)_w}{\rightarrow} P \) and vice versa.
		
		\medskip
		\noindent {\em (c)  Functional weak long-run behavior.} Assume furthermore that the solution \((X_t)_{t \geq 0}\) of the volterra equation ~\eqref{eq:CondMean} has a fake stationary regime of type I, starting from a random variable $X_0 \in L^2(P)$ with mean \( m_0\) and variance \(v_0\).Then for any limiting distribution \(X^{\infty}\),  \( \mathbb{E}[X^{\infty}_t] =  m_0
		\) 
		while its autocovariance function is, for \( t_1, t_2 \geq 0 \), \( t_1 \leq t_2 \), given by \(\text{Cov}(X_{t_1}^\infty, X_{t_2}^\infty)=C_{f_{\lambda}}(t_1,t_2)\)
		{\small
			\begin{equation}\label{eq:funclongRun}
				\text{Cov}(X_{t+t_1}, X_{t+t_2}) \overset{t \to +\infty}{\to} C_{f_{\lambda}}(t_1,t_2) := v_0 \Big(\ell_\infty^2  + \frac{ (1-\ell_\infty^2)}{\int_0^{+\infty}f^2_{\lambda}(s)ds}\int_0^{+\infty}  f_{\lambda}(t_2-t_1+u)f_{\lambda}(u)du\Big).
			\end{equation}
		}
		Thus, under any functional limiting distribution \( P \), the canonical process \( Y \) is a (weak) \( L^2 \)-stationary process
		with mean \(  m_0 \) and covariance function \( C_{f_\lambda}(s,t) \), for \( s, t \geq 0 \).
		
		\medskip
		\noindent {\em (d) Stationary Gaussian:}  If \( \sigma(x) = \sigma > 0 \) is constant (e.g. \(\kappa_2=0\) in~\eqref{eq:sigmafakeI&II}, for which \(\kappa_1=0\) and $\sigma(x)=\sqrt{\kappa_0}$) and if \(X_0\sim\nu:=\mathcal N\left( m_0, v_0\right)\) with \(v_0=c\sigma^2 \), 
		then \((X_t)_{t\ge0}\) is a Gaussian process with a fake stationary regime of type II 
		where \(\nu\) denotes its one-dimensional marginal distribution. Moreover, its satisfies
		\(
		X_{t+\cdot} \stackrel{(C)_w}{\longrightarrow} \mathcal{GP}(f_{\lambda}) \quad \text{as} \quad t \to +\infty,\)
		where \( \mathcal{GP}(f_\lambda) \) is the 
		stationary Gaussian process with mean \(  m_0\)
		and covariance function \( C_{f_\lambda}(\cdot,\cdot) \).
	\end{Theorem}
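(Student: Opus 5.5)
We follow the four-step scheme of the proof of Theorem~\ref{Thm:Fake-funcWeak_Intrinsic}; the only new ingredient is the stabilizer $\varsigma=\varsigma_{\lambda,c}$, which replaces the explicit profile $h_{\lambda,c}$ by the convolution equation $(E_{\lambda,c})$. Set $\ell(t):=1-\frac{(f_\lambda*\mu)(t)}{\lambda m_0}$, so that~\eqref{eq:CondMean} reads
\[
X_t-m_0=(X_0-m_0)\,\ell(t)+\frac1\lambda\int_0^t f_\lambda(t-s)\varsigma(s)\sigma(X_s)\,dW_s .
\]
Moreover $\ell(t)\to\ell_\infty$, by the application of \cite[Lemma 3.5]{EGnabeyeu2025} recalled after Assumption~\ref{ass:resolvent}.

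\emph{Step (a).} Taking expectations gives $\E X_t=m_0$. By the It\^o isometry and independence of $X_0$ and $W$,
\[
{\rm Var}(X_t)=v_0\,\ell^2(t)+\frac1{\lambda^2}\int_0^t f_\lambda^2(t-s)\varsigma^2(s)\,\bar\sigma^2(s)\,ds,
\qquad \bar\sigma^2(s)=\sigma^2(m_0)+\kappa_2{\rm Var}(X_s),
\]
where the expression for $\bar\sigma^2(s)$ uses the Taylor expansion of the trinomial $\sigma^2$ together with $\E X_s=m_0$. This is a linear Volterra equation for $v(t):={\rm Var}(X_t)$. Using $(E_{\lambda,c})$, i.e.\ $\frac1{\lambda^2}f_\lambda^2*\varsigma^2=c(1-\ell^2)$, the constant $v\equiv v_0=\frac{c\sigma^2(m_0)}{1-c\kappa_2}$ is a solution, since $v_0\ell^2+(\sigma^2(m_0)+\kappa_2 v_0)c(1-\ell^2)=v_0$. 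Uniqueness follows as in \cite[Proposition 3.12]{EGnabeyeu2025}: the difference of two solutions solves a homogeneous linear Volterra equation with locally integrable kernel $\frac{\kappa_2}{\lambda^2}f_\lambda^2(t-s)\varsigma^2(s)$, and a Gronwall argument forces it to vanish. Constancy of $\E\sigma^2(X_t)$ then follows immediately. For the confluence statement when $\ell_\infty=0$ we invoke \cite[Proposition 4.4]{EGnabeyeu2025}.

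\emph{Step (b).} Under~\eqref{eq:func_weak}, \cite[Proposition 4.3]{EGnabeyeu2025} yields $\sup_t\|X_t-m_0\|_p<\infty$, hence $\sup_t\|\sigma(X_t)\|_p<\infty$. We then split $X_t-X_s$ as in~\eqref{eq:Tight_intrin}, now with $\ell(t)-\ell(s)$ and with the weight $\varsigma$ inside the stochastic integral $J$.
\begin{itemize}
\item For the drift part, $|\ell(t)-\ell(s)|\le\frac1{\lambda|m_0|}\big|\int f_\lambda(t-u)\mu(u)\,du-\int f_\lambda(s-u)\mu(u)\,du\big|$. Boundedness of $\mu$, H\"older's inequality and~\eqref{eq:Integf}--\eqref{eq:Regulf} bound this by $C|t-s|^{\vartheta\wedge(1-\frac1{2\beta})}$.
\item For the stochastic part, BDG and Minkowski give two terms, $\int_s^t f_\lambda^2(t-u)\varsigma^2(u)\,du$ and $\int_0^s\big(f_\lambda(t-u)-f_\lambda(s-u)\big)^2\varsigma^2(u)\,du$.
\end{itemize}
The second term is bounded by $2\|f_\lambda\|_\infty\,\bar\delta^{\vartheta}\,(h_\lambda*\varsigma^2)(s)\le C\bar\delta^{\vartheta}$, using $|a^2-b^2|$-type estimates, $f_\lambda\in{\cal L}^\infty$ and $h_\lambda*\varsigma^2\in{\cal L}^\infty$. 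The first term is, I expect, the main obstacle, since $\varsigma$ may be unbounded (e.g.\ near $0$ or at infinity for $K_\alpha$). The first attempt will bound it by $\big(f_\lambda^2*\varsigma^2\big)(t)-\big(f_\lambda^2*\varsigma^2\big)(s)$ plus a difference term controlled again through $h_\lambda$. Since $f_\lambda^2*\varsigma^2=c\lambda^2(1-\ell^2)$ has increments controlled by those of $\ell$, this yields a bound of order $|t-s|^{\vartheta\wedge(1-\frac1{2\beta})}$. After taking square roots, this explains the factor $\frac12$ in $\vartheta^*$. Kolmogorov's criterion then gives $C$-tightness and the H\"older modification. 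Uniform integrability comes from the $L^p$ bound, and the limit mean follows by passing to the limit in $\E X_t=\ell(t)\E X_0+(1-\ell(t))m_0$. The Wasserstein confluence is taken from \cite[Proposition 4.4]{EGnabeyeu2025}, applied coordinatewise.

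\emph{Steps (c)--(d).} In the fake stationary regime $\E\sigma^2(X_s)=\bar\sigma_0^2$, so
\[
{\rm Cov}(X_{t+t_1},X_{t+t_2})=v_0\,\ell(t+t_1)\ell(t+t_2)+\frac{\bar\sigma_0^2}{\lambda^2}\int_0^{t+t_1}f_\lambda(t_2-t_1+u)f_\lambda(u)\,\varsigma^2(t+t_1-u)\,du .
\]
Letting $t\to\infty$ and using $\varsigma^2(t)\to\varsigma_\infty^2=\frac{c\lambda^2(1-\ell_\infty^2)}{\|f_\lambda\|_2^2}$, we pass to the limit by dominated convergence together with $f_\lambda\in{\cal L}^2$. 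The value of $\varsigma_\infty^2$ is read off from $(E_{\lambda,c})$ by a final-value argument; this needs the existence of $\lim\varsigma$, which we take as part of the continuity hypothesis. Since $\bar\sigma_0^2c=v_0$, this yields~\eqref{eq:funclongRun}. Uniform square integrability transfers means and covariances to any weak limit, which gives $L^2$-stationarity. Finally, for (d), constant $\sigma$ makes $X$ Gaussian, so $X_t\sim\mathcal N(m_0,v_0)$ by (a). Gaussian limits are characterized by their mean and covariance, so the limit is unique, and tightness upgrades convergence of finite-dimensional distributions to functional convergence.
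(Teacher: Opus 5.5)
Your proposal is correct and follows the paper's proof essentially step by step. This includes the key device in (b): writing $\int_s^t f_\lambda^2(t-u)\varsigma^2(u)\,du$ as an increment of $f_\lambda^2*\varsigma^2=c\lambda^2(1-\ell^2)$ plus a correction controlled by $h_\lambda*\varsigma^2$, which is where the factor $\frac12$ in $\vartheta^*$ comes from.

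There are two small points to tidy in (c). First, continuity of $\varsigma$ does not give the existence of $\lim_{t\to+\infty}\varsigma^2(t)$; the paper instead imports this limit from \cite[Lemma 3.9]{EGnabeyeu2025}. Second, since $\varsigma^2$ may blow up at $0^+$, plain dominated convergence does not cover the region near $u=t+t_1$; there you should use $\varsigma^2\in{\cal L}^1_{\rm loc}(\R_+)$ together with $f_\lambda(u)\to 0$.
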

		\noindent {\bf Proof of Theorem \ref{Thm:funcWeak}.}
		Owing to~\cite[Theorem 3.6]{EGnabeyeu2025}, the {\em functional equation } \(\textit{($E_{\lambda, c}$)}\) in~\eqref{eq:VolterraStabilizer}-\eqref{eq:notcst_phi} is a necessary condition for the fake stationary identities~\eqref{eq:FakeStats} to be satisfied.
		 
		\smallskip
		\noindent  {\sc Step~1.} The claim in {\em (a)} follows from~\cite[Proposition 3.14.]{EGnabeyeu2025} or by mimicking the proof of  Theorem~\ref{Thm:Fake-funcWeak_Intrinsic}~{\em (a)}-$(ii)$ where  the autonomous $\sigma (x)$ is replaced by $\varsigma(t)\sigma(x)$, together with the identity~\eqref{eq:VolterraStabilizer2}.
		
	\smallskip
	\noindent  {\sc Step~2.} {\em (b) (Kolmogorov criterion).}
	By~\eqref{eq:func_weak}, either \(p=2\) with
	\(c<\frac{1}{[\sigma]_{\rm Lip}^2}\), or \(p>2\) with
	\( c < \frac{1}{4\,p\,[\sigma]_{\rm Lip}^2} \). 
	 Hence,
	\cite[Proposition~4.3]{EGnabeyeu2025} yields $ \sup_{t\ge 0}\Big\| |X_t-m_0|\Big\|_p < +\infty$. The affine growth of \(\sigma\) then immediately gives \( \sup_{t \geq 0} \Big\| |\sigma(X_t)| \Big\|_p < +\infty \). 
	Now, we can establish a uniform bound on H\"older increments.  
	Let \( p\geq2 \) and $c$ satisfying constraint ~\eqref{eq:func_weak}. One writes  for $s,\,t\ge 0$ with $s\le t$ and owing to equation~\eqref{eq:CondMean}:
	\begin{equation}\label{eq:Tight} X_t-X_s = \frac{1}{\lambda m_0}\Big(X_0 - m_0\Big) \left( \int_0^s f_{\lambda}(s-u) \mu(u)du -\int_0^t f_{\lambda}(t-u) \mu(u)du \right) + \frac{1}{\lambda}\Big(J(t)-J(s)\Big).
	\end{equation}
	where we set $J(t):=\int_0^tf_{\lambda}(t-u)\varsigma(u)\sigma(X_{u})dW_u$. We also define $ I(t) :=  \frac{(X_0 - m_0)}{\lambda m_0}\int_0^t f_{\lambda}(t-u) \mu(u)  \, du$. As \(\mu \in {\cal L}^\infty(\R_+)\) i.e. \(\mu\) bounded, one gets using the triangle inequalities and \(h_\lambda\) in~\eqref{eq:Regulf}:
    	\begin{align*}
    	\Big\| |I(t)-I(s)| \Big\|_p
    	&\le \frac{\|X_0-m_0\big\|_p}{\lambda m_0}\sup_{u\geq0} |\mu(u)|\times \Big( \int_{s}^{t} |f_{\lambda}(t-u)|\, du + \int_{0}^{s} |\left( f_{\lambda}(t-u) - f_{\lambda}(s-u) \right)|\,du  \Big) \\
    	&\le \frac{\|\mu\|_{\infty}}{\lambda m_0} \|X_0-m_0\big\|_p \times \Big( \Big(\int_0^{+\infty} f_{\lambda}^{2\beta}(u)du \Big)^{\frac{1}{2\beta}}  (t-s)^{1-\frac{1}{2\beta}} + |t-s|^\vartheta \int_0^s h_{\lambda}(u)\,du \Big)\\
    	&\le C_{X_0,p,\lambda,\beta,\mu,f_\lambda}|t-s|^{ (\vartheta\wedge (1-\frac{1}{2\beta}))},\quad \text{since} \quad \int_0^\infty h_{\lambda}(u)\,du<\infty,\quad (h_{\lambda} \in {\cal L}^\infty(\R_+)).
    \end{align*}
    The constant \(C_{X_0,p,\lambda,\beta,\mu,f_\lambda}\) is finite thanks to Equations~\eqref{eq:Integf}--\eqref{eq:Regulf} from Assumption~\ref{ass:int_holregul}.
	On the other hand, combining the $L^p$-BDG inequality and the generalized Minkowski inequality, one derives from~\eqref{eq:func_weak} that,
	\begin{align}
		&\ \Big\| |J(t)-J(s)| \Big\|_p \le \left\| |\int_s^{t}f_{\lambda}(t-u)\varsigma(u)\sigma(X_{u})\mathrm{d}W_u |\right\|_p + \left\| |\int_0^s \left( f_{\lambda}(t-u) - f_{\lambda}(s-u) \right)\varsigma(u)\sigma(X_{u})dW_u | \right\|_p\nonumber\\
		&\hspace{.9cm}\le
		C^{BDG}_p
		\Big[
		\Big(\int_s^t f_\lambda^2(t-u)\, \big\|\sigma(X_u)\big\|_p^2 \varsigma^2(u)\,du\Big)^{\frac12}
		+
		\Big(\int_0^s \big(f_\lambda(t-u)-f_\lambda(s-u)\big)^2
		\big\|\sigma(X_u)\big\|_p^2 \varsigma^2(u)\,du\Big)^{\frac12}
		\Big] \nonumber \\
		&\hspace{1cm}\le
		C^{BDG}_p 
		\sup_{u\ge0}\big\|\sigma(X_u)\big\|_p
		\left[
		\left(\int_s^t f_\lambda^2(t-u)\varsigma^2(u)\,du\right)^{\frac12}
		+
		\left(|t-s|^{2\vartheta}\int_0^{s} h_{\lambda}^2(s-u)\varsigma^2(u)\,du\right)^{\frac12}
		\right].\nonumber
	\end{align}	
	Now, set $ I(t) :=  \frac{1}{\lambda m_0}\int_0^t f_{\lambda}(t-u) \mu(u)  \, du$ and note using~\eqref{eq:func_weak} and the condition on \(h_\lambda\) in~\eqref{eq:Regulf} that:
	\begin{align*} \int_s^t f_\lambda^2(t-u)\varsigma^2(u)\,du&=\int_0^t f_\lambda^2(t-u)\varsigma^2(u)\,du-\int_0^s f_\lambda^2(s-u)\varsigma^2(u)\,du + \int_0^s \big(f_\lambda^2(s-u)-f_\lambda^2(t-u)\big)\varsigma^2(u)\,du\\
		&\leq 2 c \lambda^2 \Big( 1+ \frac{\|\mu\|_{\infty}}{\lambda m_0} \|f_\lambda\|_{{\cal L}^1(\R_+)} \Big) \Big|I(t)-I(s)\Big| + 2 \|f_\lambda\|_\infty |t-s|^{\vartheta} \int_0^{s} h_{\lambda}(s-u)\varsigma^2(u)\,du\\
		&\leq C_{p, \mu,\varsigma, f_{\lambda}}|t-s|^{ (\vartheta\wedge (1-\frac{1}{2\beta}))},\quad \text{since} \quad \sup_{s\ge 0}\int_0^s h_{\lambda}(s-u)\varsigma^2(u)\,du<\infty.
	\end{align*}
	where the penultimate inequality follows from Equation~\eqref{eq:VolterraStabilizer2}, together with Assumption~\ref{ass:int_holregul}. Finally,
	\begin{align*}
		\Big\| J(t)-J(s) \Big\|_p &\leq C_p \sup_{u\ge 0}\Big\||\sigma(X_{u})|\Big\|_{p} \left[C_{p,\mu,\varsigma,f_\lambda}
		|t-s|^{\frac12\big(\vartheta\wedge (1-\frac1{2\beta})\big)} + \Big(\|h_{\lambda}\|_\infty|t-s|^{2\vartheta}\int_0^{s} h_{\lambda}(s-u)\varsigma^2(u)\,du\Big)^{\frac12}\right] \\
		&\le
		C_{p,\mu,\varsigma, \sigma,\lambda,f_\lambda}
		\left(
		|t-s|^{\left(\frac{\vartheta}{2}\right)\wedge \left(\frac12-\frac1{4\beta}\right)}
		+
		|t-s|^{\vartheta}
		\right).
	\end{align*}
	Finally, after noticing that \(\frac12-\frac1{4\beta}\leq 1-\frac{1}{2\beta},\) what precedes proves the existence of a  real constant \(C_{p,\lambda}>0\) such that
	\[
	\mathbb{E}\big[|X_t-X_s|^p\big]
	\leq
	C_{p,\lambda}\,
	|t-s|^{\frac{p}{2}\left(\vartheta\wedge (1-\frac{1}{2\beta})\right)}.
	\]
	\noindent {\em Conclusion of this step:}
	Putting all these estimates together, and recalling the definition of \(\vartheta^*\) from Equation~\eqref{eq:func_weak}, define, for every \(u\geq 0\), the process \(X^u\) by \( X^u_t = X_{t+u} \), where \( t \geq 0 \). Then \( X^u \) has continuous sample paths and satisfies
	\[	\sup_{u \geq 0} \mathbb{E}[|X^u_t - X^u_s|^p] \leq C(p)|t-s|^{p\vartheta^*}
	\quad \text{for } 0 \leq t - s \leq 1.\]
	As $p\vartheta^*>1$ according to equation \eqref{eq:func_weak}, it follows from  Kolmogorov's $C$-tightness criterion (see ~\cite[Theorem 2.1, p. 26, 3rd edition]{RevuzYor} 
	or \cite[Lemma 44.4, Section IV.44, p.100]{RogersWilliamsII}), that the family of  shifted processes $X_{t+\cdot}$, $t\ge 0$, is $C$-tight i.e. \( (X^u)_{u \geq 0} \) is tight on \( C(\mathbb{R}_+; \mathbb{R}) \) (hence the existence of a weak
	continuous accumulation point thanks to Prokhorov's theorem) with limiting distributions P under which the canonical process has the announced H\"older pathwise regularity. Therefore, we conclude that along a sequence \( u_k \uparrow \infty \), the process \( X^{u_k} \) converges in law to some continuous process \( X^{\infty} \).
	
	An application of Fatou's lemma shows that any limiting process (resp. the limit distribution) has a finite moment of any order, i.e., $\quad 
	\forall t>0, \quad \mathbb{E}[|X^{\infty}_t|^p] \leq \sup_{u \geq 0} \mathbb{E}[|X_u|^p] < \infty.$
	
	For the first moment formula, we note using ~\cite[Equation~$(3.19)$ and Lemma 3.5]{EGnabeyeu2025} that
	
	\[
	\mathbb{E}[X_t] \longrightarrow \ell_\infty \mathbb{E}[X_0]  + (1-a) \frac{\mu_\infty}{\lambda} \quad \text{as } t \to \infty.
	\]
	Since \( \sup_{t \geq 0} \mathbb{E}[|X_t|^2] < \infty \), we easily conclude that $\quad 
	\lim_{t \to \infty} \mathbb{E}[X_t] = \mathbb{E}[X^{\infty}_t].$
	
	\bigskip
	\noindent{\sc Step 3}. {\em (c) Asymptotic weak $L^2$-stationarity. }
	Now let us consider the asymptotic  covariance between $X_{t+t_1}$ and $X_{t+t_2}$, $0<t_1<t_2$ when $X_t$ starts for $X_0$ with mean  \( m_0\), variance $v_0$ and $\bar \sigma^2 = \E\, \sigma(X_{t})^2$, $t\ge 0$ constant over time. Let \(\ell_\lambda(t):=\big(1-\frac{(f_\lambda * \mu)(t)}{\lambda m_0}\big)\) for short.
	Using $\text{Cov}(aU + b, cV + d) = ac \, \text{Cov}(U, V)$ and equation ~\eqref{eq:CondMean}, we have:
		\begin{align*}
			{\rm Cov}(X_{t+t_1}, X_{t+t_2}) &= {\rm Var}(X_0) \ell_\lambda(t+t_1)\ell_\lambda(t+t_2) + \E\left[\int_0^{t+t_1} f_{\lambda}(t+t_2-s)f_{\lambda}(t+t_1-s)\frac{\varsigma^2(s)}{\lambda^2}\sigma^2(X_{s})ds\right]\\
			&=  {\rm Var}(X_0)\ell_\lambda(t+t_1)\ell_\lambda(t+t_2) + \frac{\bar \sigma^2}{\lambda^2}\int_0^{t+t_1} f_{\lambda}(t_2-t_1+u)f_{\lambda}(u)\varsigma^2(t+t_1-u) du.
		\end{align*}
	As $ f_{\lambda}(t_2-t_1+\cdot)f_{\lambda}\!\in {\cal L}^2(\R_+)$ since $f_{\lambda}\!\in {\cal L}^2(\R_+)$ , $\mbox{\bf 1}_{\{0\le u \le t+t_1\}}\varsigma^2(t+t_1-u)\to  \frac{c\lambda^2 (1-\ell_\infty^2)}{\int_0^{+\infty}f^2_{\lambda}(s)ds}$ for every $u\!\in \R_+$ as $t\to +\infty$ (owing to~\cite[Lemma 3.9]{EGnabeyeu2025}) and $\lim_{t\to+\infty}\ell_\lambda(t)= \ell_\infty$, we have:
	\[{\rm Cov}(X_{t+t_1}, X_{t+t_2})\stackrel{t\to+\infty}{\longrightarrow} \ell_\infty^2 {\rm Var}(X_0) + \frac{c\bar \sigma^2 (1-\ell_\infty^2)}{\int_0^{+\infty}f^2_{\lambda}(s)ds}\int_0^{+\infty}  f_{\lambda}(t_2-t_1+u)f_{\lambda}(u)du.\]
	Using \(v_0 = c \bar \sigma^2\), we obtain \(C_{f_{\lambda}}(t_1,t_2)\) in Equation~\eqref{eq:funclongRun}.
	The confluence result follows from \cite[Proposition 4.4]{EGnabeyeu2025} with \( \bar \varphi_{\infty,p} (t) = \sup_{u \geq t} \varphi_{\infty,p}^{1/2} (u) \).
	In fact, let $X$ and $X'$ be two solutions of  Equation~\eqref{eq:CondMean} starting from $X_0$ and $X'_0$ respectively, both square integrable. 
	By 
	 \cite[Proposition 4.4]{EGnabeyeu2025}, we derive that for every $0\le t_1<t_2< \cdots < t_{_N}<+\infty$
	\[{\cal W}_p\big([(X_{t+t_1}, \cdots, X_{t+t_{_N}})], [(X'_{t+t_1}, \cdots, X'_{t+t_{_N}})])\leq \sqrt{N} \bar \varphi_{\infty,p} (t) {\cal W}_p\big([X_0],[X'_0]\big)\to 0 \mbox{ as }t\to +\infty.\]	
	\noindent As  a consequence, the  weak limiting distributions  of $[X_{t+\cdot}]$ and $[X'_{t+\cdot}]$ are the same in the sense that,  if $[X_{t_n+\cdot}]\stackrel{(C)_w}{\longrightarrow} P$ for some subsequence $t_n \to +\infty$ (where $P$ is a probability measure on $C(\R_+, \R)$ equipped with the Borel $\sigma$-field induced by the sup-norm topology), then  $[X'_{t_n+\cdot}]\stackrel{(C)_w}{\longrightarrow} P$ and  conversely.
	
	\bigskip
	\noindent{\sc Step 4}. {\em (d) Stationary Gausian case.} This result stems first from the fact that \( (X_t)_{t \geq 0} \) is a Gaussian process, implying that its limiting distributions in the functional weak sense are also Gaussian. Secondly, a Gaussian process is completely characterized by its mean and covariance functions.\\
	In fact, when \( \sigma(x) = \sigma > 0 \quad \forall x \in \mathbb{R}\) and \( X_0 \) follows a Gaussian distribution, the process \( X \) is Gaussian, which implies (at least for finite-dimensional weak convergence, i.e., weak convergence of all marginals of any order) that,$
	(X_{t + \cdot}) \stackrel{(C)_w}{\longrightarrow} \mathcal{GP}(f_{\lambda}) \quad \text{as} \quad t \to +\infty,
	$
	where \( \mathcal{GP}(f_{\lambda}) \) is a Gaussian process with mean \( m_0 \) and covariance function given above. \hfill $\Box$
	
	\medskip
	\noindent\textbf{Remark.} The theorem remains valid for
	\(\mu\in\mathcal{L}^1(\mathbb{R}_+)\) with \(\vartheta^*=\vartheta\),
	provided that a solution to the Volterra
	SDEs~\eqref{eq:Volterrameanrevert}--\eqref{eq:Volterrameanrevert_}
	exists. This is indeed guaranteed by
	Theorem~\eqref{thm:ExistenceVolterraL2} if, in addition, \( \mu\in {\cal L}^{\frac{2\beta}{\beta-1}}_{loc}(\R_+)\) for some \(\beta>1\). In this case,
	Remark~4 following Definition~\ref{def:stabilizer} yields, \(\int_0^{+\infty} \varsigma^2(u)\,du<\infty\).
		\section{Applications to Fractional Stochastic Volterra Integral equations}\label{sec:appl2}
	Let consider the {\em fractional integration kernel} of Example~\ref{Ex:SolventGammaKernel}~$2$, where $\alpha = H + \frac{1}{2}$, with $H$ denoting the Hurst coefficient:
	\begin{equation}\label{eq:frackernel}
		K(t) = K_{\alpha}(t) = \frac{u^{\alpha-1}}{\Gamma(\alpha)} \mbox{\bf 1}_{\R_+}(t),  \quad \alpha>0.
	\end{equation}
	From Equation~\eqref{eq:SolventFracKernel}, we have that its \(\lambda-\)resolvent is given by \(	R_{\alpha,\lambda}(t) = \sum_{k\ge 0} (-1)^k \frac{\lambda ^k t^{\alpha k}}{\Gamma(\alpha k+1)}= E_{\alpha}(-\lambda t^{\alpha} )= e_{\alpha}(\lambda^{1/\alpha}t) \; t\ge 0,\)
	where $E_{\alpha}$ denotes the standard   Mittag-Leffler function and  $e_{\alpha}$, the alternate Mittag-Leffler function.
	\begin{equation}\label{eq:MittagLeffler}
		E_{\alpha}(t) = \sum_{k\ge 0} \frac{t^k}{\Gamma(\alpha k+1)},\ t\!\in \R \quad \textit{and} \quad e_{\alpha}(t) := E_{\alpha}(-t^{\alpha}) = \sum_{k\ge 0} (-1)^k \frac {t^{\alpha k}}{\Gamma(\alpha k +1)}, \quad t\ge 0.
	\end{equation}
	
	In section 5 of \cite{Pages2024} (see~\cite{GorenfloMainardi1997} further on), the author demonstrated that for such kernels $K_{\alpha}$, with $\frac{1}{2} < \alpha < 1$ (`` rough models''
	),
	$E_{\alpha}$ is increasing and differentiable on the real line with $\displaystyle \lim_{t\to +\infty}E_{\alpha} (t) =+\infty$ and $E_{\alpha}(0)=1$. In particular, $E_\alpha$ is an homeomorphism from $(-\infty, 0]$ to $(0,1]$. Consequently, 
	the resolvent $R_{\alpha,\lambda}$ satisfies the monotonicity assumption $({\cal R}_\lambda)\; $ of~\cite{Pages2024} for all $\lambda > 0$. 
	Moreover, it was shown that
	for every $\lambda >0$, the function $f_{\alpha,\lambda}  := -R_{\alpha,\lambda}$ given by~\eqref{eq:DerivSolventFracKernel} exists and is positive on $(0,+\infty)$ whenever \(\alpha \in (\frac12,1)\), so that
	$f_{\alpha,\lambda}$ is a probability density
	called {\em Mittag-Leffler density}  and is square-integrable with respect to the Lebesgue measure on $\mathbb{R}_+$. Consequently, the results established in \cite{Pages2024}, particularly in Sections 2, 3, and 4, apply to the case $\sigma(t, x) = \sigma(t)$ (Gaussian setting) and $\sigma(t, x) = \varsigma(t)\sigma(x)$.
	
	\medskip
	Note that, those properties does not holds for \(\alpha \in (1,2)\) and in this paper, {\em our Assumption $({\cal R}_\lambda)\; $ is a slightly relaxed version of that of \cite{Pages2024}.}
	The purpose of this part is to investigates if the results on {\em fake stationarity} extend to the general case where $ \alpha \in \R^*_+$. We show that for $0 < \alpha < 2$, the resolvent $R_{\alpha,\lambda}$ of $K_{\alpha}$  satisfy our relaxed monotonicity assumption $({\cal R}_\lambda)\; $ for all $\lambda > 0$, and that $f_{\alpha,\lambda} := -R_{\alpha,\lambda}$ exists and is square-integrable with respect to the Lebesgue measure on $\mathbb{R}_+$, for $1 < \alpha < 2$ (``long memory volatility models''). As a result, the findings from Sections \ref{subsec:fakestatioIntrin} and \ref{subsec:fakestatioStabil}, will be applicable in the cases where $\sigma(t, x) = \sigma(x)$ and $\sigma(t, x) = \varsigma_{\lambda,c}(t)\sigma(x)$.
	To this end, by a scaling property, it is enough to study   $R_{\alpha,1}$ ($\lambda =1$) given by its expansion $E_{\alpha}(-t^{\alpha})$ where $E_{\alpha}$ is the {\em Mittag-Leffler function}.
	
	\medskip
	We will leverage the conducive class of completely monotone functions. Let us recall that a function \( \varphi : (0,+\infty) \to [0,+\infty) \)  is called a {\em completely monotone} (CM) function if it is non-negative, \( C^\infty \) (i.e. it is infinitely differentiable on \((0,+\infty)\)), and satisfies \( (-1)^n \varphi^{(n)}(t) \geq 0 \) for all \( n \in \mathbb{N} \) and \(t>0\).
	
	Crucially, ``Bernstein-Widder theorem''  \cite[Theorem 1.4]{SchillingSongVondracek2012} (see also \cite{Bernstein1929}) provides a necessary and sufficient condition a function \( \varphi: \mathbb{R}_+ \to \R \) to be CM. More specifically, \(\varphi\) is CM if it is a ( real valued) Laplace transform of a unique non-negative measure \( \mu \) on \( [0, \infty) \). 
	Futhermore, a result by Pollard \cite{SchillingSongVondracek2012}  state that a CM function can be obtained by composing a CM function with a Bernstein function\footnote{A function \( \psi: \R_+ \to \R \) is called a Bernstein function if it is of class \(\mathcal C^\infty\), is non-negative, and its derivative is CM.} 
	\subsection{$\alpha$-fractional kernels with $\alpha>0$}\label{sec:AlphaFract}
	
	The Mittag-Leffler function \( E_{\alpha}(z) \), with \( \alpha > 0 \), generalizes the exponential function (attained with \( \alpha = 1 \)). It is defined by a power series, which converges on the entire complex plane. 
	In particular, we are interested in the alternate Mittag-Leffler function reading:
	\[
	e_{\alpha}(t) := E_{\alpha}(-t^{\alpha}) = \sum_{k\ge 0} (-1)^k \frac {t^{\alpha k}}{\Gamma(\alpha k +1)}, \quad t\ge 0, \quad E_{\alpha}(z) := \sum_{n=0}^{\infty} \frac{z^n}{\Gamma(\alpha n + 1)}, \quad \alpha > 0, \quad z \in \mathbb{C}.
	\]
	In the limiting cases \( \alpha = 1 \) and \( \alpha = 2 \), \( e_{\alpha}(t) \) are elementary functions, namely \(	e_1(t) = e^{-t} \quad \text{and} \quad e_2(t) = \cos t.\)
	Integral representations of the Mittag-Leffler function \( E_{\alpha} \) were first established in~\cite{Pollard1948}, followed by further results in~\cite{GorMain2000}, where they were connected by the Laplace transform. For instance, (see (F.12) in ~\cite{GorMain2000}), the Laplace transform of \( E_{\alpha}(-at^{\alpha}) \), with \( a \in \mathbb{C} \), is given by:\\
	\(	L_{E_{\alpha}(-at^{\alpha})}(z) = \frac{z^{\alpha - 1}}{z^{\alpha} + a}, \quad z \in \mathbb{C}, \; \Re(z) > |a|^{1/\alpha}, \quad \alpha > 0.\)
	From this, we can deduce the Laplace transform of \( e_{\alpha} \), which is given by: \(	L_{e_{\alpha}}(z) = \frac{z^{\alpha - 1}}{z^{\alpha} + 1}, \quad z \in \mathbb{C}, \; \Re(z) > 1, \quad \alpha > 0.\)
	Here, we define \( z^{\alpha} := |z|^{\alpha} e^{i\alpha \arg(z)} \), where \( -\pi < \arg(z) < \pi \), that is in the complex
	z-plane cut along the negative real axis
	
	\subsubsection{$\alpha$-fractional kernels for $ \alpha\!\in \R^*_+ $}\label{subsec:alphafrac1}
	
	\begin{Proposition}\label{prop:representation} The followings hold for the  the alternate Mittag-Leffler function for any \(t\geq 0\):
		\begin{enumerate}
			\item If  $\alpha\!\in \R^*_+ \setminus \mathbb{N}^*$, \(e_{\alpha}(t) = F_{\alpha}(t) + G_{\alpha}(t)\) where 
			\(F_{\alpha}(t):=\int_0^{+\infty} e^{-tu} H_{\alpha}(u) \, du\) with \(\forall\,  u\!\in \R_+,\;\)\\\( H_{\alpha}(u)= \frac{\sin(\alpha\pi)}{\pi}\frac{u^{\alpha-1}}{u^{2\alpha}+2u^{\alpha}\cos(\pi \alpha)+1}\) and \(G_{\alpha}(t):= \frac{2}{\alpha} \sum_{n=0}^{\lfloor \frac{\alpha-1}{2} \rfloor} \exp\left[t \cos\left(\frac{(2n+1)\pi}{\alpha}\right)\right] \cos\left[t \sin\left(\frac{(2n+1)\pi}{\alpha}\right)\right]\)
			\item If $\alpha \in \mathbb{N}^*$, \(
			e_{\alpha}(t) = G_{\alpha}(t) = \frac{2}{\alpha} \sum_{n=0}^{\lfloor \frac{\alpha-1}{2} \rfloor} \exp\left[t \cos\left(\frac{(2n+1)\pi}{\alpha}\right)\right] \cos\left[t \sin\left(\frac{(2n+1)\pi}{\alpha}\right)\right]\)
		\end{enumerate}
	\end{Proposition}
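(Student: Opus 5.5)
The natural route is the Laplace transform $L_{e_\alpha}(z)=\frac{z^{\alpha-1}}{z^\alpha+1}$ followed by complex inversion. By the Bromwich formula, for $t>0$,
\[
e_\alpha(t)=\frac{1}{2\pi i}\int_{\gamma-i\infty}^{\gamma+i\infty} e^{zt}\,\frac{z^{\alpha-1}}{z^\alpha+1}\,dz,\qquad \gamma>1,
\]
where $e_\alpha$ is continuous and of exponential order, since $|e_\alpha(t)|\le E_\alpha(t^\alpha)$. We then deform the Bromwich line into a Hankel-type contour: the two rays along the negative real axis (upper and lower lips of the cut), a small circle of radius $\varepsilon$ around the origin, and large arcs of radius $R$ closing to the left. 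The Jordan–Cauchy residue theorem then expresses $e_\alpha(t)$ as the sum of the residues of $e^{zt}z^{\alpha-1}/(z^\alpha+1)$ at poles inside the cut plane, plus the limit of the cut integrals.

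\emph{Residues.} The poles solve $z^\alpha=-1$ with $-\pi<\arg z<\pi$, that is $z_k=e^{i(2k+1)\pi/\alpha}$ with $|(2k+1)\pi/\alpha|<\pi$, i.e.\ $|2k+1|<\alpha$. At a simple pole $z_k$,
\[
\operatorname{Res}=e^{z_kt}\,\frac{z_k^{\alpha-1}}{\alpha z_k^{\alpha-1}}=\frac{1}{\alpha}e^{z_kt}.
\]
The poles come in conjugate pairs $k\leftrightarrow -k-1$, which gives $\frac{2}{\alpha}\sum_{n}e^{t\cos((2n+1)\pi/\alpha)}\cos(t\sin((2n+1)\pi/\alpha))$ with $0\le n$ and $2n+1<\alpha$. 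This yields the index range $n\le\lfloor(\alpha-1)/2\rfloor$ in the non-integer case, where $2n+1=\alpha$ cannot occur. The resulting sum is $G_\alpha$.

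\emph{Cut contribution.} On the lips $z=ue^{\pm i\pi}$, the difference of the two boundary values of $\frac{z^{\alpha-1}}{z^\alpha+1}$, combined with the factor $1/(2\pi i)$ and orientation, gives
\[
\frac{1}{\pi}\,\Im\!\Big(\frac{-u^{\alpha-1}e^{-i\pi\alpha}}{u^\alpha e^{-i\pi\alpha}+1}\Big)\ \text{up to sign}
=\frac{\sin(\alpha\pi)}{\pi}\,\frac{u^{\alpha-1}}{u^{2\alpha}+2u^\alpha\cos(\alpha\pi)+1}=H_\alpha(u),
\]
obtained after multiplying numerator and denominator by the conjugate $u^\alpha e^{i\pi\alpha}+1$. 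Together with $e^{zt}=e^{-ut}$, this gives $F_\alpha(t)=\int_0^\infty e^{-tu}H_\alpha(u)\,du$.

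We then check that the auxiliary pieces vanish. The small circle contributes $O(\varepsilon^\alpha)\to0$, because the integrand is $O(|z|^{\alpha-1})$ and the path has length $O(\varepsilon)$. The large arcs vanish by Jordan's lemma, since the integrand is $O(1/|z|)$ and $e^{zt}$ decays on the left half. The $u$-integral converges: near $0$ the integrand is $\sim u^{\alpha-1}$, and at infinity it is $\sim u^{-\alpha-1}$. The denominator never vanishes for $\alpha\notin\mathbb N$, because $u^{2\alpha}+2u^\alpha\cos\alpha\pi+1=|u^\alpha+e^{i\pi\alpha}|^2>0$. The case $t=0$ follows by continuity, or directly.

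For $\alpha\in\mathbb N^*$, the function $z^{\alpha-1}/(z^\alpha+1)$ is rational, so there is no cut and $H_\alpha\equiv0$ through $\sin(\alpha\pi)=0$. When $\alpha$ is odd, the pole $z=-1$ (that is, $2n+1=\alpha$) lies on the negative axis. It is then a genuine pole of a meromorphic function with residue $\frac1\alpha e^{-t}$, and it is counted once, not twice. We therefore plan to verify this case separately via the partial fraction decomposition $\frac{z^{\alpha-1}}{z^\alpha+1}=\frac1\alpha\sum_{k=0}^{\alpha-1}\frac{1}{z-z_k}$, which gives $e_\alpha(t)=\frac1\alpha\sum_k e^{z_kt}$. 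Grouping conjugates then gives $G_\alpha$, with the middle real root handled by the convention in the sum. The case $\alpha=1$ gives $e^{-t}$ and $\alpha=2$ gives $\cos t$, as sanity checks.

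The main obstacle is the bookkeeping of this boundary case: handling the sign and orientation on the two lips, and making sure the real pole at $-1$ for odd integer $\alpha$ receives the weight $1/\alpha$ rather than $2/\alpha$. If the formula as stated double counts it, we would note that $n$ must range over $2n+1<\alpha$, with the single term $\frac1\alpha e^{-t}$ added when $\alpha$ is odd.
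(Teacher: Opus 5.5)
Your treatment of claim 1 follows the paper's argument. It uses Bromwich inversion of $\frac{z^{\alpha-1}}{z^\alpha+1}$ and deformation onto a Jordan contour containing a Hankel loop around the negative axis. The residues $\frac1\alpha e^{z_kt}$ are grouped in conjugate pairs, and the jump across the cut produces $H_\alpha$. Your lower-lip expression $\frac1\pi\,\Im(\cdot)$ equals $H_\alpha(u)$ exactly, so the ``up to sign'' hedge is unnecessary. The arcs vanish, and so do the short connectors at $\pm iR$, which you should mention. Your description of the poles by $|2k+1|<\alpha$ is cleaner than the paper's intermediate set $\mathbb{S}$. That set is indexed by $n=0,\dots,\lfloor\alpha-1\rfloor$ and already miscounts for $\alpha=\frac32$, where there are two poles $e^{\pm 2i\pi/3}$. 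For $\alpha\notin\mathbb{N}$ the condition $2n+1<\alpha$ is exactly $0\le n\le\lfloor\frac{\alpha-1}{2}\rfloor$, so claim 1 holds as stated.

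For claim 2 you depart from the paper, which only observes that $H_\alpha\equiv0$ for integer $\alpha$, and your departure is needed: the displayed formula does double count. For odd $\alpha$ the top index $n=\frac{\alpha-1}{2}$ gives the real root $z=-1$ with weight $\frac2\alpha$. For instance $G_1(t)=2e^{-t}$, while $e_1(t)=e^{-t}$, as the paper itself records elsewhere; likewise $G_3(0)=\frac43\neq1=e_3(0)$. In contour terms, $H_\alpha\equiv0$ does not make the Hankel term vanish when $\alpha$ is odd. The pole $-1$ lies between the two lips, so the loop contributes its residue $\frac1\alpha e^{-t}$ exactly once. Your identity $\frac{z^{\alpha-1}}{z^\alpha+1}=\frac1\alpha\sum_{k=0}^{\alpha-1}\frac{1}{z-z_k}$ gives this directly. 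Hence claim 2 is correct only for even $\alpha$. For odd $\alpha$ the right formula is yours: the conjugate pairs with $2n+1<\alpha$, plus $\frac1\alpha e^{-t}$. Drop the conditional phrasing and the appeal to a ``convention in the sum'': there is none, and the proposition must be corrected in that case.
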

	
	The result or representation of the above proposition \ref{prop:representation} is an extension of the case \( \alpha \in (0,2) \) studied in  \cite{GorenfloMainardi1997} to the general case $\alpha\!\in \R \setminus \mathbb{N}$. The second claim is straigthforward as the function $ H_{\alpha}$ vanishes identically if \(\alpha\) is an integer.
	
	\bigskip
	\noindent {\bf Proof.} 
	Based on  the inverse Laplace transform (Bromwich-Mellin formula \footnote{on the Bromwich path, i.e., a line \(\text{Re} \{ z \} = a\) with \(a \geq 1\), and \(\text{Im} \{ z \}\) running from \(-\infty\) to \(+\infty\).}
	), we have the below representation as a Laplace inverse integral: For \(\gamma \) larger than the real parts of all poles of the integrand,
	{\small
		\begin{equation}\label{eq:LpInverseInt}
			e_\alpha(t)= \frac{1}{2\pi i} \int_{\textit{Br}(\gamma, \infty)} e^{zt}\frac{z^{\alpha-1}}{z^\alpha + 1} \, dz, = \frac{1}{2i\pi} \int_{z=\gamma-i\cdot\infty}^{z=\gamma+i\cdot\infty}e^{zt}\frac{z^{\alpha-1}}{z^\alpha + 1}dz = \frac{1}{2\pi i} \lim_{R \to +\infty} \int_{\textit{Br}(\gamma, R)} e^{zt}\frac{z^{\alpha-1}}{z^\alpha + 1} \, dz.
		\end{equation}
	}
	Let \(J_{\alpha}(t, \cdot) : z \to e^{zt}\frac{z^{\alpha-1}}{z^\alpha + 1}  \) be the integrand of the above representaion.
	The relevant poles of \(J_{\alpha}(t, \cdot)\) or rather \( \frac{z^{\alpha-1}}{z^\alpha + 1} \) is the set \(\mathbb{S}:=\{z_n = \exp \left(i \frac{(2n + 1)\pi}{\alpha} \right), n = 0, \cdots , \lfloor \alpha-1 \rfloor\}\). \(J_{\alpha}(t, \cdot)\) is thus holomorphic/analytic on \(\mathbb{C} \setminus \mathbb{S}\).
	And since 0 is a brand-point of the integrand  \(J_{\alpha}(t, \cdot)\), we consider \( \Gamma_{\gamma, \delta, R} \) the Jordan contour (see Figure \ref{fig:contour_deformation}) defined as the union of the below-represented several distinct paths:
	\[\Gamma_{\gamma, \delta, R} = \textit{Br}(\gamma, R) \cup C^+ \cup C_R^+ \cup (-\textit{H}(\delta, \frac{1}{R})) \cup C_R^- \cup C^-,\]
	\begin{minipage}[c]{0.5\textwidth}
		\begin{itemize}
			\item 	\( \textit{H}(\delta, \frac{1}{R}) \) is the {\em Hankel Contour} given by 
			\(\textit{H}(\delta, \frac{1}{R}):=
			[-R + i\delta, -c + i\delta] \cup C_{\frac{1}{R}} \cup [-R - i\delta, -c - i\delta],
			\) where $C_{\frac{1}{R}}$ is the small circular arc  \( |s| = \frac1R \).\\ 
			\item \( \textit{Br}(\gamma, R) \), the {\em truncated Bromwich Path} i.e. \( \textit{Br}(\gamma, R):= [\gamma - iR, \gamma + iR] \), where \( \gamma \geq 1 \) and \(\text{Re} \{ z \} = \gamma\), with \( \text{Im} \{ z \} \in [-R, R] \).\\ 
			\item  {\em  \( C^+ :=[\gamma + iR, iR]\)} and {\em \( C^- :=[-iR, \gamma - iR]\)}.\\ 
			\item {\em \( C_R^+ \)} and {\em \( C_R^- \)} denote the upper and lower semicircular arcs, respectively, of a circle of radius \( R \); \( C_R^+ \) runs from \( iR \) to \( -R + i\delta \), and \( C_R^- \) from \( -R - i\delta \) to \( -iR \).
		\end{itemize}
	\end{minipage}
	\hfill
	\begin{minipage}[c]{0.5\textwidth}
		\centering
			\begin{tikzpicture}[scale=1.1]
				\def\gammaVal{1.5}
				\def\R{2.5}
				\def\deltaVal{0.15}
				\def\c{0.5}
				\def\r{1/\R}
				
				\draw[->] (-3, 0) -- (3, 0) node[right] {$\Re(z)$};
				\draw[->] (0, -3) -- (0, 3) node[above] {$\Im(z)$};
				
				\draw[blue, thick, ->] (\gammaVal, -\R) -- (\gammaVal, \R);
				\node at (\gammaVal + 0.3, 0.2) {\small \textcolor{blue}{$\textit{Br}(\gamma, R)$}};
				
				\draw[red, thick, ->] (\gammaVal, \R) -- (0, \R) node[midway, above] {\small $C^+$};
				\draw[red, thick, <-] (\gammaVal, -\R) -- (0, -\R) node[midway, below] {\small $C^-$};
				
				\draw[green, thick, ->] (0, \R) arc[start angle=90, end angle=175, radius=\R];
				\draw[green, thick, <-] (0, -\R) arc[start angle=270, end angle=185, radius=\R];
				\node at (-2.4, 2.1) {\small $C_R^+$};
				\node at (-2.4, -2.1) {\small $C_R^-$};
				
				\draw[orange, thick, <-] (-\R, \deltaVal) -- (-\c, \deltaVal);
				\draw[orange, thick, ->] (-\R, -\deltaVal) -- (-\c, -\deltaVal);
				\node at (-1.8, -0.35) {\small \textit{H}$(\delta, \tfrac{1}{R})$};
				
				\draw[orange, thick] (-\c, \deltaVal) arc[start angle=160, end angle=-155, radius=\r];
			\end{tikzpicture}
		\captionof{figure}{Jordan contour \( \Gamma_{\gamma, \delta, R} \).}\label{fig:contour_deformation}
	\end{minipage}
	
	For small values of \( \delta \), large values of \( R \), and \( \gamma \geq 1 \), the Jordan contour \( \Gamma_{\gamma, \delta, R} \) encloses all poles of \( J_{\alpha}(t, \cdot) \). Therefore, by the {\em Jordan-Cauchy Residue Theorem}, we have:
	{\small
		\begin{align*}
			&\sum_{z \in \mathbb{C} \setminus \{-1\}: z^\alpha=-1} \text{Res}(J_{\alpha}(t, \cdot), z) 
			= \frac{1}{2\pi i} \oint_{\Gamma_{\gamma, \delta, R}} J_{\alpha}(t, z) \, dz 
			= \frac{1}{2\pi i}\int_{\textit{Br}(\gamma, R)} J_{\alpha}(t, z) \, dz + \frac{1}{2\pi i}\int_{C^+} J_{\alpha}(t, z) \, dz \\
			&\quad \hspace{1.5cm} + \frac{1}{2\pi i}\int_{C_R^+} J_{\alpha}(t, z) \, dz - \frac{1}{2\pi i}\int_{\textit{H}(\delta, \frac{1}{R})} J_{\alpha}(t, z) \, dz  + \frac{1}{2\pi i}\int_{C_R^-} J_{\alpha}(t, z) \, dz + \frac{1}{2\pi i}\int_{C^-} J_{\alpha}(t, z) \, dz.
		\end{align*}
	}
	Taking the limit as \(R \to \infty\) and \(\delta \to 0\), we may decompose \eqref{eq:LpInverseInt} as follows
	{\small
		\begin{align*}
			e_\alpha(t) &:= \frac{1}{2\pi i} \lim_{R \to +\infty} \int_{\textit{Br}(\gamma, R)} J_{\alpha}(t, z) \, dz =	\sum_{z \in \mathbb{C} \setminus \{-1\}: z^\alpha=-1} \text{Res}(J_{\alpha}(t, \cdot), z) 
			+ \frac{1}{2\pi i}\lim_{R \to +\infty}\lim_{\delta \to 0}\int_{\textit{H}(\delta, \frac{1}{R})} J_{\alpha}(t, z) \, dz \\
			& - \frac{1}{2\pi i} \left(\lim_{R \to +\infty}\int_{C^+} J_{\alpha}(t, z) \, dz 
			+ \lim_{R \to +\infty}\int_{C_R^+} J_{\alpha}(t, z) \, dz + \lim_{R \to +\infty}\int_{C_R^-} J_{\alpha}(t, z) \, dz + \lim_{R \to +\infty}\int_{C^-} J_{\alpha}(t, z) \, dz\right).
		\end{align*}
	}
	We now examine these six terms. The contribution from the Hankel path 
	is given by $\frac{1}{2\pi i}\int_{\textit{H}(\delta, \frac{1}{R})} J_{\alpha}(t, z) \, dz$, whose limit coincides with the usual contour representation of the Mittag-Leffler function for \(\alpha \in (0,1)\).
	\begin{equation}\label{eq:Laplace_transform}
		\frac{1}{2\pi i} \int_{\textit{H}(\delta, \frac{1}{R})} e^{zt}\frac{z^{\alpha-1}}{z^\alpha + 1} \, dz  \overset{R \to +\infty,\delta \to 0}{=} \int_0^{+\infty} e^{-tu} H_{\alpha}(u) \, du = L_{H_{\alpha}}(t) =:F_{\alpha}(t).
	\end{equation}
	where a synthetic formula was found for $H_{\alpha}$ in~\cite{GorMain2000} (see (F.22) p.31, see also~\cite{Mainardi2014} in the case $0<\alpha<1$).
	
	\begin{equation}\label{eq:Halpha}
		\forall\,  u\!\in \R_+,\; H_{\alpha}(u)=- \frac{1}{2\pi} \cdot 2\,\Im {\rm m}\Big( \frac{z^{\alpha-1}}{z^{\alpha}+1}\Big)_{|z= ue^{i\pi}}  = \frac{\sin(\alpha\pi)}{\pi}\frac{u^{\alpha-1}}{u^{2\alpha}+2u^{\alpha}\cos(\pi \alpha)+1}
	\end{equation}
	Note that this representation of \(F_{\alpha}\) in term of the Laplace transform of a non-negative Lebesgue integrable function (see Equation~\eqref{eq:Laplace_transform} above) was first established in~\cite{Pollard1948}.
	
	Also note that the function $ H_{\alpha}$ vanishes identically if \(\alpha\) is an integer.
	The  limit of the other integrals vanishes. In fact:
	\[\left \lvert ~ \int_{C_R^+} J_{\alpha}(t, z) \, \mathrm{d} z \, \right \rvert \leq \int_{\frac{\pi}{2}}^{\pi} R \lvert J_{\alpha}(t, e^{i\theta})\rvert  \, \mathrm{d} \theta \,\text{and} \, \lvert J_{\alpha}(t, e^{i\theta})\rvert  \, \leq \frac{R^{\alpha-1}}{R^{\alpha}-1}e^{tR\cos(\theta)} \leq \frac{R^{\alpha-1}}{R^{\alpha}-1}e^{tR(-\frac2\pi \theta +1)}\] 
	where in the last inequality, we used the trick $\cos(\theta) \leq -\frac2\pi \theta +1 \quad \forall \theta \in [\frac{\pi}{2},\pi]$.
	Consequently, 
	\[\left \lvert ~ \int_{C_R^+} J_{\alpha}(t, z) \, \mathrm{d} z \, \right \rvert \leq \frac{R^{\alpha-1}}{R^{\alpha}-1} \times \frac{\pi}{-2tR}\left[e^{tR(-\frac2\pi \theta +1)}\right]_{\theta = \frac\pi2}^{\theta=\pi} = \frac{\pi R^{\alpha}}{2t (R^{\alpha+1}-R)} (1-e^{-tR})  \stackrel{R \to \infty}{\longrightarrow} 0 \,\]
	Likewise $\lim_{R \to +\infty} \left \lvert ~ \int_{C_R^-} J_{\alpha}(t, z) \, \mathrm{d} z \, \right \rvert = 0$. Moreover:
	$\int_{C^+} J_{\alpha}(t, z) \, dz = \int_{\gamma+iR}^{iR} J_{\alpha}(t, z) \, dz = \int_{\gamma}^{0} J_{\alpha}(t, x+iR) \, dx$.
	Now, observe that:
	$ \lvert J_{\alpha}(t, x+iR)\rvert  \, \leq \frac{(x^2 + R^2)^{\frac{\alpha-1}{2}}}{(x^2 + R^2)^{\frac{\alpha}{2}}-1}e^{tx} \leq e^{tx} \frac{(\gamma^2 + R^2)^{\frac{\alpha-1}{2}}}{ R^\alpha-1}.$
	As a consequence, \(\left \lvert ~ \int_{C^+} J_{\alpha}(t, z) \, \mathrm{d} z \, \right \rvert \leq \frac{(\gamma^2 + R^2)^{\frac{\alpha-1}{2}}}{ R^\alpha-1}\int_{\gamma}^0 e^{tx}  \, \mathrm{d} x \, \stackrel{R \to \infty}{\longrightarrow} 0.\)
	Likewise for $\lim_{R \to +\infty}  \left \lvert ~ \int_{C^-} J_{\alpha}(t, z) \, \mathrm{d} z \, \right \rvert  = 0.$
	Finally,  
	\[
	G_{\alpha}(t) := \sum_{z \in \mathbb{C} \setminus \{-1\}: z^\alpha=-1} \text{Res}(J_{\alpha}(t, \cdot), z)  = \sum_{z_n \in \mathbb{S}} \text{Res}(J_{\alpha}(t, \cdot), z_n) =\sum_{n=0}^{\lfloor \alpha-1 \rfloor} e^{z_n t} \, \text{Res} \left[ \frac{z^{\alpha-1}}{z^\alpha + 1} \right]_{z_n} = \frac{1}{\alpha} \sum_{n=0}^{\lfloor \alpha-1 \rfloor}  e^{z_n t},
	\]
	Note that, \(e^{z_n t} + e^{\bar{z}_n t} = e^{\text{Re} \{ z_n \} t}\left(e^{\text{Im} \{ z_n \} t} + e^{-\text{Im} \{ z_n \} t}\right) = 2 e^{\text{Re} \{ z_n \} t} \cos\left(\text{Im} \{ z_n \} t\right) \) and \\ \(\sum_{z_n \in \mathbb{S}} \text{Res}(J_{\alpha}(t, \cdot), z_n) = \frac{1}{\alpha} \sum_{n=0}^{\lfloor \alpha-1 \rfloor}  e^{z_n t} = \frac{1}{\alpha} \sum_{n=0}^{\lfloor \frac{\alpha-1}{2} \rfloor} \left(e^{z_n t} + e^{\bar{z}_n t}\right).\)
	As a consequence, 
	{\small 
		\[
		G_{\alpha}(t) := \sum_{z \in \mathbb{C} \setminus \{-1\}: z^\alpha=-1} \text{Res}(J_{\alpha}(t, \cdot), z)  = \frac{1}{\alpha} \sum_{n=0}^{\lfloor \alpha-1 \rfloor}  e^{z_n t} = \frac{2}{\alpha} \sum_{n=0}^{\lfloor \frac{\alpha-1}{2} \rfloor} \exp\left[t \cos\left(\frac{(2n+1)\pi}{\alpha}\right)\right] \cos\left[t \sin\left(\frac{(2n+1)\pi}{\alpha}\right)\right]
		\]
	}
	\begin{figure}[H]
		\centering
		\includegraphics[width=1\textwidth]{Images/courbes_e_alpha_f_alpha.png} 
		\caption{Curves of $R_{\alpha,\lambda}(t)$ and $f_{\alpha,\lambda}(t)$ for different values of $\alpha \in (1,2)$}
		\label{fig:curves_alpha_more1}
	\end{figure}
	\noindent
	{\bf Remark:} 1. For \( 0 < \alpha < 1 \), there are no relevant poles since \( | \arg(z_k) | > \pi \), so \( G_{\alpha}(t) \equiv 0 \), and we obtain \(e_{\alpha}(t) = F_{\alpha}(t),\; \text{for}\quad 0 < \alpha < 1\), (see e.g. \cite[Figure 2]{EGnabeyeu2025}). For \( 1 < \alpha < 2 \), there are exactly two relevant poles, \( z_0 = \exp(i\pi/\alpha) \) and \( z_{-1} = \exp(-i\pi/\alpha) = \bar z_0 \), located in the left half-plane. In this case, we have $G_{\alpha}(t) = \frac{2}{\alpha} e^{t \cos \left( \frac{\pi}{\alpha} \right)} \cos \left( t \sin \left( \frac{\pi}{\alpha} \right) \right)$ and \(e_{\alpha}(t) = \int_0^{+\infty} e^{-tu} H_{\alpha}(u) \, du  + \frac{2}{\alpha} e^{t \cos \left( \frac{\pi}{\alpha} \right)} \cos \left( t \sin \left( \frac{\pi}{\alpha} \right) \right).\)
	It is clear that the function \( e_{\alpha}(t) \) oscillates in an evanescent manner to 0 as $t \to +\infty$.
	We note that this function exhibits oscillations with circular frequency and an exponentially decaying amplitude (see Figure \ref{fig:curves_alpha_more1}).
	Note that, the above expression of \(e_{\alpha}\) is the same for \( 2 < \alpha < 3 \) with the only difference that the two poles  are now located in  the right haft plane, and so providing amplified oscillations.\\
	\medskip
	2. In the case \( 2 < \alpha < +\infty \) ,
	however, certains poles are located in the right half plane, so providing amplified
	oscillations. This common instability for \(\alpha >2 \) is the reason why
	we will limit ourselves to consider \(\alpha\) in the range \( 0 < \alpha < 2 \) as highlighted by the below proposition.
	\begin{Proposition}\label{prop:main_general alpha}Let $\lambda>0$ and let $\alpha\!\in (1,+\infty) \setminus \mathbb{N}$.\\
		\smallskip
		\noindent $(a)$ The function \((-1)^{\lfloor \alpha \rfloor}F_{\alpha}\) is completely monotonic(thus convex), hence infinitely differentiable on $\mathbb{R}_+^*$.
		
		\smallskip
		\noindent $(b)$ 	The $\lambda$-resolvent $R_{\alpha, \lambda}$ satisfies $ R_{\alpha,1}= e_{\alpha}$ and $R_{\alpha, \lambda} = R_{\alpha,1}(\lambda^{1/\alpha}\cdot)$. The function $R_{\alpha, \lambda}$ is infinitely differentiable i.e. $\mathcal C^\infty$ on $(0, +\infty)$. Moreover $R_{\alpha, \lambda}(0)=1$, $R_{\alpha,\lambda}\!\in {\cal L}^{r}(\R_+)$ for every $r>\frac{1}{\alpha}$ and $\alpha<2$.
		
		$f_{\alpha, \lambda} (t):= -R'_{\alpha, \lambda} (t) $ is infinitely differentiable and satisfy : $\forall\, t>0, \quad f_{\alpha, \lambda} (t)=$
		{\small 
			\[
			\lambda^{\frac{1}{\alpha}}\left(\int_0^{+\infty} e^{-\lambda^{\frac{1}{\alpha}}tu}uH_{\alpha}(u)du - \frac{2}{\alpha} \sum_{n=0}^{\lfloor \frac{\alpha-1}{2} \rfloor} \exp\left[t \lambda^{\frac{1}{\alpha}} \cos\left(\frac{(2n+1)\pi}{\alpha}\right)\right] \cos\left[t \lambda^{\frac{1}{\alpha}} \sin\left(\frac{(2n+1)\pi}{\alpha}\right)- \frac{(2n+1)\pi}{\alpha}\right]\right)
			\]
		}
		so that, \( R_{\alpha,\lambda} \) converges to \( a \in [0,1) \) and $f_{\alpha,\lambda} \!\in {\cal L}^{2\beta}(\R_+)$ for every $\beta >0 $
		provided  $\alpha \in (1,2)$.
		
		\noindent $(c)$ 
		Furthermore, the function \( R_{\alpha,\lambda} \) satisfies assumption \(({\cal R}_\lambda)\) (i), namely \( R_{\alpha,\lambda} \) converges to \( 0 \), and the function \( f_{\alpha,\lambda} \) satisfies a $\vartheta$-H\"older regularity of the form~\eqref{eq:Regulf}
		 if and only if \( \alpha \in (1,2) \).
	\end{Proposition}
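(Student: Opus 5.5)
The plan is to work entirely from the representation of Proposition~\ref{prop:representation}, namely $e_\alpha = F_\alpha + G_\alpha$ with $F_\alpha = L_{H_\alpha}$.

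\emph{Part $(a)$.} The factor $\sin(\alpha\pi)$ has sign $(-1)^{\lfloor\alpha\rfloor}$ for $\alpha\notin\mathbb N$. The denominator
$$u^{2\alpha}+2u^\alpha\cos(\pi\alpha)+1=|u^\alpha+e^{i\pi\alpha}|^2$$
is positive for $u>0$, because $u^\alpha e^{-i\pi\alpha}\neq -1$ when $\alpha\notin\mathbb N$ (its argument is never an odd multiple of $\pi$). Hence $(-1)^{\lfloor\alpha\rfloor}H_\alpha\geq 0$. By Bernstein--Widder, $(-1)^{\lfloor\alpha\rfloor}F_\alpha$ is then the Laplace transform of a non-negative measure, so it is CM, hence convex and $\mathcal C^\infty$ on $(0,+\infty)$. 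One point needs care: $H_\alpha(u)\sim \frac{\sin(\alpha\pi)}{\pi}u^{\alpha-1}$ at $0$ and $\sim \frac{\sin(\alpha\pi)}{\pi}u^{-\alpha-1}$ at $\infty$. Thus $H_\alpha\in\mathcal L^1$ and $uH_\alpha$ is integrable against $e^{-tu}$ for every $t>0$, which justifies differentiation under the integral.

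\emph{Part $(b)$.} The identity $R_{\alpha,\lambda}(t)=E_\alpha(-\lambda t^\alpha)=e_\alpha(\lambda^{1/\alpha}t)$ follows directly from \eqref{eq:SolventFracKernel}, and $R_{\alpha,\lambda}(0)=1$ is immediate. Smoothness on $(0,\infty)$ comes from $(a)$ together with the fact that $G_\alpha$ is a finite sum of exponentials times cosines.

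To get $f_{\alpha,\lambda}$, differentiate term by term. For $F_\alpha$ this gives $-\int e^{-tu}uH_\alpha(u)\,du$. For each pair of conjugate poles, write $\frac{d}{dt}e^{z_nt}=z_ne^{z_nt}$ with $|z_n|=1$, $z_n=e^{i\theta_n}$ and $\theta_n=(2n+1)\pi/\alpha$. This gives
$$\frac{d}{dt}\big[e^{t\cos\theta_n}\cos(t\sin\theta_n)\big]=e^{t\cos\theta_n}\cos(t\sin\theta_n+\theta_n).$$
The announced formula follows after applying the chain rule with the factor $\lambda^{1/\alpha}$. I expect the sign $-\theta_n$ written in the statement to come out as $+\theta_n$; that is harmless by conjugate symmetry.

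For $\alpha\in(1,2)$ only $n=0$ contributes, and $\cos(\pi/\alpha)<0$, so $G_\alpha$ and $G'_\alpha$ decay exponentially. For $F_\alpha$, a Tauberian/Watson argument based on the behaviour of $H_\alpha$ at $u=0$ gives $F_\alpha(t)\sim \frac{\sin(\alpha\pi)\Gamma(\alpha)}{\pi}t^{-\alpha}$ and $\int e^{-tu}uH_\alpha\,du=O(t^{-\alpha-1})$ at infinity. Near $0$, both functions are bounded: $F_\alpha$ because $H_\alpha\in\mathcal L^1$, and $f$ because $uH_\alpha(u)\sim u^{-\alpha}$ is integrable at $\infty$ when $\alpha>1$. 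Consequently $R_{\alpha,\lambda}\to 0=a$, $R_{\alpha,\lambda}\in\mathcal L^r$ exactly when $r\alpha>1$, and $f_{\alpha,\lambda}\in\mathcal L^{2\beta}$ for all $\beta$, using boundedness and the $t^{-\alpha-1}$ decay.

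\emph{Part $(c)$.} For the ``if'' direction, Hölder regularity follows from $|f(t+\delta)-f(t)|\le \delta\sup_{[t,t+\delta]}|f'|$ interpolated with $2\|f\|_\infty$. Take $h_\lambda(t)=C\min(1,\sup_{s\ge t}|f'(s)|)^{\vartheta}$-type bounds, and check that $f'$ is bounded near $0$ or at least integrable. Here is the subtlety: $f'$ involves $\int e^{-tu}u^2H_\alpha\,du$, which behaves like $t^{\alpha-2}$ at $0$. I would handle this with $\vartheta\le \alpha-1$ via the bound $\int_0^\infty|e^{-(t+\delta)u}-e^{-tu}|uH_\alpha\,du\le \delta^{\vartheta}\int e^{-tu}u^{1+\vartheta}|H_\alpha|\,du$, which is integrable in $t$ and bounded when $\vartheta<\alpha-1$. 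The exponential part is Lipschitz with exponentially decaying constant.

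For ``only if'', when $\alpha>2$ some pole has $\cos\theta_n\ge 0$ (for $\alpha\in(2,3)$, $\cos(\pi/\alpha)>0$), so $G_\alpha$ oscillates with non-decaying or growing amplitude and $R_{\alpha,\lambda}$ cannot converge. For integer $\alpha\ge2$, $e_2=\cos$ already fails.

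The main obstacle is the sharp asymptotics of $F_\alpha$ and its derivative near $0$ and $\infty$ (Watson-type lemma with the exact behaviour of $H_\alpha$), which drive both the integrability claims and the choice of $\vartheta$.
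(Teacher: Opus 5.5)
Your proposal is correct in outline and follows essentially the paper's route. For (a) you use the sign of $H_\alpha$ plus Bernstein. For (b) you use the scaling $R_{\alpha,\lambda}=e_\alpha(\lambda^{1/\alpha}\cdot)$, differentiation under the integral, and exponential decay of the pole term since $\cos(\pi/\alpha)<0$. For (c) you use $e^{-tu}(1-e^{-\delta u})\le e^{-tu}(\delta u)^{\vartheta}$, giving $h(t)=Ce^{t\cos(\pi/\alpha)}+\int_0^\infty e^{-tu}u^{1+\vartheta}|H_\alpha(u)|\,du$, bounded and integrable for $\vartheta<\alpha-1$.

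The one methodological difference is the large-$t$ behaviour of $F_\alpha,F'_\alpha$, which you get from Watson's lemma. The paper instead uses $u^{2\alpha}+2u^\alpha\cos(\pi\alpha)+1\ge\sin^2(\pi\alpha)$, i.e. $|H_\alpha(u)|\le u^{\alpha-1}/(\pi|\sin(\pi\alpha)|)$. This gives $|F^{(k)}_\alpha(t)|\le\Gamma(\alpha+k)\,t^{-\alpha-k}/(\pi|\sin(\pi\alpha)|)$ for all $t>0$, so what you call the main obstacle is a one-line estimate. Sharp asymptotics are only needed for your ``exactly when'' refinements.

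Your converse sketch goes beyond what the paper writes, since the paper only shows that $\cos(\pi/\alpha)<0$ suffices. To make it a proof, add that for non-integer $\alpha>2$ the pair $e^{\pm i\pi/\alpha}$ has the strictly largest real part $\cos(\pi/\alpha)>0$ and $\sin(\pi/\alpha)>0$. So it cannot be cancelled by the other poles or by the bounded $F_\alpha$. Applied to $G'_\alpha$, the same argument makes $f_{\alpha,\lambda}$ grow exponentially. That is incompatible with a bounded $h_\lambda$, which forces at most linear growth, and this settles the H\"older half of the ``only if''.

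Two points need correcting.

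\emph{The phase.} It is not ``harmless by conjugate symmetry''. One has $\frac{d}{dt}\Re(e^{z_nt})=\Re(z_ne^{z_nt})=e^{t\cos\theta_n}\cos(t\sin\theta_n+\theta_n)$. The conjugate pole contributes $\Re(\bar z_ne^{\bar z_nt})$, which is the same $+\theta_n$ expression. The functions $\cos(t\sin\theta_n+\theta_n)$ and $\cos(t\sin\theta_n-\theta_n)$ differ whenever $\sin\theta_n\sin(t\sin\theta_n)\neq0$. So your derivation is right, and the displayed formula has a sign typo, as do the paper's expressions for $G^{(k)}_\alpha$ with $-k\theta_n$. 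Say so explicitly. Nothing downstream is affected, because all estimates only use $|\cos|\le 1$.

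\emph{The integrability range.} Boundedness plus $O(t^{-\alpha-1})$ decay gives $f_{\alpha,\lambda}\in\mathcal L^{2\beta}(\R_+)$ only when $2\beta(\alpha+1)>1$, not for every $\beta>0$. Your own Watson asymptotics give $f_{\alpha,1}(t)\sim\frac{\sin(\alpha\pi)\Gamma(\alpha+1)}{\pi}\,t^{-\alpha-1}$ with a nonzero constant, so the claim is false for $\beta\le\frac{1}{2(\alpha+1)}$. The paper's proof has the same overreach. It is harmless, since only $\beta\ge1$ is used later, but you should state the correct range.
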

	\begin{Proposition}[$\alpha$-fractional kernels $1 <\alpha<2$]\label{prop:main}Let $\lambda>0$ and let $\alpha\!\in (1,2)$.\\
		\smallskip
		\noindent $(a)$  The $\lambda$-resolvent $R_{\alpha, \lambda}$ satisfies $ R_{\alpha,1}= e_{\alpha}$ and $R_{\alpha, \lambda} = R_{\alpha,1}(\lambda^{1/\alpha}\cdot)$. The function $e_{\alpha}$ and thus $R_{\alpha, \lambda}$ are infinitely differentiable i.e. $\mathcal C^\infty$ on $(0, +\infty)$) with:
		{\small
			\begin{equation}
				\forall k \in \N, \quad e^{(k)}_\alpha(t) = F^{(k)}_\alpha(t) + G^{(k)}_\alpha(t) \quad \text{where}\quad	F^{(k)}_\alpha(t) = \int_0^{+\infty} e^{-tu} H^{(k)}_{\alpha}(u) \, du \;
			\end{equation}
		}
		\begin{equation}\label{eq:derivG_k}
			H^{(k)}_{\alpha}(u) := (-1)^k \frac{\sin(\alpha \pi) }{ \pi } \frac{u^{\alpha - 1 + k}}{u^{2\alpha} + 2u^{\alpha} \cos(\alpha \pi) + 1}\; \text{and} \; G^{(k)}_\alpha(t) = \frac2\alpha e^{t\cos \left( \frac{\pi}{\alpha} \right)} \cos \left[ t \sin \left( \frac{\pi}{\alpha} \right) - \frac{k \pi}{\alpha} \right].
		\end{equation}
		
		Moreover $R_{\alpha, \lambda}(0)=1$, $R_{\alpha,\lambda}(t) \leq 1 \quad \forall t\geq0$,  $R_{\alpha,\lambda}$ converges  to $0$.
		$R_{\alpha,\lambda}\!\in {\cal L}^{r}(\R_+)$ for every $r>\frac{1}{\alpha}$ and 
		$f_{\alpha, \lambda}:= -R'_{\alpha, \lambda}$ is infinitely differentiable, bounded converges to $0$ and satisfy:
		\[
		\forall\, t>0, \quad f_{\alpha, \lambda} (t):= -R'_{\alpha, \lambda} (t) = \lambda^{\frac{1}{\alpha}}\left(\int_0^{+\infty} e^{-\lambda^{\frac{1}{\alpha}}tu}uH_{\alpha}(u)du - \frac2\alpha e^{t \lambda^{\frac{1}{\alpha}}\cos \left( \frac{\pi}{\alpha} \right)} \cos \left[ t \lambda^{\frac{1}{\alpha}} \sin \left( \frac{\pi}{\alpha} \right) - \frac{\pi}{\alpha} \right]\right).
		\]
		\noindent $(b)$ Moreover, if  $\alpha\!\in (1, 2)$,$f_{\alpha,\lambda} \!\in {\cal L}^{p}(\R_+)$ for every $p \in (0,+\infty] $ and for every $\vartheta\!\in\big(0,\alpha-1\big)$
		\[| f_{\alpha ,\lambda}(t+\delta)-f_{\alpha,\lambda}(t)  | \leq h_{\alpha,\lambda}(t) \delta^{\vartheta} \quad \text{with}\quad  h_{\alpha,\lambda} \in {\cal L}^1(\R_+) \cap {\cal L}^\infty(\R_+) \quad \text{and}\quad h_{\alpha,\lambda}*\varsigma^2_{\alpha,\lambda,c} \in {\cal L}^\infty(\R_+)\]
	\end{Proposition}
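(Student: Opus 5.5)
The plan is to use the decomposition $e_\alpha=F_\alpha+G_\alpha$ of Proposition~\ref{prop:representation}, specialised to $\alpha\in(1,2)$ as in the Remark following it. There are then exactly two poles $z_0=e^{i\pi/\alpha}$ and $\bar z_0$, with $\cos(\pi/\alpha)<0$. So $G_\alpha(t)=\frac{2}{\alpha}\,\Re\big(e^{z_0t}\big)$, and every derivative is obtained by multiplying by $z_0^k=e^{ik\pi/\alpha}$. This gives the stated formula for $G^{(k)}_\alpha$ at once.

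For $F_\alpha$, I will differentiate under the integral sign. This is legitimate for $t>0$ because $e^{-tu}u^{k}H_\alpha(u)$ is dominated locally uniformly in $t$. Note that $H_\alpha(u)\sim \frac{\sin(\alpha\pi)}{\pi}u^{\alpha-1}$ at $0$ and $H_\alpha(u)\sim\frac{\sin(\alpha\pi)}{\pi}u^{-\alpha-1}$ at $\infty$. The denominator $u^{2\alpha}+2u^\alpha\cos(\pi\alpha)+1=|u^\alpha+e^{i\pi\alpha}|^2$ is bounded below by $\sin^2(\pi\alpha)>0$, which I will reuse throughout. This yields $F^{(k)}_\alpha=\int_0^\infty e^{-tu}H^{(k)}_\alpha(u)\,du$.

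Next, the scaling $R_{\alpha,\lambda}=e_\alpha(\lambda^{1/\alpha}\cdot)$ follows from the series \eqref{eq:SolventFracKernel}. Differentiating once gives the formula for $f_{\alpha,\lambda}$: the $G$-part contributes $-\frac{2}{\alpha}e^{\cdot}\cos(\cdot-\pi/\alpha)$ and the $F$-part contributes $\int e^{-\cdot u}uH_\alpha(u)\,du$, with the signs tracked since $f=-R'$.

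Decay and integrability come next. $G$ and its derivatives decay exponentially. For the $F$-part, the Abelian/Tauberian behaviour of Laplace transforms gives $F_\alpha(t)\sim\frac{\sin(\alpha\pi)}{\pi}\Gamma(\alpha)t^{-\alpha}$ as $t\to\infty$. Hence $R_{\alpha,\lambda}\to0$, and $R_{\alpha,\lambda}\in\mathcal L^r$ for $r\alpha>1$ (boundedness near $0$ is clear). The same argument shows $f_{\alpha,\lambda}=O(t^{-\alpha-1})$ at infinity. Near $0$, the series \eqref{eq:DerivSolventFracKernel} gives $f_{\alpha,\lambda}(t)\sim\lambda t^{\alpha-1}/\Gamma(\alpha)\to0$, since $\alpha>1$. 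So $f_{\alpha,\lambda}$ is continuous and bounded on $[0,\infty)$ and tends to $0$, which puts it in every $\mathcal L^p$ with $p\ge1$. For $p<1$ one uses the $t^{-\alpha-1}$ tail, which is integrable only when $p(\alpha+1)>1$. I will state it that way, or check whether this is what "every $p$" intends.

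The bound $R_{\alpha,\lambda}\le1$ would come from $R=1-\int_0^t f$ together with a sign argument. Cleaner is the resolvent identity: $R_{\alpha,\lambda}(t)=1-\lambda K*R$. I expect this step to be delicate, since $R$ oscillates and may become negative. I would try to show $\int_0^t f_{\alpha,\lambda}\ge0$ using the explicit formula at small and large $t$, and fall back on the first-zero argument otherwise.

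Part (b), the Hölder bound, is the main obstacle. For $\delta\le1$, write $|f(t+\delta)-f(t)|\le\min\big(2\sup_{[t,t+1]}|f|,\ \delta\sup_{[t,t+1]}|f'|\big)$. From the representation, $f'$ behaves like $t^{\alpha-2}$ near $0$ (singular, but integrable since $\alpha>1$) and like $O(t^{-\alpha-2})$ plus exponential terms at infinity.

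Interpolation then gives $|f(t+\delta)-f(t)|\le C\delta^\vartheta\,\big(\sup_{[t,t+1]}|f'|\big)^{\vartheta}\big(\sup|f|\big)^{1-\vartheta}$ when $t\ge1$. For $t<1$, use $|t+\delta|^{\alpha-1}-t^{\alpha-1}\le\delta^{\alpha-1}$ and $\vartheta<\alpha-1$. For $\delta>1$, bound directly by $2\|f\|_\infty\delta^\vartheta$.

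This defines $h_{\alpha,\lambda}(t)=C\big(\mathbf 1_{t\le1}+(1+t)^{-(\alpha+1)}\big)$, which lies in $\mathcal L^1\cap\mathcal L^\infty$ because the decay of $f$ and $f'$ is integrable; the exponential $G$-terms are harmless. Finally, $h*\varsigma^2\in\mathcal L^\infty$ would follow from Young's inequality when $\varsigma^2\in\mathcal L^1$ (Remark~4 after Definition~\ref{def:stabilizer}). Otherwise I would use local boundedness of $\varsigma^2$ together with its limit $\varsigma_\infty^2$, which gives $\sup_t\int_0^t h(t-u)\varsigma^2(u)\,du\le\|h\|_1\sup\varsigma^2$. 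Controlling $\varsigma$ near $0$, where it may blow up as $t^{\cdot}$, requires the local integrability $\varsigma\in\mathcal L^{2\beta/(\beta-1)}_{loc}$ paired with $h$ bounded.
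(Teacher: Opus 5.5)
Two points in your treatment of (a) are not right as written.

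\textbf{The phase in $G^{(k)}_\alpha$.} Your computation $G^{(k)}_\alpha=\frac2\alpha\,\mathrm{Re}\big(z_0^k e^{z_0 t}\big)$ with $z_0=e^{i\pi/\alpha}$ is correct. But it produces the phase $t\sin(\pi/\alpha)+\frac{k\pi}{\alpha}$, not the stated $-\frac{k\pi}{\alpha}$; indeed $\frac{d}{dt}\big(e^{t\cos\theta}\cos(t\sin\theta)\big)=e^{t\cos\theta}\cos(t\sin\theta+\theta)$. So it does not ``give the stated formula at once''. The stated $G^{(k)}_\alpha$ and the displayed formula for $f_{\alpha,\lambda}$ carry a sign error. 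The $G$-part of $f_{\alpha,\lambda}$ is $-\frac2\alpha e^{(\cdot)}\cos(\cdot+\pi/\alpha)$, not $\cos(\cdot-\pi/\alpha)$ as you wrote. Part (b) is unaffected, since only moduli of these terms enter there.

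\textbf{The bound $R_{\alpha,\lambda}\le1$.} You do not prove this; you only say what you would try. A first-zero argument covers only the interval up to the first zero of $R$. The paper's one-line reason (``since $\sin(\pi\alpha)\le0$'') is also incomplete. It yields only $F_\alpha<0$, i.e. $R_{\alpha,1}<G_\alpha\le\frac2\alpha e^{t\cos(\pi/\alpha)}$, and this bound exceeds $1$ near $t=0$. A short complete argument runs in two pieces.
On $[0,1]$, the series $\sum_k(-1)^k t^{\alpha k}/\Gamma(\alpha k+1)$ is alternating with non-increasing terms, because $\Gamma(\alpha+1)>1$ and $\Gamma(x+\alpha)\ge x\Gamma(x)$ for $x\ge2$. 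Hence $R_{\alpha,1}(t)\le1$ there.
On $[1,\infty)$, $R_{\alpha,1}(t)<\frac2\alpha e^{\cos(\pi/\alpha)}\le1$. This holds because $\cos(\pi/\alpha)\le1-\frac2\alpha\le-\ln\frac2\alpha$, using the chord bound $\cos\theta\le1-\frac{2\theta}{\pi}$ on $[\frac\pi2,\pi]$ (already used in the proof of Proposition~\ref{prop:representation}) and $\ln x\le x-1$.
Scaling then gives $R_{\alpha,\lambda}\le1$.

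\textbf{Part (b): a different route.} The paper works on the representation itself. It uses $1-e^{-\delta u}\le(\delta u)^\vartheta$ inside $F'_\alpha$ and trigonometric identities for $G'_\alpha$. This gives the explicit $h_{\alpha,1}(t)=\frac2\alpha e^{t\cos(\pi/\alpha)}\big(2^{1-\vartheta}(\pi/\alpha)^\vartheta+1\big)+\int_0^\infty e^{-tu}u^{1+\vartheta}|H_\alpha(u)|\,du$. Here $\vartheta<\alpha-1$ is exactly the condition $\|h_{\alpha,1}\|_\infty=h_{\alpha,1}(0)<\infty$. The same form then yields, by Fubini or Minkowski, the $\mathcal L^p$-H\"older estimates \eqref{eq:thetaHolder}.

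You instead interpolate between $\sup_{[t,t+1]}|f|$ and $\delta\sup_{[t,t+1]}|f'|$, and use the $t^{\alpha-1}$ structure near $0$. This needs only pointwise decay of $f$ and $f'$. Watson's lemma gives that decay, and unlike the paper's global bound on $|H_\alpha|$ it also gives the exact rate. Your route produces the simpler $h(t)=C\big(\mathbf 1_{[0,1]}(t)+(1+t)^{-\alpha-1}\big)$ and even allows $\vartheta=\alpha-1$.

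There are two slips to fix:
\begin{itemize}
\item The supremum of $|f|$ in the interpolation must be local, over $[t,t+1]$. Otherwise $h$ decays only like $t^{-(\alpha+2)\vartheta}$, which is not integrable for small $\vartheta$.
\item For $\delta>1$, the bound $2\|f\|_\infty\delta^\vartheta$ is not dominated by your $h(t)$ for large $t$. Use $|f(t+\delta)|+|f(t)|\le 2C(1+t)^{-\alpha-1}\le 2C(1+t)^{-\alpha-1}\delta^\vartheta$ instead.
\end{itemize}
Your control of $h*\varsigma^2$ is the paper's argument: split at $\epsilon$, then use $\|h\|_\infty\int_0^\epsilon\varsigma^2$ and $\|h\|_1\sup_{[\epsilon,\infty)}\varsigma^2$.

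\textbf{Small $p$.} Your caveat is correct and should be stated as a correction, not left open. Since $f_{\alpha,1}(t)\sim\frac{\Gamma(\alpha+1)\sin(\alpha\pi)}{\pi}\,t^{-\alpha-1}$ with a nonzero constant, $f_{\alpha,\lambda}\in\mathcal L^p(\R_+)$ if and only if $p>\frac{1}{\alpha+1}$. The claim ``for every $p\in(0,+\infty]$'' therefore fails for $p\le\frac1{\alpha+1}$. So does the paper's inference of $\mathcal L^p([1,+\infty))$ for all $p>0$ from a $t^{-\alpha-1}$ bound.
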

	\noindent{\bf Remark: } As a consequence of claim~$(b)$,
	for each $p \in \{1,2\}$ and every $\vartheta_p\!\in\big(0,\alpha-1\big)$ (we will actually prove the sharper estimate if $\alpha\!\in (\frac12, \frac32)$, for each $p\geq1$ and every $\vartheta_p\!\in\big(0,(\alpha-1+\frac1p)\wedge1\big)$), 
	there exists a real constant $C_{\vartheta_p, \lambda}>0$ such that
	\vspace{-.1cm}
	\begin{equation}\label{eq:thetaHolder}
		\forall\, \delta > 0, \quad 
		\left[\int_0^{+\infty} |f_{\alpha,\lambda}(t+\delta) - f_{\alpha,\lambda}(t) |^p \, dt \right]^{\frac1p} 
		\le C_{\vartheta_p,\lambda} \delta^{\vartheta_p}.
		\vspace{-.1cm}
	\end{equation}
		For clarity and conciseness, the proofs of Propositions \ref{prop:main_general alpha} and \ref{prop:main} are postponed to Appendix \ref{app:lemmata}.
	\begin{Theorem}\label{thm:mainFractional}
		Let \( \alpha \in \left( 1, \frac32\right) \), 
		let \( K(t) = K_{\alpha}(t) =  \frac{t^{\alpha-1}}{\Gamma(\alpha)} \), \( t > 0 \) the fractional kernel, let $\sigma(t,x):= \varsigma(t)\sigma(x)$ in~\eqref{eq:Volterrameanrevert}-\eqref{eq:Volterrameanrevert_}  with  \( \sigma \) a Lipschitz continuous function given by equation~\eqref{eq:sigmafakeI&II} with \([\sigma]_{\rm Lip}^2:=\kappa_2>0\), $\varsigma$ given by Equation~\eqref{eq:VolterraStabilizerCont}. Let  \( c \in \left( 0, \frac{\alpha-1}{8\kappa_2} \right) \), $\lambda >0$ and let \( X_0 \in L^p(\P) \) for some $p>\frac{2}{\alpha-1}>2$ such that \( \mathbb{E}[X_0] =m_0\) and \(Var(X_0) = v_0=\frac{c\sigma^2( m_0)}{1-c\kappa_2}.\)
		Then,
		\begin{enumerate}
			\item For fractional kernels \( K_{\alpha} \) with \( 1 < \alpha < \frac32 \), the solution $(X_t )_{t\geq0}$ to the Volterra equation~\eqref{eq:ConstMeanVar} or ~\eqref{eq:CondMean} starting from $X_0$ has a fake
			stationary regime of type I in the sense that:
			
			\centerline{$
				\forall\, t\ge 0, \qquad \E\, X_t =  m_0,  \quad{\rm Var}(X_t) = v_0=\frac{c\sigma^2( m_0)}{1-c\kappa_2} \mbox{ and }\quad \E\, \sigma^2(X_t) =  \bar \sigma_0^2 =\frac{\sigma^2( m_0)}{1-c\kappa_2}.
				$}
			\item If $m_0= (1-a) \frac{\mu_\infty}{\lambda}$ i.e. $\ell_\infty=0$, then for any starting random variable \( X_0^\prime  \in L^2(\P) \), a solution to the Volterra Equation~\eqref{eq:ConstMeanVar} or ~\eqref{eq:CondMean} starting from \( X_0^\prime \) satisfies \( \| X_t^\prime - X_t \|_2 \underset{t \to \infty}{\to} 0\)
			\item The family of shifted processes \( X_{t+\cdot}, t \geq 0 \), is \( C \)-tight as \( t \to +\infty \) and its (functional) limiting distributions are all \( L^2 \)-stationary processes with covariance function \( C_\infty \) given by~\eqref{eq:funclongRun_intrinsic} or\eqref{eq:funclongRun}.
		\end{enumerate}
	\end{Theorem}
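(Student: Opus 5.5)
The plan is to reduce Theorem~\ref{thm:mainFractional} to the two abstract results, Theorem~\ref{Thm:Fake-funcWeak_Intrinsic} (autonomous setting, $\varsigma\equiv1$) and Theorem~\ref{Thm:funcWeak} (stabilized setting), by checking their hypotheses for $K=K_\alpha$, $\alpha\in(1,\frac32)$. Everything kernel-specific is carried by Proposition~\ref{prop:main}.

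\textbf{Resolvent and integrability assumptions.} First I will verify Assumption~\ref{ass:resolvent}. By Proposition~\ref{prop:main}(a), $R_{\alpha,\lambda}=e_\alpha(\lambda^{1/\alpha}\cdot)$ is $\mathcal C^\infty$ on $(0,+\infty)$, satisfies $R_{\alpha,\lambda}(0)=1$ and tends to $a=0$. This gives $({\cal R}_\lambda)(i)$.

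Next, $f_{\alpha,\lambda}\in{\cal L}^p(\R_+)$ for every $p$ by Proposition~\ref{prop:main}(b), so $f_{\alpha,\lambda}\in{\cal L}^2$. It is also nonzero $dt$-a.e.: it is real analytic on $(0,+\infty)$, being a Laplace integral plus a damped cosine, and it is not identically zero because $L_{f_{\alpha,\lambda}}=\lambda/(\lambda+s^\alpha)\neq0$. This gives $(ii)$.

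Assumption~\ref{ass:int_holregul} follows from the same proposition. The integrability condition~\eqref{eq:Integf} holds for every $\beta>1$, since $f_{\alpha,\lambda}$ is bounded and in ${\cal L}^1$. The H\"older bound~\eqref{eq:Regulf} holds with any $\vartheta\in(0,\alpha-1)$, together with $h_{\alpha,\lambda}\in{\cal L}^1\cap{\cal L}^\infty$ and $h_{\alpha,\lambda}*\varsigma^2\in{\cal L}^\infty$.

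In the autonomous case I additionally need Assumption~\ref{ass:MeanRevert}. The construction of $\mu$ in Example~\ref{Ex:FakeStatAutonome_Lm} with $\rho=0$ provides it, under the requirement $c\ge\lambda^{-2}\|f_{\alpha,\lambda}\|_2^2$. In the time-dependent case, Assumption~\ref{ass:on_stabilizer} (existence of a positive continuous stabilizer $\varsigma$) is taken as standing.

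\textbf{Claims 1 and 2.} Claim~1 is then exactly part (a)-(ii) of Theorem~\ref{Thm:Fake-funcWeak_Intrinsic}, respectively part (a) of Theorem~\ref{Thm:funcWeak}. The hypotheses needed there are $\kappa_2>0$, $\sigma(m_0)\neq0$, $c<1/\kappa_2$ and the prescribed $\mathbb E X_0=m_0$, $\mathrm{Var}(X_0)=v_0$. The condition $c<1/\kappa_2$ is automatic from $c<\frac{\alpha-1}{8\kappa_2}$.

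Claim~2 (confluence when $\ell_\infty=0$) follows from the confluence statements in the same theorems, i.e.\ from \cite[Proposition 4.4]{EGnabeyeu2025}, taken with $p=2$. That proposition applies because $c\kappa_2<1$.

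\textbf{Claim 3 and where the difficulty lies.} The main point is Claim~3: I must check that the constraints~\eqref{eq:func_weak_intrin}/\eqref{eq:func_weak} are compatible with $p>\frac{2}{\alpha-1}$ and $c<\frac{\alpha-1}{8\kappa_2}$.

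In the stabilized case, choose $\beta$ large and $\vartheta$ close to $\alpha-1$. Then $\vartheta^*=\frac12(\vartheta\wedge(1-\frac1{2\beta}))$ is close to $\frac{\alpha-1}{2}$, which is at most $\frac12$. So the relevant branch is the second one, which asks for $p>1/\vartheta^*$ and $c<\frac1{4p\kappa_2}$, recalling $[\sigma]_{\rm Lip}^2=\kappa_2$.

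Since $p>\frac{2}{\alpha-1}$ is strict, I can pick $\vartheta<\alpha-1$ with $p\vartheta^*>1$. The condition on $c$ is the delicate point, because $c<\frac{\alpha-1}{8\kappa_2}$ does not automatically imply $c<\frac1{4p\kappa_2}$ for large $p$. I would handle it in one of two ways:
\begin{itemize}
\item lower $p$ to some $p'\in\big(\frac{2}{\alpha-1},\frac{1}{4c\kappa_2}\big)$, using $X_0\in L^{p'}$ since $p'\le p$; this interval is nonempty exactly because $c<\frac{\alpha-1}{8\kappa_2}$, which is what makes $\frac{1}{4c\kappa_2}>\frac{2}{\alpha-1}$;
\item or, equivalently, read the hypothesis as this compatibility condition.
\end{itemize}
I will redo the same check in the autonomous case with $\vartheta\wedge\frac{\beta-1}{2\beta}$ in place of $\vartheta^*$.

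With these choices, part (b) of Theorem~\ref{Thm:Fake-funcWeak_Intrinsic}/\ref{Thm:funcWeak} gives C-tightness, and part (c) gives $L^2$-stationarity of the limits with covariance~\eqref{eq:funclongRun_intrinsic}/\eqref{eq:funclongRun}. The limits in part (c) use $\varsigma^2(t)\to\varsigma^2_\infty$ from \cite[Lemma 3.9]{EGnabeyeu2025}, valid since $f_{\alpha,\lambda}\in{\cal L}^1\cap{\cal L}^2$.
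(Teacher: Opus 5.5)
Your proposal is correct and follows the paper's proof. The paper likewise reduces to Theorems~\ref{Thm:Fake-funcWeak_Intrinsic} and~\ref{Thm:funcWeak} via Proposition~\ref{prop:main}, takes $\vartheta\to\alpha-1$ and $\beta\to+\infty$, and assumes without loss of generality that $p\in\big(\frac{2}{\alpha-1},\frac{1}{4c\kappa_2}\big)$, an interval that is nonempty precisely because $c<\frac{\alpha-1}{8\kappa_2}$, so that $p\vartheta^*\to\frac{p(\alpha-1)}{2}>1$; Claim~2 comes from \cite[Proposition 4.4]{EGnabeyeu2025}. Your checks that $f_{\alpha,\lambda}\neq0$ a.e.\ and that the autonomous case needs $c\ge\lambda^{-2}\|f_{\alpha,\lambda}\|_{{\cal L}^2(\R_+)}^2$ (strict inequality for part (b)) are details the paper's proof leaves implicit.
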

	\noindent {\bf Proof.}	Let \( 1 < \alpha < \frac32 \).
	By Proposition~\ref{prop:main2}~$(c)$, we let \(0<\vartheta<\alpha-1\) and any \(	\beta>1\) by Assumption~\ref{ass:int_holregul}. We have
	that conditions~\eqref{eq:func_weak} of Theorem~\ref{Thm:funcWeak} are satisfied for \(\frac{2}{\alpha-1}<p < \frac1{4c\kappa_2}\) which we may assume, without loss of generality. In fact, in this case
	\[
	\lim_{\substack{\vartheta\to \alpha-1,
			\beta\to +\infty}}
	\Big[p\big(
	 \vartheta \wedge \frac{\beta-1}{2\beta}
	\big) \wedge \frac{p}{2}\big(
	\vartheta \wedge (1-\frac{1}{2\beta})
	\big)\Big]
	= \Big[ p\big(
	(\alpha-1) \wedge \frac{1}{2}
	\big) \wedge \frac{p}{2}\big(
	(\alpha-1) \wedge 1
	\big) \Big]=
	\frac{p(\alpha-1)}{2}
	>1.
	\]
	Therefore, theorems~\ref{Thm:Fake-funcWeak_Intrinsic} and~\ref{Thm:funcWeak} applies.
	The claim (2) is a consequence of~\cite[Proposition 4.4]{EGnabeyeu2025}.  \hfill $\square$
	\subsection{The function $\varsigma_{\alpha,\lambda,c}^2$ solution of the stabilizer equation when $\alpha \in (0,2)$}\label{subsec:sigma_Lmemory}
In this section, we derive an explicit representation of
$\varsigma_{\lambda,c}^{2}$ as a power series in $t^{k\alpha}$.
To motivate the form of this expansion, we first investigate the
small-time asymptotic behaviour of $\varsigma_{\lambda,c}^{2}$,
which naturally suggests an appropriate power-series ansatz.
	\subsubsection{Small-time asymptotics of the solution
		$\varsigma_{\alpha,\lambda,c}^{2}$ when $\alpha \in (0,2)$}
	In this section, we derive the leading-order asymptotic behaviour of
	$\varsigma_{\alpha,\lambda,c}^{2}$ as \(t\to0\) near the origin, assuming that
	\(\mu(t)\sim\mu(0)\) with \(\mu(0)/m_0\ge0\). For simplicity, we omit the dependence on \(\alpha\).
	To this end we rely on the Laplace version of the equation \textit{($E_{\lambda, c}$)} in ~\eqref{eq:VolterraStabilizer} satisfied by $\varsigma^2_{\lambda,c}$, namely \(\, c \lambda^2 \big(1- (\phi - f_{\lambda} * \phi)^2(t) \big) =  (f_{\lambda}^2 * \varsigma^2)(t)\; \forall\, t\ge 0 \). Applying an integration by parts, its Laplace transform is given by equation~\eqref{eq:Laplacesigma} 
	\begin{equation}\label{eq:Laplacesigma}\forall\, t>0, \quad  t\,L_{f^2_\lambda}(t).L_{\varsigma^2}(t)= - 2\,c\lambda^2 L_{(\phi - f_{\lambda} * \phi)(\phi - f_{\lambda} * \phi)^\prime}(t).
	\end{equation}
	Given the kernel $K_{\alpha}(u) = \frac{u^{\alpha-1}}{\Gamma(\alpha)}$ and the expansion of the resolvents $R_{\alpha,\lambda}$ and it derivative $-f_{\alpha,\lambda}$, \(\forall t\ge 0,\)
	{\small
		\begin{equation}\label{eq:e_alpha}
			R_{\alpha,\lambda}(t) = \sum_{k\ge 0} (-1)^k \frac{\lambda ^k t^{\alpha k}}{\Gamma(\alpha k+1)}= E_{\alpha}(-\lambda t^{\alpha} ), \; f_{\alpha, \lambda}(t)= \alpha\lambda t^{\alpha-1} E'_{\alpha}(-\lambda t^{\alpha})  = \lambda t^{\alpha-1}\sum_{k\ge 0}(-1)^k \frac{\lambda^kt^{\alpha k}}{\Gamma(\alpha (k+1))}.
		\end{equation}
	}
	Since \( \phi(t) - (f_{\lambda} * \phi)(t) = 1 - \frac{(f_{\lambda} * \mu)_t}{\lambda m_0}  \), we have \(\phi(t) - (f_{\lambda} * \phi)(t) \stackrel{0}{\sim} 1\) and owing to Assumption~\ref{ass:resolvent}~$(iii)$,
	\((\phi(t) - (f_{\lambda} * \phi)(t))^\prime\stackrel{0}{\sim} -\frac{\mu(0)}{\lambda m_0}f_{\lambda}(t) \). Together with Equation~\eqref{eq:e_alpha}, this yields
	 \[(\phi - f_{\lambda} * \phi)(\phi - f_{\lambda} * \phi)^\prime(t)\stackrel{0}{\sim} -\frac{\mu(0)}{\lambda m_0}\frac {\lambda t^{\alpha-1}}{\Gamma(\alpha)} \quad \mbox{ and}\quad f^2_{\lambda}(t)\stackrel{0}{\sim} \frac {\lambda^2 t^{2(\alpha-1)}}{\Gamma(\alpha)^2}.\]
	It follows that -- at least heuristically~(\footnote{We use here heuristically a dual version of the Hardy-Littlewood Tauberian theorem for Laplace transform, namely  $\varsigma^2(t)\stackrel{0}{\sim}Ct^{\gamma}$, $\gamma>-1$,  iff $L_{\varsigma^2}(t)\stackrel{+\infty}{\sim}C t^{-(\gamma+1)} \Gamma(\gamma+1).$ We refer to \cite{BiGoTe1989,DeHaanFerreira2006} for a general theory of regular variation.})~--
	\begin{equation}
	L_{(\phi - f_{\lambda} * \phi)(\phi - f_{\lambda} * \phi)^\prime}(t)\stackrel{+\infty}{\sim} -\lambda \frac{\mu(0)}{\lambda m_0}t^{-\alpha} \quad \mbox{ and}\quad L_{f^2_{\lambda}}(t)\stackrel{+\infty}{\sim} \frac {\lambda^2\Gamma(2\alpha-1) t^{-(2\alpha-1)}}{\Gamma(\alpha)^2}.
	\end{equation}
	This implies that \(L_{\varsigma^2}(t) \stackrel{+\infty}{\sim} 2\lambda \,c \frac{\mu(0)}{\lambda m_0} \frac{\Gamma(\alpha)^2}{\Gamma(2\alpha-1)} t^{-(2-\alpha)} \)
	owing to Equation~\eqref{eq:Laplacesigma}.
	This in turn suggests that 
	\begin{equation}\label{eq:varsigma}
		\varsigma^2(t) \stackrel{0}{\sim} \frac{2\lambda c \Gamma(\alpha)^2}{\Gamma(2\alpha-1) \Gamma(2-\alpha)} \frac{\mu(0)}{\lambda m_0} t^{1-\alpha} \quad \text{so that}
		\left\{
		\begin{array}{ll}
			(i) & \varsigma(0) = 0 \text{ if } \alpha < 1, \text{see e.g. ~\cite{Pages2024, EGnabeyeu2025}}\\
			(ii) & \lim_{t \to 0^+} \varsigma(t) = +\infty \text{ if } \alpha > 1.
		\end{array}
		\right.
	\end{equation}
	This suggests 
	to search $\varsigma^2(t)$ of the form ({\em Power Series Ansatz}):
	\begin{equation}\label{eq:expectedsigma2}
		\varsigma^2(t) = \varsigma_{\alpha,\lambda,c}^2(t):= 2\,\lambda \, c\,t^{1-\alpha}\sum_{k\ge 0} (-1)^k c_k\lambda^k t^{\alpha k} \quad \text{with}\quad  c_0 = \frac{\Gamma(\alpha)^2}{\Gamma(2\alpha-1)\Gamma(2-\alpha)}\frac{\mu(0)}{\lambda m_0}.
	\end{equation}
	
	\noindent {\bf Remarks :} 1.  At this point, it is crucial to emphasize that, for a fixed value of \( \alpha \), all functions \( \varsigma_{\alpha, \lambda, c}^2 \) from equation~\eqref{eq:expectedsigma2} are derived or generated from a common function, defined as
	\begin{equation}\label{eq:varsigma_reduit}
		\varsigma^2_{\alpha, \lambda, c}(t) = c \lambda^{2 - \frac{1}{\alpha}} \varsigma_\alpha^2\left( \lambda^{\frac{1}{\alpha}} t \right) \quad \text{with} \quad	\varsigma_{\alpha}^2(t) := 2\, t^{1-\alpha} \sum_{k \geq 0} (-1)^k c_k t^{\alpha k}.
	\end{equation}
	where the coefficients \( c_k \) depend on \( \alpha \). Thus, for simplicity in what follows, we will assume \( c = \lambda = 1 \).
	
	2. For the computation of the function $\varsigma_{\alpha,\lambda,c}^2$, we need to establish a recurrence formula satisfied by the coefficients $c_k$, which involves knowing the  form of the mean-reverting function \(\mu\). In practice, since this function is usually taken to be constant equal to \(\mu_0\), we are going in the next subsection to compute and study the function $\varsigma_{\alpha,\lambda,c}^2$ when $ \mu(t) = \mu_0=\lambda m_0 \quad \text{a.e.}$ and $\alpha \in (1,\frac32)$  bearing in mind that, the case when $\alpha \in (\frac12,1)$ have been intensively study in \cite{Pages2024, EGnabeyeu2025}.
	
	\subsubsection{Existence and computation of $\varsigma_{\alpha,\lambda,c}^2$ solution of the stabilizer equation when $\alpha \in (1, 2)$}\label{subsec:sigma_Lmemory}
	
	The recurrence formula satisfied by the coefficients $c_k$, which make possible the computation of the functions $\varsigma_{\alpha, \lambda,c}$ are established in the same manner as in \cite{Pages2024}.
	We consider the case where \( \mu(t) = \mu_0=\lambda m_0 \) a.e., so that \( \mu_\infty = \mu_0=\lambda m_0 \), and \( \phi \equiv 1 \). 
	 We then have the following proposition, whose proof is postponed to Appendix~\ref{app:lemmata}.
	\begin{Proposition}[Existence of the function $\varsigma_{\alpha,\lambda,c}^2$ for $\alpha \in (1,2)$]\label{prop:alphaFractKernel1}
		Let $\alpha \in (1,2)$: 
		\begin{enumerate}
			\item \(\lim_{t\to0} \varsigma^2_{\alpha,\lambda,c} = +\infty\), and \(
			\lim_{t \to +\infty} \varsigma^2_{\alpha,\lambda,c}(t) = \frac{c\lambda^2}{\|f_{\alpha,\lambda}\|^2_{{\cal L}^2(\R_+)}}.
			\)
			\item 
			\(\varsigma^2_{\alpha,\lambda, c}(t) = c \lambda^{2-\frac1\alpha}\varsigma_\alpha^2(\lambda^{\frac1\alpha} t)\) where \(\varsigma_{\alpha}^2(t):= 2\,t^{1-\alpha}\sum_{k\ge 0} (-1)^k c_k t^{\alpha k}\) and  the coefficients $(c_k)_{k\geq0}$ are defined as follows: \(
			c_0=\frac{\Gamma(\alpha)^2}{\Gamma(2\alpha-1)\Gamma(2-\alpha)} \quad \textit{ and for every} \quad k\ge 1,\)
			{\small
				\begin{equation}\label{eq:ck2}
					c_k = \frac{\Gamma(\alpha)^2\Gamma(\alpha(k+1))}{\Gamma(2\alpha-1)\Gamma(\alpha k+2-\alpha)}\left[ (a*b)_k- \alpha(k+1)\sum_{\ell=1}^kB\big(\alpha(\ell+2)-1,\alpha(k-\ell-1)+2\big) (b^{*2})_{\ell}  c_{k-\ell}  \right].
				\end{equation}
			} 
			where $
			a_k=\frac{1}{\Gamma(\alpha k+1)}$, $b_k=\frac{1}{\Gamma(\alpha(k+1))},\; k\ge0,$ and for two sequences of real numbers \( (u_k)_{k \geq 0} \) and \( (v_k)_{k \geq 0} \), the Cauchy product is defined as \( (u * v)_k = \sum_{\ell = 0}^k u_\ell v_{k - \ell} \) and \( B(a, b) = \int_0^1 u^{a-1}(1 -
			u)^{b-1} du\) denoting the beta function.
			
			\item   The convergence radius \( \rho_\alpha = \left( \liminf_k \left( |c_k|^{1/k} \right) \right)^{\frac{-1}{\alpha}} \) of the power series \( \sum_{k \geq 0} c_k t^{\alpha k} \), defined by the coefficients \( c_k \), is infinite. Specifically, there exist constants \( K \geq 1 \) and \( A \geq 2^{\alpha+2} \) such that for all \( k \geq 0 \), the following inequality holds: \(|c_k| \leq \frac{K A^k}{\Gamma(\alpha(k-1) + 2 )}.\)
			As a consequence, the expansion in equation \ref{eq:varsigma_reduit} converges for all \( t \in \mathbb{R}^+ \), and in fact, for all \( t \in \mathbb{R} \).
		\end{enumerate} 
	\end{Proposition}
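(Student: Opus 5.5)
We first reduce to $c=\lambda=1$. By Remark~2 after Definition~\ref{def:stabilizer}, $\varsigma_{\lambda,c}=\sqrt c\,\varsigma_{\lambda,1}$. By Proposition~\ref{prop:main}~$(a)$, $R_{\alpha,\lambda}=e_\alpha(\lambda^{1/\alpha}\cdot)$ and $f_{\alpha,\lambda}=\lambda^{1/\alpha}f_{\alpha,1}(\lambda^{1/\alpha}\cdot)$. Changing variables in $(f_{\alpha,\lambda}^2*\varsigma^2)(t)$ then gives $\varsigma^2_{\alpha,\lambda,c}(t)=c\lambda^{2-\frac1\alpha}\varsigma^2_\alpha(\lambda^{1/\alpha}t)$, which is claim~2 up to identifying $\varsigma_\alpha^2$.

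Since $\mu\equiv\lambda m_0$, we have $\phi\equiv1$ and $1-\frac{(f_\lambda*\mu)(t)}{\lambda m_0}=R_{\alpha,\lambda}(t)$. Hence $(E_{\lambda,c})$ with $c=\lambda=1$ reads
\[
1-e_\alpha(t)^2=(f_{\alpha,1}^2*\varsigma_\alpha^2)(t).
\]
We differentiate it to $2e_\alpha f_{\alpha,1}=\frac{d}{dt}(f^2_{\alpha,1}*\varsigma_\alpha^2)$; the derivative form is what produces the Cauchy product $(a*b)_k$ in \eqref{eq:ck2}. We then insert the expansions
\[
e_\alpha(t)=\sum_k(-1)^ka_kt^{\alpha k},\qquad f_{\alpha,1}(t)=t^{\alpha-1}\sum_k(-1)^kb_kt^{\alpha k},\qquad f^2_{\alpha,1}(t)=t^{2\alpha-2}\sum_k(-1)^k(b^{*2})_kt^{\alpha k},
\]
together with the ansatz \eqref{eq:varsigma_reduit}. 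The elementary identity
\[
t^{p-1}*t^{q-1}=B(p,q)\,t^{p+q-1}
\]
turns the convolution into a power series in $t^{\alpha k}$ times a fixed power of $t$.

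Matching the coefficients of $t^{\alpha(k+1)-1}$ gives the recurrence. The order-$0$ match, $2a_0b_0=2(b^{*2})_0c_0\,\alpha B(2\alpha-1,2-\alpha)$, yields $c_0=\frac{\Gamma(\alpha)^2}{\Gamma(2\alpha-1)\Gamma(2-\alpha)}$, consistent with \eqref{eq:expectedsigma2}. At order $k\geq1$, we isolate the term $\ell=0$, which is the one containing $c_k$. The prefactor $\frac{\Gamma(\alpha)^2\Gamma(\alpha(k+1))}{\Gamma(2\alpha-1)\Gamma(\alpha k+2-\alpha)}$ comes from inverting $\alpha(k+1)(b^{*2})_0B(2\alpha-1,\alpha k+2-\alpha)$, and the remaining terms $\ell\ge1$ give the sum in \eqref{eq:ck2}.

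Claim~3 is the main technical step: the termwise manipulations above are only legitimate once the series converges absolutely. We prove $|c_k|\le KA^k/\Gamma(\alpha(k-1)+2)$ by strong induction, using the following bounds.
\begin{itemize}
\item By log-convexity of $\Gamma$, $(a*b)_k\le (k+1)C^k/\Gamma(\alpha k+\alpha)$ and $(b^{*2})_\ell\le C^\ell/\Gamma(\alpha\ell+2\alpha-1)$.
\item We write $B=\Gamma\Gamma/\Gamma$, so that the Gamma factors telescope against $\Gamma(\alpha(k+1))/\Gamma(\alpha k+2-\alpha)\lesssim (\alpha k)^{2\alpha-2}$.
\item The sum over $\ell$ is then dominated by a geometric series in $(2/A)^\ell$.
\end{itemize}
Choosing $A\ge2^{\alpha+2}$ and $K$ large absorbs the polynomial factors. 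This is the step I expect to be delicate. The fallback is a cruder bound of the form $A^k/\Gamma(\alpha k/2)$, which still gives an infinite radius of convergence. Once absolute convergence holds, Fubini justifies the termwise convolution, the series solves $(E_1)$, and uniqueness follows from \cite[Lemma 3.10]{EGnabeyeu2025}.

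For claim~1, the limit at $0$ is immediate: $\varsigma_\alpha^2(t)\sim2c_0t^{1-\alpha}$ with $c_0>0$ (since $\Gamma(2-\alpha)>0$ for $\alpha<2$) and $1-\alpha<0$. For the limit at $+\infty$ we cannot use the series directly, because it is an entire-type alternating series. Instead, we use that $R_{\alpha,\lambda}\to0$, so $\ell_\infty=0$ (Proposition~\ref{prop:main}), together with $f_{\alpha,\lambda}\in{\cal L}^1\cap{\cal L}^2$, and invoke \cite[Lemma 3.9]{EGnabeyeu2025}. This yields $\varsigma^2\to c\lambda^2(1-\ell_\infty^2)/\|f_{\alpha,\lambda}\|_2^2=c\lambda^2/\|f_{\alpha,\lambda}\|^2_{{\cal L}^2(\R_+)}$. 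As a consistency check, the final value theorem applied to $sL_{\varsigma^2}(s)=sL_{1-R^2}(s)/L_{f^2}(s)$ as $s\to0$ gives the same constant.
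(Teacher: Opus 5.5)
Your proposal is correct and follows essentially the paper's argument. It uses the same power-series ansatz matched against the differentiated stabilizer equation; the paper equates coefficients on the Laplace side via \eqref{eq:Laplacesigma}, while you do it in the time domain through $t^{p-1}*t^{q-1}=B(p,q)\,t^{p+q-1}$, which gives \eqref{eq:ck2} directly in its Beta form. It also uses the same strong induction on $|c_k|\le KA^k/\Gamma(\alpha(k-1)+2)$, where the Beta factors cancel against the induction hypothesis and the bound on $(b^{*2})_\ell$ to leave a geometric series, and the same appeal to \cite[Lemma 3.9]{EGnabeyeu2025} for the limit at $+\infty$.

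You are slightly more careful than the paper in two places. You read off $\varsigma_\alpha^2(t)\sim 2c_0t^{1-\alpha}$ from the convergent series rather than from the heuristic \eqref{eq:varsigma}. You also state explicitly that absolute convergence justifies the termwise convolution and that uniqueness identifies the series with the stabilizer.
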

	\noindent\textbf{Remark.} 
	The equation in~\eqref{eq:ck2}, which provides the coefficients of the expansion 
	for \( \varsigma_{\alpha,\lambda,c}^2 \) when \( \alpha \in (1,\tfrac{3}{2}) \), 
	closely resembles that obtained for \( \alpha \in (\tfrac{1}{2},1) \) in~\cite{Pages2024}, 
	although the properties of the two functions differ significantly. 
	By the scaling property~\eqref{eq:varsigma_reduit}, we may assume now that \( c=\lambda=1 \).

	\begin{figure}[H]
		\centering
		
		\begin{minipage}[c]{0.42\linewidth}
			\centering
			\includegraphics[width=1.2\linewidth]{Images/coeff_decay_lm.pdf}
		\end{minipage}
		\hfill
		\begin{minipage}[c]{0.55\linewidth}
			\centering
			\includegraphics[width=\linewidth]{Images/curves_stabilizer_H090_T15.pdf}
			\includegraphics[width=\linewidth]{Images/curves_stabilizer_H0145_T15.pdf}
		\end{minipage}
		\caption{\textit{
				Left: Tauberian series coefficients over the time interval
				\( [0,15] \) for Hurst exponents \( H\in(\frac12,1) \),
				with parameters \( \lambda=c=1 \) and \( n=600 \).
				Right: Stabilizer mappings
				\( t_k \mapsto \varsigma_{\alpha,\lambda,c}(t_k) \)
				over the time interval \( [0,15] \) for \( H=0.9 \) and \( H=1.45 \)
				with parameters \( \lambda=c=1 \) and \( n=600 \).
		}}
		\label{fig:coeff-and-stabilizers}
	\end{figure}
	The left panel displays $\log_{10}|c_k|$ as a function of $k$ for $\alpha \in [1.01,1.49]$.
	All curves fall below the float64 machine precision threshold, beyond which the values become indistinguishable from zero, well before $k=20$ in this long-memory regime. Therefore, truncating the series~\eqref{eq:varsigma_reduit}--\eqref{eq:ck2} at $n_{\max}=20$ (long-memory) is sufficient across the entire simulation range.
	
	The right panel depicts the stabilizing functions \(t \mapsto \varsigma_{\alpha,\lambda,c}(t)\) for different values of \(\alpha\in(1,\frac32)\).
		Note that, for \(\alpha\in(\frac12,1)\), these functions were studied in~\cite{Pages2024, EGnabeyeu2025} and were shown to be non-decreasing and typically concave. In the present long-memory setting, however, their behaviour appears to be markedly different.
		
	The numerical evidence provided by these figures suggests that the stabilizing function \(\varsigma^2_{\alpha,\lambda,c}\) is well-defined and strictly positive on \(\R_+\) (at least for \(\alpha\in(1,\frac32)\)). Since a full formal proof is rather involved, we instead prove in the sequel that the stabiliser $\varsigma^2_{\alpha,\lambda,c}$ exists as a non-negative function on a non-empty interval \( (0,T] \subseteq \mathbb{R}^+ \), for some \(T>0\).
	\begin{Proposition}[Existence of $\varsigma_{\alpha,\lambda,c}$ i.e. positivity computation of the function $\varsigma_{\alpha,\lambda,c}^2$ solution of the stabilizer equation for $\alpha \in (1,\frac32)$]\label{prop:alphaFractKernel2}
		Let $\alpha \in (1,\frac32)$ and consider the volterra equation of the first kind, 
		\begin{equation}\label{eq:stabil2}
			\kappa \, \left( 1 - R_{\alpha}^2(t) \right) = (f_{\alpha}^2 * g_{\alpha})(t), \quad \forall t \geq 0, \quad \kappa > 0.
		\end{equation}
		with \( R_{\alpha} : \mathbb{R}^+ \to \mathbb{R} \), \( f_{\alpha} := -R_{\alpha}' \) satisfy \( R_{\alpha}(0) = 1 \), \( \lim_{t\to +\infty}R_{\alpha} (t) = 0 \), and \( f_{\alpha}(0)=0 \), \( \lim_{t\to +\infty}f_{\alpha} (t) = 0\)
		\begin{enumerate}
			\item[(a)] Then equation ~\eqref{eq:stabil2} has at most one solution in \( {\cal L}^1_{\text{loc}}(\R_+) \) that converges to a finite limit.
			
			\item[(b)] If the equation ~\eqref{eq:stabil2} has a continuous solution \( g_{\alpha} \) defined on \( I \subseteq (0, +\infty) \) , then \( g_{\alpha} \geq 0 \) on \( I \subseteq \mathbb{R}^+ \), so that the function \( \sqrt{g_{\alpha}} \) is well-defined on \(I \subseteq \mathbb{R}_+ \).
		\end{enumerate}
	\end{Proposition}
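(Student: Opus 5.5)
Part (a) will be handled by reducing to a Volterra equation of the first kind with a kernel of the type covered by \cite[Lemma 3.10]{EGnabeyeu2025}. Suppose $g_1,g_2\in{\cal L}^1_{loc}(\R_+)$ both solve \eqref{eq:stabil2} and both converge to finite limits. Then $d:=g_1-g_2$ satisfies $f_\alpha^2*d\equiv 0$ on $\R_+$. Since $d$ converges, it is bounded at infinity and hence has a finite Laplace transform on $(0,+\infty)$. By Proposition~\ref{prop:main}, $f_\alpha\in{\cal L}^1(\R_+)\cap{\cal L}^\infty(\R_+)$, so $L_{f_\alpha^2}$ is also finite there. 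Taking Laplace transforms gives $L_{f_\alpha^2}(t)L_d(t)=0$ for every $t>0$. Because $f_\alpha\neq0$ on a set of positive measure, we have $L_{f_\alpha^2}(t)>0$, so $L_d\equiv0$, and injectivity of the Laplace transform yields $d=0$ a.e. An alternative route avoids growth conditions altogether: apply Titchmarsh's convolution theorem. Since $f_\alpha^2\ge0$ is not a.e. null near $0$ (indeed $f_\alpha(t)\sim t^{\alpha-1}/\Gamma(\alpha)$ as $t\to0$), $f_\alpha^2*d=0$ on $[0,T]$ forces $d=0$ on $[0,T]$ for every $T$. I would present the Titchmarsh argument, because it does not use the limit assumption, and mention the Laplace argument as the one suggested by the hypothesis.

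For (b), the starting point is that the left-hand side $\kappa(1-R_\alpha^2)$ vanishes at $0$. Let $I\ni$ an interval $(0,T]$ on which $g_\alpha$ is continuous, and argue by contradiction: assume $g_\alpha(t_0)<0$ at some $t_0\in I$. I would first treat the behaviour near $0$ with the small-time asymptotics. Since $R_\alpha(t)=1-t^\alpha/\Gamma(\alpha+1)+O(t^{2\alpha})$, the left-hand side equals $\frac{2\kappa}{\Gamma(\alpha+1)}t^\alpha(1+o(1))>0$. On the right-hand side, $f_\alpha^2(s)\sim s^{2\alpha-2}/\Gamma(\alpha)^2$. Matching leading orders through the Tauberian heuristic \eqref{eq:varsigma}, or directly through the power series of Proposition~\ref{prop:alphaFractKernel1}, gives $g_\alpha(t)\sim 2\kappa c_0\,t^{1-\alpha}>0$ with $c_0>0$, because $\Gamma(2-\alpha)>0$ for $\alpha<2$. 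Hence $g_\alpha>0$ on some $(0,\varepsilon]$.

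Next I would define $t^*:=\inf\{t\in I: g_\alpha(t)<0\}\ge\varepsilon$. By continuity, $g_\alpha(t^*)=0$ and $g_\alpha\ge0$ on $(0,t^*]$. To obtain a contradiction I would differentiate \eqref{eq:stabil2}. This gives $2\kappa R_\alpha f_\alpha(t)=\frac{d}{dt}(f_\alpha^2*g_\alpha)(t)=\int_0^t 2f_\alpha f_\alpha'(t-s)g_\alpha(s)ds$, where the boundary term drops out because $f_\alpha(0)=0$. The intended contradiction comes from comparing signs: on the region where $R_\alpha f_\alpha>0$ the left-hand side is positive, and one would like the right-hand side to be forced nonpositive by the sign of $f_\alpha f'_\alpha$. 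This step is where I expect the main obstacle. For $\alpha\in(1,\frac32)$, neither $f_\alpha$ nor $R_\alpha$ has a constant sign, since both oscillate through the term $G_\alpha$. A global comparison argument of the kind used for completely monotone kernels is therefore unavailable.

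What I would try first is a \emph{local} version of the argument, restricting to $I=(0,T]$ with $T$ smaller than the first zero of $f_\alpha$ (and of $f_\alpha'$ where needed). On that range, $R_\alpha$ is decreasing and positive, $f_\alpha>0$, and the Neumann-type series in \eqref{eq:e_alpha} are sign-controlled. In that regime the map $g\mapsto f_\alpha^2*g$ preserves positivity. So I would rewrite the equation as a Volterra equation of the second kind, obtained by differentiating and dividing by the leading singular part $s^{2\alpha-2}$, and show it has a positive Neumann-series solution. Uniqueness from (a) then identifies that solution with $g_\alpha$, which gives $g_\alpha\ge0$ on $I$ and makes $\sqrt{g_\alpha}$ well defined.
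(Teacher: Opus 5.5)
Your part (a) is correct. The Titchmarsh argument is in fact stronger than what the paper does, since the paper simply invokes a uniqueness lemma from \cite{EGnabeyeu2025}, whereas you do not need the finite-limit assumption. Since $f_\alpha^2(u)\sim u^{2\alpha-2}/\Gamma(\alpha)^2>0$ near $0$, $f_\alpha^2*d=0$ forces $d=0$ a.e.\ on every $[0,T]$.

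The first step of (b) is also fine, provided you justify it through Proposition~\ref{prop:alphaFractKernel1} plus this uniqueness rather than through the Tauberian heuristic. The continuous solution then coincides with the series solution, whose leading term is $2\kappa c_0t^{1-\alpha}$ with $c_0>0$, so $g_\alpha>0$ on some $(0,\varepsilon]$.

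\textbf{The gap is everything after that: you give no mechanism forcing positivity beyond this neighbourhood.}
\begin{itemize}
\item The remark that $g\mapsto f_\alpha^2*g$ preserves positivity holds for every nonnegative kernel and points the wrong way. What you need is that the \emph{inverse} of this first-kind operator maps the data $\kappa(1-R_\alpha^2)$ to a nonnegative function, and that fails for general nonnegative kernels.
\item Differentiating does not give a second-kind equation. Because $f_\alpha(0)=0$, you only get $2\kappa R_\alpha f_\alpha=(2f_\alpha f_\alpha')*g_\alpha$. This is still first kind, with kernel $\sim(2\alpha-2)u^{2\alpha-3}/\Gamma(\alpha)^2$.
\item ``Dividing by $s^{2\alpha-2}$'' does not commute with the convolution. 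A genuine reduction needs an Abel-type regularisation (convolve with $u^{2-2\alpha}$, then differentiate twice), and the resulting resolvent has no evident sign.
\item The standard sufficient condition for positivity in first-kind equations is a nonincreasing, log-convex kernel (which has a nonnegative first-kind resolvent) combined with nondecreasing data. It is unavailable here: $f_\alpha^2$ vanishes at $0$ and is increasing on $(0,T]$ whenever $T$ precedes the first zero of $f_\alpha'$, which is the regime you propose.
\end{itemize}
So ``show it has a positive Neumann-series solution'' is exactly the missing idea, and nothing in the proposal supplies it.

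The paper argues differently, referring to its proof for the damped kernel. It takes the first zero $t_0$ of $g_\alpha$ before the first zero of $R_\alpha$. There, $(f_\alpha^2*g_\alpha)(t_0+\tau)-(f_\alpha^2*g_\alpha)(t_0)=\kappa\big(R_\alpha^2(t_0)-R_\alpha^2(t_0+\tau)\big)>0$. It bounds this increment above by $I_1-I_2-c\int_0^{\tau-\delta}f_\alpha^2$, asserts that this bound is negative, and then uses analyticity of the series in $t^\alpha$ to exclude zeros accumulating at $t_0$.

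Do not try to borrow that step. A quantity that dominates a strictly positive increment cannot be negative. Quantitatively, $I_1=\tau\,\frac{d}{dt}(f_\alpha^2*g_\alpha)(t_0)+o(\tau)$, whereas the negative contributions are $O(\tau^{2\alpha-1})=o(\tau)$.

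So what is actually established, by you and by the paper, is (a) together with $g_\alpha>0$ near the origin. If (b) is read as ``$g_\alpha\ge 0$ on some interval $I$'', your first step already suffices and the rest of your plan is unnecessary. Nonnegativity on a prescribed $I$, let alone on $(0,+\infty)$, still lacks a proof; the authors support the latter only numerically.
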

	\noindent {\bf Proof.} The argument is similar to that of Proposition~\ref{prop:alphaFractKernel1_} further on, and is therefore left to the reader.

	\section {Applications to Exponential-Fractional Stochastic Volterra Equations}\label{sec:appl3}
	\vspace{-.2cm}
			Let consider the below {\em Gamma Fractional integration kernel} or {\em Exponential-Fractional integration kernel} defined in Example \ref{Ex:SolventGammaKernel}, where $\alpha = H + \frac{1}{2}$, with $H$ denoting the Hurst coefficient:
			
			\centerline{$
			K(t) = K_{\alpha, \rho}(t) = e^{-\rho t} \frac{u^{\alpha - 1}}{\Gamma(\alpha)} \mathbf{1}_{\mathbb{R}_+}(t), \quad  \text{with} \quad \alpha, \rho > 0.
			$}
			The purpose of this part is to extend the results of ~\cite{EGnabeyeu2025} to the general case of a gamma fractional integration kernel where $ \alpha \in (1, \frac{3}{2})$. Note that, this is a generalization of the exponential kernel and the fractional integration kernel.  
			The gamma kernel is often adopted in the Quadratic Rough Heston model (see, e.g., \cite{BourgeyGatheral2025}) due to its numerical convenience, flexibility, and the availability of a closed-form expression for its resolvent of the second kind. 
			
			\medskip
			We show that for such kernels $K_{\alpha, \rho}$, the resolvent $R_{\alpha,\rho,\lambda}$  satisfy our standing assumption $({\cal R}_\lambda)\; $ for all $\lambda > 0$, and that $f_{\alpha,\rho, \lambda} := -R_{\alpha,\rho,\lambda}$ exists and is square-integrable with respect to the Lebesgue measure on $\mathbb{R}_+$ whenever $1 < \alpha < \frac{3}{2}$ (``long memory volatility models'') and satisfies the \(\vartheta\)-H\"older regularity condition~\eqref{eq:Regulf}. As a result, the findings from  Sections \ref{subsec:fakestatioIntrin} and \ref{subsec:fakestatioStabil}, will be applicable in the cases where $\sigma(t, x) = \sigma(x)$ and $\sigma(t, x) = \varsigma_{\lambda,c}(t)\sigma(x)$.
			\vspace{-.2cm}
			\subsection{$\alpha-$ Exponential Fractional  kernels $1 <\alpha<\frac32$}\label{subsec:Gammaalphafrac}
			\begin{Proposition}\label{prop:main2} Let $\lambda>0$ and let $\alpha\!\in (1,2)$.
				\smallskip
				\noindent $(a)$ The $\lambda$-resolvent $R_{\alpha,\rho, \lambda} $ is infinitely differentiable 
				on $(0, +\infty)$.
				Moreover $R_{\alpha,\rho, \lambda}(0)=1$, $R_{\alpha,\rho, \lambda}$ converges  to $\frac{1}{1 + \lambda \rho^{-\alpha}} \in [0,1[$ and
				$R_{\alpha,\rho, \lambda}\!\in {\cal L}^{r}(\R_+)\;\forall\,r>\frac{1}{\alpha}$.  
				
				\smallskip
				\noindent $(b)$ $f_{\alpha,\rho, \lambda} := -R'_{\alpha, \rho, \lambda}$ is infinitely differentiable, bounded, converges to $0$, and satisfy :
				{\small
				\[ \forall\, t>0, \quad
				f_{\alpha,\rho, \lambda} (t):= e^{-\rho t}f_{\alpha, \lambda} (t) = \lambda^{\frac{1}{\alpha}}\int_0^{+\infty} e^{-(\rho+\lambda^{\frac{1}{\alpha}})tu}uH_{\alpha}(u)du - \frac2\alpha e^{t( \lambda^{\frac{1}{\alpha}}\cos \left( \frac{\pi}{\alpha} \right)-\rho)} \cos \left[ t \lambda^{\frac{1}{\alpha}} \sin \left( \frac{\pi}{\alpha} \right) - \frac{\pi}{\alpha} \right].
				\]
				}		
				\noindent $(c)$ Moreover, if  $\alpha\!\in (1, 2)$,  $ f_{\alpha,\rho, \lambda}$ is ${\cal L}^{p}$-integrable for every $p \in (0,+\infty] $ and for every $\vartheta\!\in\big(0,\alpha-1\big)$
				\[| f_{\alpha ,\rho,\lambda}(t+\delta)-f_{\alpha,\rho,\lambda}(t)  | \leq h_{\alpha,\rho,\lambda}(t) \delta^{\vartheta} \quad \text{with}\quad  h_{\alpha,\rho,\lambda} \in {\cal L}^1(\R_+) \cap {\cal L}^\infty(\R_+) \quad \text{and}\quad h_{\alpha,\rho,\lambda}*\varsigma^2_{\alpha,\rho,\lambda, c} \in {\cal L}^\infty(\R_+)\]
			\end{Proposition}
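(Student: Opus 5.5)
The plan is to reduce everything to the already-established fractional case via the identity $K_{\alpha,\rho}(t)=e^{-\rho t}K_\alpha(t)$. The map $g\mapsto e^{-\rho\cdot}g$ is a convolution homomorphism: $(e^{-\rho\cdot}g)*(e^{-\rho\cdot}h)=e^{-\rho\cdot}(g*h)$.

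\textbf{Step 1: transfer the resolvent identities.} From \eqref{eq:flambda-eq} for $K_\alpha$ we get that $e^{-\rho\cdot}f_{\alpha,\lambda}$ solves $f+\lambda K_{\alpha,\rho}*f=\lambda K_{\alpha,\rho}$. By uniqueness (the Neumann series, or the Laplace identity \eqref{eq:LpT_fGamma} compared with $L_{f_{\alpha,\lambda}}(s+\rho)=\lambda/(\lambda+(s+\rho)^\alpha)$), this gives $f_{\alpha,\rho,\lambda}=e^{-\rho\cdot}f_{\alpha,\lambda}$. The displayed representation then follows by inserting the formula of Proposition~\ref{prop:main}$(a)$ for $f_{\alpha,\lambda}$. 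The same substitution shows smoothness on $(0,+\infty)$, boundedness (since $f_{\alpha,\lambda}$ is bounded), and $f_{\alpha,\rho,\lambda}\to0$.

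\textbf{Step 2: the resolvent itself, item $(a)$.} Write $R_{\alpha,\rho,\lambda}(t)=1-\int_0^t f_{\alpha,\rho,\lambda}(s)\,ds$. This shows $R_{\alpha,\rho,\lambda}$ is $C^\infty$ on $(0,\infty)$ and $R_{\alpha,\rho,\lambda}(0)=1$. Its limit is
\[
1-L_{f_{\alpha,\lambda}}(\rho)=1-\frac{\lambda}{\lambda+\rho^\alpha}=\frac{1}{1+\lambda\rho^{-\alpha}}.
\]
The ${\cal L}^r$ statement needs care. When $\rho>0$ the limit is nonzero, so integrability over $\R_+$ cannot hold literally. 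I would therefore prove the ${\cal L}^r$ bound for $R_{\alpha,\rho,\lambda}-R_{\alpha,\rho,\lambda}(\infty)=\int_t^\infty f_{\alpha,\rho,\lambda}$. This is exponentially small, so it lies in every ${\cal L}^r$, and I would interpret the statement in that sense; for $\rho=0$ it is Proposition~\ref{prop:main}.

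\textbf{Step 3: integrability and H\"older regularity, item $(c)$.} Since $0\le e^{-\rho t}\le 1$, we have $|f_{\alpha,\rho,\lambda}|\le|f_{\alpha,\lambda}|$, so ${\cal L}^p$-integrability for every $p\in(0,\infty]$ is inherited from Proposition~\ref{prop:main}$(b)$. For $p<1$ one should check integrability near $0$ and at infinity: $f\sim\lambda t^{\alpha-1}/\Gamma(\alpha)$ near $0$ is bounded, and the tail decays. For the increments, split
\[
f_{\alpha,\rho,\lambda}(t+\delta)-f_{\alpha,\rho,\lambda}(t)=e^{-\rho(t+\delta)}\bigl(f_{\alpha,\lambda}(t+\delta)-f_{\alpha,\lambda}(t)\bigr)+f_{\alpha,\lambda}(t)\bigl(e^{-\rho(t+\delta)}-e^{-\rho t}\bigr).
\]
The first term is at most $h_{\alpha,\lambda}(t)\delta^\vartheta$. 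For the second, use $|e^{-\rho\delta}-1|\le(\rho\delta)^\vartheta$ when $\delta\le1$ (with a bound by $2$ for $\delta>1$, absorbed into the constant since $\delta^\vartheta\ge1$). This gives $|f_{\alpha,\lambda}(t)|e^{-\rho t}\rho^\vartheta\delta^\vartheta$. So I would take
\[
h_{\alpha,\rho,\lambda}:=h_{\alpha,\lambda}+2(1\vee\rho)\,|f_{\alpha,\lambda}|e^{-\rho\cdot},
\]
which lies in ${\cal L}^1\cap{\cal L}^\infty$ by Proposition~\ref{prop:main}.

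\textbf{Main obstacle.} The hard step is the condition $h_{\alpha,\rho,\lambda}*\varsigma^2_{\alpha,\rho,\lambda,c}\in{\cal L}^\infty$. Near the origin, $\varsigma^2$ blows up like $t^{1-\alpha}$, which is integrable since $\alpha<2$, and $h$ is bounded, so the local part is finite. For the global part, I would show that $\varsigma^2_{\alpha,\rho,\lambda,c}$ is locally integrable near $0$ and bounded on $[1,\infty)$ by its finite limit $c\lambda^2(1-\ell_\infty^2)/\|f_{\alpha,\rho,\lambda}\|_2^2$, as in \cite[Lemma 3.9]{EGnabeyeu2025}. Then
\[
\sup_t\int_0^t h(t-u)\varsigma^2(u)\,du\le\|h\|_\infty\int_0^1\varsigma^2+\|h\|_1\sup_{u\ge1}\varsigma^2(u).
\]
The delicate point is justifying the boundedness of $\varsigma^2$ away from $0$, since only existence and positivity were assumed. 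I would take it from the convergence of $\varsigma^2$ at infinity together with continuity on $(0,\infty)$; failing that, I would add it as an assumption.
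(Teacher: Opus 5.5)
Your argument is the paper's own proof. Both use the factorization $f_{\alpha,\rho,\lambda}=e^{-\rho\cdot}f_{\alpha,\lambda}$; the paper reads it off the Neumann series rather than using the convolution homomorphism, which makes no difference. Both use the same two-term splitting of the increment, and the same splitting of $h*\varsigma^2$ into a part near $0$ bounded by $\|h\|_\infty\int_0^\epsilon\varsigma^2$ and a part bounded by $\|h\|_1\sup_{u\ge\epsilon}\varsigma^2(u)$.

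Two of your remarks are correct and go beyond the paper:
\begin{itemize}
\item For $\rho>0$ the limit $\rho^\alpha/(\lambda+\rho^\alpha)$ of $R_{\alpha,\rho,\lambda}$ is positive. So $R_{\alpha,\rho,\lambda}\in{\cal L}^r(\R_+)$ is false as stated, and only $R_{\alpha,\rho,\lambda}-a$ is integrable. The paper's proof never addresses this claim.
\item Your $h_{\alpha,\rho,\lambda}$ is a genuine $\delta$-free dominating function. The paper's $e^{-\rho\delta}h_{\alpha,\lambda}+|f_{\alpha,\lambda}|$ depends on $\delta$ and drops the factor $\rho^\vartheta$.
\end{itemize}

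Two of your assertions claim more than your argument gives.

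\emph{The displayed formula in (b).} Substituting Proposition~\ref{prop:main} yields
\[
f_{\alpha,\rho,\lambda}(t)=\lambda^{\frac1\alpha}\int_0^{+\infty}e^{-t(\rho+\lambda^{\frac1\alpha}u)}\,uH_\alpha(u)\,du-\frac{2\lambda^{\frac1\alpha}}{\alpha}\,e^{t(\lambda^{\frac1\alpha}\cos(\frac\pi\alpha)-\rho)}\cos\Big[t\lambda^{\frac1\alpha}\sin\Big(\frac\pi\alpha\Big)+\frac\pi\alpha\Big].
\]
This is not the displayed formula. The display has the wrong exponent in the integrand and lacks the factor $\lambda^{1/\alpha}$ on the oscillating term. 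The correct phase is also $+\frac\pi\alpha$; the $-\frac\pi\alpha$ inherited from Proposition~\ref{prop:main} is a sign slip. With $z_0=e^{i\pi/\alpha}$ and $G_\alpha(t)=\frac2\alpha\mathrm{Re}(e^{z_0t})$, one gets $G'_\alpha(t)=\frac2\alpha\mathrm{Re}(z_0e^{z_0t})=\frac2\alpha e^{t\cos(\pi/\alpha)}\cos\big(t\sin(\pi/\alpha)+\pi/\alpha\big)$. The case $\alpha=2$, where $f_{2,1}=\sin$, confirms this. State the corrected representation rather than claiming the displayed one follows.

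\emph{${\cal L}^p$-integrability for small $p$.} This cannot be inherited from Proposition~\ref{prop:main}. As $t\to+\infty$, $f_{\alpha,\lambda}(t)\sim\frac{\Gamma(1+\alpha)\sin(\pi\alpha)}{\pi\lambda}t^{-1-\alpha}$, since the oscillating part is exponentially small. Hence $f_{\alpha,\lambda}\notin{\cal L}^p(\R_+)$ for $p\le\frac1{1+\alpha}$, and ``the tail decays'' is not enough. What works for $\rho>0$ is $|f_{\alpha,\rho,\lambda}(t)|\le\|f_{\alpha,\lambda}\|_\infty e^{-\rho t}$. Here $f_{\alpha,\lambda}$ is bounded because it is continuous and tends to $0$ at both ends when $\alpha>1$.

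The step you flag as the main obstacle is not closed by your argument. It is not really closed by the paper either, which relies on two inputs: the small-time behaviour \eqref{eq:varsigma2}, derived only heuristically, and $\varsigma^2\in{\cal L}^\infty([\epsilon,\infty))$, taken from the behaviour at infinity.

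You can close it using only positivity of $\varsigma^2$ on $(0,+\infty)$, the standing assumption under which $\varsigma$ is defined, together with the equation itself:
\begin{itemize}
\item Since $f_{\alpha,\rho,\lambda}(t)\sim\lambda t^{\alpha-1}/\Gamma(\alpha)>0$ as $t\to0^+$, pick $0<a<b$ with $m:=\min_{[a,b]}f^2_{\alpha,\rho,\lambda}>0$, and set $\eta:=b-a$.
\item The left-hand side of the stabilizer equation is at most $c\lambda^2$. Evaluating the equation at $t=s+b$ gives
\[
m\int_s^{s+\eta}\varsigma^2(u)\,du\le\int_0^{s+b}f^2_{\alpha,\rho,\lambda}(s+b-u)\,\varsigma^2(u)\,du\le c\lambda^2\qquad\text{for every } s\ge0 .
\]
\item Your $h_{\alpha,\rho,\lambda}$ is dominated by $g:=h_{\alpha,\lambda}+2(1\vee\rho)\|f_{\alpha,\lambda}\|_\infty e^{-\rho\cdot}$. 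This $g$ is non-increasing, since the exponential term and the Laplace integral $\int_0^\infty e^{-tu}u^{1+\vartheta}|H_\alpha(u)|\,du$ in $h_{\alpha,\lambda}$ both are. It is also bounded and integrable for $\vartheta<\alpha-1$.
\item Cutting $[0,t]$ into blocks of length $\eta$ gives, uniformly in $t\ge0$,
\[
(h_{\alpha,\rho,\lambda}*\varsigma^2)(t)\le\sum_{k\ge0}g(k\eta)\,\sup_{s\ge0}\int_s^{s+\eta}\varsigma^2(u)\,du\le\frac{c\lambda^2}{m}\Big(g(0)+\frac{\|g\|_{{\cal L}^1(\R_+)}}{\eta}\Big).
\]
\end{itemize}
This needs neither continuity of $\varsigma$, nor its blow-up rate at $0$, nor its limit at infinity.
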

	\noindent For clarity and conciseness, the proof is postponed to Appendix \ref{app:lemmata}. 
	\begin{figure}[H]
		\centering
		\includegraphics[width=.95\textwidth]{Images/courbes_e_alpha_f_alpha_GammaKernel_L.png} 
		\caption{Curves of $R_{\alpha,\rho,\lambda}(t)$ and $f_{\alpha,\rho, \lambda}(t)$ for different values of $\alpha \in (1,2)$, $\lambda= 0.25$ and
			$\rho = 0.2$}
		\label{fig:curves_alpha_more1}
	\end{figure}	
		\noindent{\bf Remark:} The remark following Proposition~\ref{prop:main} (or, equivalently,
		the claim~$(b)$ of the above proposition~\ref{prop:main2}) implies that if $\alpha\!\in (\frac12, \frac32)$, for each $p\geq1$ and every $\vartheta_p\!\in\big(0,(\alpha-1+\frac1p)\wedge1\big)$, 
		there exists  a constant $C_{\vartheta_p,\rho,\lambda}>0$ such that
		$f_{\alpha,\rho,\lambda}$ satisfies the following
		${\cal L}^p(\mathbb R_+)$-$\vartheta_p$-H\"older continuity estimate:
		\vspace{-.3cm}
		\begin{equation}\label{eq:thetaHolder2}
			\vspace{-.1cm}
			\forall\, \delta > 0, \quad 
			\left[\int_0^{+\infty} |f_{\alpha,\rho,\lambda}(t+\delta) - f_{\alpha,\rho,\lambda}(t) |^p \, dt \right]^{\frac1p} 
			\le C_{\vartheta_p,\rho,\lambda} \delta^{\vartheta_p}.
			\vspace{-.3cm}
		\end{equation}
		In fact, let $\delta >0$ and \(\vartheta_p\in(0,1]\),
		using that $0\le 1-e^{-v} \le (1-e^{-v})^{\vartheta_p} \le {v}^{\vartheta_p}$, for every $v\ge 0$, one may deduce
		{\small
			\begin{equation*}
				\begin{aligned}
					\left( \int_0^\infty \left| f_{\alpha,\rho,\lambda}(t+\delta) - f_{\alpha,\rho,\lambda}(t) \right|^p dt \right)^{1/p}
					&\leq e^{-\rho \delta}\left( \int_0^\infty \left| f_{\alpha, \lambda}(t+\delta) - f_{\alpha, \lambda}(t) \right|^p dt \right)^{\frac1p} + (\rho \delta )^{\vartheta_p} \left( \int_0^\infty \left| f_{\alpha, \lambda}(t) \right|^p dt \right)^{\frac1p} \\
					&\leq  e^{-\rho \delta} C_{\vartheta_p, \lambda}\delta^{\vartheta_p} + C_{f_\lambda}\delta^{\vartheta_p} := C_{\vartheta_p,\rho, \lambda}\delta^{\vartheta_p}.
				\end{aligned} 
			\end{equation*}
		}
		where in the last inequality, we used that \(f_{\alpha, \lambda} \in \mathcal{L}^p(\R_+)\) for each $p\geq1$ by/together with Proposition~\ref{prop:main}. 
	
	\medskip\noindent
	At this stage, we are in a position to state Theorem~\ref{thm:mainGamma}, whose proof is analogous to that of Theorem~\ref{thm:mainFractional} established for the fractional case and is therefore left to the reader.
			\subsection{Existence of the stabilizer $\varsigma_{\alpha,\rho,\lambda,c}$ i.e. positivity computation of $\varsigma_{\alpha,\rho,\lambda,c}^2$, 
				$\alpha \in (1,\frac32)$.}\label{subsec:sigma2Lmemory}
			\noindent In this section, we want to derive the asymptotic behaviour of $\varsigma_{\alpha,\rho,\lambda,c}$ near the origin, assuming that \(\mu(t) \sim \mu(0)\) as \(t\to 0\) with \(\frac{\mu(0)}{m_0}>0\).
			Following the same computations as in~\cite[Section 5.2]{EGnabeyeu2025},
			we are led to
			{\small
			\begin{equation}\label{eq:varsigma2}
				\varsigma^2_{\alpha,\rho,\lambda,c}(t) \stackrel{0}{\sim} \frac{2\lambda c \Gamma(\alpha)^2}{\Gamma(2\alpha-1) \Gamma(2-\alpha)} \frac{\mu(0)}{\lambda m_0} e^{-2 \rho t}t^{1-\alpha} \quad \text{so that}
			 \lim_{t \to 0^+} \varsigma_{\alpha,\rho,\lambda,c}(t) = +\infty.
			\end{equation}
			}
			This suggests 
		 that, there exists \(\eta\) small enough such that \(\forall t \in (0,\eta), \quad \varsigma_{\alpha,\rho, \lambda,c}^2(t) \approx e^{-2 \rho t} \varsigma_{\alpha, \lambda,c}^2(t).\)
	
	\medskip	
	\noindent {\bf Remark:\;} 
	1. 
	The computation of $\varsigma_{\alpha,\lambda,c}^2$ through a recurrence relation satisfied by the coefficients $(c_k)_{k\geq 0}$ as in section~\ref{subsec:sigma_Lmemory} is hindered by the need for an explicit expression of the mean-reverting function $\mu$.  We therefore compute $\varsigma_{\alpha,\lambda,c}^2$ by solving the corresponding functional equation numerically.\\
	2.  With that in mind, on a time grid \(t_k = k\frac{T}{n}, k = 0, . . . , n.\), we use the discretization \(\forall\, k\ge 1, \)
	\vspace{-.2cm}
					\begin{equation}\label{eq:TriangStabilScheme_}
						 c \lambda^2\Big(1-\big(1- \frac{(f_{\lambda} * \mu)((t_k))}{\lambda m_0}\big)^2 \Big) =  (f_{\alpha,\rho, \lambda}^2 * \varsigma_{\alpha,\rho, \lambda,c}^2)(t_k) = \sum_{j=0}^{k-1}\varsigma_{\alpha,\rho, \lambda,c}^2(t_{j+1})\int_{t_{k}-t_{j+1}}^{t_{k}-t_j} f_{\alpha,\rho, \lambda}^2(s)\, ds.
						\vspace{-.1cm}
					\end{equation}
					which we can solve recursively (Lower-Triangular system) to recover the values \(\varsigma_{\alpha,\rho, \lambda,c}^2(t_k),\, k\geq1.\)
					
			\smallskip
			\noindent From now on, we consider the case where \(\mu(t)=\mu_0=\lambda m_0\) for every \(t>0\), such that \(\phi \equiv 1\).
			\begin{Proposition}[Existence and Properties of the function $\varsigma_{\alpha,\rho,\lambda,c}^2$ for $\alpha \in  (1,\frac32)$]\label{prop:alphaFractKernel1_}
				Let $\alpha \in  (1,\frac32)$:
				\begin{enumerate}
					\item Consider the following equation for \(c,\lambda >0\):
					\begin{equation}\label{eq:stabil2}
						c \lambda^2 \left( 1 - R_{\alpha,\rho, \lambda}^2(t) \right) = (f_{\alpha,\rho, \lambda}^2 * g_{\alpha,\rho, \lambda})(t), \quad \forall t \geq 0.
					\end{equation}
					with \( R_{\alpha,\rho,\lambda} : \mathbb{R}^+ \to \mathbb{R} \) and \( f_{\alpha,\rho,\lambda} := -R_{\alpha,\rho,\lambda}' \) satisfy 
					\( \lim_{t\to +\infty}R_{\alpha,\rho,\lambda} (t) = a \),  \( \lim_{t\to +\infty}f_{\alpha,\rho,\lambda} (t) = 0 \).
					\begin{enumerate}
						\item[(a)] Then equation ~\eqref{eq:stabil2} has at most one solution in \( {\cal L}^1_{\text{loc}}(\R_+) \) that converges to a finite limit.
						
						\item[(b)] If the equation ~\eqref{eq:stabil2} has a continuous solution \( g_{\alpha,\rho,\lambda} \) defined on \( I \subseteq (0, +\infty) \), then \( g_{\alpha,\rho,\lambda} \geq 0 \) on \( I \subseteq \mathbb{R}^+ \), so that the function \( \sqrt{g_{\alpha,\rho,\lambda}} \) is well-defined on \( I \subseteq \mathbb{R}_+ \).

					\end{enumerate}
				\item The stabilizer \( \varsigma^2_{\alpha,\rho,\lambda,c} \) exists as a non-negative function on \(  I \subseteq (0, +\infty) \) and
				\vspace{-.2cm}
				\begin{equation} \lim_{t\to0} \varsigma_{\alpha,\rho, \lambda,c} = +\infty
				\quad \text{and} \quad 
				\lim_{t \to +\infty} \varsigma_{\alpha,\rho,\lambda,c}(t) = \frac{\sqrt{c(1-a^2)}\lambda}{\|f_{\alpha,\rho,\lambda}\|_{{\cal L}^2(\R_+)}}, \quad a= \frac{1}{1 + \lambda \rho^{-\alpha}} .
				\vspace{-.1cm}
				\end{equation}
				\end{enumerate} 
			\end{Proposition}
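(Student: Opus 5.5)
The proof splits into the uniqueness claim 1(a), the positivity claim 1(b), and the existence/limits claim 2, which largely follows from 1 and earlier results.

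\textbf{Claim 1(a) (uniqueness).} I argue by linearity. If $g_1,g_2\in{\cal L}^1_{\rm loc}(\R_+)$ both solve \eqref{eq:stabil2} and converge to finite limits, then $d=g_1-g_2$ satisfies $f_{\alpha,\rho,\lambda}^2*d\equiv 0$. Since $d$ converges, $e^{-st}d$ is integrable for every $s>0$. Hence $L_d$ exists, and $L_{f^2_{\alpha,\rho,\lambda}}(s)L_d(s)=0$ for $s>0$. Proposition~\ref{prop:main2}(b) shows $f_{\alpha,\rho,\lambda}\not\equiv 0$ (its Laplace transform is $\lambda/(\lambda+(s+\rho)^\alpha)\neq0$). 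Therefore $L_{f^2_{\alpha,\rho,\lambda}}(s)>0$, so $L_d\equiv0$ and $d=0$ a.e. by injectivity of the Laplace transform. Titchmarsh's convolution theorem would give the same conclusion without any growth condition; this matches \cite[Lemma 3.10]{EGnabeyeu2025}.

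\textbf{Claim 1(b) (positivity).} I would mimic the argument used for $\alpha<1$ in \cite{Pages2024,EGnabeyeu2025}. Suppose $g$ is continuous on $I$ and $g(t_0)<0$ at some point. Set $t^*=\inf\{t: g(t)<0\}$, so that $g\ge0$ on $(0,t^*]$ and $g<0$ on $(t^*,t^*+\eta)$. I then differentiate \eqref{eq:stabil2}. Using $f_{\alpha,\rho,\lambda}(0)=0$ (valid since $\alpha>1$), the derivative of $f^2*g$ is $\int_0^t (f^2)'(t-s)g(s)\,ds$, while the left side has derivative $2c\lambda^2R_{\alpha,\rho,\lambda}(t)f_{\alpha,\rho,\lambda}(t)$, which is positive for small $t$. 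The difficulty is that $f$ is not monotone and $(f^2)'$ changes sign (Figure~\ref{fig:curves_alpha_more1}), so the sign bookkeeping of the rough case breaks down. I would therefore first establish positivity near $0$ from the asymptotics \eqref{eq:varsigma2}: for small $t$, $g(t)\sim C\,t^{1-\alpha}e^{-2\rho t}>0$ with $C>0$ because $\mu(0)/m_0>0$. On the remaining part of $I$, I would try a comparison argument: on $[t^*,t^*+\eta]$ write
$$(f^2*g)(t)=(f^2*g\mathbf 1_{[0,t^*]})(t)+\int_{t^*}^t f^2(t-s)g(s)\,ds.$$
The last integral is $O(\eta^{2\alpha-1})$ and negative. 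Expanding the first term around $t^*$ and comparing it with the increment of $c\lambda^2(1-R^2)$ would give a contradiction, provided $1-R^2$ is increasing there.

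\textbf{Main obstacle.} That proviso is the crux of the whole proof, since $R_{\alpha,\rho,\lambda}$ oscillates for $\alpha>1$. If needed, I would restrict $I$ to an interval on which $R_{\alpha,\rho,\lambda}$ is decreasing. This is consistent with the proposition's hedged statement ``on $I$''. The interval is nonempty because $f_{\alpha,\rho,\lambda}(t)\sim\lambda t^{\alpha-1}/\Gamma(\alpha)>0$ near $0$.

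\textbf{Claim 2 (existence and limits).} On a small interval $(0,T]$, existence comes either from the series ansatz $e^{-2\rho t}$ times the series of Proposition~\ref{prop:alphaFractKernel1}, corrected perturbatively, or from solving the first-kind equation by Laplace inversion. Nonnegativity then follows from 1(b). The limit $+\infty$ at $0$ is \eqref{eq:varsigma2}, since $\alpha>1$. For the limit at infinity, assume $g\to g_\infty$. Dominated convergence applies because $f^2\in{\cal L}^1$ by Proposition~\ref{prop:main2}(c), so $f^2*g(t)\to g_\infty\|f\|_2^2$, while the left side tends to $c\lambda^2(1-a^2)$. This gives $\varsigma_\infty=\lambda\sqrt{c(1-a^2)}/\|f_{\alpha,\rho,\lambda}\|_{{\cal L}^2}$, with $a$ taken from Proposition~\ref{prop:main2}(a), consistent with \cite[Lemma 3.9]{EGnabeyeu2025}.
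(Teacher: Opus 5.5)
Your argument for 1(a) is correct: a convergent $L^1_{\rm loc}$ function has a Laplace transform on $(0,\infty)$, and $L_{f^2_{\alpha,\rho,\lambda}}>0$. Your dominated-convergence computation of the limit at $+\infty$ gives what the paper obtains from \cite[Lemma 3.9]{EGnabeyeu2025}.

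The genuine gap is in 1(b), and it is not where you locate it. Restricting to an interval on which $R_{\alpha,\rho,\lambda}$ decreases does not make your comparison argument close. Since $f_{\alpha,\rho,\lambda}(0)=0$, differentiating the equation of claim 1 gives, for every $t>0$,
\[
\big((f^2_{\alpha,\rho,\lambda})'*g\big)(t)=2c\lambda^2R_{\alpha,\rho,\lambda}(t)f_{\alpha,\rho,\lambda}(t).
\]
Write $f,R$ for short, and take $t^*$ the first sign change, so $g(t^*)=0$. The history term $\int_0^{t^*}\big(f^2(t^*+\tau-s)-f^2(t^*-s)\big)g(s)\,ds$ then equals $2c\lambda^2R(t^*)f(t^*)\,\tau+o(\tau)$. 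It therefore reproduces the increment $c\lambda^2\big(R^2(t^*)-R^2(t^*+\tau)\big)$ to first order, whatever the sign of $g$ after $t^*$. The only negative contribution is $\int_{t^*}^{t^*+\tau}f^2(t^*+\tau-s)g(s)\,ds$. It is $o(\tau^{2\alpha-1})=o(\tau)$, because $g(t^*)=0$ and $2\alpha-1>1$. So the positivity of the left-hand increment is already supplied by the history term, and no contradiction follows.

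In the rough case $f^2$ is non-increasing, so the history term is $\le 0$ and the contradiction is immediate. For $\alpha>1$, $f^2$ increases away from $0$, that sign information is lost, and monotonicity of $1-R^2$ cannot replace it. Already for the kernel $k(t)=t$, $F=k*g$ means $g=F''$, and an increasing $F$ need not be convex. You need a genuinely new ingredient, for example a sign analysis of an explicit representation of $g$, or a positivity-preserving property of the inverse of $h\mapsto f^2*h$.

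The paper's proof follows essentially your route. It restricts to $(0,T^{\alpha,\lambda,\rho})$ before the first zero of $R_{\alpha,\rho,\lambda}$ and splits the increment into $I_1$, $-I_2$ and $-c\int_0^{\tau-\delta}f^2$. It closes the argument only by asserting that $I_1\ge0$ is ``close to zero''. By the identity above, $I_1\sim 2c\lambda^2R(t_0)f(t_0)\,\tau$. Wherever $R f>0$, this dominates the negative terms, which are $o(\tau)$, for small $\tau$. So the missing step cannot be borrowed from there either.

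Two smaller points, which the paper shares. First, your positivity near $0$ rests on the asymptotics \eqref{eq:varsigma2}, which are obtained only heuristically (an inverse Tauberian step with no monotonicity hypothesis). Second, your existence claim in 2 is only a sketch. The candidate $g=e^{-2\rho t}\varsigma^2_{\alpha,\lambda,c}$ gives $f^2_{\alpha,\rho,\lambda}*g=e^{-2\rho t}c\lambda^2(1-R^2_{\alpha,\lambda})$ rather than $c\lambda^2(1-R^2_{\alpha,\rho,\lambda})$. Neither the perturbative correction nor the Laplace-inversion route is carried out.
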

\noindent {\bf Proof.} Claim 1(a) comes from~\cite[Lemma 3.9]{EGnabeyeu2025}.
 Claim \((2)\) follows from 1(b), equation~\eqref{eq:varsigma2} and~\cite[Lemma 3.9]{EGnabeyeu2025}. The proof of 1(b) is postponed to Appendix \ref{app:lemmata}.
            \begin{figure}[H]
            	\centering
            	\begin{minipage}{0.48\linewidth}
            		\centering
            		\includegraphics[width=1\linewidth]{Images/Stabilizer_H072_T10.pdf}
            		\caption{ Graph of the stabilizer $ t \to \varsigma_{\alpha,\rho,\lambda,c}(t)$  over time interval [0, 10], for a value of the Hurst esponent $H=0.7$,  $\lambda = 0.2$, \( \rho = 1.2 \), c = 0.36.}
            	\end{minipage}
            	\hfill
            	\begin{minipage}{0.48\linewidth}
            		\centering
            		\includegraphics[width=1\linewidth]{Images/curves_stabilizer_H080Gamma_T10.pdf}
            		\caption{Graph of the stabilizer $ t \to \varsigma_{\alpha,\rho,\lambda,c}(t)$  over time interval [0, 10], for a value of the Hurst esponent $H=0.8$,  $\lambda = 0.2$, \( \rho = 1.2 \), c = 0.36.}
            	\end{minipage}
            	\vspace{-.3cm}
            \end{figure}
            These figures suggests the stabilizing function \(\varsigma^2_{\alpha,\rho,\lambda,c}\) is well-defined and positive on a non-empty interval \( (0,T] \subseteq \mathbb{R}^+ \), \(T>0\) (at least for \(\alpha\in(1,\frac32)\)).
            \vspace{-.3cm}
			\section{Numerical experiments with (stabilized) quadratic  diffusion coefficient
				}\label{Sec:Num}
				\vspace{-.2cm}
				In this section we specify a family of scaled volterra equations~\eqref{eq:Volterra} where $b(t,x) = \mu(t)-\lambda \, x$ for \( \lambda > 0 \) and \(\sigma(t,x) = \varsigma(t)\,\sigma(x)\) with \(\varsigma\) given by~\eqref{eq:VolterraStabilizerCont} and the state-dependent \(\sigma\) be a squared trinomial diffusion coefficient
				 of the form \ref{eq:sigmafakeI&II} given by:
				 \vspace{-.2cm}
				\begin{equation}\label{eq:trinom2}
					\sigma(x) = \sqrt{ \kappa_0 +\kappa_1\,(x-\frac{\mu_0}{\lambda})+\kappa_2\,(x-\frac{\mu_0}{\lambda})^2}\quad \mbox{ with }\quad  \kappa_i\ge 0,\;i=0,2, \;\kappa^2_1 \le 4\kappa_2\kappa_0.
					\vspace{-.1cm} 
				\end{equation}
			 In this case with affine drift, the equation~\eqref{eq:Volterra}-\eqref{eq:CondMean} reads with \(\phi\) as in~\eqref{eq:notcst_phi} reads:  
			 \vspace{-.2cm}
				\begin{equation}\label{eq:Gamma_lm_SVIE}
					X_t= g_0(t) +  \frac{1}{\lambda}\int_0^t f_{\alpha,\rho, \lambda}(t-s)\varsigma(s)\sigma(X_{s})dW_s,\;\; g_0(t) = X_0 - \frac{\big(X_0 - m_0\big)}{\lambda m_0} \int_0^t f_{\alpha,\rho, \lambda}(t-s) \mu(s)\dd s. 
					\vspace{-.1cm}
				\end{equation}
			\smallskip\noindent
			To simulate the volterra process, in contrast to multifactor approximation approaches, which are naturally
			suited to completely monotone kernels, we introduce a $f_{\alpha,\rho, \lambda}$-integrated discrete time Euler-Maruyama scheme providing a flexible and generic simulation framework
			for Volterra-type equations, including those driven by long-memory kernels.
			It is defined by Equation~\eqref{eq:VarianceModel} on the discrete  grid \((t^n_k)_{0\leq k\leq n}\) $t^n_k =\frac{kT}{n}, k=0, \dots, n$, and for every $k=1,\ldots,n$,   which write recursively:
			\vspace{-.2cm}
			\begin{equation}\label{eq:VarianceModel}
				\overline X_{t_{k}}^{n} 
				=  g_0(t_{k}) + \frac{1}{\lambda}\sum_{\ell=0}^{k-1}  \varsigma(t_{\ell+1})\sigma( \overline{X}^{n}_{t_{\ell}}) \int_{t_{\ell}}^{t_{\ell+1}} f_{\alpha,\rho, \lambda}(t^n_k-s) \,dW_s \qquad k=1,\ldots,n.
				\vspace{-.1cm}
			\end{equation}
			One has to deal with both the deterministic and stochastic integrals in the discretization.  Let us denote by $G= (G_{k\ell})_{ k=1:n+1, \ell=1: n}$ the  $(n+1)\times n$ matrix  involving the random terms $I^{n,\ell}_k:=\int_{t_{\ell}}^{t_{\ell+1}} f_{\alpha,\lambda}(t^n_k-s) \,dW_s$ reading column by column
				\vspace{-.2cm}
			\begin{equation}\label{eq:VCV}
				(G_{k\ell})_{k=1:n+1,\ \ell=1:n}= \left[\left( \begin{array}{l}
					\Delta W_{t_{\ell}}\,\mbox{\bf 1}_{\{k=\ell\},\, 
					}\\
					\int_{t_{\ell-1}}^{t_{\ell}} f_{\alpha,\lambda}(t_{k}-s)\,dW_s\,\mbox{\bf 1}_{\{k > \ell\}}
				\end{array} \right)_{k=1:n+1}\right]_{\ell=1:n}.
			\end{equation}
					where the first line has been introduced in order to be able to perform a consistent joint simulation of $\bar X^{n}$ and  the Euler-Maruyama scheme of any Markovian process depending on $\bar X^{n}$ and $W$ (see e.g.~\cite{Gnabeyeu2026a, Gnabeyeu2026b,dro2026optimal} 
					).
			
					\smallskip \noindent
					Then the following relation holds: $\;\overline X^{n}_{t_0}=g_0(t_{0})$ and for every $k=1,\dots,n$
						\begin{equation}\label{eq:VarianceModel2}
								 \overline X^{n}_{t_{k}}= g_0(t_{k}) +G_{k+1,\cdot}\big(\varsigma_{\alpha,\lambda,c}(t_{\ell+1})\sigma( \overline{X}^{n}_{t_{\ell}}))_{\ell=1:k}
							\end{equation}
					\noindent which is simulated on the discrete  grid \((t^n_k)_{0\leq k\leq n}\) by generating an independent sequence of gaussian vectors \( G_{\cdot,\ell}, \ell=0 \cdots n-1\)
					using an alternate extended and stable version of Cholesky decomposition of a well-defined covariance matrix \(C\).
					The reader is referred to \cite[Appendix A]{EGnabeyeu2025, GnabeyeuPages2026} for further details about the simulation of the Gaussian stochastic integrals terms in the semi-integrated Euler scheme~\eqref{eq:VarianceModel}-\eqref{eq:VarianceModel} introduced in this context for Equations~\eqref{eq:CondMean}-\eqref{eq:ConstMeanVar}. Theoretical guarantees for the convergence of this numerical scheme, as well as the convergence rate are established 
					for more general Kernels and path-dependent coefficients in~\cite{GnabeyeuPages2026} (see also~\cite{BRAS2025}).
				The reader is invited to take a look to the captions of the differents figures for the numerical values of the parameters of the Stochastic Volterra equation.
			\vspace{-.3cm}
			
			\subsection{Numerical illustration of intrinsic fake stationary SVIE with $\alpha$-fractional kernels 
			}
			We consider an $\alpha$-fractional kernel for $ \alpha \in (1, \frac32)$ (``long memory models ")  and a squared trinomial diffusion coefficient \(\sigma(t,x) = \sigma(x)\) with
			\(\sigma\) of the form \ref{eq:sigmafakeI&II} and given by \(\sigma(x) = \sqrt{ \kappa_0+\kappa_2\,(x-m_0)^2}\). 
			\begin{figure}[H]
				\centering
				\includegraphics[width=0.886\linewidth]{Images/mu_phi_alpha_lm.pdf}
				\caption{\textit{Graph of \( t \mapsto \mu(t) \) (left) and \( t \mapsto \phi(t) \) (right) for the fractional kernel (Example~\ref{Ex:SolventGammaKernel}~$-2.$) with different values of \(\alpha\) over the time interval \( [0, T] \), \( T = 1.5 \), \( \lambda = 0.2 \), \( m_0 = 10 \) and \(c=1.95356\).}}\label{fig:Mu_phi_autonome}
			\end{figure}
			\begin{figure}[H]
				\centering
				\includegraphics[width=0.856\linewidth]{Images/mean_variance_autonome_H080_T1.5.pdf}
				\caption{\textit{Graph of \( t_k \mapsto \mathbb{E}[X_{t_k}] \) and \( t_k \mapsto \text{Var}(X_{t_k}, M) \) for the process~\eqref{eq:ConstMeanVar} with \(\rho=0\) i.e. fractional kernel (Example~\ref{Ex:SolventGammaKernel}~$-2.$) over the time interval \( [0, T] \), \( T = 1 \), \( H = 0.8 \), \( \lambda = 0.2 \), \( m_0 = 10 \), \( v_0 = 0.09 \), \(c=1.95356\), \(\kappa_0 =0.01151\) and \(\kappa_2 =0.384\). Number of steps: \( n = 800 \), Simulation size: \( M = 160000 \).}}\label{fig:autonome_}
				\vspace{-.4cm}
			\end{figure}
			The Figure~\ref{fig:autonome_} illustrates that the Volterra process $(X_t)_{t\in[0,T]}$ in Equation~\eqref{eq:ConstMeanVar} is designed to ensure constant marginal
			mean and variance over time. The reader is referred to~\cite[Appendix~$A$]{EGnabeyeu2025} for
			further details about the semi-integrated Euler scheme  introduced in this context for the simulation of the stochastic integrals terms in Equation~\eqref{eq:ConstMeanVar}.
			\subsection{Numerical illustration of fake stationary SVIE with $\alpha$-fractional kernel $\alpha \in (1,\frac32)$} 
			We consider an $\alpha$-fractional kernel (\(\rho= 0\)) for $ \alpha \in (1, \frac{3}{2})$ (``Long Memory'')  and the squared trinomial diffusion coefficient~\eqref{eq:trinom2}. 	Let c be such that $c[\sigma]^2_{Lip} < 1$. 
			\begin{figure}[H]
				\centering
				\begin{minipage}{0.5\linewidth}
					\centering
					\includegraphics[width=.98\linewidth]{Images/curves_stabilizer_H080_T10.pdf}
					\caption{ Graph of the stabilizer $ t \to \varsigma_{\alpha,\lambda,c}(t)$  over [0,10]. Hurst esponent $H=0.8$,  $\lambda = 0.2$, c = 0.3.}
				\end{minipage}%
				\hfill
				\begin{minipage}{0.5\linewidth}
					\centering
					\includegraphics[width=0.9\linewidth]{Images/curves_trajectoriesConfluence_scheme1_H080.pdf}
					\caption{Confluence from a [0,30]-Uniform Distribution, T=60, $H=0.8$,  $\lambda = 0.2$, c = 0.36.}\label{fig:confluent1}
				\end{minipage}
			\end{figure}%
			\noindent
			Figure \ref{fig:confluent1} shows $L^2$-confluence (see~\cite[Proposition 4.4]{EGnabeyeu2025}) of the process's marginals for different initial values as time increases.
			\begin{figure}[H]
				\centering
				\includegraphics[width=0.9\linewidth]{Images/curves_variance_meansigma_H08_scheme2.pdf}
				\caption{Graph of \( t_k \mapsto \text{StdDev}(t_k, M) \) and \( t_k \mapsto \mathbb{E}[\sigma^2(X_{t_k},M)] \) over \( [0, T] \), \( T = 1 \), \( H = 0.8 \), \( \mu_0 = 2 \), \( \lambda = 0.2 \), \( v_0 = 0.09 \), and \( \text{StdDev}(X_0) = 0.3 \). Number of steps: \( n = 800 \), Simulation size: \( M = 100000 \).}
			\end{figure}
			In this fractional case, the stabilizing function $\varsigma_{\alpha, \lambda,c}$, solution of the associated functional equations~\eqref{eq:VolterraStabilizer2} is computed using the power series expansion in Proposition~\ref{prop:alphaFractKernel1}.
			\subsection{A numerical illustration with $\alpha$-exponential fractional kernel for $\alpha \in (1,\frac32)$}
			\noindent
			We consider an $\alpha$-exponential fractional kernel for $ \alpha \in (1, \frac{3}{2})$ (``Long Memory'')  and the squared trinomial diffusion coefficient~\eqref{eq:trinom2}. 
		
		\medskip\noindent
			Note that, in this exponential-fractional case, the stabilizing function $\varsigma_{\alpha,\rho, \lambda,c}$, solution of the associated functional equations~\eqref{eq:VolterraStabilizer2}--\eqref{eq:VolterraStabilizer} is computed using the scheme~\eqref{eq:TriangStabilScheme_}.
			\begin{figure}[H]
				\centering
				\includegraphics[width=0.83\linewidth]{Images/curves_variance_meansigma_scheme2_H080Gamma_T1.pdf}
				\caption{Graph of \( t_k \mapsto \text{StdDev}(t_k, M) \) and \( t_k \mapsto \mathbb{E}[\sigma^2(X_{t_k},M)] \) over the time interval \( [0, 1] \), \( H = 0.8 \), \( \mu_0 = 2 \), \( \lambda = 0.2 \), \( v_0 = 0.09 \) and \( \rho = 1.2 \). Number of steps: \( n = 800 \), Simulation size: \( M = 10^5 \).}
				\vspace{-0.5cm}
			\end{figure}
	
			\smallskip
			\noindent {\bf Acknowledgement:}  The authors thank J-F. Chassagneux, M. Rosenbaum and Saad Souilmi
			 for insightful discussions, help and comments.
			\vspace{-0.5cm}
			\bibliographystyle{abbrv}
			\bibliography{StationarityVolterraEquations}
			
			\normalsize  
			\appendix
			\vspace{-0.1cm}
			\section{Auxiliary results and Technical Proofs}\label{app:lemmata}
				\begin{Lemma}[Expansions]\label{lm:expansion}
		We have the following inequalities:
		\begin{enumerate}
			\item $0\le 1-e^{-v} \le (1-e^{-v})^{\vartheta} \le {v}^{\vartheta}$, for every $v\ge 0$, and $\vartheta\!\in (0,1]$.
			\item $\sin(v) \le {v}^{\vartheta}$, for every $v\ge 0$, and $\vartheta\!\in (0,1]$.
		\end{enumerate}
	\end{Lemma}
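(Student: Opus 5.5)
The two inequalities are elementary, so the plan is to prove each one by a direct real-variable argument, splitting according to whether the quantity raised to the power $\vartheta$ is below or above $1$.

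For claim~1, fix $v\ge 0$ and $\vartheta\in(0,1]$, and set $x:=1-e^{-v}$. The first step is the lower bound: since $e^{-v}\le 1$ for $v\ge0$, we get $x\in[0,1)$, so $0\le x$. The second step uses the fact that for $x\in[0,1]$ and $\vartheta\in(0,1]$ we have $x = x^{1}\le x^{\vartheta}$, because $y\mapsto x^y$ is non-increasing on $(0,+\infty)$ when $0\le x\le 1$. This gives $1-e^{-v}\le (1-e^{-v})^{\vartheta}$. The third step is the convexity inequality $e^{-v}\ge 1-v$, which gives $0\le x\le v$. Since $y\mapsto y^{\vartheta}$ is non-decreasing on $\R_+$, we obtain $x^{\vartheta}\le v^{\vartheta}$, which completes the chain.

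For claim~2, I distinguish two cases. If $v\ge 1$, then $\sin(v)\le 1\le v^{\vartheta}$, since $v^{\vartheta}\ge 1$ whenever $v\ge1$ and $\vartheta>0$. If $0\le v<1$, I use the standard bound $\sin v\le v$ for $v\ge0$; it follows from the fact that $v-\sin v$ vanishes at $0$ and has derivative $1-\cos v\ge0$. Since $v\in[0,1)$ and $\vartheta\le1$, monotonicity in the exponent gives $v\le v^{\vartheta}$, so $\sin v\le v^{\vartheta}$. Here $0^{\vartheta}=0$ covers $v=0$.

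Both arguments reduce to the single observation that $x\le x^{\vartheta}$ on $[0,1]$ for $\vartheta\le 1$, combined with a first-order bound: $1-e^{-v}\le v$ in claim~1 and $\sin v\le\min(v,1)$ in claim~2. The only delicate point is the case split in claim~2, because the linear bound alone does not yield $v^{\vartheta}$ when $v>1$; the trivial bound $\sin v\le 1$ handles that range.
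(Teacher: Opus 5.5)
Your proof is correct and follows the paper's argument. Claim~2 uses exactly the paper's case split: $\sin v\le v\le v^{\vartheta}$ for $v\in[0,1]$, and $\sin v\le 1\le v^{\vartheta}$ for $v\ge 1$. For claim~1, which the paper only calls straightforward, you correctly supply the missing details, namely $x\le x^{\vartheta}$ on $[0,1]$ and $1-e^{-v}\le v$.
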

	\noindent {\bf Proof.} The claim (1) is straightforward since $\vartheta\!\in (0,1)$, while for the second claim, we have:
	\begin{itemize}
		\item if $0\le v \le 1$, then $\sin(v) \le v \le {v}^{\vartheta}$, for every $\vartheta\!\in (0,1]$.
		\item  if $v \ge 1$, then  ${v}^{\vartheta} \ge 1 \ge\sin(v) $, for every $\vartheta\!\in (0,1]$.
	\end{itemize}
	\bigskip
	\noindent {\bf Proof of Proposition \ref{prop:main_general alpha}:} 
	\medskip
	\noindent  {\sc Step~1.} 
	As \( \forall \alpha\!\in \R^*_+ \setminus \mathbb{N} \) , \( (-1)^{\lfloor \alpha \rfloor}\sin(\alpha \pi) > 0 \), we have the inequality:
	{\small
		\begin{equation}\label{eq:ineqHalpha}
			u^{2\alpha} + 2u^{\alpha} \cos(\pi \alpha) + 1 \ge 1 - \cos^2(\alpha \pi) = \sin^2(\alpha \pi) > 0 \quad (\textit{or} \quad \ge (u^{\alpha} -1)^2 > 0 ).
		\end{equation}
	}
	i.e., \( (-1)^{\lfloor \alpha \rfloor}H_{\alpha}(u) \) is non-negative for all \( u \) in the integral \ref{eq:Laplace_transform}. Therefore, \( (-1)^{\lfloor \alpha \rfloor}F_{\alpha}(t) \) is the Laplace transform of a non-negative Lebesgue integrable function \( (-1)^{\lfloor \alpha \rfloor}H_{\alpha} : \mathbb{R}_+ \to \mathbb{R}_+ \), and, by the ''Bernstein theorem'', \( (-1)^{\lfloor \alpha \rfloor}F_{\alpha}(t) \) is \textit{completely monotone} (CM) in the real line,in the sense that $(-1)^n (-1)^{\lfloor \alpha \rfloor}F^{(n)}_\alpha(t) \ge 0$ at every order $n\ge 0$.
	However, the CM property of \( (-1)^{\lfloor \alpha \rfloor}F_\alpha(t) \) can also be seen as a consequence of the result by Pollard \cite{SchillingSongVondracek2012} because the transformation \( x = t^\alpha \) is a Bernstein function for \( \alpha \in (0, 1) \). 
	
	\noindent  {\sc Step~2.} Moreover as $H_\alpha$ is continuous on $(0,+\infty)$, $
	H_\alpha(u)\stackrel{0}{\sim}  u^{\alpha-1}\frac{\sin(\pi \alpha)}{\pi}
	\quad \mbox{ and }\quad H_\alpha(u)\stackrel{+\infty}{\sim} \frac{\sin(\pi \alpha)}{\pi u^{\alpha+1}}.$\\
	It is clear that $H_\alpha \!\in {\cal L}_{\R_+}^1(\R_+)$ and that both functions \(u\mapsto uH_\alpha(u) \) and \(u\mapsto u^{\alpha+1}H_\alpha(u)\)  are bounded on $\R_+$.
	Thus, for every $t >0$, $\int_0^{+\infty} e^{-t u}uH_\alpha (u)du <+\infty$ so that owing to a Lebesgue-type condition for differentiation under the integral sign, $F_\alpha$ is differentiable  on $(0, +\infty)$ with 
	{\small
		\begin{equation}\label{eq:derivF}
			F'_\alpha(t) = -\int_0^{+\infty} e^{-tu} uH_\alpha(u) du, \quad t>0.
		\end{equation}
	}
	The same rule applied $k$ times shows that $F_\alpha$ is $\mathcal C^k$ for $k \in \N$, hence is infinitely differentiable and
	{\small
		\begin{equation}\label{eq:derivF_k}
			F^{(k)}_\alpha(t) = \int_0^{+\infty} e^{-tu} H^{(k)}_{\alpha}(u) \, du \; \text{with} \; H^{(k)}_{\alpha}(u) := (-1)^k u^{k} H_{\alpha}(u) = (-1)^k \frac{\sin(\alpha \pi) }{ \pi } \frac{u^{\alpha - 1 + k}}{u^{2\alpha} + 2u^{\alpha} \cos(\alpha \pi) + 1}.
		\end{equation}
	}
	$G_\alpha(t)$ is infinitely differentiable($\mathcal C^k$ for $k \in \N$) as product of such functions and by recurrence, we have:
	{\small
		\begin{equation}\label{eq:derivG}
			\forall k \in \N, \quad G^{(k)}_\alpha(t) =\frac{2}{\alpha} \sum_{n=0}^{\lfloor \frac{\alpha-1}{2} \rfloor} \exp\left[t \cos\left(\frac{(2n+1)\pi}{\alpha}\right)\right] \cos\left[t \sin\left(\frac{(2n+1)\pi}{\alpha}\right)- \frac{k (2n+1)\pi}{\alpha}\right].
		\end{equation}
	}
	Claim $(b)$ follows from the fact that $R_{\alpha,\lambda}= e_{\alpha}(\lambda^{1/\alpha}\cdot)  = R_{\alpha,1}(\lambda^{1/\alpha}\cdot) $, hence infinitely differentiable  on $(0, +\infty)$ from \ref{eq:derivF_k} and \ref{eq:derivG}. 
	The representation of $f_{\alpha,\lambda}$ follows from \ref{eq:derivF} and \ref{eq:derivG}.
	
	It follows  from~\eqref{eq:Halpha} and~\eqref{eq:ineqHalpha}  that \(|	H_\alpha(u)|  \le \frac{u^{\alpha-1}|\sin(\pi \alpha)|}{\pi \sin^2(\pi \alpha)} =  \frac{u^{\alpha-1}}{\pi |\sin(\pi \alpha)|}.\)
	Hence, for every $t\ge 0$,  
	{\small
		\begin{align*}
			|R_{\alpha,1}(t)| = |e_{\alpha}(t)| &= |F_\alpha(t) +G_\alpha(t)| < \frac{1}{\pi |\sin(\pi \alpha)|}\int_{0}^{+\infty}e^{-tu}u^{\alpha-1}du + \frac{2}{\alpha} \sum_{n=0}^{\lfloor \frac{\alpha-1}{2} \rfloor} \exp\left[t \cos\left(\frac{(2n+1)\pi}{\alpha}\right)\right]\\
			&\leq \frac{\Gamma(\alpha)}{\pi |\sin(\pi \alpha)|}t^{-\alpha} +\frac{2}{\alpha} \sum_{n=0}^{\lfloor \frac{\alpha-1}{2} \rfloor} \exp\left[t \cos\left(\frac{(2n+1)\pi}{\alpha}\right)\right]\leq \frac{\Gamma(\alpha)}{\pi |\sin(\pi \alpha)|}t^{-\alpha} +\frac{\lfloor \alpha+1 \rfloor}{\alpha} e^{t \cos\left(\frac{\pi}{\alpha}\right)}
		\end{align*}
	}
	\noindent
	where the last inequality comes from the fact that \(\cos(x) \) is non-increasing on \([0, \pi]\)
	so that $R_{\alpha,1}\!\in {\cal L}^{\gamma}(\R_+)$ for every $\gamma>\frac{1}{\alpha}$ where $\alpha$ is such that $\cos \left( \frac{\pi}{\alpha} \right)<0$ i.e. $\alpha \in (0,2]$ . This extends to $R_{\lambda, \alpha}$ by scaling.
	For the $L^{2\beta}$-integrability of $f_{\alpha, \lambda}$, once noted that $f_{\alpha,\lambda}= \lambda^{1/\alpha} f_{\alpha,1}(\lambda^{1/\alpha} \cdot)$ so that $\int_0^{+\infty} f_{\alpha, \lambda}^{2\beta}(t)dt= \lambda^{\frac{2\beta-1}{\alpha}}\int_0^{+\infty} f_{\alpha,1}^{2\beta}(t)dt$, it is clear that it is enough to prove that $ f_{\alpha,1}$ is ${\cal L}^{2\beta}$-integrable.
	
	\smallskip By the same argument as above, it follows  from~\eqref{eq:derivF}  and ~\eqref{eq:derivG} that for every $t\geq0$
	{\small
		\[|f_{\alpha, 1}(t)| < \frac{1}{\pi |\sin(\pi \alpha)|} \int_0^{+\infty} e^{-tu}u^{\alpha}  du + \frac{2}{\alpha} \sum_{n=0}^{\lfloor \frac{\alpha-1}{2} \rfloor} \exp\Big[t \cos\Big(\frac{(2n+1)\pi}{\alpha}\Big)\Big] \leq \frac{\Gamma(\alpha+1)}{t^{\alpha+1}\pi |\sin(\pi \alpha)|} + \frac{\lfloor \alpha+1 \rfloor}{\alpha} e^{t \cos\Big(\frac{\pi}{\alpha}\Big)}.\]
	}
	
	Thus $f_{\alpha, 1}\!\in {\cal L}^{2\beta}([1,+\infty), {\rm Leb}_1) \quad \forall \beta >0$ provided that $\cos \Big( \frac{\pi}{\alpha} \Big)<0$ i.e. $\alpha \in (0,2)$.
	On the other hand  \(f_{\alpha, \lambda}(t) = - R'_{\alpha, \lambda}(t) = \alpha\lambda t^{\alpha-1} E'_{\alpha}(-\lambda t^{\alpha})  = \lambda t^{\alpha-1}\sum_{k\ge 0}(-1)^k\lambda^k \frac{t^{\alpha k}}{\Gamma(\alpha (k+1))}\)
	so that \(f_{\alpha,1}(t)\stackrel{0}{\sim} 
	\frac{t^{\alpha-1}}{\Gamma(\alpha)}.\)
	As $t\mapsto \frac {1}{t^{1-\alpha}}\!\in {\cal L}^{2\beta}((0,1],{\rm Leb}_1)$ for any $\beta\!\in \big( \frac{1}{2(1-\alpha)}, +\infty\big)$ , we conclude that $f_{\alpha,1}\!\in {\cal L}^{2\beta}(\R_+) \quad \forall \beta >0$ provided that $\cos \Big( \frac{\pi}{\alpha} \Big)<0$ i.e. $\alpha \in (0,2)$.
	
	\medskip
	\noindent  {\sc Step~3.}
	As for the $\vartheta$-H\"older continuity of $f_{\alpha, \lambda}$, we may again assume without loss of generality that $\lambda =1$. Let $\delta >0$, \(\vartheta\in(0,1]\) and $\alpha\!\in (1,+\infty) \setminus \mathbb{N}$. One has
	\vspace{-.2cm}
	{\small 
		\begin{equation*}
		f_{\alpha ,1}(t+\delta)-f_{\alpha,1}(t) = \Big(F'_\alpha(t)-F'_\alpha(t+\delta)\Big) + \Big(G'_\alpha(t)-G'_\alpha(t+\delta)\Big) =  \Big(F'_\alpha(t)-F'_\alpha(t+\delta)\Big) + \sum_{n=0}^{\lfloor \frac{\alpha-1}{2} \rfloor} \Big(G'^n_\alpha(t)-G'^n_\alpha(t+\delta)\Big)
		\vspace{-.2cm}
		\end{equation*}
	}
	However, bearing in mind that  \( 0 \leq \frac{\pi}{\alpha} \leq \frac{(2n+1)\pi}{\alpha} \leq \pi \) for $\alpha\!\in (1,+\infty) \setminus \mathbb{N}$ and $0 \leq n \leq \lfloor \frac{\alpha-1}{2} \rfloor$ ,  we have:
	{\small 
		\begin{align*}
			&G'^n_\alpha(t)-G'^n_\alpha(t+\delta)= \\
			& \frac2\alpha e^{t \cos \Big( \frac{(2n+1)\pi}{\alpha} \Big)} \Big( \cos \Big[ t \sin \Big( \frac{(2n+1)\pi}{\alpha} \Big) - \frac{(2n+1)\pi}{\alpha} \Big]-e^{\delta \cos \Big( \frac{(2n+1)\pi}{\alpha} \Big)} \cos \Big[ (t+\delta) \sin \Big( \frac{(2n+1)\pi}{\alpha} \Big) - \frac{(2n+1)\pi}{\alpha} \Big]\Big) \\
			&= \frac{2}{\alpha} e^{t \cos \Big( \frac{(2n+1)\pi}{\alpha} \Big)} \Big( \cos \Big[ t \sin \Big( \frac{(2n+1)\pi}{\alpha} \Big) - \frac{(2n+1)\pi}{\alpha} \Big] - \cos \Big[ (t+\delta) \sin \Big( \frac{(2n+1)\pi}{\alpha} \Big) - \frac{(2n+1)\pi}{\alpha} \Big] \Big) \\
			&\quad\hspace{4cm} + \frac{2}{\alpha} e^{t \cos \Big( \frac{(2n+1)\pi}{\alpha} \Big)} \Big( 1 - e^{\delta \cos \Big( \frac{(2n+1)\pi}{\alpha} \Big)} \Big) \cos \Big[ (t+\delta) \sin \Big( \frac{(2n+1)\pi}{\alpha} \Big) - \frac{(2n+1)\pi}{\alpha} \Big]\\
			&= \frac4\alpha e^{t \cos \Big( \frac{(2n+1)\pi}{\alpha} \Big)} \sin \Big[ \frac{\delta}{2} \sin \Big( \frac{(2n+1)\pi}{\alpha} \Big) \Big] \sin \Big[ -(t+\frac{\delta}{2}) \sin \Big( \frac{(2n+1)\pi}{\alpha} \Big) + \frac{(2n+1)\pi}{\alpha} \Big]
			\\
			&\quad\hspace{4cm} + \frac2\alpha e^{t \cos \Big( \frac{(2n+1)\pi}{\alpha} \Big)} \Big( 1 - e^{\delta \cos \Big( \frac{(2n+1)\pi}{\alpha} \Big)} \Big) \cos \Big[ (t+\delta) \sin \Big( \frac{(2n+1)\pi}{\alpha} \Big) - \frac{(2n+1)\pi}{\alpha} \Big]
		\end{align*}
		Consequently,
		
		\begin{align*}
			&|G'^n_\alpha(t)-G'^n_\alpha(t+\delta)|\leq \frac2\alpha e^{t \cos \Big( \frac{(2n+1)\pi}{\alpha} \Big)} \Big( 2\sin \Big[ \frac{\delta}{2} \sin \Big( \frac{(2n+1)\pi}{\alpha} \Big) \Big] + (1-e^{\delta \cos \Big( \frac{(2n+1)\pi}{\alpha} \Big)} )\Big) \\
			&\hspace{3cm}\leq \frac2\alpha e^{t \cos \Big( \frac{(2n+1)\pi}{\alpha} \Big)} \Big( 2\sin \Big[ \frac{\delta}{2} \sin \Big( \frac{(2n+1)\pi}{\alpha} \Big) \Big] + (1-e^{\delta \cos \Big( \frac{(2n+1)\pi}{\alpha} \Big)} )\Big) \\
			&\hspace{2cm}\leq \frac2\alpha e^{t \cos \Big( \frac{\pi}{\alpha} \Big)} \Big( 2\Big(\frac{\delta}{2} \pi \Big)^{\vartheta} + (1-e^{-\delta } ) \Big) \leq \frac2\alpha e^{t \cos \Big( \frac{\pi}{\alpha} \Big)} \Big( 2^{1-\vartheta} \pi^\vartheta \delta^\vartheta + \delta^\vartheta \Big)= \frac2\alpha e^{t \cos \Big( \frac{\pi}{\alpha} \Big)} \Big( 2^{1-\vartheta} \pi^\vartheta  + 1 \Big)\delta^\vartheta. 
		\end{align*}
	}
	The penultimate inequality follows from the fact that \( 0 \leq \frac{\pi}{\alpha} \leq \frac{(2n+1)\pi}{\alpha} \leq \pi \), which leads to two key observations. On the first hand, as we always have \(\cos \Big( \frac{(2n+1)\pi}{\alpha} \Big)\geq-1\)  we have \( 1 - e^{\delta \cos \Big( \frac{(2n+1)\pi}{\alpha} \Big)} \leq 1 - e^{-\delta},\)
	and on the other hand, by applying Lemma \ref{lm:expansion} (2), we obtain the following inequality:
	\vspace{-0.2cm}
	\begin{equation}
		\sin \Big[ \frac{\delta}{2} \sin \big( \frac{(2n+1)\pi}{\alpha} \big) \Big] \leq \Big( \frac{\delta}{2} \sin \big( \frac{(2n+1)\pi}{\alpha} \big) \Big)^{\vartheta} \leq \Big( \frac{\delta}{2} \big( \frac{(2n+1)\pi}{\alpha} \big) \Big)^{\vartheta} \leq \big( \frac{\delta}{2} \pi \big)^{\vartheta}.
	\vspace{-0.2cm}
	\end{equation}
	where the final inequality follows from Lemma \ref{lm:expansion} (1).
	Consequently, H\"older regularity with exponent \( \vartheta \) for the function \( f_{\alpha,\lambda} \)  of the form~\eqref{eq:Regulf} can be achieved provided that \( \cos \Big( \frac{\pi}{\alpha} \Big) < 0 \), i.e., \( \alpha \in (1,2) \).
	
	\bigskip
	\noindent Now, about the $\alpha$-fractional kernels with $1 <\alpha<2$, it follows from Proposition\ref{prop:representation}, (see also \cite{GorenfloMainardi1997}) that:
	\vspace{-0.2cm}
	\begin{equation}
		e_{\alpha}(t) = F_{\alpha}(t) + G_{\alpha}(t) = \int_0^{+\infty} e^{-tu} H_{\alpha}(u) \, du  + \frac{2}{\alpha} e^{t \cos \Big( \frac{\pi}{\alpha} \Big)} \cos \Big( t \sin \Big( \frac{\pi}{\alpha} \Big) \Big), \quad 1 < \alpha < 2, \quad t \geq 0. \label{eq:main_relation}
		\vspace{-0.2cm}
	\end{equation}
	Note that, in this case ($1 <\alpha<2$), the function \(H_{\alpha}(u) \) is negative for all u (and thus -F is completely monotone and hence infinitely differentiable on $\R_+^+$)
	since \( 1 < \alpha < 2 \) implies \( \sin(\alpha \pi) < 0 \) and we have the following inequality: $
	u^{2\alpha} + 2u^{\alpha} \cos(\pi \alpha) + 1 \ge 1 - \cos^2(\alpha \pi) = \sin^2(\alpha \pi) > 0 \; (\textit{or} \; \ge (u^{\alpha} -1)^2 > 0 ).$ \hfill $\square$
	
	\bigskip
	\noindent {\bf Proof of Proposition \ref{prop:main}.}
	\noindent  {\sc Step~1.}
	$(a)$ follows from the first claim of Proposition \ref{prop:main_general alpha} since $R_{\alpha,\lambda}= e_{\alpha}(\lambda^{1/\alpha}\cdot)  = R_{\alpha,1}(\lambda^{1/\alpha}\cdot) $, hence infinitely differentiable  on $(0, +\infty)$ from \ref{eq:derivF_k} and \ref{eq:derivG}. All will extend to $R_{\alpha, \lambda}$ by scaling.
	It follows  from~\eqref{eq:Halpha} and~\eqref{eq:ineqHalpha} that \(|H_\alpha(u)| \le \frac{u^{\alpha-1}|\sin(\pi \alpha)|}{\pi \sin^2(\pi \alpha)} =  \frac{u^{\alpha-1}}{\pi |\sin(\pi \alpha)|}.\)
	Hence, for every $t\ge 0$,
	{\small  
	\vspace{-0.2cm}
	\begin{equation}
	|R_{\alpha,1}(t)| = | F_\alpha(t) +G_\alpha(t) |\le \frac{1}{\pi |\sin(\pi \alpha)|}\int_{0}^{+\infty}e^{-tu}u^{\alpha-1}du + \frac2\alpha e^{t\cos \Big( \frac{\pi}{\alpha} \Big)}= \frac{\Gamma(\alpha)}{\pi |\sin(\pi \alpha)|}t^{-\alpha} +\frac2\alpha e^{t\cos \Big( \frac{\pi}{\alpha} \Big)}.
	\vspace{-0.2cm}
    \end{equation}
    }
	so that $R_{\alpha,1}\!\in {\cal L}^{\gamma}(\R_+)$ for every $\gamma>\frac{1}{\alpha}$ as $\cos \Big( \frac{\pi}{\alpha} \Big)<0,\,\forall\,\alpha \in (1,2)$ and in particular $R_{\alpha,1}(t) \leq 1 \quad \forall t\geq0$ since $\sin(\pi \alpha) \leq 0$ . The representation of $f_{\alpha,\lambda}$ in $(b)$  follows from \eqref{eq:derivF} and \eqref{eq:derivG_k}.
	
	\medskip
	\noindent  {\sc Step~2.}
	 $(b)$ Let us prove the $L^{p}$-integrability of $f_{\alpha, \lambda}$. Once noted that $f_{\alpha,\lambda}= \lambda^{1/\alpha} f_{\alpha,1}(\lambda^{1/\alpha} \cdot)$ so that $\int_0^{+\infty} f_{\alpha, \lambda}^{p}(t)dt= \lambda^{\frac{p-1}{\alpha}}\int_0^{+\infty} f_{\alpha,1}^{p}(t)dt$, it is clear that it is enough to prove that $ f_{\alpha,1}$ is ${\cal L}^{p}$-integrable.
	
	\medskip
	\noindent It follows from~\eqref{eq:derivF}  and ~\eqref{eq:derivG_k} that for every $t>0$,
	{\small
	\vspace{-0.2cm}
	\begin{equation}
	|f_{\alpha, 1}(t)| =|-F'_\alpha(t) -G'_\alpha(t)| \le \frac{1}{\pi |\sin(\pi \alpha)|} \int_0^{+\infty} e^{-tu}u^{\alpha}  du + \frac2\alpha e^{t\cos \Big( \frac{\pi}{\alpha} \Big)}= \frac{\Gamma(\alpha+1)}{t^{\alpha+1}\pi \sin(\pi \alpha)} + \frac2\alpha e^{t\cos \Big( \frac{\pi}{\alpha} \Big)}.
	\vspace{-0.2cm}
	\end{equation}
	}
	Thus $f_{\alpha, 1}\!\in {\cal L}^{p}([1,+\infty), {\rm Leb}_1) \; \forall p >0$ and is bounded in \([1,+\infty)\).
	On the other hand  \(f_{\alpha, \lambda}(t) = \lambda t^{\alpha-1}\sum_{k\ge 0}(-1)^k\lambda^k \frac{t^{\alpha k}}{\Gamma(\alpha (k+1))}\)
	so that \(f_{\alpha,1}(t)\stackrel{0}{\sim} 
	\frac{t^{\alpha-1}}{\Gamma(\alpha)},\) \(f_{\alpha,1}(0)=0\) and \(f_{\alpha,1}\) is bounded on \([0,1]\). Moreover, since
	$t\mapsto \frac {1}{t^{1-\alpha}}\!\in {\cal L}^{p}((0,1],{\rm Leb}_1)$ for any $p\!\in \big( \frac{1}{(1-\alpha)}, +\infty\big) \cap \R_+^* = \R_+^* $, we conclude that $f_{\alpha,1}\!\in {\cal L}^{p}(\R_+) \quad \forall p \in (0,+\infty]$. Another consequence is that, for every \( t \geq 1 \),
	\(	|R_{\alpha, 1}(t)| = |e_{\alpha}(t)| = \int_t^{+\infty} | f_{\alpha, 1}(s)| \, ds \leq C_{\alpha}^\prime \, t^{-\alpha} + C_{\alpha}^{\prime\prime} \, e^{t\cos \Big( \frac{\pi}{\alpha} \Big)},\)
	so that \( R_{\alpha, 1} \in {\cal L}^2(\R_+) \).
	
	\medskip
	\noindent  {\sc Step~3.}
	As for the $\vartheta$-H\"older continuity of $f_{\alpha, \lambda}$, one may again assume w.l.g. that $\lambda =1$. Let $\delta >0$ and \(\vartheta\in(0,1]\). One has \(f_{\alpha ,1}(t+\delta)-f_{\alpha,1}(t) = \Big(F'_\alpha(t)-F'_\alpha(t+\delta)\Big) + \Big(G'_\alpha(t)-G'_\alpha(t+\delta)\Big)\) and following the same reasoning as above while bearing in mind that $\cos \Big( \frac{\pi}{\alpha} \Big) \leq 0 $, $\sin \Big( \frac{\pi}{\alpha} \Big) \geq 0 $ for $\alpha \in (1,2)$, we have:
	\vspace{-0.2cm}
	{\small
		\begin{align*}
			&G'_\alpha(t)-G'_\alpha(t+\delta) = \frac2\alpha e^{t \cos \Big( \frac{\pi}{\alpha} \Big)} \Big( \cos \Big[ t \sin \Big( \frac{\pi}{\alpha} \Big) - \frac{\pi}{\alpha} \Big]-e^{\delta \cos \Big( \frac{\pi}{\alpha} \Big)} \cos \Big[ (t+\delta) \sin \Big( \frac{\pi}{\alpha} \Big) - \frac{\pi}{\alpha} \Big]\Big) \nonumber\\
			&= \frac2\alpha e^{t \cos \Big( \frac{\pi}{\alpha} \Big)} \Big( (\cos \Big[ t \sin \Big( \frac{\pi}{\alpha} \Big) - \frac{\pi}{\alpha} \Big] -\cos \Big[ (t+\delta) \sin \Big( \frac{\pi}{\alpha} \Big) - \frac{\pi}{\alpha} \Big])+ (1-e^{\delta \cos \Big( \frac{\pi}{\alpha} \Big)} )\cos \Big[ (t+\delta) \sin \Big( \frac{\pi}{\alpha} \Big) - \frac{\pi}{\alpha} \Big]\Big)\nonumber\\
			&= \frac2\alpha e^{t \cos \Big( \frac{\pi}{\alpha} \Big)} \Big( 2\sin \Big[ \frac{\delta}{2} \sin \Big( \frac{\pi}{\alpha} \Big) \Big] \sin \Big[ -(t+\frac{\delta}{2}) \sin \Big( \frac{\pi}{\alpha} \Big) + \frac{\pi}{\alpha} \Big]+ (1-e^{\delta \cos \Big( \frac{\pi}{\alpha} \Big)} )\cos \Big[ (t+\delta) \sin \Big( \frac{\pi}{\alpha} \Big) - \frac{\pi}{\alpha} \Big]\Big) 
		\end{align*}
		Consequently,
		\vspace{-0.2cm}
		\begin{align}
			|G'_\alpha(t)-G'_\alpha(t+\delta)| &\leq \frac2\alpha e^{t \cos \Big( \frac{\pi}{\alpha} \Big)} \Big( 2\sin \Big[ \frac{\delta}{2} \sin \Big( \frac{\pi}{\alpha} \Big) \Big] + (1-e^{\delta \cos \Big( \frac{\pi}{\alpha} \Big)} )\Big) \leq \frac2\alpha e^{t \cos \Big( \frac{\pi}{\alpha} \Big)} \Big( 2\Big(\frac{\delta}{2} \frac{\pi}{\alpha} \Big)^{\vartheta} + (1-e^{-\delta } ) \Big) \nonumber\\
			&\leq \frac2\alpha e^{t \cos \Big( \frac{\pi}{\alpha} \Big)} \Big( 2^{1-\vartheta} (\frac{\pi}{\alpha})^\vartheta \delta^\vartheta + \delta^\vartheta \Big)= \frac2\alpha e^{t \cos \Big( \frac{\pi}{\alpha} \Big)} \Big( 2^{1-\vartheta} (\frac{\pi}{\alpha})^\vartheta  + 1 \Big)\delta^\vartheta.\label{eq:EstimG'}
		\end{align}
	}
	Where the penultimate inequality follows from the fact that \( \frac{\pi}{2} \leq \frac{\pi}{\alpha} \leq \pi \), so that \(1 - e^{\delta \cos \Big( \frac{\pi}{\alpha} \Big)} \leq 1 - e^{-\delta},\)
	and \(
	\sin \Big[ \frac{\delta}{2} \sin \Big( \frac{\pi}{\alpha} \Big) \Big] \leq \Big( \frac{\delta}{2} \sin \Big( \frac{\pi}{\alpha} \Big) \Big)^{\vartheta} \leq \Big( \frac{\delta}{2} \Big( \frac{\pi}{\alpha} \Big) \Big)^{\vartheta}
	\) owing to Lemma \ref{lm:expansion} (2).
	The final inequality follows from Lemma \ref{lm:expansion} (1).
	Moreover, for the term \(F'_\alpha(t)-F'_\alpha(t+\delta) := \int_0^{+\infty} e^{-tu}(1-e^{-\delta u} ) uH_{\alpha}(u)du\), we may write
	\vspace{-0.2cm}
	\begin{equation}\label{eq:EstimF'}
		| F'_\alpha(t)-F'_\alpha(t+\delta) | \leq \int_0^{+\infty} e^{-tu}(1-e^{-\delta u} )^{\vartheta} u |H_{\alpha}(u)|du \leq  \int_0^{+\infty} e^{-tu} \delta^{\vartheta} u^{1+\vartheta} | H_{\alpha}(u)|du.
		\vspace{-0.2cm}
     \end{equation}
	Finally, combining the estimates \eqref{eq:EstimG'} and \eqref{eq:EstimF'}, we obtain:
	\vspace{-0.2cm}
		\begin{equation}\label{eq:estim_hLambda}| f_{\alpha ,1}(t+\delta)-f_{\alpha,1}(t)  | \leq h_{\alpha,1}(t) \delta^{\vartheta} \; \text{where}\; h_{\alpha,1}(t):= \frac2\alpha e^{t \cos \big( \frac{\pi}{\alpha} \big)} \big( 2^{1-\vartheta} (\frac{\pi}{\alpha})^\vartheta  + 1 \big) + \int_0^{+\infty} e^{-tu} u^{1+\vartheta} | H_{\alpha}(u)|du
			\vspace{-0.2cm}
		\end{equation}
	It remains to verify that \( h_{\alpha,1} \in {\cal L}^1(\R_+) \cap {\cal L}^\infty(\R_+) \) and \( h_{\alpha,1}*\varsigma^2 \in {\cal L}^\infty(\R_+).\) Since $\alpha \in (1,2)$, we have
	\(\cos\!\Big(\frac{\pi}{\alpha}\Big) < 0\).
	Hence the first term in $h$ decays exponentially as $t \to \infty$, and therefore belongs to
	\({\cal L}^1(\mathbb{R}_+) \cap {\cal L}^\infty(\mathbb{R}_+).\)
	For the integral term, we derive form~\eqref{eq:Halpha} that: \(|H_\alpha(u)|\stackrel{0}{\sim} \frac{|\sin(\pi \alpha)|}{\pi} u^{\alpha-1}
	\; \mbox{ and }\; |H_\alpha(u)|\stackrel{+\infty}{\sim} \frac{|\sin(\pi \alpha)|}{\pi}u^{-(\alpha+1)}.\)
	Consequently
	\vspace{-0.2cm}
	\begin{equation}\label{eq:Int_Hu}
		u^{\vartheta} | H_\alpha(u)|\stackrel{0}{\sim} \frac{|\sin(\pi \alpha)|}{\pi} u^{\alpha-1+\vartheta}
		\quad \mbox{ and }\quad u^{\vartheta} |H_\alpha(u)|\stackrel{+\infty}{\sim} \frac{|\sin(\pi \alpha)|}{\pi} u^{-(1 +\alpha-\vartheta)},
		\vspace{-0.2cm}
	\end{equation}
	which implies that \(\int_{(0, +\infty)}\!\!\!  u^{\vartheta} |H_{\alpha}(u)|\,du<+\infty\) and \(\int_{(0, +\infty)}\!\!\!  u^{1+\vartheta} |H_{\alpha}(u)|\,du<+\infty\) if and only if \( \vartheta < \alpha \wedge (\alpha-1).\) Consequently, the second term in $h_{\alpha,1}$ also belongs to
	\({\cal L}^1(\mathbb{R}_+) \cap {\cal L}^\infty(\mathbb{R}_+).\)
		Now, we show that $h_{\alpha,1}*\varsigma^2 \in {\cal L}^\infty(\mathbb{R}_+)$.
		We write, for $t\geq\epsilon>0$,
		\vspace{-0.2cm}
		\begin{align*}
		(h_{\alpha,1}*\varsigma^2)(t)
		=
		\int_0^t h_{\alpha,1}(t-s)\varsigma^2(s)\,ds
		&=\int_0^{\epsilon} h_{\alpha,1}(t-s)\varsigma^2(s)\,ds + \int_{\epsilon}^t h_{\alpha,1}(t-s)\varsigma^2(s)\,ds\\
		&\leq \|h_{\alpha,1}\|_{{\cal L}^\infty(\mathbb{R}_+)}\int_0^\epsilon \varsigma^2(s)\,ds +  \|\varsigma^2\|_{{\cal L}^\infty([\epsilon,\infty))}\int_0^{t-\epsilon} h_{\alpha,1}(s)\,ds <\infty
		\end{align*}
		owing to~\cite[Lemma 3.9]{EGnabeyeu2025}, the local asymptotic~\eqref{eq:varsigma} for \(\alpha\in(1,2)\) and the fact that \(h_{\alpha,1}\in {\cal L}^1(\mathbb{R}_+) \cap {\cal L}^\infty(\mathbb{R}_+)\). This yields \(\sup_{t\ge0}(h_{\alpha,1}*\varsigma^2)(t)<\infty\).
	

	\medskip
	\noindent{\sc Step~3.} In this step, we provide refined ${\cal L}^p(\R_+)$-$\vartheta_p$ global H\"older continuity estimates for every \(p\geq1\), improving the ranges of admissible H\"older exponents stated in the remark following Proposition~\ref{prop:main}.
	
	\smallskip
	\noindent
	1. Owing to Fubini-Tonelli's theorem to interwind the order of integration, we have sucessively:
	\begin{align*}
		&\int_0^{+\infty} \big|F'_{\alpha }(t+\delta)-F'_{\alpha}(t) \big| dt  \le \int_{(0, +\infty)}\!\!\! u^{1+\vartheta}|H_{\alpha}(u)|\int_0^{+\infty} \!\!e^{-tu} dt\,du\, \delta^{\vartheta} =  \Big[ \int_{(0, +\infty)}\!\!\! u^\vartheta |H_{\alpha}(u)|\,du\,\Big]\delta^{\vartheta}\\
	&\int_0^{+\infty} \big|G'_{\alpha}(t)-G'_{\alpha }(t+\delta) \big| dt \le \frac2\alpha  \Big( 2^{1-\vartheta} (\frac{\pi}{\alpha})^\vartheta  + 1 \Big)\delta^\vartheta   \int_0^{+\infty} \!\!e^{t \cos \Big( \frac{\pi}{\alpha} \Big)} dt\,=  \Big[\frac{-2}{\alpha \cos \Big( \frac{\pi}{\alpha} \Big)}  \Big( 2^{1-\vartheta} (\frac{\pi}{\alpha})^\vartheta  + 1 \Big) \Big]\delta^{\vartheta}. 
	\end{align*}
	It follows that, \(\int_0^{+\infty} \big|f_{\alpha ,1}(t+\delta)-f_{\alpha,1}(t) \big| dt  \le\Big[ \int_{\R_+}\!\!\! u^\vartheta |H_{\alpha}(u)|\,du\, + \frac{2}{\alpha}  \Big( 2^{1-\vartheta} (\frac{\pi}{\alpha})^\vartheta  + 1 \Big)\Big]\delta^{\vartheta}:=C\delta^{\vartheta}\) since \(\int_{(0, +\infty)}\!\!\!\;  u^{\vartheta} |H_{\alpha}(u)|\,du<+\infty\) owing to~\eqref{eq:Int_Hu} and provided that \(\vartheta\in(0,\alpha\wedge1)=(0,1)\).
	
	\medskip
	\noindent 2. Secondly, as: \(\left(
	f_{\alpha ,1}(t+\delta)-f_{\alpha,1}(t)\right)^2 \leq 2\left( \left(F'_\alpha(t)-F'_\alpha(t+\delta)\right) \right)^2 + 2 \left( \left(G'_\alpha(t)-G'_\alpha(t+\delta)\right) \right)^2 \)
	with:
	{\small
		\begin{align*}
			\int_0^{+\infty} \,&\big(F'_{\alpha }(t+\delta)-F'_{\alpha}(t) \big)^2 dt  \le \int_0^{+\infty}  \int_0^{+\infty} e^{-tu} \delta^{\vartheta} u^{1+\vartheta} |H_{\alpha}(u)|du \int_0^{+\infty} e^{-tv} \delta^{\vartheta} v^{1+\vartheta} |H_{\alpha}(v)|dv \\
			&\leq \int_{(0, +\infty)^2}\!\!\! (uv)^{1+\vartheta}|H_{\alpha}(u)H_{\alpha}(v)|\int_0^{+\infty} \!\!e^{-t(u+v)} dt\,du\,dv\, \delta^{2\vartheta} =   \int_{(0, +\infty)^2}\!\!\! \frac{(uv)^{1+\vartheta}}{u+v}|H_{\alpha}(u)H_{\alpha}(v )|\,du\,dv\, \delta^{2\vartheta} \\
			& \le  \tfrac 12 \int_{(0, +\infty)^2}\!\!\!  (uv)^{\frac 12 +\vartheta} |H_{\alpha}(u)H_{\alpha}(v )|\,du\,dv\, \delta^{2\vartheta} = \tfrac 12 \Big[\int_{(0, +\infty)}\!\!\!  u^{\frac 12 +\vartheta} |H_{\alpha}(u)|\,du\Big]^2\, \delta^{2\vartheta}.
		\end{align*}
	}
	\noindent
	where we used  Fubini-Tonelli's theorem in the first   line to interwind the order of integration and the elementary inequality $\sqrt{uv} \le \frac 12 (u+v)$ when $u,\, v\ge 0$ in the penultimate line. Furthermore,
	{\small
		\[
		\int_0^{+\infty} \big(G'_{\alpha }(t+\delta)-G'_{\alpha}(t) \big)^2 dt \le \frac{4}{\alpha^2}  \left( \left( 2^{1-\vartheta} (\frac{\pi}{\alpha})^\vartheta  + 1 \right)\delta^\vartheta \right)^2  \int_0^{+\infty} \!\!e^{2t \cos \left( \frac{\pi}{\alpha} \right)} dt\,=  \left[\frac{-2}{\alpha^2 \cos \left( \frac{\pi}{\alpha} \right)}  \left( 2^{1-\vartheta} (\frac{\pi}{\alpha})^\vartheta  + 1 \right)^2 \right]\delta^{2\vartheta}. 
		\]
	}
	It follows that , \(\int_0^{+\infty} \big(f_{\alpha ,1}(t+\delta)-f_{\alpha,1}(t) \big)^2 dt  \le \left[ \Big(\int_{(0, +\infty)}\!\!\!  u^{\frac 12 +\vartheta} |H_{\alpha}(u)|\,du\Big)^2 +\frac{4}{\alpha^2} \left( 2^{1-\vartheta} (\frac{\pi}{\alpha})^\vartheta  + 1 \right)^2\right]\, \delta^{2\vartheta}.\)
	
	\noindent Now, we derive, still form~\eqref{eq:Halpha} that:
	$
	 | H_\alpha(u)|\stackrel{0}{\sim} \frac{|\sin(\pi \alpha)|}{\pi} u^{\alpha-1}
	\; \mbox{ and }\; |H_\alpha(u)|\stackrel{+\infty}{\sim} \frac{|\sin(\pi \alpha)|}{\pi}u^{-(\alpha+1)}.
	$
	Consequently
	$$
	u^{\frac 12+\vartheta} |H_\alpha(u)|\stackrel{0}{\sim} \frac{|\sin(\pi \alpha)|}{\pi} u^{\alpha-\frac 12+\vartheta}
	\quad \mbox{ and }\quad u^{\frac 12+\vartheta} |H_\alpha(u)|\stackrel{+\infty}{\sim} \frac{|\sin(\pi \alpha)|}{\pi} u^{-(-\frac 12 +\alpha-\vartheta)},
	$$
	which implies that \(\int_{\R_+}\!\!\!  u^{\frac 12 +\vartheta} |H_{\alpha}(u)|\,du<+\infty\, \mbox{iff}\, \vartheta < (\alpha-\tfrac 12) \wedge 1. 
	\).
	
	\smallskip
	\noindent	3. {\em General case \(p\geq1\):}  Global H\"older estimate. From the estimates~\eqref{eq:estim_hLambda}, for every $0<\vartheta<1$,
	\[
	\|f_{\alpha,1}(\cdot+\delta)-f_{\alpha,1}\|_{{\cal L}^p(\R_+)}
	\le \|h_{\alpha,1}\|_{{\cal L}^p(\R_+)}\delta^\vartheta,\; 
	\]
	It remains to show that $h_{\alpha,1}\in {\cal L}^p(\R_+)$. The first term in the definition of $h_{\alpha,1}$ clearly belongs to ${\cal L}^p(\R_+)$. Moreover, Minkowski's integral inequality gives
		\begin{equation*}
			\begin{aligned}
				\left\|\int_0^\infty e^{-\cdot u}u^{1+\vartheta}|H_\alpha(u)|\,du\right\|_{{\cal L}^p(\R_+)}
				&\le \int_0^\infty u^{1+\vartheta}|H_\alpha(u)|
				\|e^{-u\cdot}\|_{{\cal L}^p(\R_+)}\,du= p^{-\frac1p}\int_0^\infty u^{1+\vartheta-\frac1p}|H_\alpha(u)|\,du ,
			\end{aligned}
		\end{equation*}
	Now, we derive, still form~\eqref{eq:Halpha}, the asymptotic estimates (see also~\eqref{eq:Int_Hu})
	$$
	u^{1-\frac1p+\vartheta} |H_\alpha(u)|\stackrel{0}{\sim} \frac{|\sin(\pi \alpha)|}{\pi} u^{\alpha-\frac 12+\vartheta}
	\quad \mbox{ and }\quad u^{\frac 12+\vartheta} |H_\alpha(u)|\stackrel{+\infty}{\sim} \frac{|\sin(\pi \alpha)|}{\pi} u^{-(\frac1p +\alpha-\vartheta)},
	$$ 
	Consequently, the last integral is finite whenever $\vartheta<\alpha-1+\frac1p$. Therefore, for every $0<\vartheta<\alpha-1+\frac1p$, there exists a constant $C_{p,\vartheta}$ such that
	$\|f_{\alpha,1}(\cdot+\delta)-f_{\alpha,1}\|_{{\cal L}^p(\R_+)}
	\le C_{p,\vartheta} \delta^\vartheta$ for every $\delta>0$.
	One concludes the general case when \( \lambda > 0 \) by scaling.\hfill $\Box$
	\begin{Lemma}\label{lem:bound}
		Let \(\alpha \in (1, \frac32)\). For every \(k \geq 1\),
		\begin{enumerate}
			\item \(\forall l \geq 1 \quad \forall a \geq 1, \quad B(\alpha\ell, \alpha(k-\ell+a)) \geq \frac{1}{(\alpha(k+a) - 1) 2^{\alpha k+2(a-1)}} 
			\geq \frac{1}{\alpha(k+a) 2^{\alpha k+2(a-1)}}.\)
			\item \(
			(a * b)_k \leq \frac{2^{\alpha k}}{\Gamma(\alpha(k+1))} \left( 1 + (k + 1)(1 + \log k) \right) .
			\)
			\item  \((b^{*2})_k \leq \frac{(\alpha(k+2) - 1)(k + 1) 2^{\alpha k+2}}{\Gamma(\alpha(k+2))} .\)
		\end{enumerate}
	\end{Lemma}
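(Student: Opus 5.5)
The three bounds are elementary Gamma/Beta-function estimates, and I would prove them in the order stated, since item 3 reuses the Beta lower bound of item 1. Throughout I use \(B(x,y)=\Gamma(x)\Gamma(y)/\Gamma(x+y)\), \(a_k=1/\Gamma(\alpha k+1)\), \(b_k=1/\Gamma(\alpha(k+1))\), and \(\alpha\in(1,\tfrac32)\).

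\emph{Item 1.} I will write \(B(\alpha\ell,\alpha(k-\ell+a))=\int_0^1 u^{\alpha\ell-1}(1-u)^{\alpha(k-\ell+a)-1}\,du\). Restricting to a half-interval gives the exponential factor: on \([0,\tfrac12]\) one has \((1-u)^{\cdots}\ge 2^{-\alpha(k-\ell+a)+1}\), and on \([\tfrac12,1]\) the symmetric bound holds.

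More cleanly, I would use the elementary inequality \(u^{x-1}(1-u)^{y-1}\ge 2^{-(x+y-2)}\) near \(u=x/(x+y)\). The simplest route is \(\int_0^1 u^{x-1}(1-u)^{y-1}du\ge \int_0^1 \min(u,1-u)^{x+y-2}du = \frac{2^{-(x+y-2)}}{x+y-1}\). This holds because \(u^{x-1}(1-u)^{y-1}\ge \min(u,1-u)^{x+y-2}\) whenever \(x,y\ge1\), which is true here since \(\alpha\ell\ge\alpha>1\) and \(\alpha(k-\ell+a)\ge\alpha>1\) (this needs \(\ell\le k\)). Setting \(x+y=\alpha(k+a)\) gives the factor \(2^{-(\alpha(k+a)-2)}\).

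One must then compare \(2^{\alpha(k+a)-2}\) with \(2^{\alpha k+2(a-1)}\). Since \(\alpha a-2\le 2a-2\) for \(\alpha<2\), the desired bound \(\frac{1}{(\alpha(k+a)-1)2^{\alpha k+2(a-1)}}\) follows. The final inequality is trivial.

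\emph{Item 2.} I will write \((a*b)_k=\sum_{\ell=0}^k \frac{1}{\Gamma(\alpha\ell+1)\Gamma(\alpha(k-\ell+1))}=\frac{1}{\Gamma(\alpha(k+1)+1)}\sum_\ell \frac{1}{B(\alpha\ell+1,\alpha(k-\ell+1))}\). Multiplying by \(\Gamma(\alpha(k+1))\) yields a factor \(\frac{1}{\alpha(k+1)}\). I would then bound \(1/B\) using item 1's argument, which gives \(1/B\le (\alpha(k+1))\,2^{\alpha(k+1)-1}\)-type bounds, or more sharply \(1/B(x,y)\le \frac{xy}{x+y}\binom{\cdots}{}\).

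To obtain the stated \((k+1)(1+\log k)\) factor, I expect one must use the sharper estimate \(\frac{1}{B(x,y)}\le \frac{x y}{x+y}\,2^{x+y}\)-like bounds together with a harmonic sum \(\sum_\ell \frac{1}{\ell}\le 1+\log k\) coming from the \(\ell=0\) and \(\ell=k\) endpoint terms. The endpoint terms are handled separately: the \(\ell=0\) term equals \(b_k\), which accounts for the summand \(1\).

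This step is the main obstacle, because getting exactly the factor \((k+1)(1+\log k)\) requires a careful choice of Beta bound. As a first attempt I will use \(B(x,y)\ge \frac{1}{x}\cdot\frac{1}{y}\cdot\) (something) through \(B(x,y)\ge \frac{\Gamma(x)\Gamma(y)}{\Gamma(x+y)}\) with log-convexity of \(\Gamma\), and adjust constants until the harmonic sum appears.

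\emph{Item 3.} I will write \((b^{*2})_k=\sum_{\ell}\frac{1}{\Gamma(\alpha(\ell+1))\Gamma(\alpha(k-\ell+1))}=\frac{1}{\Gamma(\alpha(k+2))}\sum_{\ell=0}^k\frac{1}{B(\alpha(\ell+1),\alpha(k-\ell+1))}\). Applying item 1 with \(\ell+1\) in place of \(\ell\) and \(a=1\) (so \(k+a\) becomes \(k+2\) after reindexing) bounds each term by \((\alpha(k+2)-1)2^{\alpha k+2}\). There are \(k+1\) terms, which gives the claim.
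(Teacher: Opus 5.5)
Your items 1 and 3 are correct and follow the paper. The pointwise bound $u^{x-1}(1-u)^{y-1}\ge\min(u,1-u)^{x+y-2}$ is exactly the paper's splitting of $[0,1]$ at $\tfrac12$, and item 3 is the sum of $k+1$ such Beta bounds. For bookkeeping, item 1 really needs $\ell\le k+a-1$ rather than $\ell\le k$. The paper applies it with $a=2$ and $\ell+1\in\{1,\dots,k+1\}$. Your implicit shift $k\to k+1$ with $a=1$ also works: it gives the factor $2^{\alpha(k+1)}\le 2^{\alpha k+2}$.

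The gap is item 2, which you never actually prove. You end with ``adjust constants until the harmonic sum appears'' and guess that the harmonic sum comes from the endpoint terms, which is not where it comes from. In the paper, the weights $1/\ell$ come from writing $\Gamma(\alpha\ell+1)=\alpha\ell\,\Gamma(\alpha\ell)$ for $\ell\ge1$, which gives
\[
(a*b)_k=\frac{1}{\Gamma(\alpha(k+1))}\Big[1+\frac1\alpha\sum_{\ell=1}^k\frac{1}{\ell\,B(\alpha\ell,\alpha(k-\ell+1))}\Big].
\]
Item 1 with $a=1$ yields $1/B(\alpha\ell,\alpha(k-\ell+1))\le\alpha(k+1)2^{\alpha k}$. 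The bracket is therefore at most $1+(k+1)2^{\alpha k}\sum_{\ell=1}^k\frac1\ell\le 2^{\alpha k}\big(1+(k+1)(1+\log k)\big)$.

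The obstacle you perceived is also illusory. You only need to stay below the stated bound, not to reproduce the factor $(k+1)(1+\log k)$. The estimate you already wrote, $1/B(\alpha\ell+1,\alpha(k-\ell+1))\le\alpha(k+1)2^{\alpha(k+1)-1}$, summed over the $k+1$ terms gives $(a*b)_k\le (k+1)2^{\alpha-1}\,2^{\alpha k}/\Gamma(\alpha(k+1))$. It remains to check that $(k+1)2^{\alpha-1}\le 1+(k+1)(1+\log k)$ for every $k\ge1$:
\begin{itemize}
\item at $k=1$ this reads $2^{\alpha}\le 3$, which holds because $\alpha<\tfrac32$;
\item for $k\ge2$, use $2^{\alpha-1}<\sqrt2<1+\log 2$.
\end{itemize}
So your own route closes item 2 in one line, but as written the proposal does not take that step.
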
		
	\noindent {\bf Proof.} 1.\(\forall l \geq 1 \quad \forall a \geq 1,\) we have:
	{\small
		\begin{align*}
			B(\alpha\ell, \alpha(k-\ell+a)) &= \int_0^1 u^{\alpha\ell - 1} \left( 1 - u \right)^{\alpha(k-\ell+a) - 1} \, du \geq \int_0^{\frac{1}{2}} u^{\alpha(k+a) - 2} \, du + \int_{\frac{1}{2}}^1 \left( 1 - u \right)^{\alpha(k+a) - 2} \, du \\
			&= 2 \int_0^{\frac{1}{2}} u^{\alpha(k+a) - 2} \, du \geq \frac{1}{(\alpha(k+a) - 1) 2^{\alpha(k+a)-2}} \geq \frac{1}{(\alpha(k+a) - 1) 2^{\alpha k+2(a-1)}}.
		\end{align*}
	}
	Where the last inequality comes from the fact that \(\alpha < 2 \).
	
	2. Using the identity :
	\(
	\forall a, b > 0 \quad \Gamma(a+1) = a \Gamma(a), \quad B(a, b) := \frac{\Gamma(a)\Gamma(b)}{\Gamma(a + b)}
	\), we have for every \(k \geq 1\)
	\vspace{-0.3cm}
	{\small
		\begin{align*}
			(a * b)_k &= \sum_{\ell=0}^{k} \frac{1}{\Gamma(\alpha\ell + 1) \Gamma(\alpha(k - \ell + 1))} = \frac{1}{\Gamma(1)\Gamma(\alpha(k+1))}+\sum_{\ell=1}^{k} \frac{1}{\alpha \ell\Gamma(\alpha \ell) \Gamma(\alpha(k - \ell + 1))} \\
			&=\frac{1}{\Gamma(\alpha(k+1))}\left[1+\frac1\alpha\sum_{\ell=1}^{k} \frac{1}{ \ell}\frac{1}{B(\alpha\ell, \alpha(k-\ell+1))} \right] \leq \frac{2^{\alpha k}}{\Gamma(\alpha(k+1))} \left( 1 + (k + 1)(1 + \log k) \right) .
		\end{align*}
	}
	where the last inequality comes from Lemma \ref{lem:bound}(1) for \(a=1\) and the fact that \(\frac{1}{2^{\alpha }} \leq 1\). 
	
	3. Likewise,
	\vspace{-0.3cm}
	{\small
		\begin{align*}
			(b^{*2})_k &= \sum_{\ell=0}^{k} \frac{1}{\Gamma(\alpha (\ell + 1)) \Gamma(\alpha(k - \ell + 1))} =\frac{1}{\Gamma(\alpha(k+2))}\sum_{\ell=0}^{k} \frac{1}{B(\alpha(\ell + 1), \alpha(k-\ell+1))}  \\
			&\leq \frac{(\alpha(k+2) - 1)(k + 1)}{\Gamma(\alpha(k+2))} 2^{\alpha k+2}.
		\end{align*}
	}
	Still owing to Lemma \ref{lem:bound} (1), now for \(a=2\). \hfill $\Box$
	
	\bigskip
	\noindent {\bf Proof of Proposition \ref{prop:alphaFractKernel1}.}
	\medskip
	\noindent  {\sc Step~1.} (1) comes from equation~\eqref{eq:varsigma} and ~\cite[Lemma 3.9]{EGnabeyeu2025}.
	
	\noindent  {\sc Step~2.} To establish statement (2), following the approach in \cite{Pages2024}, it is useful (though not strictly necessary) to transition to Laplace transforms. For simplicity, and as indicated in remark ~\eqref{eq:varsigma_reduit}, we assume \( c = \lambda = 1 \) and proceed by rewriting the series expansions in~\eqref{eq:e_alpha}. We define \( R_{\alpha} := R_{\alpha, 1} \) and \( f_{\alpha} := f_{\alpha, 1} \), as follows: 
	\vspace{-0.2cm}
	\begin{equation*}R_{\alpha}(t) = \sum_{k \geq 0} (-1)^k a_k t^{\alpha k},  \; f_{\alpha}(t) = t^{\alpha - 1} \sum_{k \geq 0} (-1)^k b_k t^{\alpha k} \quad \text{with} \quad a_k = \frac{1}{\Gamma(\alpha k + 1)}, \; b_k = \frac{1}{\Gamma(\alpha(k + 1))}, \; k \geq 0.
	\vspace{-0.2cm}
	\end{equation*}
	Now, using the Cauchy product of two series
	and the fact that \( L_{u^{\gamma}}(t) = t^{-(\gamma+1)} \Gamma(\gamma + 1) \), we obtain the following Laplace transforms: \(	L_{R_{\alpha} f_{\alpha}}(t) = t^{-\alpha} \sum_{k \geq 0} (-1)^k (a * b)_k t^{-\alpha k} \Gamma(\alpha(k + 1))\)  and\\ \(L_{f_{\alpha}^2}(t) = t^{-2\alpha + 1} \sum_{k \geq 0} (-1)^k (b^{*2})_k t^{-\alpha k} \Gamma(\alpha(k + 2) - 1)\),
	where for two sequences of real numbers \( (u_k)_{k \geq 0} \) and \( (v_k)_{k \geq 0} \), the Cauchy product is defined as \( (u * v)_k = \sum_{\ell = 0}^k u_\ell v_{k - \ell} \). We define the sequences
	\vspace{-0.1cm}
	\begin{equation*}
	\widetilde{b}_k = (b^{*2})_k \Gamma(\alpha(k + 2) - 1) \quad \text{and} \quad \widetilde{c}_k = c_k \Gamma(\alpha(k - 1) + 2), \; k \geq 0.
	\vspace{-0.1cm}
	\end{equation*}
	Assuming that \( \varsigma_{\alpha}^2(t) \) (for \( c = \lambda = 1 \)) takes the expected form~\eqref{eq:varsigma_reduit}, we have:
	\vspace{-0.1cm}
	\begin{equation*}
	L_{\varsigma_{\alpha}^2}(t) = 2 \sum_{k \geq 0} (-1)^k c_k t^{-(\alpha(k - 1) + 2)} \Gamma(\alpha(k - 1) + 2) = 2 t^{\alpha - 2} \sum_{k \geq 0} (-1)^k \widetilde{c}_k t^{-\alpha k}.
	\vspace{-0.2cm}
	\end{equation*}
	Thus, by equating the coefficients from both sides of equation~\eqref{eq:Laplacesigma}, we obtain the condition:
	\centerline{$
		\forall k \geq 0, \quad (\widetilde{b} * \widetilde{c})_k = (a * b)_k \Gamma(\alpha(k + 1)).
		$}
	Simple computations yield \(	c_0 = \frac{\Gamma(\alpha)^2}{\Gamma(2\alpha - 1) \Gamma(2 - \alpha)},\)
	and for every \( k \geq 1 \),
	\vspace{-0.3cm}
	{\small
		\begin{equation}\label{eq:c0}
			c_k = \frac{\Gamma(\alpha)^2}{\Gamma(\alpha(k - 1) + 2) \Gamma(2\alpha - 1)} \left[ \Gamma(\alpha(k + 1)) (a * b)_k - \sum_{\ell = 1}^k \Gamma(\alpha(\ell + 2) - 1) \Gamma(\alpha(k - \ell - 1) + 2) (b^{*2})_\ell c_{k - \ell} \right].
		\end{equation}
	}
	\normalsize
	Using standard identities such as \( \Gamma(a) \Gamma(b) = \Gamma(a + b) B(a, b) \) for \( a, b > 0 \), where \( B(a, b) = \int_0^1 u^{a - 1}(1 - u)^{b - 1} \, du \), and \( \Gamma(a + 1) = a \Gamma(a) \), we arrive at the formulation of the \( c_k \)'s provided in the proposition, which is more suitable for numerical computations.
	
	\medskip
	\noindent  {\sc Step~3.} Using standard methods, as in \cite{CalGraPag21} 
	or Appendix~A of \cite{Pages2024} (in the case $\alpha \in (\frac12, 1)$), we show that the radius of convergence \( \rho_\alpha \) of the power series defined by the coefficients \( c_k \) is infinite. 
	Firstly, let us prove by induction that there exists \(A > 2^{\alpha+2}\) and \(K > 1\) such that,
	\begin{equation}\label{eq:bound_onC}
		\forall k \geq 0 \quad |c_k| \leq \frac{K A^k}{ \Gamma(\alpha (k - 1) + 2 )}.
	\end{equation}
	By the triangle inequality, we get the bound :
	\vspace{-0.2cm}
	{\small
		\begin{equation}
			\left| c_k \right| \leq \frac{\Gamma(\alpha)^2 \Gamma(\alpha(k + 1))}{\Gamma(\alpha (k - 1) + 2 ) \Gamma(2\alpha - 1)} \left[ (a * b)_k + \alpha(k + 1) \sum_{\ell = 1}^k B\left( \alpha(\ell + 2) - 1, \alpha(k - \ell - 1) + 2 \right) (b^{*2})_\ell |c_{k-\ell}| \right].  \label{eq:second_inequality}
			\vspace{-0.2cm}
		\end{equation}
	}
	{\em Initialisation:} 
	For \( k = 0 \), \( c_0 = \frac{\Gamma(\alpha) ^2}{\Gamma(2-\alpha)\Gamma(2\alpha-1)}\leq\frac{K}{\Gamma(2 - \alpha)}  \) since \(K > 1\) and by log-convexity \( \frac{\Gamma(\alpha)^2}{\Gamma(2\alpha - 1)} < 1 \). 
	{\em Heredity:}
	Now let \( k \geq 1 \) and assume that \(c_\ell\) satisfies the inequality~\eqref{eq:bound_onC} for every \(\ell = 0, \dots, k-1\). Then, for every \(\ell = 1, \dots, k\),
	\vspace{-0.2cm}
	\begin{align*}
		& \,B\left( \alpha(\ell + 2) - 1, \alpha(k - \ell - 1) + 2 \right) (b^{*2})_\ell |c_{k-\ell}| \leq \frac{\Gamma(\alpha (\ell +2) -1 )\Gamma(\alpha (k-\ell - 1) + 2 )}{\Gamma(\alpha (k+ 1) + 1)\;\Gamma(\alpha (k-\ell - 1) + 2 )} \times K A^{k-\ell}(b^{*2})_\ell \\
		&\leq \frac{ K A^{k-\ell}\Gamma(\alpha (\ell +2) -1 )}{\Gamma(\alpha (k+ 1) + 1)}\frac{(\alpha(l+2) - 1)(l + 1) 2^{\alpha l+2}}{\Gamma(\alpha(l+2))} \leq \frac{ K A^{k-\ell}}{\Gamma(\alpha (k+ 1) + 1)}\frac{(\alpha(l+2) - 1)(l + 1) 2^{\alpha l+2}}{(\alpha (\ell +2) -1 )}  \\
		&\leq K \frac{ (l + 1) 2^{\alpha l+2} A^{k-\ell}}{\alpha (k+ 1)  \Gamma(\alpha (k+ 1) )}. \quad \text{Inserting this bound into the inequality~\eqref{eq:second_inequality} for \(c_k\) gives:}
	\end{align*}
		\vspace{-0.2cm}
		\begin{equation}
	\left| c_k \right| \leq \frac{\Gamma(\alpha)^2}{\Gamma(\alpha (k - 1) + 2 ) \Gamma(2\alpha - 1)} \left[ \Gamma(\alpha(k + 1))(a * b)_k + K\,A^{k} \frac{1}{\Gamma(\alpha(k + 1))} \sum_{\ell = 1}^k (\ell + 1) \rho^\ell \right].
		\vspace{-0.2cm}
	\end{equation}
	\noindent
	where we set \(
	\rho = \rho(A) := \frac{2^{\alpha+2}}{A}\).  Next, dividing the above inequality by $ K A^{k}$  and using the upper bound for $ (a * b)_k $ from Lemma \ref{lem:bound}(2):
	\vspace{-0.2cm}
	\begin{equation}
	\frac{\left| c_k \right|}{ K A^{k}}  \leq \frac{ 1}{\Gamma(\alpha (k - 1) + 2 ) } \frac{\Gamma(\alpha)^2}{\Gamma(2\alpha - 1)} \left[ \frac{\rho^k}{K}\left( 1 + (k + 1)( 1 + \log k) \right) + \frac{1}{(1-\rho)^2} \right].
	\vspace{-0.2cm}
	\end{equation}
	Owing to the elementary inequality: \(\forall \, \rho \in (0, 1), \quad \sum_{l \geq 1} l \rho^{l-1} \leq \frac{1}{(1 - \rho)^2}\).
	Let \( \epsilon > 0 \) and let \( A = A_{\epsilon} \) be large enough so that
	\(\sup_{k \geq 1} \left( \rho^k + \rho^k(k + 1)( 1 + \log k) \right) < \epsilon
	\quad \text{and} \quad \frac{1}{(1 - \rho)^2} < 1 + \epsilon.\)
	Due to the log-convexity of the Gamma function, \(
	\log \Gamma(\alpha) \le \tfrac{1}{2} \log \Gamma(2\alpha - 1) + \log \Gamma(1) = \tfrac{1}{2} \log \Gamma(2\alpha - 1)
	\), so that \( \frac{\Gamma(\alpha)^2}{\Gamma(2\alpha - 1)} < 1 \). Thus, it is possible to choose \( \epsilon \) small enough and \( K \) large enough such that:
	\vspace{-0.2cm}
	\begin{equation}	\frac{\Gamma(\alpha)^2}{\Gamma(2\alpha - 1)} \left[ \frac{\rho^k}{K} \left( 1 + (k + 1)(1 + \log k) \right) + \frac{1}{(1-\rho)^2} \right] \leq \frac{\Gamma(\alpha)^2}{\Gamma(2\alpha - 1)} \left( \frac{\epsilon}{K} + 1 + \epsilon \right) < 1.
	\vspace{-0.2cm}
	\end{equation}
	Consequently, \(|c_k| \leq \frac{K A^k}{ \Gamma(\alpha (k - 1) + 2 )}.\)
	And thus the Cauchy-Hadamard's formula for the radius of convergence together with Stirling's formula gives:
	\vspace{-0.2cm}
	\begin{equation}
	\limsup_{k \to \infty} |c_k|^{\frac{1}{k}} \leq \limsup_{k \to \infty} \left( \frac{K\,A^k}{ \Gamma(\alpha k + 2 - \alpha)} \right)^{1/k} \sim \lim_{k \to \infty}  A \frac{K^\frac1k}{ e^{-\alpha}(\alpha (k-1) + 2)^\alpha} = 0.
	\vspace{-0.2cm}
	\end{equation}
	This completes the proof.	 \hfill $\square$
	
		\bigskip
	\noindent {\bf Proof of Proposition \ref{prop:main2}.}
	$(a)$ 	We consider the function $R_{\alpha, \rho, \lambda}(t) = 1 + \sum_{k \geq 1} (-1)^k \frac{\lambda^k }{\Gamma(k\alpha)}I_k(t) $ where \(I_k(t) = \int_0^t e^{-\rho s} s^{k\alpha -1} \,ds\). Given that for all \( k \geq 1\), the function \( s \mapsto e^{-\rho s} s^{k\alpha -1} \) is continuous on \((0,+\infty)\), 
	\(I_k\) is differentiable on \((0,+\infty)\).
	Moreover, the series of derivatives \(\sum_{k \geq 1} (-1)^k \frac{\lambda^k}{\Gamma(k\alpha)} e^{-\rho t} t^{k\alpha -1} \)
	converges absolutely and locally uniformly on \((0,+\infty)\). Therefore, by the theorem on term-by-term differentiation of uniformly convergent series,	
	\( R_{\alpha, \rho, \lambda}(t) \) is differentiable for \( t > 0 \), with its derivative given by \(R'_{\alpha, \rho, \lambda}(t) = \sum_{k \geq 1} (-1)^k \frac{\lambda^k }{\Gamma(k\alpha)} e^{-\rho t} t^{k\alpha -1} =: f_{\alpha, \rho, \lambda}(t)\).
	One could argue similarly to show that \( R_{\alpha, \rho, \lambda} \) is infinitely differentiable, i.e., \( \mathcal{C}^\infty \) on \( (0, +\infty) \). Alternatively, observe that for all \( t > 0 \), we have \(f_{\alpha, \rho, \lambda}(t) = e^{-\rho t} f_{\alpha, \lambda}(t),\)
	which is \( \mathcal{C}^\infty \) as the product of such functions, by virtue of Proposition~\ref{prop:main}~$(a)$.
	
	\smallskip
	\noindent $(b)$ The representation of $f_{\alpha,\rho, \lambda}$ follows by definition and from the claim $(b)$ of Proposition \ref{prop:main2}.
	
	\smallskip
	\noindent $(c)$  Let us prove the $L^{p}$-integrability of $f_{\alpha,\rho,  \lambda}$. Once noted that $f_{\alpha,\rho,  \lambda} = e^{-\rho t}f_{\alpha,\lambda}$ so that $\int_0^{+\infty} f_{\alpha,\rho, \lambda}^{p}(t)dt= \int_0^{+\infty} e^{-p\rho t} f_{\alpha,\lambda}^{p}(t)dt \leq   \int_0^{+\infty} f_{\alpha,\lambda}^{p}(t)dt,$
	it is clear that it is enough to have that $ f_{\alpha,\lambda}$ is ${\cal L}^{p}$-integrable.
	It follows from Proposition \ref{prop:main}  that $ f_{\alpha,\rho, \lambda}$ is ${\cal L}^{p}$-integrable $\forall p \in (0,+\infty]$.
	
		\medskip
	\noindent 
	As for the $\vartheta$-H\"older continuity of $f_{\alpha,\rho,\lambda}$, let $\delta >0$ and \(\vartheta\in(0,1]\). One has
	\begin{equation}\label{eq:Boundf_Gamma}
	\left| f_{\alpha,\rho,\lambda}(t+\delta) - f_{\alpha,\rho,\lambda}(t) \right|
	\leq e^{-\rho(t+\delta)}\left| f_{\alpha, \lambda}(t+\delta) - f_{\alpha, \lambda}(t) \right| + e^{-\rho t} \left|f_{\alpha, \lambda}(t) \right| (1-e^{-\rho \delta}) .
    \end{equation}
	Then, 
	 using Lemma~\ref{lm:expansion}, one may deduce
	\[| f_{\alpha ,\rho,\lambda}(t+\delta)-f_{\alpha,\rho,\lambda}(t)  | \leq h_{\alpha,\rho,\lambda}(t) \delta^{\vartheta} \quad \text{with}\quad h_{\alpha,\rho,\lambda}:= e^{-\rho\delta} h_{\alpha,\lambda} + \left|f_{\alpha, \lambda} \right|
	\]
	Consequently, owing to Proposition~\ref{prop:main}, \(h_{\alpha,\rho,\lambda} \in {\cal L}^1(\R_+) \cap {\cal L}^\infty(\R_+)\).
	Arguing as in the proof of Proposition~\ref{prop:main},
	and using~\cite[Lemma 3.9]{EGnabeyeu2025}, the local asymptotic~\eqref{eq:varsigma2} for \(\alpha\in(1,2)\), and the fact that \(h_{\alpha,\rho,\lambda}\in {\cal L}^1(\mathbb{R}_+) \cap {\cal L}^\infty(\mathbb{R}_+)\), we obtain \(\sup_{t\ge0}(h_{\alpha,\rho,\lambda}*\varsigma^2_{\alpha,\rho,\lambda, c})(t)<\infty\).
 This completes the proof.	 \hfill $\square$
	
		\bigskip
	\noindent {\bf Proof of Proposition \ref{prop:alphaFractKernel1_}.}
	We have \( \lim_{t\to 0^+}g_{\alpha,\rho,\lambda} = +\infty \) and \(\lim_{t\to +\infty}g_{\alpha,\rho,\lambda} >0\).
	Hence, there exists \(t_0, t_1 >0\) such that \(g_{\alpha,\rho,\lambda} \geq 0\) at least on the small intervals \((0,t_0) \cup (t_1, +\infty)\) with $t_0 = \inf\{t \,:\, g_{\alpha,\rho,\lambda}(t) < 0\}$ and $t_1 = \sup\{t \,:\, g_{\alpha,\rho,\lambda}(t) < 0\}$. 
	By continuity of $g_{\alpha,\rho,\lambda}$ it is clear that $g_{\alpha,\rho,\lambda}(t_0)=g_{\alpha,\rho,\lambda}(t_1) = 0$ and $g_{\alpha,\rho,\lambda} \ge 0$ on $[0, t_0] \cup [t_1, +\infty)$.
	While numerical computations suggest that \(g_{\alpha,\rho,\lambda}\) is positive on \(\R_+\) (i.e. \(t_0=t_1=\infty\)), establishing this positivity analytically turns out to be quite challenging.
	We shall, however, establish that if \(T^{\alpha,\lambda,\rho}\) is the first zero of the resolvent \(R_{\alpha,\rho,\lambda}\) (see \cite[Proposition 3.13.]{Gorenflo2020MittagLeffler} for all zeros of the functions $E_\alpha$), then, since \(R_{\alpha,\rho,\lambda}^2\) decreases strictly on \((0, T^{\alpha,\lambda,\rho})\), the function \(g_{\alpha,\rho,\lambda}\) remains non-negative over that interval. \\
	
	Let's assume that \(t_0 \in (0, T^{\alpha,\lambda,\rho})\) and thus $g_{\alpha,\rho,\lambda} \le 0$ on a small interval $[t_0, t_0 + \eta] \subset(0, T^{\alpha,\lambda,\rho})$ for some $\eta > 0$. 
	Then, for every $t \in (t_0, t_0 + \eta]$, there exists \(\tau >0\) such that \(t=t_0 + \tau\).
	Let \(\delta \in (0,\frac\tau2)\), and set \(c:=-\max_{s \in [t_0+\delta,t_0+\tau]}g_{\alpha,\rho,\lambda}(s)\).
	By continuity $c > 0$ and 
	$g_{\alpha,\rho,\lambda}(s) \le -c$ for all $s \in [t_0+\delta,t_0+\tau]$. 
	For simplification, we set \(f_{\alpha,\rho,\lambda}\equiv f_{\alpha}\) and \(R_{\alpha,\rho,\lambda}\equiv R_{\alpha}, g_{\alpha,\rho,\lambda}\equiv g_{\alpha}\). Then, we have:
	{\small
		\begin{align*}
			&\,(f_{\alpha}^2* g_{\alpha})(t_0+\tau)-(f_{\alpha}^2* g_{\alpha})(t_0)= \int_{0}^{t_0} \underbrace{(f_{\alpha}^2(t_0+\tau-s)-f_{\alpha}^2(t_0-s))}_{\geq \approx 0}\, \underbrace{g_{\alpha}(s)}_{\geq0}\,ds  + \int_{t_0}^{t_0+\delta} f_{\alpha}^2(t_0+\tau-s)\, \underbrace{g_{\alpha}(s)}_{\leq0}\,ds\\
			&\qquad\hspace{4cm} + \int_{t_0+\delta}^{t_0+\tau} f_{\alpha}^2(t_0+\tau-s)\, \underbrace{g_{\alpha}(s)}_{\leq0}\,ds\leq I_1 - I_2 -c \left(\int_{0}^{\tau-\delta}f_{\alpha}^2(u)\,du\right).\; 
		\end{align*}
	}
	where \(I_2:=-\int_{t_0}^{t_0+\delta} f_{\alpha}^2(t_0+\tau-s)\, g_{\alpha}(s)\,ds\geq0\) and \( I_1:=\int_{0}^{t_0} (f_{\alpha}^2(t_0+\tau-s)-f_{\alpha}^2(t_0-s))\, g_{\alpha}(s)\,ds\geq 0\)
	
	However, as \(I_1\) is nonnegative and close to zero, for an adequate choice of \(\delta \in (0,\frac\tau2)\), the upper bound above is strictly negative.
	On the other hand, 
	$(f_{\alpha}^2* g_{\alpha})(t_0+\tau)-(f_{\alpha}^2* g_{\alpha})(t_0) = c\lambda^2 (R^2_{\alpha}(t_0) - R^2_{\alpha}(t_0+\tau))  > 0$, 
	which yields a contradiction.
	Hence, for every large enough $n \ge 0$, there exists 
	$t_n^+ \in (t_0, t_0 + \frac{1}{n}]$ such that $g_{\alpha}(t_n^+) > 0$. 
	On the other hand, by the very definition of $t_0$, 
	there exists a sequence $t_n^- > t_0$, $n \ge 1$, such that $g_{\alpha}(t_n^-) < 0$. 
	One then builds by induction a sequence $(\tau_n)_{n \ge 1}$ such that 
	$g_{\alpha}(\tau_{2n+1}) < 0$ and $g_{\alpha}(\tau_{2n}) > 0$, with $\tau_n \to t_0$ as $n \to +\infty$, $\tau_n > t_0$. 
	In turn this implies, by the intermediate value theorem, 
	the existence of a sequence $(\tilde{\tau}_n)_{n \ge 1}$ such that 
	$\tilde{g}_{\alpha}(\lambda\tilde{\tau}^\alpha_n) = g_\alpha(\tilde{\tau}_n) = 0$, 
	$\lambda\tilde{\tau}_n^\alpha > \lambda t_0^\alpha$ and 
	$\lambda\tilde{\tau}_n^\alpha \to \lambda t_0^\alpha$ by the continuity of $g_{\alpha}$. 
	As $\tilde{g}_{\alpha}$ is analytic, it implies that $\tilde{g}_{\alpha}$ is everywhere zero. 
	Hence a contradiction since $\tilde{g}_{\alpha}(0) > 0$.

	From the above steps, we have \( \forall t \geq 0 \quad  g_{\alpha,\rho,\lambda}(t) \geq 0  \) on an interval \(  I \subseteq (0, +\infty) \) so that the function \( \sqrt{g_{\alpha,\rho,\lambda}} \)  is well-defined on \( I \).     This completes the proof and we are done.	 \hfill $\square$
	
	\medskip	
	\noindent {\bf Proof of Theorem~\ref{thm:ExistenceVolterraL2}}
	We follow~\cite[Theorem 2.7]{GnabeyeuPages2026} to prove the existence and uniqueness of a strong solution to~\eqref{eq:Volterrameanrevert},
	using a contraction mapping principle.
	Let $p\geq2$. For $T\ge0$, consider the family of processes on $[0,T]$ 
	\vspace{-0.2cm}
	\[
	\Hcal_{p,T}
	:=
	\left\{
	X=(X_s)_{s\in[0,T]}, \text{ }\mathbb{R}\text{-valued, }\mathbb{F}\text{-adapted processes such that } \|X\|_{p,T}<\infty \text{ and is $L^p$-continuous\footnotemark}
	\right\}
	\]
	\footnotetext{In the sense that
		$\lim_{s\to t}\E[|X_s-X_t|^p]=0$ for every $t\in[0,T]$.}
	where $\|X\|_{p,T} = \big(\sup_{t \leq T} \E[|X_t|^p]\big)^{\frac1p}$.
	We still denote by $\Hcal_{p,T}$ the space of all such $X$, modulo the equivalence relation obtained by identifying processes that are versions of each other. One readily checks that $(\Hcal_{p,T},\|\fdot\|_{p,T})$ is a Banach space. Thanks to \cite[Proposition 3.21]{PeszatZabczyk2007}, every element $X\in\Hcal_{p,T}$ admits a predictable modification, 
	again denoted by $X$. Below we always work with such modifications. 
	We first prove the existence of a unique solution to~\eqref{eq:Volterra} in $\mathcal H_{p,T}$. To this end we consider the following family of norms on $\mathcal H_{p,T}$:
	\vspace{-0.2cm}
	\begin{equation}
	\|X\|_{p,c,T} :=  \big(\sup_{t \leq T} \E[|e^{-c t} X_t|^p]\big)^{\frac1p}, \quad c >0.
	\vspace{-0.2cm}
	\end{equation}
	It is readily seen that the norms $\|\cdot\|_{p,c,T}$ and $\|\cdot\|_{p,T}$ are equivalent, since $\forall\,X\in \Hcal_{p,T}$, $e^{-cT}\|X\|_{p,T}\le \|X\|_{p,c,T}\le \|X\|_{p,T}$.
	The core of the argument is to show that
	the map \(\mathcal{T}_c : \mathcal{H}_{p,T} \rightarrow \mathcal{H}_{p,T} , \, \quad \text{defined by}\)
	\vspace{-0.2cm}
	\begin{equation}
	\forall X \in \mathcal{H}_{p,T} 
	\quad \mathcal{T}_c(X) =
	\Big(x_0(t) + \int_{0}^{t } K(t-s) b(s, X_s) \, ds + \int_{0}^{t} K(t-s) \sigma(s, X_s) \, dW_s
	\Big)_{t \in [0,T]}.
	\vspace{-0.2cm}
	\end{equation}
	i.e. for $X \in \mathcal{H}_{p,T}$, \(	\mathcal{T}_c(X) := (\mathcal{T}_c(X)_t)_{0 \leq t \leq T}\) with  
	\vspace{-0.2cm}
	\begin{equation}
	(\mathcal{T}_c(X))_t := x_0(t) + \int_{0}^{t } K(t-s) b(s, X_s) \, ds + \int_{0}^{t} K(t-s) \sigma(s, X_s) \, dW_s.
	\vspace{-0.2cm}
	\end{equation}
	is Lipschitz continuous with Lipschitz coefficient strictly less than one when $c$ is sufficiently large.
	
	\noindent {\sc Step~1} ({\em Well-definedness of $\mathcal{T}_c$ }).
	We need to prove that $\|\mathcal{T}_c(X)\|_{p,c,T} < \infty$  so that $\mathcal{T}_c(X) \in \mathcal{H}_{p,T}$. Since the norms $\|\cdot\|_{p,T}$ and $\|\cdot\|_{p,c, T}$ are equivalent, it is enough to show that $\|\mathcal{T}_c(X)\|_{p,T} < \infty$ . By the triangle inequality,
	\vspace{-0.2cm}
	\begin{align}\label{eq:norm_X}
		\| \mathcal{T}_c(X) \|_{p,T} 
		&\leq \| x_0 \|_{p,T} + \Big\| \int_{0}^{\cdot } K(\cdot-s) b(s, X_s) \, ds \Big\|_{p,T} + \Big\| \int_{0}^{\cdot} K(\cdot-s) \sigma(s, X_s) \, dW_s \Big\|_{p,T}\nonumber\\
		&\leq \| x_0 \|_{p,T} + \big(T^\frac12+ C_p^{\text{BDG}}\big)\Big( \int_0^{T} \ell_{b,\sigma}^{\frac{2\beta}{\beta-1}}(s) ds\Big)^{\frac{\beta-1}{2\beta}}\Big( \int_0^{T} K(s)^{2\beta} ds\Big)^{\frac{1}{2\beta}}\Big(1+ \| X \|_{p,T}\Big)
	\end{align}
	where we argue exactly as in the proof of Lemma~\ref{L:Holder_bound}, using Jensen's inequality\footnote{For two measurable real-valued functions \( f \), \( g \), and \( p \geq 1 \), we have
		\begin{equation}
			\Big| \int f(s)g(s) \, ds \Big|^p 
			\leq \Big| \int |f(s)|^{1 - \frac{1}{p}} \cdot |f(s)|^{\frac{1}{p}} g(s) \, ds \Big|^p 
			\leq \Big( \int |f(s)| \, ds \Big)^{p - 1} \cdot \int |f(s)||g(s)|^p \, ds.\label{eq:JensenIneq}
		\end{equation}
		\label{fn:first}}, the Burkholder--Davis--Gundy inequality then H\"older's inequality, and the linear growth properties of $b$ and $\sigma$ in assumption~\ref{assum:CoefsVolterra}~$(ii)$.
	As \(K\in{\cal L}^{2\beta}_{loc}(\R_+)\), \(\ell_{b,\sigma}\in{\cal L}^{{\frac{2\beta}{\beta-1}}}_{loc}(\R_+)\) for some \(\beta>1\) and \(\| x_0 \|_{p,T}<+\infty\) owing to condition~\eqref{eq:Hol_Intergral_Cond} and assumption~\ref{assum:CoefsVolterra}~$(iii)$, Equation~\eqref{eq:norm_X}
	implies that $\|\Tcal_c(X)\|_{p,T}<\infty$ whenever \(\| X \|_{p,T}<+\infty\).
	The fact that $\Tcal_c(X)$ is continuous in $L^p$ follows from Lemma~\ref{L:Holder_bound}. 
	Thus $\mathcal{T}_c(X)$ lies in $\mathcal H_{p,T}$. 
	Since the norms $\|\cdot\|_{p,T}$ and $\|\cdot\|_{p,c, T}$ are equivalent, we have $\|\mathcal{T}_c(X)\|_{p,c, T} < \infty$ and thus $\mathcal{T}_c(X) \in \mathcal{H}_{p,T}$ whenever \( X \in  \mathcal{H}_{p,T} \).
	
	\smallskip
	\noindent {\sc Step~2} ({\em Lipschitzianity of $\mathcal{T}_c$}).
	For $0\leq t \leq T$ and $c > 0$, 
	We want to show that \( \mathcal{T}_c \) is Lipschitz with respect to the norm \(\|X\|_{p,c,T}\).
	Then, for \((X,Y)\in \mathcal{H}_{p,T}\times \mathcal{H}_{p,T}\), let us estimate \( \|\mathcal{T}_c(X) - \mathcal{T}_c(Y)\|_{p,c,T} \). One has
	\begin{align*}
		|e^{-c t}((\mathcal{T}_c(X))_t - (\mathcal{T}_c(Y))_t)|^p  &\leq  2^{p-1} \left|\int_0^t e^{-c (t-s)}K(t-s)e^{-c s}(b(s,X_s)-b(s,Y_s))ds\right|^p\\
		&\quad  + 2^{p-1} \left|\int_0^t e^{-c (t-s)} K(t-s)e^{-c s}((\sigma(s, X_s)-\sigma(s, Y_s))dW_s\right|^p.
	\end{align*} 
	Let us denote by {\bf(A)} and {\bf(B)} the two terms of the sum on the right hand side of the above equality.
	For the first and deterministic term, using the triangle and 
	Jensen's inequality~\eqref{eq:JensenIneq} in~\ref{fn:first}, then the Lipschitz assumption on \(b\)  
	and taking expectations yields
	\begin{equation} \label{bound_A}
		\E[\,{\bf A}\,] \le T^{\frac{p}{2}}\Big( \int_0^{T} h_{b,\sigma}^{\frac{2\beta}{\beta-1}}(s) ds\Big)^{\frac{p(\beta-1)}{2\beta}}\Big( \int_0^{t}  e^{-2c\beta (t-s)} K(t-s)^{2\beta} ds\Big)^{\frac{p}{2\beta}} \sup_{0\le s\le T} \E[\big(e^{-c s}|X_s-Y_s|\big)^p ].
	\end{equation}
	Similarly, for the martingale terms, using the Burkholder-Davis-Gundy (BDG) inequality applied to the continuous local martingale $\big\{\int_0^r K(t-s)\sigma(s,X_s)dW_s\colon r\in[0,t]\big\}$ and
	standard arguments taking advantage of the Lipschitz property of \(\sigma\), yield
	\begin{align} \label{bound_B}
		&\E\Big[ \,{\bf B}\, \Big]
		\le (C_p^{\text{BDG}})^p\,\E\Big[ \Big( \int_0^t e^{-2c (t-s)} K(t-s)^2e^{-2c s}((\sigma(s, X_s)-\sigma(s, Y_s))^2ds\Big)^{\frac{p}{2}} \Big] \nonumber\\
		&\;\;\le (C_p^{\text{BDG}})^p\,\Big( \int_0^{T} h_{b,\sigma}^{\frac{2\beta}{\beta-1}}(s) ds\Big)^{\frac{p(\beta-1)}{2\beta}}\Big( \int_0^{t}  e^{-2c\beta (t-s)} K(t-s)^{2\beta} ds\Big)^{\frac{p}{2\beta}} \sup_{0\le s\le T} \E[  \big(e^{-c s}|X_s-Y_s|\big)^p ].
	\end{align}
	Combining \eqref{bound_A}--\eqref{bound_B} 
	and taking the sup norm  leads ( we set for short $K_{p,T}:=T^{\frac{1}{2}}+ C_p^{\text{BDG}}$) to
	\begin{equation}\label{eq:lipschtbound} 
		\|\mathcal{T}_c(X)-\mathcal{T}_c(Y)\|_{p,c,T}
		\leq K_{p,T} \Big( \int_0^{T} h_{b,\sigma}^{\frac{2\beta}{\beta-1}}(s) ds\Big)^{\frac{(\beta-1)}{2\beta}} \,\Big( \int_0^Te^{-2c \beta s} K(s)^{2\beta} ds\Big)^{\frac{1}{2\beta}}
		\|X-Y\|_{p,c,T}.
	\end{equation} 
	\noindent {\sc Step~3} ({\em Contraction of $\mathcal{T}_c$}).
	Now, it is enough to find $c>0$ such that the $\mathcal{T}_c$ defines a contraction on $(\mathcal H_{p,T},\|\cdot\|_{p,c, T})$. That is, we look for $c>0$ and $\rho<1$ such that
	\begin{equation}\label{eq:contract temp}
		\| \mathcal{T}_c(X) - \mathcal{T}_c(Y) \|_{p,c, T} \leq \rho \| X -  Y \|_{p,c, T}, \quad  X,Y \in \mathcal H_{p,T}.	
	\end{equation} 
	Note that for any \(\beta\in (1,+\infty)\), \( \int_0^T e^{-2c \beta s}|K(s)|^{2\beta}\,ds
	\xrightarrow{}0\) as \(c\to\infty\)
	by the dominated convergence theorem,
	so that one can choose $c>0$ in~\eqref{eq:lipschtbound} sufficiently large such that \eqref{eq:contract temp} holds.
	Consequently, $\mathcal{T}_c$ is a contraction on $(\mathcal H_{p,T},\|\cdot\|_{p,c,T})$.
	The Banach fixed-point theorem therefore yields a unique fixed point $X\in\mathcal H_{p,T}$ satisfying \(\mathcal{T}_c(X)=X.\)
	Moreover, the following estimate holds
	true:
	\begin{equation}\label{eq:moment bound}
		\sup_{t \leq T} \E[|X_t|^p] \leq C\,\big(1+ \sup_{t \leq T}\E[|x_0(t)|^p] ) \leq C\,\big(1+ \E[\sup_{t \leq T}|x_0(t)|^p] ) < \infty
	\end{equation}
	In fact, the estimate~\eqref{eq:moment bound} is obtained, by taking \( X = 0 \) in~\eqref{eq:norm_X} and \( Y = 0 \), in inequality~\eqref{eq:lipschtbound} and owing in this latter to the fact that \(\mathcal{T}_c (X) = X\) since \(X\) is a fixed point of the application \(\mathcal{T}_c\). The result being true for every \(c\geq 0.\)
	
	\medskip
	Let $X$ be the unique fixed point in $\mathcal H_{p,T}$ of the map $\mathcal{T}_c$. By Lemma~\ref{L:Holder_bound}, $X$ admits a continuous version. This version is a strong solution of \eqref{eq:Volterra} on $[0,T]$. Conversely, by virtue of Lemmas~\ref{L:Holder_bound} and~\ref{eq:moment bound}, every continuous solution of~\eqref{eq:Volterra} on $[0,T]$ belongs to $\mathcal H_{p,T}$ and satisfies $ \mathcal{T}_c(X)=X$. Hence every continuous solution is a fixed point of the map $\mathcal{T}_c$, and uniqueness of the fixed point implies uniqueness of the solution.
	The inequalities~\eqref{eq:Lpincrements}, ~\eqref{eq:Holderpaths} and~\eqref{eq:supLpbound}
	follow from Lemma~\ref{L:Holder_bound} (see Equation~\ref{eq:FinTightness}) together with the estimate~\eqref{eq:moment bound} and Jensen's inequality.
	This completes the proof.	 \hfill $\square$
	
	\medskip
	\noindent{\bf Remark:} If the diffusion coefficient $\sigma$ satisfy a Lipschitz-linear growth assumption uniformly in time with some real constants $ C_{\sigma,T}$ and $ C^\prime_{\sigma,T}$, then Equation~\eqref{eq:norm_X} and~\eqref{eq:lipschtbound} are rather:
	\begin{align}
		&\| \mathcal{T}_c(X) \|_{p,T} \leq \| x_0 \|_{p,T} + \Big(\Big( \int_0^{T} \ell_{b}^2(s) ds\Big)^{\frac{1}{2}}+C_{\sigma,T}^\prime C_p^{\text{BDG}}\Big)\Big( \int_0^{T} K(s)^2 ds\Big)^{\frac{1}{2}}\Big(1+ \| X \|_{p,T}\Big)\\
		&\|\mathcal{T}_c(X)-\mathcal{T}_c(Y)\|_{p,c,T}
		\leq \Big(\Big( \int_0^{T} h_{b}^2(s) ds\Big)^{\frac{1}{2}}+C_{\sigma,T} C_p^{\text{BDG}}\Big) \,\Big( \int_0^Te^{-2c s} K(s)^2 ds\Big)^{\frac{1}{2}}
		\|X-Y\|_{p,c,T}.
	\end{align}
Therefore, the well-posedness result remains valid under the sole assumption that \(K,\ell_b,h_b \in {\cal L}_{loc}^2(\R_+)\).
	\begin{Lemma}\label{L:Holder_bound}
		Let  Assumptions~\eqref{assum:convolKernel} and~\eqref{assum:CoefsVolterra} be in force and consider the Volterra process~\eqref{eq:Volterra}. Let $T\ge0$ and $p>\max\{2,\frac1\delta\vee\frac{1}{\widehat\theta}\vee\frac1\theta\}$ be such that $\sup_{t\le T} \E[ |X_t|^p ]$ is finite. Then $X$ admits a version which is H\"older continuous on $[0,T]$ of any order $a<\delta\wedge\widehat\theta\wedge\theta-\frac1p$. Denoting this version again by $X$, one has
		\begin{equation} \label{eq:L:Holder_bound}
			\E\left[ \left( \sup_{s\neq t\in[0,T]} \frac{|X_t-X_s|}{|t-s|^a} \right)^p \right] \le C_{a,p, K, b,\sigma,T}\,\Big[\big(1+ \E[\sup_{0\leq u\le T} |x_0(u)|^{p} ]\big)\vee \big(1+\sup_{0\leq u\le T} \E[  |X_u|^{p} ]\big)\Big].
		\end{equation}
		for all $a\in[0,\delta\wedge\widehat\theta\wedge\theta-\frac1p)$ and $C_{a,p, K, b,\sigma,T}$, a positive constant that only depends on $a$, $p$, $K$, $b$,$\sigma$ and $T$.
	\end{Lemma}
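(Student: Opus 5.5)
The natural route is a Kolmogorov-type moment estimate for increments followed by the Garsia--Rodemich--Rumsey (or Kolmogorov--Chentsov) lemma. Fix $0\le s<t\le T$ and write, from \eqref{eq:Volterra},
\[
X_t-X_s=\big(x_0(t)-x_0(s)\big)+\big(D_t-D_s\big)+\big(M_t-M_s\big),
\]
with $D_t=\int_0^tK(t-u)b(u,X_u)\,du$ and $M_t=\int_0^tK(t-u)\sigma(u,X_u)\,dW_u$. The first term is controlled directly by Assumption~\ref{assum:CoefsVolterra}~$(iii)$:
\[
\E|x_0(t)-x_0(s)|^p\le C\big(1+\E\sup_{u\le T}|x_0(u)|^p\big)|t-s|^{\delta p}.
\]
The aim is then to show
\[
\E|X_t-X_s|^p\le C\Big[\big(1+\E\sup|x_0|^p\big)\vee\big(1+\sup_{u\le T}\E|X_u|^p\big)\Big]\,|t-s|^{p(\delta\wedge\widehat\theta\wedge\theta)},
\]
after which the GRR lemma (e.g.\ in the form \cite[Theorem~2.1]{RevuzYor}, with $p(\delta\wedge\widehat\theta\wedge\theta)>1$ by the assumption on $p$) yields a continuous version, Hölder of every order $a<\delta\wedge\widehat\theta\wedge\theta-\frac1p$, together with the bound \eqref{eq:L:Holder_bound} on the $p$-th moment of the Hölder constant.

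For the stochastic term, split $M_t-M_s=\int_s^tK(t-u)\sigma(u,X_u)\,dW_u+\int_0^s\big(K(t-u)-K(s-u)\big)\sigma(u,X_u)\,dW_u$. Apply BDG to each piece, then the Jensen-type inequality \eqref{eq:JensenIneq}, then Hölder with exponents $\beta$ and $\frac{\beta}{\beta-1}$ to separate the kernel from $\ell_{b,\sigma}$, and finally the linear growth in Assumption~\ref{assum:CoefsVolterra}~$(ii)$ with $\sup_u\E|X_u|^p<\infty$. This gives
\[
\E\Big|\int_s^t\cdots\Big|^p\le C\Big(\int_0^T\ell_{b,\sigma}^{\frac{2\beta}{\beta-1}}\Big)^{\frac{p(\beta-1)}{2\beta}}\Big(\int_{s}^{t}K(t-u)^{2\beta}du\Big)^{\frac{p}{2\beta}}\big(1+\sup_u\E|X_u|^p\big),
\]
and by $(\widehat{\cal K}^{cont}_{T,\widehat\theta})$ in \eqref{eq:contKtilde} with $\bar\delta=t-s$ this is $O(|t-s|^{p\widehat\theta})$. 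The second piece is handled the same way, using $({\cal K}^{cont}_{T,\theta})$ in \eqref{eq:Kcont} with $\delta=t-s$ after the substitution $v=s-u$, giving $O(|t-s|^{p\theta})$. The drift increment $D_t-D_s$ splits identically; Jensen replaces BDG, and one picks up an extra factor $T^{p/2}$ from Cauchy--Schwarz. The same two kernel conditions then give the same orders.

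The main obstacle is bookkeeping the time-dependent coefficients $\ell_{b,\sigma}$, which are only in ${\cal L}^{2\beta/(\beta-1)}$. Since the Hölder split must put \emph{all} the smallness in the kernel factor, the moment of $\sigma(u,X_u)$ has to be pulled out using Jensen weighted by the measure $\ell_{b,\sigma}^2(u)\,du$ (or the kernel measure), rather than by a sup. I would first apply Minkowski's integral inequality in $L^{p/2}$ to get $\big\|\int g(u)^2\sigma(u,X_u)^2du\big\|_{p/2}\le\int g(u)^2\|\sigma(u,X_u)\|_p^2\,du$, bound $\|\sigma(u,X_u)\|_p\le\ell_{b,\sigma}(u)(1+\|X_u\|_p)$, and only then apply Hölder in $u$. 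This avoids needing pathwise bounds.

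Finally, the constant depends only on $a,p,K,b,\sigma,T$ through $\kappa_{T,\beta}$, $\widehat\kappa_{T,\beta}$, $\|\ell_{b,\sigma}\|_{{\cal L}^{2\beta/(\beta-1)}([0,T])}$ and $C^{BDG}_p$. Taking the maximum of the $x_0$-term and the $X$-term gives the displayed bound.
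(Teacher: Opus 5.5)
Your proposal is correct and follows essentially the same route as the paper: the same split of $X_t-X_s$ into the $x_0$-increment plus two drift and two stochastic pieces, then BDG (resp.\ Jensen) and H\"older with exponents $\beta$ and $\frac{\beta}{\beta-1}$ so that the kernel factors are controlled by \eqref{eq:contKtilde} and \eqref{eq:Kcont}, and finally Kolmogorov's continuity criterion \cite[Theorem~I.2.1]{RevuzYor}. Your use of Minkowski's integral inequality in $L^{p/2}$ to extract $\sup_u\|X_u\|_p$ is only a cosmetic variant of the paper's weighted Jensen inequality \eqref{eq:JensenIneq}.
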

	\noindent{\bf Proof:}
	For any $p\ge2$ and any $s<t\le T<\infty$ we have
	\begin{align*}
		|X_t - X_s|^p &\le  5^{p-1} \Big(|x_0(t) - x_0(s)|^p + \Big| \int_s^t K(t-u)b(u,X_u) du \Big|^p  + \Big| \int_0^s \left( K(t-u) - K(s-u) \right) b(u,X_u) du \Big|^p\Big) \\
		&\qquad5^{p-1} \Big( \Big| \int_s^t K(t-u)\sigma(u,X_u)dW_u \Big|^p +  \Big| \int_0^s \left( K(t-u) - K(s-u) \right)\sigma(u,X_u) dW_u \Big|^p\Big)  \\
		&= 5^{p-1}\left( {\bf C} + {\bf I} + {\bf II}+  {\bf III} + {\bf IV}  \right).
	\end{align*}
	By Jensen's inequality 
	in~\ref{fn:first}, then taking expectations together with a change of variables, we obtain 
	\begin{equation}\label{bound_I_new}
		\E[\,{\bf I}\,]\le (t-s)^\frac{p}{2} \Big( \int_s^{t} \ell_{b,\sigma}^{\frac{2\beta}{\beta-1}}(u) du\Big)^{\frac{p(\beta-1)}{2\beta}} \, \Big( \int_s^{t} K(t-u)^{2\beta}du\Big)^{\frac{p}{{2\beta}}} \sup_{0\leq u\le T} \E[  \big(1+|X_u|\big)^p ].
	\end{equation}
	In a similar manner,
	\begin{equation}\label{bound_II_new}
		\E[\,{\bf II}\,] \le T^\frac{p}{2} \Big( \int_0^{s} \ell_{b,\sigma}^{\frac{2\beta}{\beta-1}}(u) du\Big)^{\frac{p(\beta-1)}{2\beta}} \,\Big( \int_0^s (K(u+t-s)-K(u))^{2\beta}  du\Big)^{\frac{p}{{2\beta}}} \sup_{0\leq u\le T} \E[  \big(1+|X_u|\big)^p ].
	\end{equation}
	Analogous calculations relying on the BDG inequalities together with Jensen's inequality~\eqref{eq:JensenIneq}
	yield
	\begin{align} 
			\E\left[ \,{\bf III}\, \right]
			&\le (C_p^{\text{BDG}})^p\,\E\Big[ \Big( \int_s^t K(t-u)^2 \,\ell_{b,\sigma}^2(u)\, \big(1+|X_u|\big)^2\, du \Big)^{\frac{p}{2}} \Big] \nonumber\\
			&\le (C_p^{\text{BDG}})^p\,\Big( \int_s^{t} \ell_{b,\sigma}^{\frac{2\beta}{\beta-1}}(u) du\Big)^{\frac{p(\beta-1)}{2\beta}} \,\Big( \int_s^{t} K(t-u)^{2\beta} du\Big)^{\frac{p}{{2\beta}}} \sup_{0\leq u\le T} \E[  \big(1+|X_u|\big)^p ].\label{bound_III_new}
	\end{align}
	and
	\begin{equation} \label{bound_IV_new}
		\E\left[ \,{\bf IV}\, \right] \le ( C_p^{\text{BDG}})^p \big( \int_0^{s} \ell_{b,\sigma}^{\frac{2\beta}{\beta-1}}(u) du\big)^{\frac{p(\beta-1)}{2\beta}} \Big( \int_0^s (K(u+t-s)-K(u))^{2\beta}  du\Big)^{\frac{p}{{2\beta}}} \sup_{0\leq u\le T} \E[ \big(1+|X_u|\big)^p ].
	\end{equation}
	Combining \eqref{bound_I_new}--\eqref{bound_IV_new} with conditions~\eqref{eq:contKtilde} and \eqref{eq:Kcont} and Assumption~\ref{assum:CoefsVolterra}~$(iii)$ for \({\bf C}\) leads to
	\begin{equation}\label{eq:FinTightness}
		\E\left[|X_t - X_s|^p\right] \le C'\Big[\big(1+ \E[\sup_{0\leq u\le T} |x_0(u)|^{p} ]\big)\vee \big(1+\sup_{0\leq u\le T} \E[  |X_u|^{p} ]\big)\Big]\,(t-s)^{p(\delta\wedge\widehat\theta\wedge\theta)},
	\end{equation}
	with 
	$C':=C_{T,p}+\big(\widehat \kappa_{T,\beta}+ \kappa_{T,\beta}\big)\Big(T^\frac{p}{2}+(C_p^{\text{BDG}})^p\Big) \Big( \int_0^{s} \ell_{b,\sigma}^{\frac{2\beta}{\beta-1}}(u) du\Big)^{\frac{p(\beta-1)}{2\beta}}$, a constant that only depends on $p$, $K$, $b$, $\sigma$, $T$, but not on $s$ or $t$. Existence of a continuous version as well as bound \eqref{eq:L:Holder_bound} now follow from the Kolmogorov continuity theorem; see \cite[Theorem~I.2.1]{RevuzYor}. This completes the proof.	 \hfill $\square$
	
	\medskip
	\noindent{\bf Remark:} If $x_0:=X_0\phi$ as in~\eqref{eq:Volterrameanrevert}, where the function $\phi$ satisfies the H\"older continuity condition in~\ref{assum:CoefsVolterra}-$(iii)$, using~\eqref{eq:moment bound} and mimicking the end of the proof of~\cite[Theorem 1.1]{JouPag22} (see see also~\cite[Theorem 2.10]{GnabeyeuPages2026}), the bounds~\eqref{eq:FinTightness} and~\eqref{eq:L:Holder_bound} can be extended to every \(p > 0\) such that \(\lVert X_0\rVert_p < +\infty\).
	
\paragraph{An alternative proof under stronger regularity assumptions:}	 We conclude this section with an alternative proof of Theorem~\ref{thm:ExistenceVolterraL2}. Assuming additional regularity of the kernel, the proof proceeds by estimating the supremum norm directly, thereby avoiding the Hölder continuity estimates used above.

Here, the Kernel $K$ satisfies \(K(0)=0 \) and its derivative $K^\prime$ together with $h_{b,\sigma}$, $\ell_{b,\sigma}$
are locally integrable with an exponent strictly larger than 2, namely,
\vspace{-.2cm}
\begin{equation}\label{eq:Hol_Intergral_Cond_}
	\int_0^T\big(\ell_{b,\sigma}(u)^{{\frac{2\beta}{\beta-1}}} + h_{b,\sigma}(u)^{{\frac{2\beta}{\beta-1}}}
	+K^\prime(u)^{2\beta}\big)du<+\infty, \; T>0, \; \text{for some} \; \beta>1.
	\vspace{-.3cm}
\end{equation}
\noindent{\bf Remarks :} 1. Note that, if $h_{b,\sigma}$ and $\ell_{b,\sigma}$ 	
are constant functions, we only require \(K^\prime \in {\cal L}^2_{loc}(\R_+) \), which is sufficient to prove the existence result.\\
\noindent 2. One readily checks that the derivative $K^\prime$ of the kernel \(K\) in Examples~\ref{Ex:SolventGammaKernel}~$2.$ and~$3.$ satisfy~\ref{eq:Hol_Intergral_Cond_}
for every \(\alpha>\frac32\) with any \(\beta\in[1,\frac{1}{2(\alpha-2)^-})\), still with the convention $1/0 = +\infty$ and \(x^-:=\max(-x, 0)\).

\medskip	
\noindent {\bf Alternative proof of Theorem~\ref{thm:ExistenceVolterraL2}}
		We follow~\cite[Remark on Theorem~2.7]{GnabeyeuPages2026} (see also \cite{Protter1985}) to prove the existence and uniqueness of a strong solution to~\eqref{eq:Volterrameanrevert},
		using a Picard iteration principle.
		Let $p\geq2$. For $T\ge0$, consider the family of processes on $[0,T]$ 
		\[
		\Hcal_{p,T}
		:=
		\left\{
		X=(X_s)_{s\in[0,T]},\mathbb{R}\text{-valued, }\mathbb{F}\text{-adapted, $L^p$-continuous process such that } \mathbb{E}\!\left[\|X\|_T^p\right]<\infty
		\right\},
		\]
		where \(\|x\|_t:=\sup_{u\in[0,t]}|x_u|,
		\; t\in[0,T],\; x\in \Hcal_{p,T}\). 
		We set \(\|X\|_{p,T}:=\big\|\|X\|_T\big\|_{p}=\big(\mathbb{E}\!\left[\|X\|_T^p\right]\big)^\frac1p\).
		We still denote by $\Hcal_{p,T}$ the space of all such $X$, modulo the equivalence relation obtained by identifying processes that are versions of each other. One readily verifies that $(\Hcal_{p,T},\|\fdot\|_{p,T})$ is a Banach space. Thanks to \cite[Proposition 3.21]{PeszatZabczyk2007}, every element $X\in\Hcal_{p,T}$ admits a predictable representative, again denoted by $X$. Below we always work with such representatives.
		We first prove the existence of a unique solution to~\eqref{eq:Volterra} in $\mathcal H_{p,T}$.
		For all \(X\in \mathcal{H}_{p,T}\), we define 
		the process \(	\mathcal{T}(X) := (\mathcal{T}(X)_t)_{0 \leq t \leq T}\)
		by
		\[
		(\mathcal{T}(X))_t := x_0(t) + \int_{0}^{t } K(t-s) b(s, X_s) \, ds + \int_{0}^{t} K(t-s) \sigma(s, X_s) \, dW_s = x_0(t) + \int_{0}^{t } K(t-s) dZ_s .
		\]
		Under our assumptions, for any \(X\in \mathcal{H}_{p,T}\),
		\(\int_0^t \left( \int_0^s \left| \partial_u K(u-s) \right|^2 d\langle Z\rangle_u \right)^{\frac12} ds < \infty \quad \P-\text{a.s.}.\) Consequently,
		we have that \(Y_t := \int_{0}^{t } K(t-s) dZ_s \) is an \((\mathcal{F}_t)_{t \in [0,T]}\)-semimartingale. By applying the fundamental theorem of calculus and stochastic Fubini's theorem \cite[ Theorem 2.6]{Walsh1986}, we can represent \(Y_t\) as:
		\[
		Y_t = \int_0^t K(0) \, dZ_s + \int_0^t \Big( \int_0^s \partial_s K(s-u) \, dZ_u \Big) ds, \quad \text{where} \quad \partial_s K(s-u) := \Big. \frac{\partial}{\partial t} K(t-u) \Big|_{t = s}=K^\prime(s-u).
		\]
		Furthermore, for any \( T >0\), combining generalized Minkowski inequality, H\"older's inequality and the $L^p-BDG$ (Burkholder-Davis-Gundy) inequality yields
		{\small
			\begin{equation}\label{eq:bounds_Sup2}
				\frac{\big\|\|Y\|_T\big\|_{p}}{T^{1-\frac1p}} 
				\leq  \Big( \int_0^T \mathbb{E} \Big[ \Big( \int_0^s | K^\prime(s-u) b(u, X_u)| du \Big)^{p} \Big] ds\Big)^\frac1p
				+ C_p^{BDG}\Big( \int_0^T \mathbb{E} \Big[ \Big( \int_0^s | K^\prime(s-u) \sigma(u, X_u)|^2 du \Big)^{\frac{p}{2}} \Big] ds \Big)^\frac1p
			\end{equation}
		}
		Consequently, we can directly prove the existence and uniqueness of a solution in this setting, using the supremum norm, via the Picard iteration method.
		
		\noindent {\sc Step~1} ({\em Well-definedness of $\mathcal{T}$ }).
		We need to prove that for any $X \in \mathcal{H}_{p,T}$, $\|\mathcal{T}(X)\|_{p,T} < \infty$  so that $\mathcal{T}(X) \in \mathcal{H}_{p,T}$. By the triangle inequality,
		\begin{align}\label{eq:norm_X2}
			&\,\| \mathcal{T}(X) \|_{p,T} 
			\leq \| x_0 \|_{p,T} + \Big\| \int_{0}^{\cdot } K(\cdot-s) b(s, X_s) \, ds \Big\|_{p,T} + \Big\| \int_{0}^{\cdot} K(\cdot-s) \sigma(s, X_s) \, dW_s \Big\|_{p,T}\nonumber\\
			&\leq \| x_0 \|_{p,T} + (2T)^{1-\frac1p}\big(\sqrt{T}+C_p^{\text{BDG}}\big)\sup_{0\le s\le T}\big( \int_0^{s} K^\prime(s-u)^2 \ell_{b,\sigma}^2(u) du\big)^{\frac{1}{2}}\big(\int_0^T\big(1+ \| X \|_{p,s}^p\big)ds\big)^\frac1p
		\end{align}
		where we used Equation~\eqref{eq:bounds_Sup2} togehter with Jensen's inequality~\eqref{eq:JensenIneq}
		in~\ref{fn:first}, and the linear growth properties of $b$ and $\sigma$ in assumption~\ref{assum:CoefsVolterra}~$(ii)$.
		As \(\int_0^T\big(\ell_{b,\sigma}(u)^{{\frac{2\beta}{\beta-1}}}
		+K^\prime(u)^{2\beta}\big)du<+\infty \; \text{for some} \; \beta>1\) owing to  Equation~\eqref{eq:Hol_Intergral_Cond} of  Assumption~\ref{assum:convolKernel}
	and	\(\| x_0 \|_{p,T}<+\infty\) by assumption~\ref{assum:CoefsVolterra}~$(ii)$ and~$(iii)$, Equation~\eqref{eq:norm_X2}
		implies that $\|\Tcal(X)\|_{p,T}<\infty$ whenever \(\| X \|_{p,T}<+\infty\).
		 The fact that $\Tcal(X)$ is continuous in $L^p$ follows from Lemma~\ref{L:Holder_bound}. 
		Thus $\mathcal{T}(X)$ lies in $\mathcal H_{p,T}$. 
	
		\smallskip
		\noindent {\sc Step~2} ({\em An integral recursive stability estimate for $\mathcal{T}$}).
		For $0\leq t \leq T$, we want to establish a key integral estimate for
		$\mathcal{T}$ with respect to the norm \(\|X\|_{p,t}\) needed for the Picard
		iteration.  For \((X,Y)\in \mathcal{H}_{p,T}\times \mathcal{H}_{p,T}\), let us estimate \( \|\mathcal{T}(X) - \mathcal{T}(Y)\|_{p,t} \). We have
		using Equation~\eqref{eq:bounds_Sup2}, the Lipschitz assumption on \(b\) and \(\sigma\) 	together with 
		Jensen's inequality~\eqref{eq:JensenIneq} in~\ref{fn:first}: 
			\begin{align}\label{eq:Lipschitz_norm}
			\|\mathcal{T}(X) &\,- \mathcal{T}(Y)\|_{p,t} 
			\leq  \Big\| \int_{0}^{\cdot } K(\cdot-s) \big(b(s, X_s)-b(s, Y_s)\big) \, ds \Big\|_{p,t} + \Big\| \int_{0}^{\cdot} K(\cdot-s) \big(\sigma(s, X_s)-\sigma(s, Y_s)\big) \, dW_s \Big\|_{p,t}\nonumber\\
			&\leq (2t)^{1-\frac1p}\big(\sqrt{t}+C_p^{\text{BDG}}\big)\sup_{0\le s\le t}\Big( \int_0^{s} K^\prime(s-u)^2 h_{b,\sigma}^2(u) du\Big)^{\frac{1}{2}}\Big(\int_0^t \| X-Y \|_{p,s}^pds\Big)^\frac1p
		\end{align}
		Besides, by Equation~\eqref{eq:Hol_Intergral_Cond} of Assumption~\ref{assum:convolKernel}, \(	\int_0^T\big(h_{b,\sigma}(u)^{{\frac{2\beta}{\beta-1}}}
		+K^\prime(u)^{2\beta}\big)du<+\infty \; \text{for some} \; \beta>1\). Then,
		combining \eqref{eq:Lipschitz_norm} with assumption~\ref{assum:convolKernel} leads to the existence of some constant $C_{p,T} > 0$ that depends only on $p$, $T$, $\beta$, $b$, $\sigma$ and the kernel $K$ such that
		\begin{equation}\label{eq:lipschtbound2} 
			\|\mathcal{T}(X)-\mathcal{T}(Y)\|_{p,t}^{\,p}
			\leq C_{p,T}\,\int_0^t \| X-Y \|_{p,s}^pds,
		\end{equation} 
		\noindent {\sc Step~3} ({\em Picard Iteration Algorithm}). 
		Moreover, for every integer $n\ge1$,
		\begin{align*}
			\|\mathcal{T}^n(X)-\mathcal{T}^n(Y)\|_{p,t}^{\,p}
			&\le C_{p,T}\int_0^t
			\|\mathcal{T}^{\,n-1}(X)-\mathcal{T}^{\,n-1}(Y)\|_{p,s}^{\,p}\,ds\\
			&\le (C_{p,T})^2
			\int_0^t\int_0^{s}
			\|\mathcal{T}^{\,n-2}(X)-\mathcal{T}^{\,n-2}(Y)\|_{p,r}^{\,p}\,dr\,ds\\
			&\le (C_{p,T})^n
			\int_{\{0\le s_n\le\cdots\le s_1\le t\}}
			\|X-Y\|_{p,s_n}^{\,p}\,
			ds_n\cdots ds_1\le (C_{p,T})^n
			\|X-Y\|_{p,t}^{\,p}\,
			\frac{t^n}{n!}.
		\end{align*}
		Now for $Y \in \mathcal{H}_{p,T}$, let $X^0:=Y$ and define recursively $X^{n+1}:=\mathcal{T}(X^n),\; n\ge0.$
		Then, it follows that
		\[
		\|X^n-X^{n+1}\|_{p,t}^{\,p}
		\le
		(C_{p,T})^n
		\|Y-\mathcal{T}(Y)\|_{p,t}^{\,p}
		\frac{t^n}{n!},\;\text{and consequently}\; \sum_{n\ge0}
		\|X^{n+1}-X^n\|_{p,T}
		<\infty.
		\]
		Hence $(X^n)_{n\ge0}$ converges uniformly, $\P-a.s.$ in $\mathcal{H}_{p,T}$ to some
		$X\in\mathcal{H}_{p,T}$ with continuous path a.s., which is a fixed point of
		$\mathcal{T}$.
		Finally, it is straightforward to
		see that \(X\) is the unique strong solution of~\eqref{eq:Volterra} with initial profile \(x_0\).
		 Moreover, the following estimate holds
		 true for some constant \(C>0\)  that only depends only on $p$, $T$, $\beta$, $b$, $\sigma$ and the kernel $K$:
		 \begin{equation}\label{eq:moment bound2}
		 	 \sup_{t \leq T}\E[|X_t|^p] \leq  \E[\sup_{t \leq T}|X_t|^p] \leq C\,\big(1+ \E[\sup_{t \leq T}|x_0(t)|^p] ) < \infty
		 \end{equation}
		In fact, the estimate~\eqref{eq:moment bound2} is obtained, by taking \( X = 0 \) in~\eqref{eq:norm_X2} and \( Y = 0 \), in inequality~\eqref{eq:lipschtbound2} 
		and owing in this latter to the fact that \(\mathcal{T}(X) = X\) since \(X\) is a fixed point of the application \(\mathcal{T}\). The result follows by an application of Gr\"onwall's lemma.
		
		\smallskip
		\noindent
		At this stage, the inequalities~\eqref{eq:Lpincrements} and~\eqref{eq:Holderpaths} 
		follow from Lemma~\ref{L:Holder_bound} (see Equation~\ref{eq:FinTightness}) together with the estimate~\eqref{eq:moment bound2} and Jensen's inequality.
     This completes the proof and we are done.	 \hfill $\square$
\end{document}